\documentclass[a4paper,twoside]{article}
\usepackage{a4}
\usepackage{amssymb}
\usepackage{amsmath}
\usepackage{upref}
\usepackage[active]{srcltx}
\usepackage[pagebackref,colorlinks,citecolor=blue,linkcolor=blue]{hyperref}
\allowdisplaybreaks[2] 
%
%
%
\newcount\minutes \newcount\hours
\hours=\time
\divide\hours 60
\minutes=\hours
\multiply\minutes -60
\advance\minutes \time
\newcommand{\klockan}{\the\hours:{\ifnum\minutes<10 0\fi}\the\minutes}
\newcommand{\tid}{\today\ \klockan}
\newcommand{\prtid}{\smash{\raise 10mm \hbox{\LaTeX ed \tid}}}
\renewcommand{\prtid}{}
%
%
\makeatletter
\pagestyle{headings}
\headheight 10pt
\def\sectionmark#1{} 
\def\subsectionmark#1{}
\newcommand{\sectnr}{\ifnum \c@secnumdepth >\z@
                 \thesection.\hskip 1em\relax \fi}
\def\@evenhead{\footnotesize\rm\thepage\hfil\leftmark\hfil\llap{\prtid}}
\def\@oddhead{\footnotesize\rm\rlap{\prtid}\hfil\rightmark\hfil\thepage}
\def\tableofcontents{\section*{Contents} 
 \@starttoc{toc}}
\makeatother
%
%
\makeatletter
\def\@biblabel#1{#1.}
\makeatother
%
%
%
\makeatletter
\let\Thebibliography=\thebibliography
\renewcommand{\thebibliography}[1]{\def\@mkboth##1##2{}\Thebibliography{#1}
\addcontentsline{toc}{section}{References}
\frenchspacing 
\setlength{\@topsep}{0pt}
\setlength{\itemsep}{0pt}%
\setlength{\parskip}{0pt plus 2pt}%
}
\makeatother
%
%
\makeatletter
\def\mdots@{\mathinner.\nonscript\!.%
 \ifx\next,.\else\ifx\next;.\else\ifx\next..\else
 \nonscript\!\mathinner.\fi\fi\fi}
\let\ldots\mdots@
\let\cdots\mdots@
\let\dotso\mdots@
\let\dotsb\mdots@
\let\dotsm\mdots@
\let\dotsc\mdots@
\def\vdots{\vbox{\baselineskip2.8\p@ \lineskiplimit\z@
    \kern6\p@\hbox{.}\hbox{.}\hbox{.}\kern3\p@}}
\def\ddots{\mathinner{\mkern1mu\raise8.6\p@\vbox{\kern7\p@\hbox{.}}%
    \raise5.8\p@\hbox{.}\raise3\p@\hbox{.}\mkern1mu}}
\makeatother
%
%
\makeatletter
\let\Enumerate=\enumerate
\renewcommand{\enumerate}{\Enumerate%
\setlength{\@topsep}{0pt}
\setlength{\itemsep}{0pt}%
\setlength{\parskip}{0pt plus 1pt}%
\renewcommand{\theenumi}{\textup{(\alph{enumi})}}%
\renewcommand{\labelenumi}{\theenumi}%
}
\let\endEnumerate=\endenumerate
\renewcommand{\endenumerate}{\endEnumerate\unskip}

\makeatother
%
%
\makeatletter
\def\@seccntformat#1{\csname the#1\endcsname.\quad}
\makeatother
%
%
\makeatletter
\long\def\@makecaption#1#2{%
  \vskip\abovecaptionskip
  \sbox\@tempboxa{ #1. #2}%
  \ifdim \wd\@tempboxa >\hsize
    #1. #2\par
  \else
    \global \@minipagefalse
    \hb@xt@\hsize{\hfil\box\@tempboxa\hfil}%
  \fi
  \vskip\belowcaptionskip}
\makeatother
%
%
\newcommand{\authortitle}[2]{\author{#1}\title{#2}\markboth{#1}{#2}}
%
%
\newcommand{\art}[6]{{\sc #1, \rm #2, \it #3\/ \bf #4 \rm (#5), \mbox{#6}.}}
\newcommand{\auth}[2]{{#2. #1}}
\newcommand{\artprep}[3]{{\sc #1, \rm #2, #3.}}

\newcommand{\book}[3]{{\sc #1, \it #2, \rm #3.}}
\newcommand{\AND}{{\rm and }}
%
%
\RequirePackage{amsthm}
\newtheoremstyle{descriptive}%
  {\topsep}   
  {\topsep}   
  {\rmfamily} 
  {}          
  {\bfseries} 
  {.}         
  { }         
  {}          
\newtheoremstyle{propositional}%
  {\topsep}   
  {\topsep}   
  {\itshape}  
  {}          
  {\bfseries} 
  {.}         
  { }         
  {}          
\theoremstyle{propositional}
\newtheorem{thm}{Theorem}[section]
\newtheorem{theo}[thm]{Theorem}   
\newtheorem{prop}[thm]{Proposition}
\newtheorem{lem}[thm]{Lemma}
\newtheorem{cor}[thm]{Corollary}
\theoremstyle{descriptive}
\newtheorem{definition}[thm]{Definition}
\newtheorem{remark}[thm]{Remark}
%
%
%
%
%
\makeatletter
\renewenvironment{proof}[1][\proofname]{\par
  \pushQED{\qed}%
  \normalfont
  \trivlist
  \item[\hskip\labelsep
        \itshape
    #1\@addpunct{.}]\ignorespaces
}{%
  \popQED\endtrivlist\@endpefalse
}
\makeatother
{\catcode`p =12 \catcode`t =12 \gdef\eeaa#1pt{#1}}      
\def\accentadjtext#1{\setbox0\hbox{$#1$}\kern   
                \expandafter\eeaa\the\fontdimen1\textfont1 \ht0 }
\def\accentadjscript#1{\setbox0\hbox{$#1$}\kern 
                \expandafter\eeaa\the\fontdimen1\scriptfont1 \ht0 }
\def\accentadjscriptscript#1{\setbox0\hbox{$#1$}\kern   
                \expandafter\eeaa\the\fontdimen1\scriptscriptfont1 \ht0 }
\def\accentadjtextback#1{\setbox0\hbox{$#1$}\kern       
                -\expandafter\eeaa\the\fontdimen1\textfont1 \ht0 }
\def\accentadjscriptback#1{\setbox0\hbox{$#1$}\kern     
                -\expandafter\eeaa\the\fontdimen1\scriptfont1 \ht0 }
\def\accentadjscriptscriptback#1{\setbox0\hbox{$#1$}\kern 
                -\expandafter\eeaa\the\fontdimen1\scriptscriptfont1 \ht0 }
\def\itoverline#1{{\mathsurround0pt\mathchoice
        {\rlap{$\accentadjtext{\displaystyle #1}
                \accentadjtext{\vrule height1.593pt}
                \overline{\phantom{\displaystyle #1}
                \accentadjtextback{\displaystyle #1}}$}{#1}}
        {\rlap{$\accentadjtext{\textstyle #1}
                \accentadjtext{\vrule height1.593pt}
                \overline{\phantom{\textstyle #1}
                \accentadjtextback{\textstyle #1}}$}{#1}}
        {\rlap{$\accentadjscript{\scriptstyle #1}
                \accentadjscript{\vrule height1.593pt}
                \overline{\phantom{\scriptstyle #1}
                \accentadjscriptback{\scriptstyle #1}}$}{#1}}
        {\rlap{$\accentadjscriptscript{\scriptscriptstyle #1}
                \accentadjscriptscript{\vrule height1.593pt}
                \overline{\phantom{\scriptscriptstyle #1}
                \accentadjscriptscriptback{\scriptscriptstyle #1}}$}{#1}}}}
\def\itunderline#1{{\mathsurround0pt\mathchoice
        {\rlap{$\underline{\phantom{\displaystyle #1}
                \accentadjtextback{\displaystyle #1}}$}{#1}}
        {\rlap{$\underline{\phantom{\textstyle #1}
                \accentadjtextback{\textstyle #1}}$}{#1}}
        {\rlap{$\underline{\phantom{\scriptstyle #1}
                \accentadjscriptback{\scriptstyle #1}}$}{#1}}
        {\rlap{$\underline{\phantom{\scriptscriptstyle #1}
                \accentadjscriptscriptback{\scriptscriptstyle #1}}$}{#1}}}}
%
%
%
%
%
%
%
\newcommand{\limplus}{{\mathchoice{\vcenter{\hbox{$\scriptstyle +$}}}
  {\vcenter{\hbox{$\scriptstyle +$}}}
  {\vcenter{\hbox{$\scriptscriptstyle +$}}}
  {\vcenter{\hbox{$\scriptscriptstyle +$}}}
}}
\newcommand{\limminus}{{\mathchoice{\vcenter{\hbox{$\scriptstyle -$}}}
  {\vcenter{\hbox{$\scriptstyle -$}}}
  {\vcenter{\hbox{$\scriptscriptstyle -$}}}
  {\vcenter{\hbox{$\scriptscriptstyle -$}}}
}}
\newcommand{\limpm}{{\mathchoice{\vcenter{\hbox{$\scriptstyle \pm$}}}
  {\vcenter{\hbox{$\scriptstyle \pm$}}}
  {\vcenter{\hbox{$\scriptscriptstyle \pm$}}}
  {\vcenter{\hbox{$\scriptscriptstyle \pm$}}}
}}
%
%
%
%
%
%
%
%
\newdimen\extrawidth
\def\iintlim#1#2{\setbox0\hbox{$\scriptstyle#1$}%
	\setbox1\hbox{$\scriptstyle#2$}%
	\extrawidth=\wd1 \advance\extrawidth-\wd0
	\ifdim\extrawidth<0pt \extrawidth=0pt\fi%
	\int_{#1\kern\extrawidth \kern .5em}^{#2\kern -\wd1} \kern -.5em%
}
%
%
\numberwithin{equation}{section}
\newcommand{\imp}{\ensuremath{\mathchoice{\quad \Longrightarrow \quad}{\Rightarrow}
                {\Rightarrow}{\Rightarrow}} }
\newenvironment{ack}{\medskip{\it Acknowledgement.}}{}

\DeclareMathOperator{\diam}{diam}
\DeclareMathOperator{\Div}{div}
\DeclareMathOperator{\dvg}{div}
\newcommand{\capacity}{\operatornamewithlimits{cap}}


\hyphenation{super-para-bolic}

\renewcommand{\phi}{\varphi}
\newcommand{\eps}{\varepsilon}
\newcommand{\al}{\alpha}
\newcommand{\be}{\beta}
\newcommand{\de}{\delta}

\newcommand{\R}{\mathbf{R}}
\newcommand{\Q}{\mathbf{Q}}
\newcommand{\Qp}{{\Q_\limplus}}
\newcommand{\Rn}{\mathbf{R}^n}
\newcommand{\Rno}{\mathbf{R}^{n+1}}

\newcommand{\dist}{\operatorname{dist}}

\newcommand{\esssup}{\operatornamewithlimits{ess\, sup}}
\DeclareMathOperator*{\essliminf}{ess\,lim\,inf}
\DeclareMathOperator*{\esslimsup}{ess\,lim\,sup}

\newcommand{\Th}{\Theta}

\newcommand{\p}{{$p\mspace{1mu}$}}

\newcommand{\uS}{\itoverline{S}}
\newcommand{\lS}{\itunderline{S}}
\newcommand{\uP}{\itoverline{P}}
\newcommand{\lP}{\itunderline{P}}
\newcommand{\UU}{\mathcal{U}}%
\newcommand{\UUt}{\widetilde{\mathcal{U}}}%
\newcommand{\LL}{\mathcal{L}}%
\newcommand{\LLt}{\widetilde{\mathcal{L}}}%
\newcommand{\bdy}{\partial}
\newcommand{\bdry}{\partial}
\newcommand{\bdyp}{\bdy_p}
\newcommand{\grad}{\nabla}
\newcommand{\setm}{\setminus}
\renewcommand{\emptyset}{\varnothing}
\newcommand{\alp}{\alpha}
\newcommand{\ga}{\gamma}
\newcommand{\gat}{\widetilde{\gamma}}
\newcommand{\tf}{\tilde{f}}
\newcommand{\wcj}{w_{c,j}}
\newcommand{\wck}{w_{c,k}}
\newcommand{\ucj}{u_{c,j}}
\newcommand{\vcj}{v_{c,j}}
\newcommand{\psicj}{\psi_{c,j}}

\newcommand{\gh}{\hat{g}}
\newcommand{\gb}{\bar{g}}
\newcommand{\tb}{\tau}
\newcommand{\hh}{\hat{h}}
\newcommand{\hb}{\bar{h}}
\newcommand{\uh}{\hat{u}}
\newcommand{\uj}{u_j}
\newcommand{\la}{\lambda}
\newcommand{\ut}{\tilde{u}}
\newcommand{\ft}{\tilde{f}}
\newcommand{\gah}{\widehat{\gamma}}
\newcommand{\Qjt}{Q_j^\tau}

\begin{document}
%
%
\authortitle{Anders Bj\"orn, Jana Bj\"orn, Ugo Gianazza
and Juhana Siljander}
{Boundary regularity for the porous medium equation}
\author{
Anders Bj\"orn \\
\it\small Department of Mathematics, Link\"oping University, \\
\it\small SE-581 83 Link\"oping, Sweden\/{\rm ;}
\it \small anders.bjorn@liu.se
\\
\\
Jana Bj\"orn \\
\it\small Department of Mathematics, Link\"oping University, \\
\it\small SE-581 83 Link\"oping, Sweden\/{\rm ;}
\it \small jana.bjorn@liu.se
\\
\\
Ugo Gianazza \\
\it\small Department of Mathematics ``F. Casorati'', Universit\`a di Pavia,\\
\it\small via Ferrata 1, IT-27100 Pavia, Italy\/{\rm ;}
\it\small gianazza@imati.cnr.it
\\
\\
Juhana Siljander \\
\it\small Department of Mathematics and Statistics, University of Jyv\"askyl\"a,\\
\it\small P.O. Box 35, FI-40014 Jyv\"askyl\"a, Finland\/{\rm ;}
\it\small juhana.siljander@jyu.fi 
}

\date{}
\maketitle

\noindent{\small
{\bf Abstract}. 
We study the boundary regularity of solutions to the porous medium
equation $u_t = \Delta u^m$ in the degenerate range $m>1$. 
In particular, we show that in cylinders 
the Dirichlet problem
with positive continuous boundary data on the parabolic boundary
has a solution which attains the boundary values, 
provided that the spatial domain satisfies the elliptic Wiener
criterion. This condition is known to be optimal,
and it is a consequence of our main theorem which
establishes a barrier characterization of regular boundary points for
general -- not necessarily cylindrical -- domains in $\R^{n+1}$. 
One of our fundamental tools is a new strict comparison principle
between sub- and superparabolic functions,
which makes it essential for us to study
both nonstrict and strict Perron solutions to be able
to develop a fruitful boundary regularity theory.
Several other comparison principles and pasting lemmas are also obtained.
In the process we obtain a rather complete picture of the relation
between sub/super\-para\-bolic functions and weak sub/super\-solu\-tions.
}

\bigskip
\noindent
{\small \emph{Key words and phrases}:
barrier family,
boundary regularity, 
comparison principle,
degenerate parabolic equation,
lower regular boundary point,
pasting lemma,
Perron method, 
porous medium equation, 
strict comparison principle,
superparabolic function,
upper regular boundary point, 
Wiener criterion.
}

\medskip
\noindent
{\small Mathematics Subject Classification (2010):
Primary: 35K20;  
Secondary:  35B51, 35B65, 35K10,
35K55, 35K65.
}

\section{Introduction}

Let $\Theta$ be a bounded open set in a Euclidean space
and for every $f \in C(\bdy \Theta)$
let $u_f$ be the solution of the Dirichlet problem 
with boundary data $f$
for a given partial differential equation.
Then a boundary point $\xi_0 \in \bdy \Theta$ is \emph{regular} if
\[
             \lim_{\Theta \ni \zeta \to \xi_0} u_f(\zeta)=f(\xi_0)
             \quad \text{for all } f \in C(\bdy \Theta),
\]
i.e.\ if the solution to the Dirichlet problem attains
the given boundary data continuously at $\xi_0$,
for all continuous boundary data $f$.

In this paper, we characterize 
regular boundary points for the porous medium equation
\begin{equation}   \label{eq-porous-intro}
\bdy_t u = \Delta u^m
\end{equation}
in terms of families of barriers, in the so-called \emph{degenerate case} 
$1<m<\infty$,
and for general (not necessarily cylindrical) domains.
To our knowledge, 
Abdulla~\cite{abdulla-00}, \cite{abdulla-05} is the only one who
has studied 
the Dirichlet problem for the porous medium equation 
in noncylindrical domains.

The characterization of regular boundary points for different 
partial differential equations has a very long history.
Poincar\'e~\cite{poincare1890} was the first to use barriers, while
Lebesgue~\cite{lebesgue1912a} coined the name.
At that time, barriers
were used to study the solvability of the Dirichlet problem
for harmonic functions,
a question that was later completely settled using e.g.\ Perron solutions.
In 1924, Lebesgue~\cite{lebesgue1924} characterized regular boundary points
for harmonic functions by the existence of barriers.
The corresponding  characterization for the heat equation 
was given by Bauer~\cite{Bauer62} in 1962, but barriers
had then already been used 
to study boundary regularity  for the heat equation
since Petrovski\u\i~\cite{Petro2} 
in  1935; 
see the introduction in \cite{BBP} for more on the
history of boundary regularity for the heat equation.

Coming to nonlinear parabolic equations of degenerate and singular
types, 
the potential theory for \p-parabolic 
equations was
initiated by Kilpel\"ainen and Lindqvist in \cite{KiLi96}. They
established the parabolic Perron method, and also suggested a boundary
regularity characterization in terms of one barrier. Even if the
single barrier criterion has turned out to be  
problematic, \cite{KiLi96} has been the basis for the
further development by Lindqvist~\cite{lindqvist95},
Bj\"orn--Bj\"orn--Gianazza~\cite{BBG}
and
Bj\"orn--Bj\"orn--Gianazza--Parviainen~\cite{BBGP}
for the \p-\emph{parabolic equation} 
\begin{equation} \label{eq-p-para}
   \partial_t u = \Delta_p u := \Div(|\nabla u|^{p-2} \nabla u).
\end{equation}

For the porous medium equation~\eqref{eq-porous-intro}, 
potential theory is largely at its
inception, and so far not very much is known about the boundary
behaviour of solutions in general domains.
To our knowledge the main
contributions in this field are
due to Ziemer~\cite{ziemer}, Abdulla~\cite{abdulla-00}, \cite{abdulla-05}
and Kinnunen--Lindqvist--Lukkari~\cite{Kinnunen-Lindqvist-Lukkari:2013}.

Ziemer~\cite{ziemer} studied boundary regularity in cylinders
for a class of  degenerate parabolic equations,
which includes the porous medium equation 
with $m>1$,
but with boundary data taken in a weak (Sobolev) sense;
see Section~\ref{S:reg-cylinders} for further details.

 Abdulla~\cite{abdulla-00}, \cite{abdulla-05}
investigated the Dirichlet problem for the
porous medium equation 
with $m>0$ in general 
domains $\Theta\subset\R^{n+1}$, $n\geq 2$. 
Existence was established in \cite{abdulla-00},
while  uniqueness, comparison and stability theorems were
presented in \cite{abdulla-05}. 
Therein, the smoothness condition on the boundary in order to 
have $u\in C(\overline{\Theta})$
is given in terms of a parabolic H\"older-type modulus; 
cf.\ Theorems~\ref{thm-cont-exist} and~\ref{thm-stability}
for the cylindrical case.

Kinnunen--Lindqvist--Lukkari~\cite{Kinnunen-Lindqvist-Lukkari:2013}
developed the Perron method for the porous medium equation in the 
degenerate range $m>1$ and showed that nonnegative
continuous boundary functions are resolutive in arbitrary cylindrical
domains. 
A boundary function $f$ is \emph{resolutive} if the upper and lower Perron
solutions $\uP f$ and $\lP f$ coincide.

The present paper can be considered as an extension of the previous
contributions in several different but strictly related directions, 
as well as an 
initial development of a boundary 
regularity theory for the
porous medium equation in terms of barriers. Under this
second point of view, it is strictly related to the works
\cite{BBG} and~\cite{BBGP} for the \p-parabolic equation
\eqref{eq-p-para}, 
even though the porous medium equation has extra
difficulties not present for the \p-parabolic equation.
In particular, if $u$ is a solution of the porous medium 
equation~\eqref{eq-porous-intro}
and $c \ne 0$ is a constant,
then typically $u+c$ is not a solution. 
Moreover, we restrict ourselves to
nonnegative functions, and therefore are not allowed to change
sign. 

It is possible to
study sign-changing solutions of the porous medium
equation, as has been done by some authors, but in addition
to causing extra difficulties it may also cause significant differences 
when it comes to 
boundary regularity,
as it seems quite possible that boundary regularity can be different for
nonnegative and sign-changing functions.
Here we restrict ourselves to nonnegative, and primarily positive, functions.

A well-known problem for the 
porous medium equation is the difficulty of obtaining a comparison principle
between sub- and superparabolic functions.
One of the main achievements in \cite{Kinnunen-Lindqvist-Lukkari:2013} 
was their comparison principle for cylinders
(cf.\ Theorem~\ref{thm-comp-KLL}).
In order to even start developing the theory in this paper, it
is fundamental to have a comparison principle in general domains,
which we obtain in Theorem~\ref{thm-para-comp-princ}.

Comparison principles usually require an inequality $\le$ on the boundary, 
and to establish such a comparison principle for general domains has been
a major problem both for earlier authors and for us. 
We have chosen a slightly different and novel route obtaining a strict
comparison principle in general domains, 
with the strict inequality $<$ at the boundary
(see Theorems~\ref{thm-para-comp-princ} and~\ref{thm-comp-princ}). 
Using a strict comparison principle
 causes extra complications, but 
we have still been able to develop a fruitful Perron and boundary regularity
theory in general domains.

For thorough presentations of the theory of the porous
medium equation, we refer the interested reader to
Daskalopoulos--Kenig~\cite{Daskalopoulos-Kenig-book} and 
V\'azquez~\cite{Vazquez-book-1};
see also 
DiBenedetto--Gianazza--Vespri~\cite{DBGV-mono}.
We primarily deal with the degenerate case $m\ge1$, but whenever possible we have
given statements for general $m>0$. The \emph{singular case} $0<m<1$
will be the object of future research. 

The paper is  organized as follows.
 Section~\ref{S:Prelim}
is devoted to some preliminary material. In particular, we recall the
different concepts of solutions and sub/super\-solu\-tions,
as well as various existence, uniqueness and stability results that will
be essential later on.

Section~\ref{S:Superparabolic} deals with the notions of sub- and
superparabolic functions. 
In Theorem~\ref{thm-lsc-supersoln}, we
show that if $u$ is a
weak supersolution then its \emph{lsc-regularization} $u_*$ is
superparabolic. 
A corresponding result for weak subsolutions is also obtained.
(As we are not allowed to change sign, the theory for weak subsolutions
does not follow directly from the corresponding theory for weak
supersolutions.)
We conclude the section by presenting the parabolic comparison
principle for cylinders due to 
Kinnunen--Lindqvist--Lukkari~\cite{Kinnunen-Lindqvist-Lukkari:2013},
with a new proof. 

In Section~\ref{S:Further-Superparabolic} we consider further results
on sub/super\-para\-bolic functions: in particular, under proper
conditions, sub/super\-para\-bolic functions are weak
sub/super\-solu\-tions. In this way, 
we establish a rather complete understanding of
the relation between weak sub/super\-solu\-tions
and sub/super\-para\-bolic functions.

Section~\ref{S:Comparison} is devoted to a series of different
comparison principles, for sub- and superparabolic functions, both of
elliptic and parabolic types, and both of strict and nonstrict 
types. 
Several pasting lemmas are also obtained.

In Section~\ref{S:Boundary} we deal with the Perron method, and with
boundary regularity. We introduce the notion of upper regular points,
as well as of lower regular points for positive (resp.\ nonnegative) boundary
data. From here on we restrict ourselves to bounded open  sets
$\Theta\subset\R^{n+1}$. 
Moreover, the boundary data are always
assumed to be bounded. 

Section~\ref{S:Upper-reg} is devoted to the characterization of an
upper regular point in terms of a two-parameter family of barriers,
with some related properties, whereas Section~\ref{S:Lower-reg} deals
with the characterization of a lower regular point for positive
boundary data, in terms of another  two-parameter family of
barriers. 
This reflects the fact that we can neither add
constants nor change sign, which is the
crucial difference compared with the \p-parabolic
equation \eqref{eq-p-para}, where 
a single one-parameter family of barriers is necessary and sufficient
(see \cite{BBG} and~\cite{BBGP}).  
In this paper, we do not develop the general theory
of lower regularity for nonnegative boundary data. 
 
In Section~\ref{S:Earlier} we show that the earliest points are always
regular, while  in Section~\ref{S:Future} we
prove that upper regularity, as well as lower regularity (for positive
boundary data), are independent of the future.  

Section~\ref{S:reg-cylinders} collects the most important
contributions of the paper. 
First, we show in
Theorem~\ref{thm-reg-cylinder} that the boundary regularity 
(for positive boundary data)
of a lateral boundary point $(x_0,t_0) \in \bdy U \times [t_1,t_2]$,
with respect to the cylinder $U \times (t_1,t_2)$, is determined by the
elliptic regularity of  $x_0$ with respect to the spatial set $U$. 
This result is
optimal in the sense that every harmonic function $u$ induces a
time-independent solution $u^{1/m}$ of the porous medium equation, and
the Wiener criterion is a necessary and sufficient condition for
boundary regularity of harmonic functions. 
Then, in
Theorem~\ref{thm-reg-monot-cyl} we give a unique solvability result in
suitable finite unions of cylinders, which generalizes previous unique
solvability results due to Abdulla~\cite{abdulla-00}, \cite{abdulla-05},
as well as the resolutivity result by
Kinnunen--Lindqvist--Lukkari~\cite{Kinnunen-Lindqvist-Lukkari:2013}
for general cylinders. 

Finally, Appendix~\ref{app-pf-usc} is devoted to the proof of
Theorem~\ref{thm-usc-repr}; we thought it better to postpone it, in
order not to spoil the flow of the main arguments in
Section~\ref{S:Superparabolic}. 

\begin{ack}
This research started while the authors
were visiting Institut Mittag-Leffler in 2013,
we thank the institute for the kind hospitality.
\end{ack}

\section{Preliminaries}\label{S:Prelim}

Let $\Theta$ be an open set in $\R^{n+1}$, $n\ge 2$.
We write points in $\R^{n+1}$ as $\xi=(x,t)$, where $x \in \R^n$ and $t \in \R$.
For $m>0$, we consider the \emph{porous medium equation}
\begin{equation} \label{eq:para}
\bdy_t u=\Delta u^m :=\Div(\nabla u^{m}),
\end{equation}
where, from now on, the \emph{gradient} $\grad$ and the 
\emph{divergence} $\Div$
are taken with respect to $x$. 
In this paper we only consider nonnegative solutions $u$.
This equation is \emph{degenerate} if $m>1$ and \emph{singular} if $0<m<1$.
For $m=1$ it is the usual heat equation.
Observe that if $u$ satisfies \eqref{eq:para},
and $a \in \R_\limplus$, then (in general) $au$ and $u+a$ do not
satisfy \eqref{eq:para}.

All our cylinders are bounded space-time cylinders, i.e.\
of the form $U_{t_1,t_2}:=U\times (t_1,t_2) \Subset \R^{n+1}$,
where $U \Subset\R^n$ is open.
We say that $U_{t_1,t_2}$ is a \emph{$C^{k,\al}$-cylinder}
if $U$ is $C^{k,\al}$-smooth.
The \emph{parabolic boundary} of 
$U_{t_1,t_2}$ is 
\[
\partial_p U_{t_1,t_2}=(\overline U\times \{t_1\})\cup(\partial U\times (t_1,t_2]).
\]
We define the \emph{parabolic boundary} of a finite union of open
cylinders $U^j_{t_j,s_j}$ as follows
\[
\partial_p \biggl(\bigcup_{j=1}^N  U^j_{t_j,s_j}\biggr) : =
\biggl(\bigcup_{j=1}^N \partial_p U^j_{t_j,s_j}\biggr) \setminus
\bigcup_{j=1}^N{U}^j_{t_j,s_j}.
\]
Note that the parabolic boundary is by definition compact.
Further, $B(x,r)=\{z \in \R^{n} : |z-x| <r\}$
stands for the usual Euclidean ball in $\R^n$.
We also let
\begin{align*}
\Theta_T &= \{ (x,t)\in\Theta: t<T\}, \\
\Theta_\limminus&=\{(x,t) \in \Theta: t <0\},\\
 \Theta_\limplus&=\{(x,t) \in \Theta: t >0\}.
\end{align*}

Let $U$ be a bounded open set in $\Rn$.  
As usual, $W^{1,2}(U)$
denotes the space of real-valued functions $u$ such that 
$u \in L^2(U)$ and the distributional first partial derivatives 
$\partial u /\partial x_i$,
$i=1,2,\dots,n$, exist in $U$ 
and belong to $L^2(U)$. 
We use the norm
\[
\|u\|_{W^{1,2}(U)} =\biggl(\int_U|u|^2\,dx + \int_U|\nabla u|^2\,dx\biggr)^{1/2}.
\]
The Sobolev space $W_0^{1,2}(U)$ with zero boundary values is
the closure of $C_0^\infty(U)$ with respect to the Sobolev norm.

By the \emph{parabolic Sobolev space} $L^2(t_1,t_2;W^{1,2}(U))$,
with $t_1<t_2$, we mean the space of measurable
functions $u(x,t)$ such that the mapping 
$x\mapsto u(x,t)$ belongs to $W^{1,2}(U)$ for a.e.\ 
$t_1<t<t_2$ and the norm
\[
\biggl(\iintlim{t_1}{t_2}\int_U |u(x,t)|^{2} + |\nabla
u(x,t)|^{2}\,dx\,dt\biggr)^{1/2}
\]
is finite. 
The definition of the space
$L^2(t_1,t_2;W_{0}^{1,2}(U))$ is similar.
Analogously, by the space $C(t_1,t_2;L^2(U))$,
with $t_1<t_2$, we mean the space of measurable
functions $u(x,t)$, such that the mapping
$t\mapsto u(\,\cdot\,,t) \in L^2(U)$ is continuous in the time interval $[t_1,t_2]$.
We can now introduce the notion of weak solution.

\begin{definition}\label{def-sol}
A function $u:\Theta \to [0,\infty]$ is a  
\emph{weak solution} of equation \eqref{eq:para} 
if whenever $U_{t_1,t_2} \Subset \Theta$, 
we have 
$u \in C(t_1,t_2;L^{m+1}(U))$, $u^m\in L^{2}(t_1,t_2;W^{1,2}(U))$ and  
$u$ satisfies the integral equality
\begin{equation} \label{eq:weak-solution}
\iintlim{t_1}{t_2}\int_{U} { \nabla u^{m}} \cdot
\nabla\phi \, dx\,dt - \iintlim{t_1}{t_2}\int_{U} u
\partial_t\phi \, dx\,dt   =  0
\quad \text{for all }\phi \in C_0^\infty(U_{t_1,t_2}).
\end{equation}
Continuous weak solutions are called  \emph{parabolic functions}.

A function $u:\Theta \to [0,\infty]$
is a  \emph{weak supersolution} (\emph{subsolution})
if whenever $U_{t_1,t_2} \Subset
\Theta$, we have $u^m \in L^{2}(t_1,t_2;W^{1,2}(U))$ and the 
left-hand side 
above is nonnegative (nonpositive) for all
nonnegative $\phi \in C_0^\infty(U_{t_1,t_2})$.
\end{definition}

One can also consider sign-changing (and nonpositive) weak
(sub/super)solu\-tions, defined analogously, see 
Kinnunen--Lindqvist~\cite{Kinnunen-Lindqvist:2008} for details.
The general sign-changing theory is however much less developed
than the theory for nonnegative functions.
Moreover, it seems likely that regularity for sign-changing solutions
of the porous medium equation may be quite different from regularity 
when restricted to positive or nonnegative solutions, which we have chosen
to work with here.
For simplicity, we will often
omit \emph{weak}, when talking of weak (sub/super)\-solu\-tions. 

In this paper, the name parabolic (and later sub/super\-para\-bolic)
refers precisely to the porous medium equation \eqref{eq:para}, 
which is just one
of many parabolic equations considered in the literature.
A more specific terminology could be ``porous-parabolic''
but for simplicity and readability
we refrain from this nonstandard term.

\begin{remark}
In Definition~\ref{def-sol}, when dealing with the range $m>1$, 
one could actually require less (see below) on $u$, namely
\begin{equation}\label{alt-def}
u \in C(t_1,t_2;L^2(U))
\quad\text{and}\quad
 u^{(m+1)/2}\in L^{2}(t_1,t_2;W^{1,2}(U)).
\end{equation}
This has been done e.g.\ in DiBenedetto--Gianazza--Vespri~\cite{DBGV-mono}. 
Roughly speaking, our notion of solution corresponds to using 
$u^m$ as a test function in the weak formulation \eqref{eq:weak-solution}, 
whereas assuming \eqref{alt-def} amounts to using $u$. 
Such a choice seems more natural in a number of applications, 
but it seemingly introduces the extra difficulty
that two different notions of solutions are needed, according to 
whether $m \le 1$ or $m \ge 1$.
However, it has recently been proved by
B\"ogelein--Lehtel\"a--Sturm~\cite[Theorem~1.2]{BLS-2018},
that for $m\ge1$ the two notions are equivalent.
\end{remark}

Locally bounded solutions are locally H\"older continuous: this result is due to
different authors. A full account is given in 
Daskalopoulos--Kenig~\cite{Daskalopoulos-Kenig-book}, 
DiBenedetto--Gianazza--Vespri~\cite{DBGV-mono}
and V\'azquez~\cite{Vazquez-book-1}. 
For $m>\frac{(n-2)_\limplus}{n+2}$
solutions are automatically 
locally bounded, 
whereas for $0<m\le\frac{(n-2)_\limplus}{n+2}$ 
explicit unbounded solutions are known, and in order to guarantee  
boundedness, an extra assumption on $u$ is needed 
(see the discussions in DiBenedetto~\cite[Chapter~V]{dibe-mono} 
and DiBenedetto--Gianazza--Vespri~\cite[Appendix~B]{DBGV-mono}). 
Although it plays no role in the following, it is worth mentioning 
that nonnegative solutions satisfy proper forms of Harnack inequalities 
(see \cite{DBGV-mono}). 

Next we will present a series of auxiliary results, 
which will be used later in the paper. 

Besides the notion of weak solutions given in Definition~\ref{def-sol}, 
we need to be able to uniquely solve the Dirichlet problem in smooth
cylinders. 
Given measurable nonnegative functions $u_0$ on 
$U\Subset\Rn$ 
and $g$ on the lateral boundary
$\Sigma_{t_1,t_2}=\partial U\times(t_1,t_2]$, we are interested 
in finding a weak solution $u=u(x,t)$ defined in $U_{t_1,t_2}$ that solves 
the \emph{boundary value problem}
\begin{equation}\label{eq-bvp}
\begin{cases}
\bdy_t u = \Delta u^m& \text{in }\ U_{t_1,t_2},\\
u(\,\cdot\,,t_1)=u_0&\text{in }\ U,\\
u^m=g &\text{in }\ \Sigma_{t_1,t_2}.
\end{cases}
\end{equation}
We need to define in which sense the initial condition and the lateral 
boundary data are taken. 

It is well known that for sufficiently smooth $U$, functions 
$f\in W^{1,2}(U)$ have boundary values $T_{\partial U}f$,
called traces, on the boundary $\partial U$ (see e.g.\
DiBenedetto~\cite[Theorem~18.1]{DB-real}).
Moreover, the linear trace map $T_{\partial U}$ maps $W^{1,2}(U)$ 
onto the space $W^{1/2,2}(\partial U)\subset L^2(\partial U)$,
and $T_{\partial U} f =f|_{\partial U}$ if $f \in W^{1,2}(U) \cap C(\overline{U})$.
In the time dependent context, the trace operator can be naturally 
extended into a continuous linear map
\[
T_{\Sigma_{t_1,t_2}}: L^2(t_1,t_2;W^{1,2}(U)) \longrightarrow 
L^2(t_1,t_2;W^{1/2,2}(\partial U))\subset L^2(\Sigma_{t_1,t_2}).
\]

In V\'azquez~\cite[Theorems~5.13 and 5.14]{Vazquez-book-1} 
the following result is proved, 
which addresses the problem of existence and uniqueness 
in the framework of $L^p$ spaces. 
A somewhat analogous result is proved in Alt--Luckhaus~\cite{altL}. 

\begin{thm}\label{thm-dirichlet-bvp} 
Let $m >0$ and let $U_{t_1,t_2}$ be a $C^{2,\alpha}$-cylinder.
Also let $u_0\in L^{m+1}(U)$ be nonnegative and assume that there exists 
 $\gb\in L^2(t_1,t_2;W^{1,2}(U))$ 
such that 
\[
T_{\Sigma_{t_1,t_2}}(\gb)=g
\]
and $\gb,\partial_t \gb\in L^\infty(U_{t_1,t_2})$. 
Then there exists a unique weak solution $u$ in $U_{t_1,t_2}$ such that
\begin{itemize}
\item $T_{\Sigma_{t_1,t_2}}(u^m)=g$,
\item $u(\,\cdot\,,t)\to u_0
$ in the $L^1(U)$ topology, as $t\to t_1$. 
\end{itemize}
Finally, the comparison principle applies to these solutions\/\textup{:}
if $u$ and $\hat u$ are weak solutions corresponding to $g, u_0$ and 
$\gh, \uh_0$, respectively, with
$u_0\le\hat u_0$ a.e.\ in $U$ and $g\le\hat g$ a.e.\ in 
$\Sigma_{t_1,t_2}$, 
then $u\le\hat u$ a.e.\ in $U_{t_1,t_2}$. 
\end{thm}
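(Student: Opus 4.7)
The plan is to obtain $u$ by a vanishing-viscosity approximation and to deduce uniqueness and the comparison principle together from an $L^1$-type contraction estimate; a detailed account can be found in V\'azquez~\cite{Vazquez-book-1}. First I would regularize the data to remove the degeneracy of \eqref{eq:para} at $u=0$: replace $u_0$ with $u_{0,k}:=\min(u_0+1/k,k)\in L^\infty(U)$, which is bounded away from $0$, and $\gb$ with $\gb_k:=\gb+1/k$, whose trace on $\Sigma_{t_1,t_2}$ equals $g+1/k$. With $1/k\le u\le C_k$ the equation $\bdy_tu=\Delta u^m$ is uniformly parabolic, so classical quasilinear parabolic theory in the $C^{2,\alpha}$-cylinder produces a smooth solution $u_k$ attaining the prescribed data classically, and at this regularized level the pointwise maximum principle is available and in particular makes $k\mapsto u_k$ monotone.

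Next I would establish uniform a priori estimates. Testing the equation for $u_k$ (after Steklov averaging in time) with $u_k^m-\gb_k$ and using the $L^\infty$-bounds on $\gb_k$ and $\partial_t\gb_k$ produces the $k$-independent energy bound
\[
\sup_{t_1<t<t_2}\int_U u_k^{m+1}(x,t)\,dx+\iintlim{t_1}{t_2}\int_U|\nabla u_k^m|^2\,dx\,dt\le C.
\]
An Aubin--Lions compactness argument, together with the monotonicity of $\{u_k\}$, then extracts a limit $u$ with $u_k\to u$ strongly in $L^{m+1}(U_{t_1,t_2})$ and $u_k^m\rightharpoonup u^m$ weakly in $L^2(t_1,t_2;W^{1,2}(U))$. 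Passing to the limit in \eqref{eq:weak-solution} shows that $u$ is a weak solution, the identity $T_{\Sigma_{t_1,t_2}}(u^m)=g$ is preserved by weak continuity of the trace operator, and the standard $L^1$-continuity in time of weak solutions of \eqref{eq:para} gives $u(\,\cdot\,,t)\to u_0$ in $L^1(U)$ as $t\to t_1$.

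Finally, for the comparison principle suppose $u,\uh$ are two weak solutions with $u_0\le\uh_0$ a.e.\ in $U$ and $g\le\gh$ a.e.\ on $\Sigma_{t_1,t_2}$. After Steklov regularization in time I would subtract the weak formulations and test with $\varphi_\delta=(u^m-\uh^m-\delta)_+$ for $\delta>0$. Since $g\le\gh$, the trace of $\varphi_\delta$ vanishes on $\Sigma_{t_1,t_2}$, so $\varphi_\delta$ is admissible; the elliptic term contributes $-\iint|\nabla\varphi_\delta|^2\le 0$, while the time term, combined with the elementary inequality $(a-b)(a^m-b^m)\ge c(a^{(m+1)/2}-b^{(m+1)/2})^2$, yields a Gronwall-type control that, after $\delta\to0^+$, forces $u\le\uh$ a.e.\ in $U_{t_1,t_2}$; uniqueness is the special case of equal data. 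The main obstacle in this scheme is the treatment of the lateral boundary contributions in both the energy estimate and the contraction argument: since $u$ and $\uh$ lie only in parabolic Sobolev spaces, their boundary values must be interpreted via the trace operator, and the lifting hypothesis $\gb,\partial_t\gb\in L^\infty$ is precisely what is needed to keep the boundary terms bounded and to justify the test-function manipulations above.
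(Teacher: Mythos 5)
The paper does not prove this theorem at all: it is quoted from V\'azquez~\cite[Theorems~5.13 and~5.14]{Vazquez-book-1} (with Alt--Luckhaus~\cite{altL} as a related reference), so you are supplying a proof where the authors supply a citation. Your existence half follows the standard route of that source (regularize the data away from the degeneracy, solve the uniformly parabolic problems classically, test with $u_k^m-\gb_k$ to get the $k$-independent energy bound, and pass to the limit by monotonicity and compactness), and modulo the usual care with the initial trace (one needs a modulus of $L^1$-continuity in time near $t_1$ that is uniform in $k$, not just the $L^1$-continuity of the limit $u$) this part is sound.

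The comparison/uniqueness step, however, has a genuine gap. Testing the difference of the two weak formulations with $\varphi_\delta=(u^m-\uh^m-\delta)_+$ does not produce a Gronwall inequality: the parabolic term is
\[
\iint \partial_t(u-\uh)\,(u^m-\uh^m-\delta)_+\,dx\,dt,
\]
and there is \emph{no} function $H(a,b)$ with $\partial_a H=(a^m-b^m-\delta)_+=-\partial_b H$ (the mixed partials do not agree), so this expression is not the time derivative of any sign-definite quantity and the inequality $(a-b)(a^m-b^m)\ge c\,(a^{(m+1)/2}-b^{(m+1)/2})^2$ never enters; moreover $\partial_t u$ and $\partial_t\uh$ exist only as distributions, so the pairing itself is not defined for this choice of $\varphi_\delta$. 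This is precisely the well-known obstruction to uniqueness for the porous medium equation. The two standard repairs are: (i) the Ole\u{\i}nik duality test function $\varphi(x,t)=\int_t^{t_2}\bigl(u^m(x,s)-\uh^m(x,s)\bigr)\,ds$, which turns the elliptic term into a perfect time derivative and the parabolic term into $-\iint(u-\uh)(u^m-\uh^m)\le 0$, giving uniqueness (but not directly the ordered comparison); or (ii) the $L^1$-contraction obtained by testing with a smooth, \emph{bounded} approximation $p_\eps(u^m-\uh^m)$ of $\operatorname{sign}_\limplus$, using that $\operatorname{sign}_\limplus(u^m-\uh^m)=\operatorname{sign}_\limplus(u-\uh)$, which yields $\int_U(u-\uh)_\limplus(t)\,dx\le\int_U(u_0-\uh_0)_\limplus\,dx$ and hence the full comparison. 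Alternatively, comparison for the \emph{constructed} solutions follows from the classical maximum principle at the approximate level (which you already invoke for monotonicity in $k$) combined with uniqueness; but uniqueness itself must then be established by (i) or (ii), not by the argument as written.
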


We also need to consider the existence of \emph{continuous} solutions. 
Under this point of view, if we assign 
continuous data on the whole 
parabolic boundary, we have the following result.

\begin{thm}\label{thm-cont-exist} 
Let $m >0$ and let $U_{t_1,t_2}$ be a $C^{1,\be}$-cylinder,
where $\be=\frac{m-1}{m+1}$ if $m>1$ and  $\be >0$ if $0<m \le 1$.
Also let $h\in C(\partial_p U_{t_1,t_2})$ be nonnegative. 
Then there is a unique function 
$u \in C(\overline{U}_{t_1,t_2})$ 
that is parabolic in $U_{t_1,t_2}$ 
and takes the boundary values $u=h$ on the parabolic boundary 
$\partial_p U_{t_1,t_2}$. 

Moreover, 
 if $h \le h' \in C(\partial_p U_{t_1,t_2})$ and 
$u'\in C(\overline{U}_{t_1,t_2})$ is the unique function corresponding to $h'$ as above,
then $u \le u'$ in $U_{t_1,t_2}$.
\end{thm}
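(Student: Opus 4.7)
The plan is to reduce everything to Theorem~\ref{thm-dirichlet-bvp} via a two-sided approximation of the continuous data $h$, and then to invoke boundary H\"older/continuity estimates for nonnegative weak solutions of the porous medium equation in smooth cylinders in order to upgrade weak convergence to uniform convergence on $\overline{U}_{t_1,t_2}$. Concretely, I first extend $h$ to a nonnegative continuous function $\tilde h$ on $\overline{U}_{t_1,t_2}$ (Tietze) and mollify $\tilde h^m$ to produce sequences $\bar g_k^{\pm}\in C^\infty(\overline{U}_{t_1,t_2})$ with $\bar g_k^- \le \tilde h^m \le \bar g_k^+$, both converging uniformly to $\tilde h^m$ on $\overline{U}_{t_1,t_2}$, and with $\bar g_k^\pm,\partial_t\bar g_k^\pm\in L^\infty$. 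Their traces $g_k^\pm$ on $\Sigma_{t_1,t_2}$ together with the initial data $u_{0,k}^\pm:=(\bar g_k^\pm(\,\cdot\,,t_1))^{1/m}\in L^{m+1}(U)$ satisfy the hypotheses of Theorem~\ref{thm-dirichlet-bvp}, and hence produce unique weak solutions $u_k^\pm$ in $U_{t_1,t_2}$. The comparison assertion of Theorem~\ref{thm-dirichlet-bvp}, applied to consecutive pairs, yields the monotone chain $u_1^-\le u_2^-\le\cdots\le u_2^+\le u_1^+$ a.e.\ in $U_{t_1,t_2}$.

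The crucial step -- and the main obstacle -- is then to show that $\{u_k^\pm\}$ is equicontinuous on $\overline{U}_{t_1,t_2}$, with a modulus depending only on $n$, $m$, the cylinder, a uniform bound on $\|u_k^\pm\|_{L^\infty}$ (which follows from $h\in L^\infty$ and the comparison principle), and the common modulus of continuity inherited from $\tilde h^m$. This is where the hypothesis that $U_{t_1,t_2}$ is a $C^{1,\beta}$-cylinder with the specific exponent $\beta=\frac{m-1}{m+1}$ for $m>1$ (respectively $\beta>0$ for $0<m\le 1$) intervenes, and the statement has to be pulled in from the quantitative regularity theory for the porous medium equation, as developed in DiBenedetto--Gianazza--Vespri~\cite{DBGV-mono} and V\'azquez~\cite{Vazquez-book-1}; in particular, the boundary H\"older exponent that naturally appears in the degenerate theory is exactly the one forced by this value of $\beta$. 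Reproducing this estimate is not something I would attempt from scratch: I would simply cite it.

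Granted such an estimate, the rest is routine: by monotonicity and equicontinuity, $u^\pm:=\lim_k u_k^\pm$ exist, belong to $C(\overline{U}_{t_1,t_2})$, and agree with $h$ on $\partial_p U_{t_1,t_2}$ thanks to the uniform convergence $\bar g_k^\pm\to\tilde h^m$. The usual energy estimates together with the compact embeddings for $(u^m)$ then show that $u^\pm$ are weak solutions of~\eqref{eq:para} in $U_{t_1,t_2}$ and hence parabolic. Since $u^-\le u^+$ a.e.\ and they coincide on the parabolic boundary, the comparison part of Theorem~\ref{thm-dirichlet-bvp} forces $u^+=u^-=:u$, which gives existence. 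Uniqueness of $u$ in $C(\overline{U}_{t_1,t_2})$ follows in the same way: any competitor $v$ attaining $h$ on $\partial_p U_{t_1,t_2}$ is a weak solution with the same initial data and the same lateral trace as $u$, so Theorem~\ref{thm-dirichlet-bvp} yields $v=u$ a.e., hence everywhere by continuity. Finally, the monotonicity assertion $h\le h'\Rightarrow u\le u'$ is obtained by running the approximation with sequences chosen so that $\bar g_k^+(h)\le\bar g_k^+(h')$, comparing the corresponding solutions via Theorem~\ref{thm-dirichlet-bvp}, and letting $k\to\infty$.
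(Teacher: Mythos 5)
Your construction is a genuinely different route from the paper's: the paper proves this theorem essentially by assembling citations (existence and uniqueness in Abdulla's weaker solution class from \cite{abdulla-00}, \cite{abdulla-05}, continuity up to the boundary from DiBenedetto~\cite{DiBe1986} and Vespri~\cite{vespri}, and an integration-by-parts argument to show that the continuous parabolic function lands in Abdulla's uniqueness class), whereas you build the solution by a two-sided monotone approximation through Theorem~\ref{thm-dirichlet-bvp}. The existence half of your argument is plausible in outline, although the boundary equicontinuity you need is not really in \cite{DBGV-mono} or \cite{Vazquez-book-1} (which give interior estimates); the correct sources are precisely \cite{DiBe1986} and \cite{vespri}, where the exponent $\be=\frac{m-1}{m+1}$ originates.

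The genuine gap is in your uniqueness step. You assert that any competitor $v\in C(\overline{U}_{t_1,t_2})$ that is parabolic in $U_{t_1,t_2}$ and equals $h$ on $\bdy_p U_{t_1,t_2}$ ``is a weak solution with the same initial data and the same lateral trace as $u$,'' and then invoke the uniqueness of Theorem~\ref{thm-dirichlet-bvp}. But Definition~\ref{def-sol} only gives $v^m\in L^2(s_1,s_2;W^{1,2}(V))$ for subcylinders $V_{s_1,s_2}\Subset U_{t_1,t_2}$; continuity up to $\bdy_p U_{t_1,t_2}$ does not imply $v^m\in L^2(t_1,t_2;W^{1,2}(U))$ globally, so $v$ need not possess a Sobolev trace $T_{\Sigma_{t_1,t_2}}(v^m)$ at all, let alone one equal to $h^m$. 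Hence $v$ is not known to lie in the uniqueness class of Theorem~\ref{thm-dirichlet-bvp}, and the conclusion $v=u$ does not follow. (The paper flags exactly this obstruction in the proof of Theorem~\ref{thm-lsc-supersoln}, and for uniqueness it deliberately passes through Abdulla's weaker notion of solution, for which uniqueness holds under merely continuous boundary attainment.) A milder instance of the same problem occurs when you identify $u^+=u^-$ via the comparison part of Theorem~\ref{thm-dirichlet-bvp}: you must first show that the limits retain global membership in $L^2(t_1,t_2;W^{1,2}(U))$ (for their $m$-th powers) and that their traces converge to $h^m$, which your appeal to ``the usual energy estimates'' does not settle; here, however, an $L^1$-contraction or a squeeze between $u_k^-$ and $u_k^+$ could repair the argument, whereas the uniqueness step needs a different idea altogether.
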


Variations of this second boundary value problem have been widely studied.
Aronson--Peletier~\cite{aronson-peletier} and 
Gilding--Peletier~\cite{gilding-peletier} proved the unique existence as here,
provided $U_{t_1,t_2}$ is a $C^{2,\alpha}$-cylinder, $m>1$, and one has 
\emph{homogeneous conditions} $h=0$ on the lateral boundary.
We need this unique existence 
for general boundary conditions, 
in which case the result can be seen as a consequence of 
Abdulla~\cite{abdulla-00}, \cite{abdulla-05}, 
DiBenedetto~\cite{DiBe1986} and Vespri~\cite{vespri}; 
see the comments in the proof below.

\begin{proof}
In \cite{abdulla-00} and \cite{abdulla-05}, Abdulla studies the unique solvability of 
the Dirichlet problem with continuous boundary data $h$ in a general 
(not necessarily cylindrical) open set $\Theta\subset\R^{n+1}$.
In particular, conditions $\mathcal A$ and $\mathcal B$ of 
\cite[Theorem~2.1]{abdulla-00} ensure existence, 
whereas condition $\mathcal M$ of \cite[Theorem~2.2]{abdulla-05} 
ensures uniqueness. 
When $\Theta$ is a cylinder, 
$\mathcal A$ and $\mathcal B$ coincide.
It is not hard to verify that if $U$ is a bounded 
open $C^{1,\be}$-smooth set,
with $\be$ as above, 
then at every point of the parabolic boundary $\partial_p U_{t_1,t_2}$ 
conditions $\mathcal B$ and $\mathcal M$ are satisfied, yielding the 
unique existence of a suitable solution in $C(\overline{U}_{t_1,t_2})$. 

As a matter of fact, Abdulla 
uses a definition of solution, which is weaker than Definition~\ref{def-sol}. 
However, the existence of a function $u\in C(\overline{U}_{t_1,t_2})$, 
that is parabolic (in our sense)
in $U_{t_1,t_2}$ and takes the boundary values $u=h$  on the parabolic 
boundary $\partial_p U_{t_1,t_2}$, follows from 
DiBenedetto~\cite[Remark~1.2]{DiBe1986}
(for $m>1$) and Vespri~\cite[Theorem~1.1 and Remarks~(a) and~(d)]{vespri}
(for $0<m \le 1$). 
Using integration by parts it can be shown that this parabolic function 
is a solution in the sense of Abdulla. 

Since solutions in the sense of Abdulla are unique, it follows that 
the parabolic function provided by \cite{DiBe1986} 
or \cite{vespri} is the unique 
continuous weak solution of the boundary value problem.

Finally, the inequality 
$u \le u'$ follows from \cite[Theorem~2.3]{abdulla-05}.
\end{proof}

Having considered existence and uniqueness, we also need the following 
stability result from Abdulla~\cite[Corollary~2.3]{abdulla-05}.

\begin{thm} \label{thm-stability}
Let $m >0$ and let $U_{t_1,t_2}$ be a $C^{1,\be}$-cylinder,
where $\be=\frac{m-1}{m+1}$ if $m>1$ and  $\be >0$ if $0<m \le 1$.
Also let
$h_j \in C(\partial_p U_{t_1,t_2})$ be nonnegative,
and let $u_j \in C(\overline{U}_{t_1,t_2})$ be the corresponding
solutions given by Theorem~\ref{thm-cont-exist}, $j=0,1,2,\ldots$.
If $\sup_{\partial_p U_{t_1,t_2}} |h_j-h_0| \to 0$ as $j \to \infty$,
then $u_j$ tends to $u_0$ locally uniformly in $U\times(t_1,t_2]$
as $j\to\infty$.
\end{thm}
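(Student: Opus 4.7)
The plan is a sandwich argument based on the comparison principle of Theorem~\ref{thm-cont-exist}. Given $\eps>0$, the uniform convergence $h_j\to h_0$ guarantees that $(h_0-\eps)_+ \le h_j \le h_0+\eps$ on $\bdy_p U_{t_1,t_2}$ for all sufficiently large $j$, using nonnegativity of $h_j$. Let $v^+_\eps, v^-_\eps \in C(\overline{U}_{t_1,t_2})$ be the unique continuous parabolic functions provided by Theorem~\ref{thm-cont-exist} with boundary data $h_0+\eps$ and $(h_0-\eps)_+$, respectively. Applying the comparison statement of that theorem yields
\[
v^-_\eps \le u_j \le v^+_\eps \quad \text{in } \overline{U}_{t_1,t_2} \text{ for all large } j,
\]
together with $v^-_\eps \le u_0 \le v^+_\eps$. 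Hence $|u_j-u_0|\le v^+_\eps-v^-_\eps$, and it suffices to prove that $v^+_\eps - v^-_\eps \to 0$ locally uniformly on $U\times (t_1,t_2]$ as $\eps\to 0^+$.

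By the comparison principle, $v^+_\eps$ is nonincreasing and $v^-_\eps$ is nondecreasing as $\eps\downarrow 0$, so both have pointwise monotone limits $V^+$ and $V^-$ on $\overline{U}_{t_1,t_2}$, satisfying $V^-\le u_0\le V^+$. At any $\xi_0\in\bdy_p U_{t_1,t_2}$, continuity of $v^\pm_\eps$ together with the sandwich bounds shows that both $V^+$ and $V^-$ attain the value $h_0(\xi_0)$ continuously there. Assuming further that $V^\pm$ are weak solutions of \eqref{eq:para} in $U_{t_1,t_2}$, the uniqueness part of Theorem~\ref{thm-cont-exist} forces $V^+=V^-=u_0$. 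Then, since $v^+_\eps-u_0$ and $u_0-v^-_\eps$ are continuous, nonnegative functions decreasing monotonically to $0$ as $\eps\downarrow 0$, Dini's theorem upgrades the convergence to uniform convergence on every compact subset of $\overline{U}_{t_1,t_2}$, and in particular on compact subsets of $U\times (t_1,t_2]$.

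The main obstacle is thus verifying that the pointwise monotone limits $V^\pm$ are indeed weak solutions of \eqref{eq:para}, which requires passing to the limit in the nonlinear term $\nabla v^m$ in the weak formulation~\eqref{eq:weak-solution}. I plan to handle this with a standard compactness chain: uniform $L^\infty$-bounds on $v^\pm_\eps$ (from comparison with a constant supersolution dominating $\sup_j\max_{\bdy_p U_{t_1,t_2}}h_j$), Caccioppoli-type energy estimates yielding uniform control of $(v^\pm_\eps)^m$ in $L^2(t_1,t_2;W^{1,2}(U'))$ for $U'\Subset U$, weak compactness of $\nabla(v^\pm_\eps)^m$ in $L^2_{\mathrm{loc}}$, and strong $L^2_{\mathrm{loc}}$-convergence of $(v^\pm_\eps)^m$ via dominated convergence from the uniform bounds and the pointwise convergence. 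The local H\"older continuity of bounded parabolic functions cited in Section~\ref{S:Prelim} supplies the interior regularity needed to make these passages rigorous and to justify the continuity of $V^\pm$ on compact subsets of $U\times (t_1,t_2]$.
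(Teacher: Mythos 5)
Your argument is sound, but note that the paper does not prove this theorem at all: it is imported verbatim from Abdulla [Corollary~2.3 of \emph{abdulla-05}], so there is no in-paper proof to match. Your self-contained route -- sandwich $u_j$ between the solutions $v_\eps^\pm$ with data $h_0+\eps$ and $(h_0-\eps)_+$, use monotonicity in $\eps$ to produce limits $V^\pm$, identify $V^\pm$ with $u_0$ via uniqueness, and upgrade to uniform convergence by Dini -- is a legitimate alternative, and its skeleton (comparison, ordered boundary data, uniqueness of the continuous solution) uses exactly the tools the paper makes available in Theorem~\ref{thm-cont-exist}. What the citation buys the paper is precisely the step you defer: showing that the monotone pointwise limits $V^\pm$ are parabolic in $U_{t_1,t_2}$. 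Your compactness chain for that step is the standard one and does work (the weak formulation is linear in $\nabla u^m$ and in $u$, so uniform $L^\infty$ bounds, local Caccioppoli control of $\nabla(v_\eps^\pm)^m$, and dominated convergence suffice); a slightly cleaner variant, consistent with how the paper argues in the proofs of Theorems~\ref{thm-lsc-supersoln} and~\ref{thm-comp-KLL}, is to invoke the local equicontinuity of bounded solutions (DiBenedetto--Gianazza--Vespri, Theorem~5.16.1) to get locally uniform convergence and continuity of $V^\pm$ directly, bypassing Dini.

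Two small points to tighten. First, the conclusion is locally uniform convergence in $U\times(t_1,t_2]$, which includes the open top $U\times\{t_2\}$; interior H\"older estimates alone do not reach $t=t_2$, so you need the version of the equicontinuity estimate that extends solutions continuously to the top of the cylinder (this is exactly what the paper cites DGV Theorem~5.16.1 for in the proof of Theorem~\ref{thm-para-comp-princ-union-cyl}). Second, before invoking the uniqueness part of Theorem~\ref{thm-cont-exist} you must verify $V^\pm\in C(\overline{U}_{t_1,t_2})$ globally, not just that the boundary limits exist pointwise; the squeeze $u_0\le V^+\le v_\eps^+$ (and its mirror for $V^-$) does give continuity up to $\partial_p U_{t_1,t_2}$, but continuity at the top again requires the boundary-in-time estimate. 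With these details supplied, the proof is complete.
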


We proceed by stating a comparison principle for sub- and supersolutions 
in cylinders. 
It was first proved in $\R^{1+1}$ by Aronson--Crandall--Peletier~\cite{aronson-82},
and in $\R^{n+1}$ 
by Dahlberg--Kenig~\cite{Dahlberg-Kenig-84}, \cite{Dahlberg-Kenig-86}, 
\cite{Dahlberg-Kenig-88}.
A further and somewhat different statement of the comparison principle is 
given in Abdulla~\cite[Theorem~2.3]{abdulla-05}. 
For the proof of the following statement, we refer the reader to 
Daskalopoulos--Kenig~\cite[pp.\ 10--12]{Daskalopoulos-Kenig-book} and 
V\'azquez~\cite[pp.\ 132--134]{Vazquez-book-1}. 

\begin{prop}
\label{prop-comp-sub-supersoln}
\textup{(Comparison principle for sub- and supersolutions)}
Let $m >0$ and let $U_{t_1,t_2}$ be a $C^2$-cylinder.
Suppose $u$ and $v$ are a super- and a subsolution  
in $U_{t_1,t_2}$, respectively, such that $u^m, v^m\in L^2(U_{t_1,t_2})$. 
Assume furthermore that 
\[
    \begin{cases}
    (v^m-u^m)_\limplus(\,\cdot\,, t) \in W_0^{1,2}(U) &
        \text{for a.e. } t \in (t_1, t_2), \\
    (v-u)_\limplus(x, t_1)=0 & \text{for a.e. } x \in U.
    \end{cases}
\]
Then $0 \le v \le u$ a.e.\ in $U_{t_1,t_2}$. 
\end{prop}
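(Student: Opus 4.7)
The natural route is the standard energy/entropy argument: test the difference of the two weak formulations against a suitable cut-off of $\phi:=(v^m-u^m)_+$, which by hypothesis belongs to $W^{1,2}_0(U)$ for a.e.\ $t$. Subtracting the subsolution inequality for $v$ from the supersolution inequality for $u$ should yield, for each $\tau\in(t_1,t_2)$,
\[
\iintlim{t_1}{\tau}\int_U \nabla(v^m-u^m)\cdot\nabla\phi\,dx\,dt
 - \iintlim{t_1}{\tau}\int_U (v-u)\,\partial_t\phi\,dx\,dt \le 0.
\]
First I would make this rigorous by introducing a smooth time cut-off $\eta_\tau$ vanishing near $t_2$ and equal to $1$ on $(t_1,\tau)$, and by regularizing $\phi$ with a Steklov average $[\,\cdot\,]_h$ in the $t$-variable, since $v^m$ and $u^m$ are only in $L^2(t_1,t_2;W^{1,2}(U))$ with no pointwise $t$-regularity. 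The spatial admissibility at each time slice is guaranteed by the hypothesis $(v^m-u^m)_+(\cdot,t)\in W^{1,2}_0(U)$, which allows approximation by $C_0^\infty(U)$.

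Next I would treat the two terms separately. The elliptic term collapses, via Stampacchia's chain rule ($\nabla(v^m-u^m)=\nabla(v^m-u^m)_+$ on $\{v^m>u^m\}$ and $=0$ elsewhere), to
\[
\iintlim{t_1}{\tau}\int_U |\nabla(v^m-u^m)_+|^2\,dx\,dt \ge 0.
\]
For the parabolic term, I would use the monotonicity of $s\mapsto s^{1/m}$ to note that $v-u$ and $v^m-u^m$ have the same sign, so $(v-u)\,d(v^m-u^m)_+\ge 0$ in a distributional sense; after Steklov averaging, letting $h\to 0$, and invoking the initial condition $(v-u)_+(\cdot,t_1)=0$, this should produce a nonnegative quantity of the form
\[
\int_U G\bigl(u(x,\tau),v(x,\tau)\bigr)\,dx,\qquad G\ge 0,\ \ G(a,b)=0\text{ for }b\le a,
\]
which together with the elliptic term yields
\[
\int_U G\bigl(u(\cdot,\tau),v(\cdot,\tau)\bigr)\,dx + \iintlim{t_1}{\tau}\int_U |\nabla(v^m-u^m)_+|^2\,dx\,dt\le 0.
\]
Both terms being nonnegative, each must vanish, and the vanishing of $G(u,v)$ a.e.\ forces $v\le u$ a.e.; nonnegativity $v\ge 0$ is built into the definition of solution.

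The main obstacle is the time term. One cannot simply take $G$ to be a smooth primitive: the natural system $\partial_v G=v^m-u^m$, $\partial_u G=u^m-v^m$ is \emph{not} closed, since the required mixed partials $-m u^{m-1}$ and $-m v^{m-1}$ disagree. The standard resolution — which I would carry out following the strategy in Daskalopoulos--Kenig~\cite{Daskalopoulos-Kenig-book} and V\'azquez~\cite{Vazquez-book-1} — is to work throughout at the level of Steklov averages, rewrite $\iint (v-u)\,\partial_t[(v^m-u^m)_+]_h$ via a backward difference on $v-u$, and pass to the limit $h\to 0$ to obtain a one-sided distributional estimate giving the nonnegative $G$ above without needing a smooth primitive. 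Once this technical step is in place, the rest of the argument is routine.
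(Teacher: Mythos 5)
The paper does not prove this proposition itself: it defers entirely to Daskalopoulos--Kenig (pp.\ 10--12) and V\'azquez (pp.\ 132--134), so the relevant comparison is with the arguments given there. Your proposal attempts a direct energy proof with the test function $(v^m-u^m)_+$, and you correctly isolate where it breaks: the time term
\[
\iintlim{t_1}{\tau}\int_U (v-u)\,\partial_t\bigl[(v^m-u^m)_+\bigr]\,dx\,dt
\]
cannot be rewritten as $\int_U G(u,v)\,dx\,\big|_{t_1}^{\tau}$ with $G\ge0$, because the $1$-form $(v-u)\,d(v^m-u^m)$ is not closed on $\{v>u\}$ (the mixed partials $-mu^{m-1}$ and $-mv^{m-1}$ disagree, exactly as you note). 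The gap is that your proposed repair is not an argument. Steklov averaging and backward differencing are regularization devices: they justify manipulations involving a primitive that is already known to exist, but they cannot manufacture one out of a non-exact differential. The integral of $(v-u)\,d(v^m-u^m)_+$ along the trajectory $t\mapsto(u(x,t),v(x,t))$ genuinely depends on the path and carries no a priori sign, so no limit of time-discretizations will deliver the one-sided estimate you need. This is precisely why the direct energy method is not how the one-sided comparison principle for the porous medium equation is proved, and your attribution of this "resolution" to Daskalopoulos--Kenig and V\'azquez is incorrect.

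What those references actually do is the Ole\u{\i}nik duality method, a genuinely different choice of test function. One sets $a=(v^m-u^m)/(v-u)\ge0$ (with a suitable convention where $v=u$), approximates it by smooth coefficients $a_\eps\ge\eps>0$, solves the backward linear problem $\partial_t\psi+a_\eps\Delta\psi=0$ in $U_{t_1,t_2}$ with $\psi=0$ on the lateral boundary and smooth nonnegative final data $\chi$, and uses $\psi$ as the test function in the subtracted weak formulations. Uniform-in-$\eps$ energy estimates for the dual problem control the commutator term $\iint(v-u)(a-a_\eps)\Delta\psi$; the hypothesis $(v^m-u^m)_+(\,\cdot\,,t)\in W_0^{1,2}(U)$, combined with $\partial_\nu\psi\le0$ on $\bdy U$ (maximum principle for the dual problem), disposes of the lateral boundary term; and the initial hypothesis kills the contribution at $t=t_1$. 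This yields $\int_U(v-u)\chi\,dx\le0$ at the final time for all admissible $\chi\ge0$, hence $v\le u$ a.e. If you want a self-contained proof, this is the route to follow; the direct substitution of $(v^m-u^m)_+$ does not close.
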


Proposition~\ref{prop-comp-sub-supersoln} is the first of many
comparison principles in this paper. 
This is the only one between sub- and supersolutions, but we will have several 
different parabolic (Theorems~\ref{thm-comp-KLL}, 
\ref{thm-para-comp-princ} and~\ref{thm-para-comp-princ-union-cyl})
and one elliptic-type (Theorem~\ref{thm-comp-princ})
comparison principles for sub- and superparabolic functions.
In addition, sub- and superparabolic functions
will be defined using yet another  type of comparison principle,
for which we also have alternative versions in 
Proposition~\ref{prop-def-alt-superparabolic}
and Remark~\ref{rmk-def-superparabolic}.

\section{Definition of superparabolic functions}\label{S:Superparabolic}

\begin{definition}\label{def:superparabolic}
A function $u:{\Th}\subset\Rno\rightarrow [0,\infty]$
is \emph{superparabolic} if
\begin{enumerate}
\renewcommand{\theenumi}{\textup{(\roman{enumi})}}%
\item \label{i} $u$ is lower semicontinuous;
\item \label{ii} $u$ is finite in a dense subset of ${\Th}$;
\item \label{iii}
$u$ satisfies the following comparison principle on each 
  $C^{2,\alp}$-cylinder $U_{t_1,t_2}\Subset{\Th}$:
If $h\in C(\overline{U}_{t_1,t_2})$ is parabolic in
  $U_{t_1,t_2}$ and 
  $h\leq u$ on $\partial_p U_{t_1,t_2}$, then $h\leq u$ in  
$U_{t_1,t_2}$.
\end{enumerate}

That $v$ is \emph{subparabolic} is defined analogously,
except that $v:\Th\to [0,\infty)$ is upper semicontinuous 
and the inequalities are reversed, i.e.\
 we require that if  $h\ge v$ on $\partial_p U_{t_1,t_2}$, 
then $h\ge v$ in 
$U_{t_1,t_2}$.
\end{definition}

Note that as with sub- and supersolutions we implicitly assume
that sub- and superparabolic functions are nonnegative in this paper. 

In Kinnunen--Lindqvist~\cite{Kinnunen-Lindqvist:2008},  
Kinnunen--Lindqvist--Lukkari~\cite{Kinnunen-Lindqvist-Lukkari:2013}
and Avelin--Luk\-kari~\cite{AvelinL2015}
they require \ref{iii} in Definition~\ref{def:superparabolic} 
to hold for arbitrary
compactly contained cylinders $U_{t_1,t_2}\Subset{\Th}$.
(In \cite{Kinnunen-Lindqvist:2008} and \cite{Kinnunen-Lindqvist-Lukkari:2013}
they use the name ``viscosity supersolution'' instead of superparabolic,
while in \cite{AvelinL2015} they call them
``semicontinuous supersolutions''.)

One of our first aims is to show that our Definition~\ref{def:superparabolic}
is equivalent to the definition
in 
\cite{AvelinL2015}, \cite{Kinnunen-Lindqvist:2008}
and~\cite{Kinnunen-Lindqvist-Lukkari:2013},
 when $m \ge 1$.
This will take some effort and will only be completed at the end of this section.
The reason for our unorthodox definition is that we
want to establish Theorem~\ref{thm-lsc-supersoln},
which we have not been able to prove without using our definition.
Once Theorem~\ref{thm-lsc-supersoln} has been deduced we are
able to show that our definition of sub- and superparabolic
functions is equivalent to the one in 
\cite{AvelinL2015}, \cite{Kinnunen-Lindqvist:2008}
and~\cite{Kinnunen-Lindqvist-Lukkari:2013},
when $m \ge 1$, see Remark~\ref{rmk-def-superparabolic}.

The following consequences of the definition of sub- and superparabolicity
are almost immediate, we leave the proof to the reader.

\begin{lem} \label{lem-min-superparabolic} 
The following hold for all $m>0$\/{\rm:}
\begin{enumerate}
\item
if $u$ and $v$ are superparabolic, then $\min\{u,v\}$ is superparabolic\/{\rm;}
\item
if $u$ is finite in a dense set, then $u$ is superparabolic
if and only if $\min\{u,k\}$ is superparabolic for $k=1,2,\dots$\/{\rm;}
\item
if $u$ and $v$ are subparabolic, then $\max\{u,v\}$ is subparabolic\/{\rm;}
\item
if $v$ is subparabolic, then $v$ is locally bounded.
\end{enumerate}
\end{lem}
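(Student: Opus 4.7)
The plan is to handle the four parts in order, essentially grinding through the axioms of Definition~\ref{def:superparabolic} and using one uniform observation: nonnegative constants are parabolic functions. Indeed, for any constant $c\ge 0$, the regularity hypotheses $c\in C(t_1,t_2;L^{m+1}(U))$ and $c^m \in L^2(t_1,t_2;W^{1,2}(U))$ are trivial on bounded $U_{t_1,t_2}$, and both terms in the weak formulation \eqref{eq:weak-solution} vanish (the first because $\nabla c^m=0$, the second because $\phi$ has compact support in $U_{t_1,t_2}$, so $\int_{t_1}^{t_2}\partial_t\phi\,dt=0$). Hence $c$ is parabolic.

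For part~(a), set $w=\min\{u,v\}$. Lower semicontinuity of $w$ is immediate from the lsc of $u$ and $v$; at every point where $u$ is finite, $w\le u<\infty$, and since such points form a dense set already for $u$, they do so for $w$. For the comparison axiom, let $h\in C(\overline{U}_{t_1,t_2})$ be parabolic in a $C^{2,\alpha}$-cylinder $U_{t_1,t_2}\Subset\Theta$ with $h\le w$ on $\partial_p U_{t_1,t_2}$. Then separately $h\le u$ and $h\le v$ on $\partial_p U_{t_1,t_2}$, so applying~\ref{iii} to $u$ and to $v$ individually yields $h\le u$ and $h\le v$ in $U_{t_1,t_2}$, whence $h\le w$. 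Part~(c) is obtained by the mirror argument for $\max\{u,v\}$: upper semicontinuity is preserved by finite maxima, finiteness is automatic from the codomain $[0,\infty)$, and the reversed comparison passes through in exactly the same way.

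For part~(b), the forward direction follows directly from part~(a) together with the fact that the constant $k$ is superparabolic (being parabolic, it satisfies~\ref{iii} by definition of parabolicity, and it is trivially lsc and finite). For the reverse direction, suppose $u$ is finite on a dense set and each $u_k:=\min\{u,k\}$ is superparabolic. Since $u=\sup_k u_k$ is an increasing pointwise supremum of lsc functions, $u$ is lsc. To verify~\ref{iii}, let $h\in C(\overline{U}_{t_1,t_2})$ be parabolic with $h\le u$ on $\partial_p U_{t_1,t_2}$. Continuity of $h$ on the compact set $\partial_p U_{t_1,t_2}$ gives a bound $h\le M$ there, and for any $k\ge M$ we then have $h\le\min\{u,k\}=u_k$ on $\partial_p U_{t_1,t_2}$. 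By superparabolicity of $u_k$ this inequality extends to the whole cylinder, and letting $k\to\infty$ yields $h\le u$ in $U_{t_1,t_2}$.

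For part~(d), fix $\xi_0\in\Theta$ and choose a $C^{2,\alpha}$-cylinder $U_{t_1,t_2}$ with $\xi_0\in U_{t_1,t_2}\Subset\Theta$. Since $v:\Theta\to[0,\infty)$ is finite-valued and upper semicontinuous, $M:=\sup_{\partial_p U_{t_1,t_2}}v$ is finite (the parabolic boundary is compact). The constant $M$ is parabolic, and $v\le M$ on $\partial_p U_{t_1,t_2}$, so the subparabolic comparison axiom gives $v\le M$ throughout $U_{t_1,t_2}$, proving local boundedness at $\xi_0$. The only step that is not purely formal is confirming that constants are parabolic in the sense of Definition~\ref{def-sol}; once this is in hand, every part reduces to bookkeeping with semicontinuity and the comparison axiom, and I anticipate no genuine obstacle.
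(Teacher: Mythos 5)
The paper gives no proof of this lemma (it is explicitly left to the reader as "almost immediate"), and your verification is essentially the intended routine one: semicontinuity and dense finiteness are preserved by the relevant min/max operations, and the comparison axiom~\ref{iii} passes through because $h\le\min\{u,v\}$ (resp.\ $h\ge\max\{u,v\}$) on $\partial_p U_{t_1,t_2}$ splits into two separate boundary inequalities. Parts (a), (c), the reverse direction of (b), and (d) are correct as written; in particular your observation that nonnegative constants are parabolic is exactly what (d) needs, since there the constant $M$ plays the role of the comparison function $h$ in axiom~\ref{iii}.

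The one place where your justification is not valid as stated is the claim, used in the forward direction of (b), that the constant $k$ is \emph{superparabolic} "by definition of parabolicity." Condition~\ref{iii} for the constant $k$ asserts that every $h\in C(\overline{U}_{t_1,t_2})$ which is parabolic in $U_{t_1,t_2}$ and satisfies $h\le k$ on $\partial_p U_{t_1,t_2}$ satisfies $h\le k$ throughout the cylinder; this is the parabolic maximum principle, a statement quantified over all such $h$, and it does not follow from the weak formulation \eqref{eq:weak-solution} "by definition." It does follow immediately from the comparison part of Theorem~\ref{thm-cont-exist}: by uniqueness, $h$ is the solution with data $h|_{\partial_p U_{t_1,t_2}}$ and $k$ is the solution with constant data $k$, so $h\le k$ in $U_{t_1,t_2}$. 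With that citation supplied (valid for all $m>0$, as required), your proof is complete; alternatively, you could verify axiom~\ref{iii} for $\min\{u,k\}$ directly, but you would still need this maximum principle to handle the constant cap.
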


For a function $u$ we define the \emph{lsc-regularization} of $u$ as
\[
u_*(\xi_0)=\essliminf_{\xi\to\xi_0}u(\xi).
\]
We also say that $u$ is \emph{lsc-regularized} if $u_*=u$.
Avelin--Lukkari~\cite{AvelinL2015} 
proved the following result.

\begin{thm}\label{thm-lsc-repr} 
Let $m \ge 1$ and let 
$u$ be a supersolution. 
Then,
\[
   u_*(x, t) = u(x, t)
\]
at all Lebesgue points of $u$ such that $u(x, t)<\infty$. 
In particular, $u_*=u$ a.e., and $u_*$ is  a lower 
semicontinuous representative of $u$.
\end{thm}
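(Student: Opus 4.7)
The plan is to establish both inequalities $u_*(\xi_0)\le u(\xi_0)$ and $u_*(\xi_0)\ge u(\xi_0)$ at every Lebesgue point $\xi_0$ of $u$ with $u(\xi_0)<\infty$; the identity $u_*=u$ a.e.\ then follows from Lebesgue's differentiation theorem, which guarantees that almost every point of $\Theta$ is a Lebesgue point, and lower semicontinuity of $u_*$ is automatic from its definition as an essential liminf.

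The upper bound $u_*(\xi_0)\le u(\xi_0)$ is purely measure-theoretic and does not invoke the equation. For every $r>0$, the essential infimum of $u$ on $B(\xi_0,r)\subset\R^{n+1}$ is bounded above by the mean value $|B(\xi_0,r)|^{-1}\int_{B(\xi_0,r)}u\,d\xi$. Since $\xi_0$ is a Lebesgue point with $u(\xi_0)<\infty$, these means converge to $u(\xi_0)$ as $r\to 0$, which gives
\[
u_*(\xi_0)=\essliminf_{\xi\to\xi_0}u(\xi)\le u(\xi_0).
\]

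The reverse inequality $u_*(\xi_0)\ge u(\xi_0)$ is the substantive step and uses both the supersolution property and the restriction $m\ge 1$. Since $u^m\in L^2_{\mathrm{loc}}(t_1,t_2;W^{1,2}(U))$, parabolic Caccioppoli estimates are available on small cylinders about $\xi_0$, and a Moser/De Giorgi iteration adapted to the intrinsic porous medium scaling yields a weak Harnack-type minimum principle of the form
\[
\biggl(\frac{1}{|Q_r^+(\xi_0)|}\int_{Q_r^+(\xi_0)}u^q\,d\xi\biggr)^{1/q}\le \essliminf_{\xi\to\xi_0}u(\xi)+\varepsilon_r,\qquad\varepsilon_r\to 0,
\]
for some $q>0$ and suitable forward/backward intrinsic cylinders $Q_r^\pm(\xi_0)$ of shrinking size. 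At a Lebesgue point (and using the elementary bound $u_*\le u(\xi_0)$ from the previous step to control integrability of $u^q$ locally) the left-hand side converges to $u(\xi_0)$, and letting $r\to 0$ gives $u(\xi_0)\le u_*(\xi_0)$.

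The main obstacle is producing the sharp constant $1$ in front of the average, rather than a generic $C>1$ that a crude Moser iteration delivers and which would only give $u(\xi_0)\le C u_*(\xi_0)$. This is absorbed by exploiting the intrinsic scaling of the porous medium equation: choosing cylinders whose temporal extent is proportional to $u(\xi_0)^{1-m}r^2$ forces the iteration constant to tend to $1$ as $r\to 0$, so any multiplicative defect disappears in the limit. Alternatively, since this is precisely the main result of Avelin--Lukkari~\cite{AvelinL2015}, one may cite it directly; combining equality at Lebesgue points with the Lebesgue differentiation theorem then gives $u_*=u$ a.e., and the automatic lower semicontinuity of $u_*$ completes the identification of $u_*$ as a lower semicontinuous representative of $u$.
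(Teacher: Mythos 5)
Your fallback — citing Avelin--Lukkari \cite{AvelinL2015} — is exactly what the paper does: Theorem~\ref{thm-lsc-repr} is stated as a quoted result with no proof given, only the remark that \cite{AvelinL2015} covers $m>1$ and that $m=1$ is recovered from Kuusi \cite{Kuusi09} with $p=2$. Your measure-theoretic argument for $u_*(\xi_0)\le u(\xi_0)$ at a finite-valued Lebesgue point is also correct. So as a citation-based proof the proposal is fine and matches the paper.

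The attempted direct proof of the hard inequality $u_*(\xi_0)\ge u(\xi_0)$, however, has a genuine gap. You propose a weak Harnack-type bound with leading constant $1+\eps_r$ and claim that intrinsic scaling (cylinders of temporal extent $\sim u(\xi_0)^{1-m}r^2$) forces the iteration constant to tend to $1$ as $r\to0$. That is not the case: intrinsic scaling makes the Harnack-type constants \emph{scale-invariant}, not asymptotically sharp, so a Moser/De Giorgi iteration still delivers a fixed $C>1$ at every scale and your argument only yields $u(\xi_0)\le C\,u_*(\xi_0)$. The mechanism that actually works — used in \cite{AvelinL2015} and, for the subsolution analogue, in Appendix~\ref{app-pf-usc} of this paper (Proposition~\ref{prop-app}) — sidesteps the sharp-constant issue entirely by iterating on the truncations $(M-u)_\limplus$ at the level $M=u(\xi_0)$: one proves
\[
\esssup_{Q(\xi_0,\sigma\rho)}\,(M-u)
\;\le\; C\,\biggl(\frac{1}{|Q(\rho)|}\iint_{Q(\xi_0,\rho)}(M-u)_\limplus^2\,dx\,dt\biggr)^{1/\lambda},
\]
and at a Lebesgue point the right-hand side tends to $0$ as $\rho\to0$ regardless of the size of $C$, giving $\essliminf_{\xi\to\xi_0}u\ge u(\xi_0)$ directly. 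If you want a self-contained argument rather than a citation, you should replace the sharp-constant weak Harnack claim by this truncated supremum estimate (the supersolution counterpart of Proposition~\ref{prop-app}).
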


Strictly speaking, Avelin--Lukkari~\cite{AvelinL2015}  only considers $m>1$; 
the remaining case $m=1$ 
can be recovered from Kuusi~\cite{Kuusi09} assuming $p=2$.

Similarly, for a function $u$ we define the 
\emph{usc-regularization} of $u$ as
\[
u^*(\xi_0)=\esslimsup_{\xi\to\xi_0}u(\xi)
\]
and 
say that $u$ is \emph{usc-regularized} if $u^*=u$. 
We will also need  the following result.

\begin{thm}\label{thm-usc-repr} 
Let $m \ge 1$ and let 
$u$ be a subsolution.
Then,
\[
  u^*(x, t) = u(x, t)
\]
at all Lebesgue points of $u$. 
In particular, $u^*=u$ a.e., and
$u^*$ is an upper semicontinuous representative of $u$.
\end{thm}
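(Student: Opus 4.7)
The plan is to adapt the strategy that Avelin--Lukkari~\cite{AvelinL2015} used for Theorem~\ref{thm-lsc-repr}, making the modifications required by the fact that we cannot pass to $-u$: the PME is not preserved under sign change, so the subsolution theory must be developed directly. The core ingredient will be a Moser-type sup-estimate that controls upward oscillations of a subsolution by its integral averages on small parabolic cylinders.

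First, I would invoke the fact that nonnegative weak subsolutions of the PME with $m \geq 1$ are locally bounded and, on parabolic cylinders $Q_r(\xi_0) = B(x_0, r) \times (t_0 - r^2, t_0 + r^2) \Subset \Theta$, satisfy a bound of the form
\[
\esssup_{Q_{r/2}(\xi_0)} u \leq C\biggl(\frac{1}{|Q_r(\xi_0)|}\iint_{Q_r(\xi_0)} u^\alpha \, dx\,dt\biggr)^{1/\alpha}
\]
for some $\alpha \geq 1$ and a constant $C = C(n,m,\alpha)$. Such estimates follow from a Caccioppoli inequality applied to powers of $u^m$ together with Moser or De Giorgi iteration, and are available in DiBenedetto--Gianazza--Vespri~\cite{DBGV-mono}. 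Now fix a Lebesgue point $\xi_0 = (x_0, t_0)$ of $u$. Local boundedness and the mean value theorem imply that $\xi_0$ is also a Lebesgue point of $u^\alpha$, so
\[
u^*(\xi_0) \leq \lim_{r \to 0^+} \esssup_{Q_{r/2}(\xi_0)} u \leq \lim_{r \to 0^+} C\biggl(\frac{1}{|Q_r(\xi_0)|}\iint_{Q_r(\xi_0)} u^\alpha \, dx\,dt\biggr)^{1/\alpha} = u(\xi_0).
\]
The reverse inequality $u^*(\xi_0) \geq u(\xi_0)$ is immediate, since on any small neighborhood of $\xi_0$ the essential supremum of $u$ dominates its integral average, which converges to $u(\xi_0)$ at a Lebesgue point. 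Hence $u^*(\xi_0) = u(\xi_0)$. The remaining conclusions, that $u^* = u$ almost everywhere and that $u^*$ is an upper semicontinuous representative, follow respectively from the Lebesgue differentiation theorem and from the fact that the essential limit superior of any measurable function is automatically upper semicontinuous.

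The hardest part will be producing the sup-estimate in the form described above. In the degenerate case $m > 1$, the natural Harnack-type cylinders for the PME are intrinsically scaled (their time-length depends on the size of $u$), so some care is needed to adapt the estimate to the fixed cylinders $Q_r(\xi_0)$ used here; however, since $u$ is locally bounded on the ambient cylinder, the intrinsic scaling factor can be absorbed into $C$ on any fixed compact subset, reducing matters to standard parabolic cylinders. Verifying this reduction carefully, together with the passage to the limit $r \to 0$, is the main work to be carried out in the appendix.
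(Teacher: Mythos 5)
Your overall strategy --- a local sup-estimate combined with the Lebesgue point property --- is the same as the paper's, but the specific estimate you propose does not close the argument. From
\[
\esssup_{Q_{r/2}(\xi_0)} u \le C\Bigl(\frac{1}{|Q_r(\xi_0)|}\iint_{Q_r(\xi_0)} u^\alpha\,dx\,dt\Bigr)^{1/\alpha}
\]
you only obtain $u^*(\xi_0)\le C\,u(\xi_0)$ in the limit $r\to0$, and Moser/De Giorgi constants satisfy $C>1$ in general; your final chain of inequalities silently replaces $C$ by $1$. To conclude $u^*(\xi_0)\le u(\xi_0)$ one needs the estimate in truncated form,
\[
\esssup_{Q(x_0,t_0,\sigma\rho)}(u-M) \le C\Bigl(\frac{1}{|Q(\rho)|}\iint_{Q(x_0,t_0,\rho)}(u-M)_+^2\,dx\,dt\Bigr)^{1/\lambda},
\]
applied with $M=u(\xi_0)$: at a Lebesgue point the right-hand side then tends to $0$ as $\rho\to0$, so the multiplicative constant is harmless. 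This is exactly Proposition~\ref{prop-app} in the paper.

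The truncated estimate is also where the real difficulty lies, and your closing paragraph misidentifies it. For the heat equation one could simply apply the standard sup-bound to $u-M$, but for the porous medium equation $u-M$ is not a subsolution (constants cannot be subtracted), which is precisely why Theorem~\ref{thm-usc-repr} is not a trivial consequence of Theorem~\ref{thm-lsc-repr} and why the appendix is needed at all. One must run the De Giorgi iteration directly on the levels $M+k_j$, testing with $2(u-M-k_{j+1})_+\zeta_j^2$ and using that the degenerate weight $u^{m-1}$ is bounded below by $(k/2)^{m-1}$ on the set where the truncation is positive and above by $L^{m-1}$; this is how the dependence of $C$ on $L=\esssup u$ enters and how the degeneracy is controlled on fixed (non-intrinsic) cylinders. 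Your proposed reduction of intrinsic scaling to fixed cylinders is a side issue by comparison; without the truncated form of the estimate the proof does not go through.
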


Due to the structure of the porous medium equation, 
this 
is not a trivial consequence of 
Theorem~\ref{thm-lsc-repr}, but needs to be proved separately.
We postpone the proof of Theorem~\ref{thm-usc-repr} 
to Appendix~\ref{app-pf-usc}.

Note that we do not need to require that $u(x, t)$ is finite in 
Theorem~\ref{thm-usc-repr},
since $u$ is nonnegative 
and 
subsolutions are essentially bounded from above when $m \ge 1$; 
see Andreucci~\cite{andreucci}.

\begin{thm} \label{thm-lsc-supersoln}
Let $m \ge 1$. 
If $u$ is a  supersolution then $u_*$ is superparabolic.
Similarly, if $v$ is a  subsolution then $v^*$ is subparabolic.
\end{thm}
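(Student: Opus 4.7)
The proof checks the three properties of Definition~\ref{def:superparabolic} for $u_*$. Property (i) is immediate; for (ii) use Theorem~\ref{thm-lsc-repr} to get $u_*=u$ a.e., together with the fact that $u^m$ being locally in $L^2$ forces $u$ (hence $u_*$) to be finite on a dense set. The real task is (iii), so fix a $C^{2,\alpha}$-cylinder $U_{t_1,t_2}\Subset\Theta$ and $h\in C(\overline{U}_{t_1,t_2})$ parabolic with $h\le u_*$ on $\bdyp U_{t_1,t_2}$. For each $\varepsilon>0$, the lower semicontinuity of $u_*-h$ on $\overline{U}_{t_1,t_2}$ and its nonnegativity on the compact set $\bdyp U_{t_1,t_2}$ yield an open neighborhood $N_\varepsilon\subset\Theta$ of $\bdyp U_{t_1,t_2}$ on which $u_*>h-\varepsilon$, hence $u_*\ge(h-\varepsilon)_+$ (using $u_*\ge 0$). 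For $\delta>0$ sufficiently small, both a spatial collar of $\partial U$ inside $U$ over the time interval $[t_1,t_2]$ and the initial slab $\overline U\times[t_1,t_1+\delta]$ lie in $N_\varepsilon$.

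On the $C^{2,\alpha}$-cylinder $V_\delta:=U\times(t_1+\delta,t_2)$, I would invoke Theorem~\ref{thm-cont-exist} with continuous nonnegative data $g_\varepsilon:=(h-\varepsilon)_+|_{\bdyp V_\delta}$ to obtain a continuous parabolic $w=w_{\varepsilon,\delta}$. The key step is to apply Proposition~\ref{prop-comp-sub-supersoln} on $V_\delta$ to the supersolution $u$ and the subsolution $w$. The two hypotheses there read: (a) $(w^m-u^m)_+(\cdot,t)\in W_0^{1,2}(U)$ for a.e.\ $t\in(t_1+\delta,t_2)$, and (b) $(w-u)_+(\cdot,t_1+\delta)=0$ a.e.\ in $U$. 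For (a), the a.e.\ identity $u=u_*$ combined with $u_*\ge(h-\varepsilon)_+$ in the spatial collar gives $u^m\ge((h-\varepsilon)_+)^m$ a.e.\ near $\partial U$, which by the Sobolev trace theorem forces $T_{\partial U}(u^m)\ge ((h-\varepsilon)_+)^m=T_{\partial U}(w^m)$ a.e.\ on $\partial U\times(t_1+\delta,t_2)$, yielding (a). For (b), Fubini gives a cofinal set of admissible $\delta$ for which $u(\cdot,t_1+\delta)=u_*(\cdot,t_1+\delta)$ a.e.\ in $U$, and for such $\delta$ one has $u_*(\cdot,t_1+\delta)\ge(h-\varepsilon)_+(\cdot,t_1+\delta)=w(\cdot,t_1+\delta)$ pointwise. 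Proposition~\ref{prop-comp-sub-supersoln} then delivers $w_{\varepsilon,\delta}\le u$ a.e.\ in $V_\delta$.

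To conclude, fix $\delta$ and let $\varepsilon\to 0$. Since $g_\varepsilon\to h|_{\bdyp V_\delta}$ uniformly on the compact parabolic boundary, Theorem~\ref{thm-stability} gives locally uniform convergence of $w_{\varepsilon,\delta}$ in $V_\delta$ to the continuous parabolic function with boundary data $h|_{\bdyp V_\delta}$, which by the uniqueness part of Theorem~\ref{thm-cont-exist} equals $h|_{V_\delta}$. Hence $h\le u$ a.e.\ in $V_\delta$, and letting $\delta\to 0$ along admissible values yields $h\le u$ a.e.\ in $U_{t_1,t_2}$. Taking the essential lower limit and using continuity of $h$ produces
\[
u_*(\xi_0)=\essliminf_{\xi\to\xi_0}u(\xi)\ge\essliminf_{\xi\to\xi_0}h(\xi)=h(\xi_0)
\]
for every $\xi_0\in U_{t_1,t_2}$, which is (iii). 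The subparabolic case is entirely symmetric: invoke Theorem~\ref{thm-usc-repr}, replace $(h-\varepsilon)_+$ by the strictly positive continuous datum $h+\varepsilon$, let $v$ play the role of subsolution and the corresponding Dirichlet solution that of supersolution, and use the local essential boundedness of subsolutions for $m\ge 1$ (Andreucci) to keep $v^m\in L^2$.

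The main obstacle I foresee is verifying condition (a): the hypothesis $h\le u_*$ is pointwise on the topological parabolic boundary $\bdyp U_{t_1,t_2}$, whereas Proposition~\ref{prop-comp-sub-supersoln} demands a Sobolev trace inequality on the interior lateral boundary $\partial U\times(t_1+\delta,t_2)$ of $V_\delta$. Bridging this gap depends on coupling the lsc-gap $u_*\ge h-\varepsilon$ in the neighborhood $N_\varepsilon$ with the a.e.\ identification $u=u_*$ from Theorem~\ref{thm-lsc-repr}, and then transferring that a.e.\ interior inequality into a trace inequality on $\partial U$; this is the step where care is genuinely needed, since a generic supersolution need not possess a nice pointwise boundary trace.
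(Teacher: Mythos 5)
There is a genuine gap, and it sits exactly where the paper itself locates the difficulty. Your plan is to solve the Dirichlet problem on $V_\delta$ with the continuous data $(h-\varepsilon)_\limplus$ via Theorem~\ref{thm-cont-exist} and then compare the resulting $w=w_{\varepsilon,\delta}$ with $u$ using Proposition~\ref{prop-comp-sub-supersoln}. But Theorem~\ref{thm-cont-exist} only produces a function in $C(\overline{U}_{t_1,t_2})$ that is parabolic in the interior; by Definition~\ref{def-sol} this gives $w^m\in L^2(s_1,s_2;W^{1,2}(V))$ only for cylinders $V_{s_1,s_2}\Subset V_\delta$, and says nothing about $w^m\in L^2(t_1+\delta,t_2;W^{1,2}(U))$ up to the lateral boundary. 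Consequently the hypothesis $(w^m-u^m)_\limplus(\,\cdot\,,t)\in W_0^{1,2}(U)$ of Proposition~\ref{prop-comp-sub-supersoln} is not merely unverified --- it is not even well posed, since $w^m(\,\cdot\,,t)$ need not lie in $W^{1,2}(U)$. You have simply transferred onto $w$ the very obstruction that prevents one from comparing $h$ with $u$ directly, and neither the $\varepsilon$-lowering of the data nor the $\delta$-shift in time removes it. (Your stated worry about the trace of $u^m$ is actually the harmless part: since $U_{t_1,t_2}\Subset\Theta$, Definition~\ref{def-sol} gives $u^m\in L^2(t_1,t_2;W^{1,2}(U))$ globally on the cylinder, and an a.e.\ inequality in a collar does pass to the trace.)

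The missing idea is to manufacture comparison functions that carry global Sobolev regularity by construction. The paper does this by approximating $h$ from below, uniformly on $\bdy_p U_{t_1,t_2}$, by restrictions $h_j=\hb_j|_{\bdy_p U_{t_1,t_2}}$ of functions $\hb_j\in C^\infty(\Rno)$ with $0\le h_j\le h$, and then solving with Theorem~\ref{thm-dirichlet-bvp} (the trace-framework existence result, which requires the lateral data to be the trace of a bounded extension $\gb$ with $\partial_t\gb\in L^\infty$ --- hence the smoothing). The resulting $h_j$ satisfies $h_j^m(\,\cdot\,,t)\in W^{1,2}(U)$ for a.e.\ $t$, so Proposition~\ref{prop-comp-sub-supersoln} applies and yields $h_j\le u_*$ a.e., hence everywhere by continuity of $h_j$ and lsc-regularity of $u_*$. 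One then upgrades $h_j$ to an element of $C(\overline{U}_{t_1,t_2})$ via DiBenedetto's boundary modulus of continuity, identifies it with the solution of Theorem~\ref{thm-cont-exist}, and passes to the limit with Theorem~\ref{thm-stability} to get $h\le u_*$. Your closing limit argument ($\varepsilon\to0$ via Theorem~\ref{thm-stability}, then the essential lower limit) is fine once this regularity issue is repaired, and your Fubini selection of good time slices is a legitimate way to handle the initial condition; but as written the central application of the comparison principle does not go through.
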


In a less precise form this result was stated
just after Theorem~1.1
in Avelin--Lukkari~\cite{AvelinL2015}, without proof.
We therefore provide a complete proof 
of this result, and this is also the reason
for our unorthodox definition of sub- and superparabolic functions.
Once Remark~\ref{rmk-def-superparabolic} has been
established below, it follows directly 
that Theorem~\ref{thm-lsc-supersoln} 
is also valid using the sub- and superparabolic
definition used in 
\cite{AvelinL2015}, \cite{Kinnunen-Lindqvist:2008}
and~\cite{Kinnunen-Lindqvist-Lukkari:2013}.

\begin{proof}
Assume first that $u$ is a supersolution.
By Theorem~\ref{thm-lsc-repr}, $u_*=u$ a.e., and thus
also $u_*$ is a supersolution.
We want to show that $u_*$ is superparabolic.
Condition \ref{i} follows from Theorem~\ref{thm-lsc-repr},
while \ref{ii} follows directly.
For \ref{iii}, fix a $C^{2,\alpha}$-cylinder ${U}_{t_1,t_2} \Subset \Theta$
and let $h \in C(\overline{U}_{t_1,t_2})$  be such that
it is parabolic in $U_{t_1,t_2}\Subset \Th$ and $h \le u_*$ on
$\bdy_p U_{t_1,t_2}$.

According to Definition~\ref{def-sol}, this means that
$h^m\in L^2(s_1,s_2;W^{1,2}(V))$ for every cylinder $V_{s_1,s_2}\Subset U_{t_1,t_2}$,
but this is not enough to directly apply the comparison principle
in Proposition~\ref{prop-comp-sub-supersoln}, which would require
$h^m\in L^2(t_1,t_2;W^{1,2}(U))$.
We therefore proceed as follows.

Let $\hb_j \in C^\infty(\Rno)$ and $h_j= \hb_j|_{\bdy_p U_{t_1,t_2}}$
be such that  $0 \le h_j \le h$ on $\bdy_p U_{t_1,t_2}$
and $\sup_{\bdy_p U_{t_1,t_2}} |h_j -h| \to 0$, as $j \to \infty$.
Using Theorem~\ref{thm-dirichlet-bvp},
we can extend $h_j$ so that it is a weak solution in $U_{t_1,t_2}$
which takes the boundary data $h_j$ in the sense of traces
and which satisfies
$h^m_j(\,\cdot\,,t)\in W^{1,2}(U)$ for a.e.\ $t \in (t_1,t_2)$.
By the comparison principle in
Proposition~\ref{prop-comp-sub-supersoln},
 $h_j \le u_*  $ a.e.\ in $U_{t_1,t_2}$.
Since $h_j$ is continuous, and $u_*$ is lsc-regularized,
it directly follows that $h_j \le u_*$ everywhere in $U_{t_1,t_2}$.

Moreover, since the boundary data $h_j$ are continuous,
DiBenedetto~\cite[Theorem, p.\ 421]{DiBe1986} implies that
$h_j\in C(\overline{U}_{t_1,t_2})$.
Hence $h_j$ coincides with the solution provided by
Theorem~\ref{thm-cont-exist}.
Letting $j\to\infty$, we conclude from Theorem~\ref{thm-stability}
that $h \le u_*$ everywhere in $U_{t_1,t_2}$.
Hence $u_*$ is superparabolic.
The proof for subsolutions is analogous, using Theorem~\ref{thm-usc-repr}.
\end{proof}

To establish the equivalence between
our sub- and superparabolic functions and the ones used in 
\cite{AvelinL2015}, \cite{Kinnunen-Lindqvist:2008}
and~\cite{Kinnunen-Lindqvist-Lukkari:2013},
we will also need the following parabolic
comparison principle for sub- and superparabolic functions,
which was obtained by
Kinnunen--Lindqvist--Lukkari~\cite[Theorem~3.3]{Kinnunen-Lindqvist-Lukkari:2013}.

\begin{thm} \label{thm-comp-KLL}
\textup{(Parabolic comparison principle for cylinders)}
Let $m \ge 1$ and let $U_{t_1,t_2}$ be an arbitrary cylinder 
in $\R^{n+1}$. 
Suppose that $u$ is a bounded super\-parabolic function
 and $v$ is a bounded sub\-parabolic function
in $U_{t_1,t_2}$.
Assume that
\begin{equation}  \label{eq-limsup-le-liminf}
  \limsup_{U_{t_1,t_2}  \ni (y,s)\rightarrow (x,t)} v(y,s) \le
   \liminf_{U_{t_1,t_2} \ni (y,s)\rightarrow (x,t)} u(y,s) 
\end{equation}
for all
$(x,t) \in\bdyp U_{t_1,t_2}$.
Then $v\le u$ in $U_{t_1,t_2}$.
\end{thm}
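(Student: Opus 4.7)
The plan is to reduce the comparison to $C^{2,\alpha}$-subcylinders, where clause~\ref{iii} of Definition~\ref{def:superparabolic} lets us sandwich $v$ and $u$ by continuous parabolic intermediaries produced via Theorem~\ref{thm-cont-exist}, and then to exhaust $U_{t_1,t_2}$.

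First, I would choose a smooth exhaustion $U^j\Subset U^{j+1}\Subset U$ with $\bigcup_j U^j=U$, and times $t_1^j\downarrow t_1$, $t_2^j\uparrow t_2$; set $D_j:=U^j_{t_1^j,t_2^j}$, which is a $C^{2,\alpha}$-cylinder compactly contained in $U_{t_1,t_2}$. Combining \eqref{eq-limsup-le-liminf} with the compactness of $\bdyp U_{t_1,t_2}$ yields, for every $\eta>0$, an open neighbourhood $N_\eta\supset\bdyp U_{t_1,t_2}$ in $\R^{n+1}$ with $v<u+\eta$ on $N_\eta\cap U_{t_1,t_2}$. Since $\bdyp D_j$ approaches $\bdyp U_{t_1,t_2}$ in the Hausdorff sense, the quantity $\eta_j:=\sup_{\bdyp D_j}(v-u)_+$ tends to $0$ as $j\to\infty$.

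On the compact set $\bdyp D_j$ the upper semicontinuous function $v$ and the lower semicontinuous function $u$ are bounded and satisfy $v\le u+\eta_j$, so a Hahn--Tong type insertion gives a continuous nonnegative $\phi_j$ on $\bdyp D_j$ with $v\le\phi_j\le u+\eta_j$. Let $h_j\in C(\overline{D_j})$ be the unique continuous parabolic solution provided by Theorem~\ref{thm-cont-exist} with $h_j=\phi_j$ on $\bdyp D_j$. Clause~\ref{iii} of Definition~\ref{def:superparabolic}, applied to the subparabolic function $v$ in the smooth cylinder $D_j$, yields $v\le h_j$ in $D_j$. Similarly, $\tilde\phi_j:=\max\{\phi_j-\eta_j,0\}$ is continuous and satisfies $\tilde\phi_j\le u$ on $\bdyp D_j$; letting $\tilde h_j$ be the corresponding continuous parabolic solution, the same clause applied to the superparabolic $u$ gives $\tilde h_j\le u$ in $D_j$.

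The remaining and main obstacle is to close the gap between $h_j$ and $\tilde h_j$. This step is genuinely delicate for the porous medium equation because $u+\eta_j$ is not superparabolic (constants cannot be added to PME solutions), so the bound $\phi_j\le u+\eta_j$ on $\bdyp D_j$ cannot be propagated to the interior by clause~\ref{iii} directly. I would use the stability statement Theorem~\ref{thm-stability}: since $\|\phi_j-\tilde\phi_j\|_{L^\infty(\bdyp D_j)}\le\eta_j\to0$ and the data are uniformly bounded, the continuous parabolic solutions $h_j$ and $\tilde h_j$ differ by an amount that tends to $0$ locally uniformly in any fixed compact subcylinder of $U_{t_1,t_2}$. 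Consequently, fixing an interior $(x_0,t_0)\in U_{t_1,t_2}$ and any $C^{2,\alpha}$-cylinder $D^\star\Subset U_{t_1,t_2}$ containing $(x_0,t_0)$, we have $D^\star\Subset D_j$ for $j$ large and
\[
v(x_0,t_0)\le h_j(x_0,t_0)=\tilde h_j(x_0,t_0)+o(1)\le u(x_0,t_0)+o(1),
\]
so letting $j\to\infty$ gives $v(x_0,t_0)\le u(x_0,t_0)$. The truly delicate point is this last stability step, where the $L^\infty$-closeness of the boundary data must be combined with the fact that Theorem~\ref{thm-stability} only provides local uniform convergence, via a careful diagonal passage to the limit.
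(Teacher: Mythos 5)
Your setup---exhaustion by smooth subcylinders, insertion of continuous data between $v$ and $u+\eta_j$ on $\bdyp D_j$, and the two one-sided comparisons $v\le h_j$ and $\tilde h_j\le u$ via clause~\ref{iii} of Definition~\ref{def:superparabolic}---follows the same outline as the paper's proof. The gap sits exactly where you flag the argument as delicate: Theorem~\ref{thm-stability} cannot close the distance between $h_j$ and $\tilde h_j$. That theorem is a qualitative statement on a \emph{fixed} cylinder for a sequence of boundary data converging uniformly to a \emph{fixed} limit datum $h_0$; it provides no uniform modulus of continuity for the solution operator, and hence no bound on $\sup_{K}|h_j-\tilde h_j|$ in terms of $\sup_{\bdyp D_j}|\phi_j-\tilde\phi_j|\le\eta_j$ when the cylinders $D_j$ and both families of data vary with $j$ and neither converges to anything fixed. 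A compactness/contradiction argument around a fixed limit datum is unavailable for the same reason, and the naive bound ``solutions with $\eta_j$-close data are $C\eta_j$-close'' is precisely what fails for the porous medium equation, since constants cannot be added to solutions; this is the whole difficulty of the theorem, so it cannot be delegated to a stability result that was never designed to be quantitative.

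The paper closes this gap with a different, quantitative ingredient. It arranges the two boundary data in the form $\hb_j$ and $(\hb_j^m+\eps_j^m)^{1/m}$ and invokes Lemma~3.2 of Kinnunen--Lindqvist--Lukkari~\cite{Kinnunen-Lindqvist-Lukkari:2013}, which yields the integral estimate
\begin{equation*}
\iintlim{s_j}{t_2}\int_{U^j}(\hh_j-h_j)(\hh_j^m-h_j^m)\,dx\,dt\le C\eps_j
\end{equation*}
with $C$ independent of $j$. Combined with the local equicontinuity of the families $\{h_j\}$ and $\{\hh_j\}$ from \cite[Theorem~5.16.1]{DBGV-mono} and Ascoli's theorem, one extracts locally uniform limits $h$ and $\hh$; letting $j\to\infty$ in the integral estimate forces $h=\hh$ a.e., hence everywhere by continuity, and then $v\le\hh=h\le u$. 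Some such quantitative energy-type estimate (or an equivalent device) is indispensable here; as written, your argument does not reach the conclusion.
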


As the definition of superparabolic functions in 
\cite{Kinnunen-Lindqvist-Lukkari:2013} 
is slightly different from ours,  some comments are in order.
Since we also had difficulties understanding how they concluded
that $u \le v$ everywhere (and not just a.e.) at the end of their
proof, we seize the opportunity to provide our own proof
(based partly on the ideas in \cite{Kinnunen-Lindqvist-Lukkari:2013}). 

\begin{proof}
Without loss of generality we can assume that both $u$ and $v$ are bounded.
Using~\eqref{eq-limsup-le-liminf} and the compactness of $\bdy_p U_{t_1,t_2}$,
we can for each $\eps_j=1/j$, $j=1,2,\ldots$, find
$C^{2,\al}$-cylinders 
$U^j_{s_j,t_2}:=U^j\times(s_j,t_2) \Subset U\times (t_1,t_2]$ so that 
\[
U^1\Subset U^2\Subset \cdots \Subset 
\bigcup_{j=1}^\infty U^j = U,
\quad s_1 > s_2 > \cdots \to t_1,
\]
and 
\[
v^m \le u^m + \eps_j^m \quad 
\text{in } U_{t_1,t_2} \setm U^j_{s_j,t_2},  \ j=1,2,\ldots.
\]
Since $u$ and $v$ are lower and upper semicontinuous, respectively, 
we can also find nonnegative $\hb_j\in C^\infty(\R^n)$ such that
\[
v^m \le \hb_j^m + \eps_j^m \le u^m + \eps_j^m 
\quad \text{in } U_{t_1,t_2} \setm U^j_{s_j,t_2}, \ j=1,2,\ldots.
\]
As in the proof of Theorem~\ref{thm-lsc-supersoln}, we use
Theorem~\ref{thm-dirichlet-bvp}, together with
DiBenedetto~\cite[Theorem, p.\ 421]{DiBe1986},
to find weak solutions 
$h_j,\hh_j$ in $U^j_{s_j,t_2}$ which take the boundary 
data $\hb_j$ and $(\hb_j^m + \eps_j^m)^{1/m}$, respectively,
both in the sense of traces
and continuously on $\bdy_p U^j_{s_j,t_2}$.
The super/subparabolicity of $u$ and $v$ now yield
\begin{equation}   \label{eq-v-le-ht-h-u}
u\ge h_j \quad \text{and} \quad v\le \hh_j 
\qquad \text{in } U^j_{s_j,t_2}.
\end{equation}
If we extend $h_j$ and $\hh_j$ as $\hb_j$ and $(\hb_j^m + \eps_j^m)^{1/m}$
outside $U^j_{s_j,t_2}$, then also
\begin{equation}   \label{eq-v-le-ht-h-u-outside}
u\ge h_j \quad \text{and} \quad v\le \hh_j 
\qquad \text{in } U_{t_1,t_2} \setm U^j_{s_j,t_2}.
\end{equation}
Moreover, 
\begin{equation}   \label{eq-hh-le-h+eps}
h_j\le\hh_j \le h_j + \eps_j \text{ in } U_{t_1,t_2}\setm U^j_{s_j,t_2}
\quad \text{and} \quad h_j\le\hh_j \text{ in } U^j_{s_j,t_2},
\end{equation}
by Proposition~\ref{prop-comp-sub-supersoln}.

Now, Theorem~5.16.1 in DiBenedetto--Gianazza--Vespri~\cite{DBGV-mono}
shows that both families $\{h_j\}_{j=1}^\infty$ and $\{\hh_j\}_{j=1}^\infty$
are locally equicontinuous in $U_{t_1,t_2}$.
Hence, A\-scoli's theorem and a diagonal argument provide us with subsequences,
also denoted 
$\{h_{j}\}_{j=1}^\infty$ and $\{\hh_{j}\}_{j=1}^\infty$, which converge
locally uniformly in $U_{t_1,t_2}$ to continuous functions $h$ and $\hh$.
Clearly, $h\le\hh$ and taking limits in \eqref{eq-v-le-ht-h-u} 
and~\eqref{eq-v-le-ht-h-u-outside} yields
\begin{equation}  \label{eq-u-ge-h}
u\ge h \quad \text{and} \quad v\le \hh  \qquad \text{in } U_{t_1,t_2}.
\end{equation}
For each $j=1,2,\ldots$, Lemma~3.2 in 
Kinnunen--Lindqvist--Lukkari~\cite{Kinnunen-Lindqvist-Lukkari:2013}
implies that 
\[
\iintlim{s_j}{t_2}\int_{U^j} (\hh_j-h_j)(\hh^m_j-h^m_j) \,dx\,dt \le C\eps_j,
\]
where $C$ depends on $U$ and the bounds for $u$ and $v$, but not on $j$.
Taking into account \eqref{eq-hh-le-h+eps}, we thus conclude that
\[
0\le \iintlim{t_1}{t_2}\int_U (\hh_j-h_j)(\hh^m_j-h^m_j) \,dx\,dt \le C\eps_j.
\]
Since $h_{j}\to h$ and $\hh_{j}\to \hh$ in $U_{t_1,t_2}$
and all the functions are uniformly bounded, dominated convergence
implies that
\[
\iintlim{t_1}{t_2}\int_U (\hh-h)(\hh^m-h^m) \,dx\,dt =0.
\]
and hence $h=\hh$ a.e. 
Finally, the continuity of $h$ and $\hh$, together with \eqref{eq-u-ge-h},
yields $v\le\hh=h\le u$.
\end{proof}

\begin{remark}
The above proof also shows that the function $h=\hh$ is a weak solution
in $U_{t_1,t_2}$.
Indeed, the Caccioppoli inequality 
(Lemma~2.15 in Kinnunen--Lindqvist~\cite{Kinnunen-Lindqvist:2008})
shows that $|\grad h_j^{m}|$ and
$|\grad \hh_j^{m}|$ are uniformly bounded in $L^2(s,t;W^{1,2}(V))$ for
every cylinder $V_{s,t}\Subset U_{t_1,t_2}$. 
Thus, there is a weakly converging subsequence, for which the integral 
identity \eqref{eq:weak-solution} on $V_{s,t}\Subset U_{t_1,t_2}$ pertains.
\end{remark}

\begin{prop} \label{prop-def-alt-superparabolic}
Let $m \ge 1$. 
If $u$ is superparabolic in $\Th$,
then it satisfies the following comparison principle on each 
  cylinder $U_{t_1,t_2}\Subset{\Th}$\textup{:}
If $h \in C(\overline{U}_{t_1,t_2})$ is parabolic in
  $U_{t_1,t_2}$ and 
  $h\leq u$ on $\partial_p U_{t_1,t_2}$, then $h\leq u$ in  $U_{t_1,t_2}$.

Similarly, if $v$ is subparabolic in $\Th$, $U_{t_1,t_2}\Subset{\Th}$ 
is a cylinder, $h \in C(\overline{U}_{t_1,t_2})$ is parabolic in
$U_{t_1,t_2}$ and 
$h\ge v$ on $\partial_p U_{t_1,t_2}$, then $h\ge u$ in $U_{t_1,t_2}$.
\end{prop}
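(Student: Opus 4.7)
The plan is to upgrade the $C^{2,\alpha}$-cylinder comparison built into Definition~\ref{def:superparabolic}\ref{iii} to an arbitrary cylinder by invoking the parabolic comparison principle of Theorem~\ref{thm-comp-KLL}, after first making $u$ bounded via a suitable truncation. Throughout I write $M := \max_{\overline{U}_{t_1,t_2}} h$, which is finite since $h\in C(\overline{U}_{t_1,t_2})$.

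First I would observe that the constant function $M$ is itself superparabolic on $\Theta$: it is continuous, is trivially a weak solution of~\eqref{eq:para}, and the comparison inequality in Definition~\ref{def:superparabolic}\ref{iii} on any $C^{2,\alpha}$-cylinder $V_{s_1,s_2}\Subset\Theta$ follows at once from the comparison part of Theorem~\ref{thm-cont-exist}. Applying Lemma~\ref{lem-min-superparabolic}(a) to $u$ and $M$ then gives that the truncation $\tilde u := \min\{u,M\}$ is superparabolic and, by construction, bounded. Moreover $\tilde u \ge h$ on $\bdy_p U_{t_1,t_2}$, since the hypothesis yields $h\le u$ there while $h\le M$ holds everywhere on $\overline{U}_{t_1,t_2}$. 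On the other hand, $h$ is a continuous weak solution in $U_{t_1,t_2}$, hence in particular a weak subsolution, so Theorem~\ref{thm-lsc-supersoln} shows that its usc-regularization $h^*=h$ is a bounded subparabolic function in $U_{t_1,t_2}$.

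Next I would verify the boundary hypothesis~\eqref{eq-limsup-le-liminf} of Theorem~\ref{thm-comp-KLL} for the pair $(\tilde u, h)$. Fix $(x,t)\in \bdy_p U_{t_1,t_2}$, noting that this point lies in $\Theta$ because $U_{t_1,t_2}\Subset\Theta$. Continuity of $h$ on $\overline{U}_{t_1,t_2}$ yields
\[
\limsup_{U_{t_1,t_2}\ni (y,s)\to (x,t)} h(y,s) = h(x,t),
\]
while the lower semicontinuity of $\tilde u$ on $\Theta$, combined with $\tilde u(x,t)\ge h(x,t)$, gives
\[
\liminf_{U_{t_1,t_2}\ni (y,s)\to (x,t)} \tilde u(y,s) \ge \tilde u(x,t) \ge h(x,t).
\]
Theorem~\ref{thm-comp-KLL} applied in the cylinder $U_{t_1,t_2}$ then delivers $h\le \tilde u \le u$ throughout $U_{t_1,t_2}$, as required.

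The subparabolic statement is handled in the same way, with one simplification: by Lemma~\ref{lem-min-superparabolic}(d), $v$ is locally bounded on $\Theta$ and hence bounded on the compact set $\overline{U}_{t_1,t_2}$, so no truncation of $v$ is needed, and one applies Theorem~\ref{thm-comp-KLL} directly to the pair $(h,v)$ after swapping the sub-/superparabolic roles. The step I expect to be the most delicate is confirming that the constant $M$ actually qualifies as superparabolic in the sense of Definition~\ref{def:superparabolic}, so that Lemma~\ref{lem-min-superparabolic}(a) can legitimately produce the bounded superparabolic truncation $\tilde u$; once this bookkeeping is carried out, Theorem~\ref{thm-comp-KLL} does all the real work.
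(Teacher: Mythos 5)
Your proposal is correct and follows essentially the same route as the paper: truncate $u$ at $M=\max_{\overline{U}_{t_1,t_2}}h$ via Lemma~\ref{lem-min-superparabolic} to get a bounded superparabolic function, observe that $h$ is (bounded) subparabolic by Theorem~\ref{thm-lsc-supersoln}, and conclude with the cylinder comparison principle of Theorem~\ref{thm-comp-KLL}. The extra bookkeeping you supply (that the constant $M$ is superparabolic, and the verification of \eqref{eq-limsup-le-liminf} from semicontinuity) is exactly the detail the paper's proof leaves implicit.
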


\begin{remark} \label{rmk-def-superparabolic} 
This shows that our definition of sub- and superparabolic functions
is equivalent to the one used in 
Kinnunen--Lindqvist~\cite{Kinnunen-Lindqvist:2008},
Kinnunen--Lindqvist--Lukkari~\cite{Kinnunen-Lindqvist-Lukkari:2013}
and  Avelin--Lukkari~\cite{AvelinL2015}.
It also follows from Theorem~\ref{thm-para-comp-princ-union-cyl} below,
that one can equivalently assume that the comparison principle 
holds for all compactly contained finite unions of cylinders;
this equivalence was also pointed out in 
\cite[p.\ 147]{Kinnunen-Lindqvist:2008}.

Whether it is equivalent to just assuming that the comparison principle holds
for space-time boxes 
$(a_1,b_1)\times\ldots\times(a_n,b_n)\times(t_1,t_2)$
is an open problem.
Such an equivalence is known to hold for the  \p-parabolic 
equation \eqref{eq-p-para},
see Korte--Kuusi--Parviainen~\cite[Corollary~4.7]{KoKuPa10}.
\end{remark}

\begin{proof}[Proof of Proposition~\ref{prop-def-alt-superparabolic}] 
Let $u$ be superparabolic and let $U_{t_1,t_2}\Subset{\Th}$ be a cylinder.
By Theorem~\ref{thm-lsc-supersoln}, $h$ is subparabolic in $U_{t_1,t_2}$. 
Since $h$ is continuous on $\overline{U}_{t_1,t_2}$, it is also bounded. 
By Lemma~\ref{lem-min-superparabolic}, we have that 
$\tilde u = \min\{u, \max_{\overline{U}_{t_1,t_2}} h\}$ 
is a bounded superparabolic function. 
We can thus apply the comparison principle in Theorem~\ref{thm-comp-KLL}
to conclude that $h \le \tilde u \le u$ in $U_{t_1,t_2}$.

The proof for the subparabolic case is similar.
\end{proof}

\section{Further results on superparabolic functions}\label{S:Further-Superparabolic}

We continue with a few
more results on superparabolic functions that will be needed later on.

The following deep result completes the relation between
superparabolic functions and supersolutions.
In particular, a bounded function is superparabolic if
and only if it is an lsc-regularized supersolution.

\begin{thm} 
\label{thm-essliminf}
\textup{(Kinnunen--Lindqvist~\cite[Theorems~3.2 and~6.2]{Kinnunen-Lindqvist:2008})}
Let $m \ge 1$ and $u$ be superparabolic.
Then the following are true\/\textup{:}
\begin{enumerate}
\item 
$u$ is lsc-regularized, and moreover
\[
   u(x,t)=\essliminf_{\substack{(y,s) \to (x,t) \\ s < t}}u(y,s);
\]
\item \label{jj-super}
if $u$ is locally bounded, then $u$ is a supersolution.
\end{enumerate}
\end{thm}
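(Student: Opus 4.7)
The plan is to prove part~(b) (if $u$ is locally bounded, then $u$ is a supersolution) first and then deduce part~(a) (the lsc-regularization identity) from it via Theorem~\ref{thm-lsc-repr}. The strategy for~(b) is an approximation-from-below argument inside smooth cylinders, combined with the parabolic compactness available for bounded nonnegative solutions of the porous medium equation.

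For~(b), fix any $C^{2,\alpha}$-cylinder $U_{t_1,t_2}\Subset\Th$ on which $u\le M<\infty$. Since $u$ is lsc, $u|_{\bdy_p U_{t_1,t_2}}$ is a lsc function on a compact set, and can be written as an increasing pointwise limit $f_j\nearrow u|_{\bdy_p U_{t_1,t_2}}$ of nonnegative continuous functions. Theorem~\ref{thm-cont-exist} provides unique continuous parabolic functions $u_j\in C(\overline U_{t_1,t_2})$ with $u_j=f_j$ on $\bdy_p U_{t_1,t_2}$, and condition~\ref{iii} of Definition~\ref{def:superparabolic} applied to each $u_j$ yields $u_j\le u$ in $U_{t_1,t_2}$; the comparison assertion of Theorem~\ref{thm-cont-exist} shows that the $u_j$ are nondecreasing. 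The Caccioppoli inequality (Lemma~2.15 of~\cite{Kinnunen-Lindqvist:2008}) gives uniform $L^2(s_1,s_2;W^{1,2}(V))$ bounds for $u_j^m$ on every subcylinder $V_{s_1,s_2}\Subset U_{t_1,t_2}$, together with uniform local H\"older continuity of the $u_j$ by the standard continuity theorem for the porous medium equation. An Ascoli and diagonal argument extracts a continuous limit $v=\lim_j u_j$, which satisfies~\eqref{eq:weak-solution} by passage to the limit. Clearly $v\le u$; the crucial step is $v=u$. This will follow because any continuous parabolic minorant $\varphi\le u-\eta$ on $\bdy_p U_{t_1,t_2}$ is dominated by $f_j$ on $\bdy_p U_{t_1,t_2}$ for sufficiently large $j$, and hence $\varphi\le u_j\le v$ in $U_{t_1,t_2}$; thus $v$ realises the Perron-type lower envelope of $u|_{\bdy_p U_{t_1,t_2}}$, which by the lsc property of $u$ coincides with $u$ in $U_{t_1,t_2}$. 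Therefore $u=v$ is a weak solution locally in $\Th$, in particular a supersolution.

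For part~(a), Lemma~\ref{lem-min-superparabolic} shows that the truncations $u^{(k)}:=\min\{u,k\}$ are bounded and superparabolic, hence supersolutions by~(b). Theorem~\ref{thm-lsc-repr} then gives $(u^{(k)})_*=u^{(k)}$ at every Lebesgue point of $u^{(k)}$, and so a.e. Since $u^{(k)}$ is lsc one has $u^{(k)}\le(u^{(k)})_*$ pointwise. The reverse inequality at an arbitrary point $\xi_0$ is obtained by contradiction: if $(u^{(k)})_*(\xi_0)>u^{(k)}(\xi_0)+2\de$, then $u^{(k)}\ge u^{(k)}(\xi_0)+\de$ almost everywhere and, by lsc, on an open dense subset of some parabolic neighbourhood of $\xi_0$. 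A Fubini-type choice of a small subcylinder $U_{t_1,t_2}\ni\xi_0$ arranges $u^{(k)}\ge u^{(k)}(\xi_0)+\de$ pointwise on $\bdy_p U_{t_1,t_2}$, and then comparing with the constant parabolic function $u^{(k)}(\xi_0)+\de$ through condition~\ref{iii} contradicts $u^{(k)}(\xi_0)<u^{(k)}(\xi_0)+\de$. Letting $k\to\infty$ together with property~\ref{ii} (finiteness on a dense set) and monotone convergence of essential infima recovers $u_*=u$ everywhere. The parabolic refinement to $s<t$ is subtler and reflects the forward-in-time character of the porous medium equation; it is established by a past-one-sided analogue of the Lebesgue-point argument, exploiting that supersolutions have one-sided semicontinuity in time from below (cf.~Kuusi~\cite{Kuusi09}).

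The main obstacle in this plan is the identification $u=v$ in the limit step of~(b). The inequality $v\le u$ is immediate from superparabolicity, but the reverse requires realising $u$ as the upper envelope of its continuous parabolic minorants on $\bdy_p U_{t_1,t_2}$; this rests sensitively on the interplay between the lsc structure of $u$ on the compact parabolic boundary and the continuity assertion of Theorem~\ref{thm-cont-exist}, and the $C^{2,\alpha}$ smoothness of the cylinder is indispensable to produce the approximating parabolic functions $u_j$ with matching boundary data.
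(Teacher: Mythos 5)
First, note that the paper does not actually prove this theorem: it is quoted from Kinnunen--Lindqvist, and the paper explicitly remarks that their proof rests on the solvability of the \emph{obstacle problem} for the porous medium equation (now available in full detail in B\"ogelein--Lukkari--Scheven). Your proposal tries to replace the obstacle problem by the plain Dirichlet problem, and this is exactly where it breaks down.

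The fatal step is the identification $v=u$ in part (b). From $f_j\nearrow u|_{\bdy_p U_{t_1,t_2}}$ and comparison you correctly get an increasing sequence of parabolic functions $u_j\le u$ converging to a weak solution $v\le u$, and you correctly get that $v$ dominates every continuous parabolic minorant of $u-\eta$ on $\bdy_p U_{t_1,t_2}$. But the assertion that this lower envelope ``coincides with $u$ in $U_{t_1,t_2}$'' is false: that envelope is (essentially) the Perron solution in the cylinder with boundary data $u|_{\bdy_p U_{t_1,t_2}}$, which is a \emph{parabolic} function lying \emph{below} $u$, with equality only when $u$ itself is parabolic. Lower semicontinuity of $u$ controls the boundary approximation $f_j\to u$ on $\bdy_p U_{t_1,t_2}$; it says nothing about the values of $u$ at interior points, which is where $v$ and $u$ differ. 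Indeed, your argument as written would prove that every locally bounded superparabolic function is a weak \emph{solution}, which is false: take $u=\min\{h_1,h_2\}$ for two parabolic functions $h_1,h_2$ (superparabolic by Lemma~\ref{lem-min-superparabolic}, bounded, but not parabolic); then $v$ is the solution with boundary data $u|_{\bdy_p U_{t_1,t_2}}$ and $v<u$ somewhere inside. The standard repair is precisely the obstacle problem: one takes smooth $\psi_j\nearrow u$ and lets $w_j$ be the smallest supersolution above the obstacle $\psi_j$; then $\psi_j\le w_j\le u$ (the upper bound because $w_j$ is a weak solution on the open set $\{w_j>\psi_j\}$, where the comparison property of Definition~\ref{def:superparabolic} applies), so $w_j\nearrow u$ is forced, and uniform Caccioppoli estimates let one pass to the limit in the supersolution inequality. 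Without the interior pinching $w_j\ge\psi_j$ supplied by the obstacle, the convergence to $u$ is simply not available. Separately, even granting (b), your treatment of the pointwise identity in (a) and especially of the one-sided refinement $s<t$ is only a sketch (the ``Fubini-type choice'' needs the bad null set to miss the entire parabolic boundary, not just to have small measure on it), but the essential gap is the one in (b).
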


Theorems~3.2 and~6.2 of \cite{Kinnunen-Lindqvist:2008} rely on the
results about the obstacle problem for the porous medium equation
discussed in Lemma~2.18 of the same paper. The main arguments are just
sketched, and the interested reader is referred elsewhere for the
details. Recently, the obstacle problem for the porous medium equation
has been extensively studied in
B\"ogelein--Lukkari--Scheven~\cite{Bogelein:2015} in a rather general
framework, and it is not hard to check that
\cite[Lemma~2.18]{Kinnunen-Lindqvist:2008} can be considered as a
special case of \cite[Theorem~2.6 and Corollary~2.8]{Bogelein:2015}. 

We will also need the corresponding result for subparabolic functions.

\begin{thm} 
\label{thm-esslimsup}
Let $m \ge 1$ and $u$ be subparabolic.
Then the following are true\/\textup{:}
\begin{enumerate}
\item 
$u$ is usc-regularized, and moreover
\[
   u(x,t)=\esslimsup_{\substack{(y,s) \to (x,t) \\ s < t}}u(y,s);
\]
\item \label{jj-sub}
$u$ is a subsolution.
\end{enumerate}
\end{thm}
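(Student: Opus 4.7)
The plan is to mirror the proof strategy of Theorem~\ref{thm-essliminf} for the superparabolic case, adapting the arguments of Kinnunen--Lindqvist~\cite{Kinnunen-Lindqvist:2008} and the obstacle-problem results of B\"ogelein--Lukkari--Scheven~\cite{Bogelein:2015} to the subparabolic setting. Since $v$ is usc and locally bounded by Lemma~\ref{lem-min-superparabolic}(iv), the boundedness complications that arise for superparabolic functions do not occur here. I would first prove part~(b) and then derive part~(a) from it.

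For part~(b), fix a $C^{2,\alpha}$-cylinder $Q=U_{t_1,t_2}\Subset\Theta$ on which $v$ is bounded; it suffices to show $v$ is a subsolution in $Q$. Since $v$ is usc on $\overline{Q}$, choose continuous functions $\phi_j\downarrow v$ on $\overline{Q}$. Using Theorem~\ref{thm-cont-exist}, let $h_j\in C(\overline{Q})$ be the unique parabolic function in $Q$ with boundary data $\phi_j|_{\bdy_p Q}$. By Proposition~\ref{prop-def-alt-superparabolic} (the subparabolicity comparison against continuous parabolic barriers), $v\le h_j$ in $Q$, and the comparison part of Theorem~\ref{thm-cont-exist} makes $\{h_j\}$ decreasing. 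A Caccioppoli and local compactness argument identical to the one used in the proof of Theorem~\ref{thm-comp-KLL} shows that $h_j\downarrow h$ locally uniformly in $Q$, with $h$ a bounded weak solution satisfying $h\ge v$ in $Q$. The remaining step is to upgrade this inequality to $h=v$ a.e.\ on $Q$: invoke the obstacle problem of \cite[Theorem~2.6 and Corollary~2.8]{Bogelein:2015} with $v$ as lower obstacle and boundary data provided by the traces of the $h_j$; the resulting obstacle solution $\tilde v$ is a supersolution that coincides with $v$ in the contact set and solves the equation in $\{\tilde v>v\}$. Comparison between $\tilde v$ and the decreasing modifications $h_j$ (using Proposition~\ref{prop-comp-sub-supersoln} on smooth subcylinders exhausting $Q$) forces $h\le\tilde v$, whereas $h\ge v$ implies $h\ge\tilde v$ in the contact set. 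A standard argument, showing that any open set $\{h>v\}$ of positive measure would contradict the minimality of $h$ among continuous Poisson modifications from above, then yields $h=v$ a.e. Hence $v$ agrees a.e.\ with the weak solution $h$, so $v$ itself is a weak subsolution in $Q$.

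For part~(a), with~(b) in hand, Theorem~\ref{thm-usc-repr} gives $v^*=v$ a.e. The upper semicontinuity of $v$ forces $v\ge v^*$ pointwise, and any strict inequality $v(x_0,t_0)>v^*(x_0,t_0)$ would by usc of $v$ propagate to a neighbourhood on which $v>v^*$ on a set of positive measure, contradicting $v=v^*$ a.e. Hence $v$ is usc-regularized. For the one-sided representation with $s<t$, I would argue that the essential limsup over a full neighbourhood can be replaced by the essential limsup from below by using the subsolution property restricted to cylinders $U\times(t-r,t)$ approaching $(x,t)$ from the past, together with the pointwise identity $v=v^*$ just established; the past-time contribution alone suffices because subsolutions on $U\times(t-r,t)$ are already essentially bounded above (Andreucci) and the usc-regularization procedure of Appendix~\ref{app-pf-usc} is inherently one-sided in time. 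The main obstacle is the a.e.\ identification $h=v$ in part~(b): the subparabolic comparison only delivers $v\le h$, and promoting this to equality requires the full obstacle-problem machinery of \cite{Bogelein:2015}, with the minimality of Poisson modifications from above playing the role that the minimum of two superparabolic functions plays in the superparabolic proof.
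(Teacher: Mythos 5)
There is a genuine gap, and it is fatal to your proof of part (b). The step ``upgrade the inequality $v\le h$ to $h=v$ a.e.'' cannot work, because it would show that $v$ coincides a.e.\ with a weak \emph{solution}, i.e.\ that every subparabolic function is a solution. This is false: take $v=\max\{h_1,h_2\}$ for two parabolic functions that cross each other; by Lemma~\ref{lem-min-superparabolic} this $v$ is subparabolic, but it is a strict subsolution near the crossing set and is not a.e.\ equal to any solution. Your construction of $h$ as the decreasing limit of the $h_j$ produces (at best) the Poisson modification of $v$ on $Q$, and the only relation between it and $v$ is $v\le h$ --- exactly what the definition of subparabolicity gives, and not enough to test $v$ against arbitrary nonnegative $\phi\in C_0^\infty$. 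The appeal to the obstacle problem of B\"ogelein--Lukkari--Scheven with $v$ as a \emph{lower} obstacle produces a supersolution above $v$, which points in the wrong direction for the subsolution property, and the claimed ``minimality of $h$ among continuous Poisson modifications from above'' does not yield any contradiction from $\{h>v\}$ having positive measure. A secondary problem occurs in part (a): upper semicontinuity of $v$ at $\xi_0$ gives $v^*(\xi_0)\le v(\xi_0)$ but provides no lower bound on $v$ near $\xi_0$, so a strict inequality $v(\xi_0)>v^*(\xi_0)$ does \emph{not} ``propagate to a neighbourhood''; an isolated upward spike is perfectly usc, and ruling it out requires the comparison principle in the definition of subparabolicity, not semicontinuity.

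For comparison, the paper's proof is a two-line reduction: since Kinnunen--Lindqvist~\cite{Kinnunen-Lindqvist:2008} develop the theory also for sign-changing functions, $-u$ is superparabolic in their sense and Theorem~\ref{thm-essliminf} (their Theorems~3.2 and~6.2) applies to it directly; the local boundedness hypothesis needed in part~\ref{jj-super} of that theorem is automatic here because nonnegative subparabolic functions are locally bounded (Lemma~\ref{lem-min-superparabolic}). If you want a self-contained argument avoiding the sign change, you must redo the obstacle-problem argument with $v$ as an obstacle \emph{from above} (producing a continuous subsolution below $v$ that solves the equation off the contact set), not identify $v$ with its Poisson modification.
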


\begin{proof}
As Kinnunen--Lindqvist~\cite{Kinnunen-Lindqvist:2008} deal
also with sign-changing functions, this follows
directly by applying Theorem~\ref{thm-essliminf} to $-u$.

For \ref{jj-sub} we do not need to assume that $u$ is locally bounded, as this
is automatic for nonnegative subparabolic functions.
\end{proof}

The following result completes the picture.

\begin{prop}
Let $m\ge 1$ and $u$ be a nonnegative function in $\Th$.
Then $u$ is parabolic if and only if it is
both sub- and superparabolic in $\Th$.
\end{prop}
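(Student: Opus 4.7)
The plan is to prove the two directions separately, relying on the machinery already assembled in Sections~\ref{S:Superparabolic} and~\ref{S:Further-Superparabolic}.

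For the forward direction, suppose $u$ is parabolic, i.e.\ a continuous weak solution. Then $u$ is simultaneously a weak supersolution and a weak subsolution. By Theorem~\ref{thm-lsc-supersoln}, the lsc-regularization $u_*$ is superparabolic and the usc-regularization $u^*$ is subparabolic. Since $u$ is continuous, $u_* = u = u^*$ pointwise, so $u$ itself is both sub- and superparabolic.

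For the converse, suppose $u$ is both sub- and superparabolic. By \ref{i} of Definition~\ref{def:superparabolic} (applied to $u$ as a superparabolic function) $u$ is lower semicontinuous, and by the subparabolic version of \ref{i} (applied to $u$ as a subparabolic function) $u$ is upper semicontinuous; hence $u$ is continuous. In particular, $u$ is locally bounded (which we also get for free from Lemma~\ref{lem-min-superparabolic}\,(d) on the subparabolic side). Now Theorem~\ref{thm-essliminf}\,\ref{jj-super} applies to the locally bounded superparabolic function $u$ and gives that $u$ is a weak supersolution, while Theorem~\ref{thm-esslimsup}\,\ref{jj-sub} applies to the subparabolic $u$ and gives that $u$ is a weak subsolution. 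Thus $u$ is a weak solution, and being continuous it is a parabolic function by definition.

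There is no genuine obstacle here: the proposition is essentially a bookkeeping statement that combines Theorems~\ref{thm-lsc-supersoln}, \ref{thm-essliminf} and~\ref{thm-esslimsup} with the trivial observation that lower and upper semicontinuity together force continuity. The only point worth flagging is that Theorem~\ref{thm-essliminf}\,\ref{jj-super} requires local boundedness, but this is automatic once we know $u$ is continuous (and would also follow from subparabolicity alone).
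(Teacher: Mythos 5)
Your proof is correct, and the forward direction (parabolic implies both sub- and superparabolic, via Theorem~\ref{thm-lsc-supersoln} and $u_*=u=u^*$) is exactly the paper's argument. The converse, however, takes a genuinely different route. The paper does not invoke Theorems~\ref{thm-essliminf} and~\ref{thm-esslimsup} at all: instead, for each $C^{2,\alp}$-cylinder $U_{t_1,t_2}\Subset\Th$ it uses Theorem~\ref{thm-cont-exist} to produce $h\in C(\overline{U}_{t_1,t_2})$ parabolic in $U_{t_1,t_2}$ with $h=u$ on $\bdy_p U_{t_1,t_2}$, and then the comparison principle built into Definition~\ref{def:superparabolic} gives $h\le u\le h$, so $u=h$ is parabolic locally and hence globally. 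That argument is elementary and self-contained, whereas yours routes through Theorem~\ref{thm-essliminf}\,(b), which the paper itself describes as a deep result resting on the obstacle problem. What your route buys is that it produces the weak (sub/super)solution property directly rather than identifying $u$ locally with Dirichlet solutions; what it costs is the reliance on heavier machinery, plus two small points you should make explicit: being simultaneously a weak super- and subsolution gives the vanishing of the integral in \eqref{eq:weak-solution} only for \emph{nonnegative} test functions, so you need the standard linearity argument (write $\phi=(\phi+M\eta)-M\eta$ with a nonnegative cutoff $\eta$) to pass to all $\phi\in C_0^\infty(U_{t_1,t_2})$; and Definition~\ref{def-sol} additionally requires $u\in C(t_1,t_2;L^{m+1}(U))$, which is not part of the sub/supersolution definitions but follows immediately from the continuity of $u$ on the compact set $\overline{U}_{t_1,t_2}\subset\Th$. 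Neither is a real gap, but both deserve a sentence.
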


Note that a parabolic function is, by Definition~\ref{def-sol},
a continuous solution, whereas
sub- and superparabolicity is defined using
the quite different Definition~\ref{def:superparabolic}.

\begin{proof}
First assume that $u$ is both sub- and superparabolic. 
Then,  $u$ is continuous.
Let $U_{t_1,t_2} \subset \Th$ be a $C^{2,\alp}$-cylinder.
By Theorem~\ref{thm-cont-exist}, there is
$h \in C(\overline{U}_{t_1,t_2})$ which is parabolic in 
$U_{t_1,t_2}$ and satisfies $h=u$ on $\bdy_p U_{t_1,t_2}$.
Since $u$ is superparabolic, $h \le u$ in $U_{t_1,t_2}$,
and as $u$ is subparabolic, $h \ge u$ in $U_{t_1,t_2}$,
i.e.\ $u=h$ in $U_{t_1,t_2}$, and in particular
$u$ is parabolic in $U_{t_1,t_2}$.
As being a solution of an equation is a local property,
$u$ is parabolic in $\Th$.

Conversely, assume that $u$ is parabolic.
Then $u$ is continuous, and thus $u=u_*=u^*$.
By Theorem~\ref{thm-lsc-supersoln}, $u$ is both sub- and superparabolic.
\end{proof}

Recall that $\Theta_T = \{ (x,t)\in\Theta: t<T\}$.

\begin{prop}  \label{prop-extend-subpar-m}
Let $m >0$.
Assume that $v$ is a subparabolic function in $\Theta_T$ 
satisfying 
$v\ge c$ for some $c\ge0$. 
Then the function
\[
w(x,t) = \begin{cases}
              v(x,t), & \text{if } (x,t)\in\Theta_T, \\
              {\displaystyle \limsup_{\Theta_T\ni(y,s)\to(x,t)} v(y,s)},      
                           & \text{if } (x,t)\in\Theta \text{ and } t=T, \\
              c, & \text{if } (x,t)\in\Theta \text{ and } t>T,
              \end{cases}
\]
is subparabolic in $\Theta$.
\end{prop}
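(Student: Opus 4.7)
The plan is to verify directly the two defining properties of subparabolicity for $w$ from Definition~\ref{def:superparabolic}: upper semicontinuity with finite values, and the comparison principle on every $C^{2,\alpha}$-cylinder compactly contained in $\Theta$. Upper semicontinuity is inherited from $v$ on the open set $\Theta_T$, and is trivial on $\Theta\cap\{t>T\}$ where $w\equiv c$ and any approaching sequence $(y_k,s_k)\to(x_0,t_0)$ with $t_0>T$ eventually has $s_k>T$; at a slice point $(x_0,T)\in\Theta$, the choice of $w(x_0,T)$ as the $\Theta_T$-limsup of $v$, together with $v\ge c$, immediately gives $\limsup_{(y,s)\to(x_0,T)} w(y,s)\le w(x_0,T)$. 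Finiteness of $w$ on $\{t=T\}$ would follow by applying subparabolicity of $v$ in a small sub-cylinder whose parabolic boundary lies compactly in $\Theta_T$ and comparing $v$ there with the continuous parabolic extension (via Theorem~\ref{thm-cont-exist}) of an upper bound of $v$ on that boundary.

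For the comparison principle, I would fix a $C^{2,\alpha}$-cylinder $U_{t_1,t_2}\Subset\Theta$ and $h\in C(\overline{U}_{t_1,t_2})$ parabolic in $U_{t_1,t_2}$ with $h\ge w$ on $\partial_p U_{t_1,t_2}$, and split into three cases. If $t_2\le T$, then $U_{t_1,t_2}\Subset\Theta_T$ and $w=v$, so the subparabolicity of $v$ directly yields $h\ge v=w$. If $t_1\ge T$, then $w\equiv c$ in the interior (and $w\ge c$ on the bottom slice when $t_1=T$); since the constant $c$ is itself parabolic and $h\ge w\ge c$ on $\partial_p U_{t_1,t_2}$, the comparison part of Theorem~\ref{thm-cont-exist} applied to $h$ and to the constant parabolic function $c$ gives $h\ge c=w$.

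The main obstacle is the mixed case $t_1<T<t_2$, which I would split across the slice $\{t=T\}$ into two steps. First, for every sufficiently small $\varepsilon>0$ one has $U_{t_1,T-\varepsilon}\Subset\Theta_T$, and since $h$ is continuous on $\overline{U}_{t_1,t_2}$ and $w=v$ on $\partial_p U_{t_1,T-\varepsilon}$, the subparabolicity of $v$ yields $h\ge v=w$ in $U_{t_1,T-\varepsilon}$; letting $\varepsilon\to 0$ extends this to all of $U\times(t_1,T)$. Continuity of $h$ combined with the very definition of $w$ on the slice,
\[
h(x,T) = \lim_{\substack{(y,s)\to(x,T) \\ s<T}} h(y,s)
        \ge \limsup_{\Theta_T\ni(y,s)\to(x,T)} v(y,s) = w(x,T),
\]
then extends the inequality to $\overline{U}\times\{T\}\cap U_{t_1,t_2}$. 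Finally, on the upper sub-cylinder $U_{T,t_2}$ the lateral part of $\partial_p U_{T,t_2}$ lies in $\partial_p U_{t_1,t_2}$ (so $h\ge w\ge c$ there), while on the bottom $\overline{U}\times\{T\}$ the preceding step gives $h\ge w\ge c$; one more application of Theorem~\ref{thm-cont-exist} to $h$ and to the constant parabolic function $c$ in $U_{T,t_2}$ yields $h\ge c=w$ there, completing the proof.
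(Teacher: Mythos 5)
Your proof is correct and follows essentially the same route as the paper's (the paper writes out the superparabolic companion, Proposition~\ref{prop-extend-superpar-m}, and declares this one analogous): inherit the semicontinuity, split the comparison principle into the three time cases, exhaust the lower part by sub-cylinders $U_{t_1,T-\varepsilon}$, pass to the slice $\{t=T\}$ via continuity of $h$ and the definition of $w$, and handle the top by comparison with the constant $c$ via Theorem~\ref{thm-cont-exist}. The only slip is the edge case $t_2=T$: there $U_{t_1,t_2}$ is \emph{not} compactly contained in $\Theta_T$ (its closure meets $\{t=T\}$), so that case should be routed through your $\varepsilon$-exhaustion from the mixed case rather than through the first case — a one-line fix.
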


\begin{prop}  \label{prop-extend-superpar-m}
Let $m >0$.
Assume that $v$ is a 
superparabolic function in $\Theta_T$ satisfying
$v\le M$ for some $M<\infty$. 
Then the function
\[
w(x,t) = \begin{cases}
              v(x,t), & \text{if }(x,t)\in\Theta_T, \\
              {\displaystyle \liminf_{\Theta_T\ni(y,s)\to(x,t)} v(y,s)},      
                           & \text{if } (x,t)\in\Theta \text{ and } t=T, \\
              M, & \text{if }(x,t)\in\Theta \text{ and } t>T,
              \end{cases}
\]
is superparabolic in $\Theta$.
\end{prop}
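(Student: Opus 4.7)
The plan is to verify the three conditions of Definition~\ref{def:superparabolic} for $w$. Condition (ii) is immediate since $v\le M<\infty$ forces $w\le M$ everywhere in $\Theta$, so $w$ is finite on all of $\Theta$.

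For the lower semicontinuity condition (i), I would split on the position of $(x_0,t_0)\in\Theta$. If $t_0\ne T$, a sufficiently small neighborhood of $(x_0,t_0)$ lies either in $\Theta_T$, where $w=v$ is lsc by the superparabolicity of $v$, or in $\{t>T\}$, where $w=M$ is constant. For $t_0=T$ I would first check by a standard diagonal argument that the spatial slice $g(x):=w(x,T)$ is lsc in $x$. Then for an arbitrary sequence $(y_j,s_j)\to(x_0,T)$, I would split it according to whether $s_j<T$, $s_j>T$ or $s_j=T$; the liminf of $w$ along these three pieces is at least $g(x_0)$ by, respectively, the definition of $g$ as a liminf from $\Theta_T$, the bound $g(x_0)\le M=w$ for $s>T$, and the spatial lsc of $g$.

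For condition (iii), let $U_{t_1,t_2}\Subset\Theta$ be a $C^{2,\alpha}$-cylinder and $h\in C(\overline U_{t_1,t_2})$ be parabolic in $U_{t_1,t_2}$ with $h\le w$ on $\bdy_pU_{t_1,t_2}$. I would handle this by a case split on the position of $[t_1,t_2]$ relative to $T$. If $t_2<T$, then $U_{t_1,t_2}\Subset\Theta_T$ and the superparabolicity of $v$ applies directly. If $t_1\ge T$, then $w\equiv M$ on $U_{t_1,t_2}$ and $h\le w\le M$ on $\bdy_pU_{t_1,t_2}$; since the constant $M$ is itself parabolic and $C^{2,\alpha}\subset C^{1,\beta}$, the comparison statement of Theorem~\ref{thm-cont-exist} gives $h\le M=w$.

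The hard part will be the two remaining cases, where the cylinder abuts or crosses $\{t=T\}$. For $t_2=T$ I would exhaust $U_{t_1,T}$ by the sub-cylinders $U_{t_1,T-\eps}\Subset\Theta_T$; since $\bdy_pU_{t_1,T-\eps}\subset\bdy_pU_{t_1,t_2}$ lies entirely in $\{t<T\}$ where $w=v$, the superparabolicity of $v$ yields $h\le v$ on each, and letting $\eps\to0$ gives $h\le v=w$ on all of $U_{t_1,T}$. For $t_1<T<t_2$, I would split the cylinder at $t=T$: the previous argument applied to $U_{t_1,T}$ gives $h\le v$ there, and then by the continuity of $h$, the pointwise inequality $h\le v$ on $U_{t_1,T}$, and the definition of $g$ as a liminf from $\Theta_T$, I would deduce $h(x,T)\le g(x)=w(x,T)$ for every $x\in U$. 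Combined with $h\le w=M$ on the lateral part $\bdy U\times(T,t_2]$ and with $g\le M$ on $\bdy U\times\{T\}$, this shows $h\le M$ on $\bdy_pU_{T,t_2}$, so Theorem~\ref{thm-cont-exist} applied to $U_{T,t_2}$ with the parabolic comparison function $M$ yields $h\le M=w$ on $U_{T,t_2}$, completing the proof.
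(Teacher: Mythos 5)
Your proof is correct and follows essentially the same route as the paper's: the same case split on the position of $[t_1,t_2]$ relative to $T$, exhaustion of $U_{t_1,T}$ by sub-cylinders $U_{t_1,T-\eps}\Subset\Theta_T$, passage to the slice $t=T$ via the liminf definition and continuity of $h$, and the comparison part of Theorem~\ref{thm-cont-exist} against the constant $M$ above $T$. The only difference is that you spell out the lower semicontinuity of $w$ across $\{t=T\}$ in detail, which the paper states in one sentence.
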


The proofs of these two results are similar, we give the proof of the
latter one.

\begin{proof}
Since $v$ is lower semicontinuous, so is $w$, and it is also bounded.
It remains to show the comparison principle.
To this end, let 
  $U_{t_1,t_2}\Subset{\Th}$ be a 
$C^{2,\alp}$-cylinder, 
and $h \in C(\overline{U}_{t_1,t_2})$ be parabolic in $U_{t_1,t_2}$ and such
that $h \le w$ on $\bdyp U_{t_1,t_2}$.
In particular $h \le M$ on $\bdyp U_{t_1,t_2}$, and thus
by (the comparison part of) Theorem~\ref{thm-cont-exist}, $h \le M$ in $U_{t_1,t_2}$.
Since $v$ is superparabolic in $\Theta_T$, we
see that $h \le w$ in $U_{t_1,t_2}$ if either $t_2 < T$ or $t_1 \ge T$.

Assume therefore that $t_1 < T \le t_2$.
Since $w=v \ge h$ in $U_{t_1,s}$ for each $s<T$, 
this holds also in $U_{t_1,T}$.
If $t_2>T$, it follows from the definition of $w$ and the continuity of $h$
that $h \le w$ in $U \times \{T\}$, and
moreover $h \le M = w $ in $U_{T,t_2}$.
\end{proof}

Using Theorem~\ref{thm-stability} 
we can obtain the following convergence result.

\begin{prop} \label{prop-inc-superparabolic}
Let $m >0$ and $u_k$ be an increasing sequence of superparabolic functions
 in $\Theta$.
If $u:=\lim_{k \to\infty} u_k$ is finite in a dense subset of $\Theta$,
then $u$ is superparabolic in $\Theta$.
\end{prop}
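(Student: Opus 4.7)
The plan is to verify the three clauses of Definition~\ref{def:superparabolic} for the limit function $u$. Clause \ref{ii} is exactly the hypothesis, and clause \ref{i} is immediate: the supremum of an increasing family of lower semicontinuous functions is lower semicontinuous, since for every $a\in\R$ the set $\{u>a\}=\bigcup_k\{u_k>a\}$ is open. The substance of the argument lies in clause \ref{iii}.

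So fix a $C^{2,\alpha}$-cylinder $U_{t_1,t_2}\Subset\Theta$ and let $h\in C(\overline{U}_{t_1,t_2})$ be parabolic in $U_{t_1,t_2}$ with $h\le u$ on $\bdyp U_{t_1,t_2}$. For each $\eps>0$, I will use the combination of lower semicontinuity and monotone convergence $u_k\nearrow u$ to find $K=K_\eps$ such that
\[
u_K(\xi)>h(\xi)-\eps\quad\text{for every }\xi\in\bdyp U_{t_1,t_2}.
\]
For a single point $\xi_0\in\bdyp U_{t_1,t_2}$, since $u(\xi_0)\ge h(\xi_0)$ and $u_k(\xi_0)\nearrow u(\xi_0)$ (in $[0,\infty]$), one can pick $K(\xi_0)$ with $u_{K(\xi_0)}(\xi_0)>h(\xi_0)-\eps/2$. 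The lower semicontinuity of $u_{K(\xi_0)}$ gives a neighbourhood of $\xi_0$ on which $u_{K(\xi_0)}>h(\xi_0)-\eps/2$, and the continuity of $h$ lets us shrink the neighbourhood so that also $h<h(\xi_0)+\eps/2$ there. Compactness of $\bdyp U_{t_1,t_2}$ extracts a finite subcover, and the monotonicity of $\{u_k\}$ lets us take $K$ to be the largest index involved.

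Next, the function $(h-\eps)_+\in C(\bdyp U_{t_1,t_2})$ is nonnegative, and the $C^{2,\alpha}$-smoothness of $U$ is more than enough to apply Theorem~\ref{thm-cont-exist}, producing a parabolic $h_\eps\in C(\overline{U}_{t_1,t_2})$ with boundary data $(h-\eps)_+$. Since $(h-\eps)_+\le u_K$ on $\bdyp U_{t_1,t_2}$, and since $u_K$ is superparabolic, the comparison clause \ref{iii} applied to $u_K$ yields $h_\eps\le u_K\le u$ throughout $U_{t_1,t_2}$. Letting $\eps\to 0^+$, the boundary data $(h-\eps)_+$ converge uniformly to $h$ on the compact set $\bdyp U_{t_1,t_2}$, so Theorem~\ref{thm-stability} gives $h_\eps\to h$ locally uniformly in $U\times(t_1,t_2]\supset U_{t_1,t_2}$, and the inequality $h\le u$ on $U_{t_1,t_2}$ follows.

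The main obstacle is the nonlinearity of the porous medium equation: unlike in the harmonic or caloric setting, one cannot simply subtract a constant from $h$ to produce a parabolic function that strictly undercuts $u$ on the parabolic boundary, both because the equation is not translation invariant and because we are restricted to nonnegative functions. Replacing $h-\eps$ by $(h-\eps)_+$ restores nonnegativity while keeping continuous boundary data, and the stability result of Theorem~\ref{thm-stability} is what allows us to pass to the limit and recover $h$ itself in the interior.
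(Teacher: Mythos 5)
Your proof is correct and follows essentially the same route as the paper's: the paper also truncates to $(h-1/j)_\limplus$ on the parabolic boundary, extends by Theorem~\ref{thm-cont-exist}, finds an index $k_j$ via compactness and lower semicontinuity, applies the definition of superparabolicity of $u_{k_j}$, and concludes with the stability result of Theorem~\ref{thm-stability}. The only difference is that you spell out the compactness/Dini-type argument for locating $K_\eps$, which the paper leaves implicit.
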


\begin{proof}
As the sequence is increasing, $u$ is automatically lower semicontinuous,
and thus it is only the comparison principle \ref{iii} that we need to prove.
Let   $U_{t_1,t_2}\Subset{\Th}$ be a $C^{2,\alp}$-cylinder,
and let  $h\in C(\overline{U}_{t_1,t_2})$ be parabolic in
$U_{t_1,t_2}$ and satisfy  $h\leq u$ on $\partial_p U_{t_1,t_2}$.
Let $h_j=(h|_{\partial_p U_{t_1,t_2}}-1/j)_\limplus$ on $\partial_p U_{t_1,t_2}$
and extend it to $U_{t_1,t_2}$ as the unique continuous extension
which is parabolic in $U_{t_1,t_2}$, as 
provided by Theorem~\ref{thm-cont-exist}.
It follows from the compactness and the lower semicontinuity that
for each $j$ there is $k_j$ such that $h_j \le u_{k_j}$ on $\partial_p U_{t_1,t_2}$.
As $u_{k_j}$ is superparabolic, it then follows from the definition
that $h_j \le u_{k_j} \le u$ in $U_{t_1,t_2}$.
By Theorem~\ref{thm-stability}, $h \le u$ in $U_{t_1,t_2}$.
Thus $u$ is superparabolic.
\end{proof}

For subparabolic functions we have the following result.

\begin{prop} 
Let $m >0$ and $u_k$ be a decreasing sequence of subparabolic functions 
in $\Theta$.
Then  $u:=\lim_{k\to \infty} u$ is subparabolic in $\Theta$.
\end{prop}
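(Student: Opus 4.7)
The plan is to adapt the argument of Proposition~\ref{prop-inc-superparabolic}, reversing the direction of the inequalities and using upper semicontinuity in place of lower semicontinuity. First, because each $u_k$ is upper semicontinuous and the sequence is decreasing, the pointwise limit $u$ is automatically upper semicontinuous as the infimum of a family of upper semicontinuous functions. Also, $0 \le u \le u_1$, and $u_1$ is locally bounded by Lemma~\ref{lem-min-superparabolic}(d), so $u$ is locally bounded; in particular, it is finite everywhere.

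The core of the proof is to verify the comparison principle in Definition~\ref{def:superparabolic}. Fix a $C^{2,\alpha}$-cylinder $U_{t_1,t_2}\Subset\Theta$ and let $h\in C(\overline{U}_{t_1,t_2})$ be parabolic in $U_{t_1,t_2}$ with $h\ge u$ on $\bdyp U_{t_1,t_2}$. For each $j$, invoke Theorem~\ref{thm-cont-exist} to obtain the unique parabolic $\tilde h_j\in C(\overline{U}_{t_1,t_2})$ with boundary values $h+1/j$ on $\bdyp U_{t_1,t_2}$. Since $u_k\downarrow u\le h<h+1/j$ pointwise on $\bdyp U_{t_1,t_2}$, the sets
\[
V_k=\{\xi\in\overline{U}_{t_1,t_2}:u_k(\xi)<h(\xi)+1/j\}
\]
are open (by upper semicontinuity of $u_k$ and continuity of $h$), increasing in $k$, and cover the compact set $\bdyp U_{t_1,t_2}$. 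Hence some $V_{k_j}$ already contains $\bdyp U_{t_1,t_2}$, giving $u_{k_j}\le \tilde h_j$ on $\bdyp U_{t_1,t_2}$. Subparabolicity of $u_{k_j}$ then yields $u\le u_{k_j}\le \tilde h_j$ throughout $U_{t_1,t_2}$.

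To finish, I would let $j\to\infty$. Since $\sup_{\bdyp U_{t_1,t_2}}|(h+1/j)-h|=1/j\to 0$, Theorem~\ref{thm-stability} gives $\tilde h_j\to h$ locally uniformly in $U\times(t_1,t_2]\supset U_{t_1,t_2}$, and passing to the limit in $u\le\tilde h_j$ produces $u\le h$ in $U_{t_1,t_2}$, as required.

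The step that needs the most care is the construction of $\tilde h_j$: because constants are not admissible perturbations of the porous medium equation, $h+1/j$ is merely a continuous boundary datum and must be re-solved parabolically before the subparabolicity of $u_{k_j}$ can be applied. This is what forces the use of the existence theorem together with the stability result; once those are in hand, the compactness/open-cover step is a routine Dini-type argument adapted to upper semicontinuity.
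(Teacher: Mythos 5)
Your proof is correct and follows essentially the same route as the paper, which simply mirrors the argument of Proposition~\ref{prop-inc-superparabolic}: perturb the boundary datum of $h$ by $1/j$ (upward here, so no positive part is needed), re-solve via Theorem~\ref{thm-cont-exist}, use compactness of $\bdyp U_{t_1,t_2}$ and semicontinuity to find $k_j$ with the boundary inequality, apply the defining comparison principle, and conclude with Theorem~\ref{thm-stability}. Your write-up just makes the Dini-type covering step and the finiteness observation explicit, which the paper leaves implicit.
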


\begin{proof}
The proof is almost identical to the proof
of Proposition~\ref{prop-inc-superparabolic}.
However, this time the finiteness is automatic.
\end{proof}

Using this we can improve on Proposition~3.3 in 
Kinnunen--Lindqvist~\cite{Kinnunen-Lindqvist:2008} as follows 
(for nonnegative functions).

\begin{prop} 
Let $m \ge 1$.
If $u_k$ is an increasing sequence of supersolutions 
and $u:=\lim_{k \to\infty} u$ is locally bounded, 
then $u$ is a supersolution.

Similarly, if $u_k$ is a decreasing sequence of subsolutions,
then $u:=\lim_{k \to\infty} u$ is 
a subsolution.
\end{prop}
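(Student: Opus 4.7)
The plan is to transfer both parts of the statement to the sub/super\-para\-bolic setting by passing to the lsc- (resp.\ usc-) regularization, invoke the monotone-limit results for sub/super\-para\-bolic functions already established, and then exploit the equivalence (for locally bounded functions) between being super\-para\-bolic and being a super\-solu\-tion afforded by Theorem~\ref{thm-essliminf}\,\ref{jj-super}, together with the corresponding statement for sub\-para\-bolic functions in Theorem~\ref{thm-esslimsup}\,\ref{jj-sub}.

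For the supersolution part, I would first set $v_k=(u_k)_*$. By Theorem~\ref{thm-lsc-repr}, $v_k=u_k$ a.e., and by Theorem~\ref{thm-lsc-supersoln} each $v_k$ is super\-para\-bolic. The a.e.\ inequality $u_k\le u_{k+1}$ propagates to the pointwise inequality $v_k\le v_{k+1}$ everywhere, because the essential limit inferior is unaffected by null-set modifications. Setting $v=\lim_{k\to\infty}v_k$, we have $v=u$ a.e., so $v$ is finite on a dense subset (since $u$ is locally bounded), and Proposition~\ref{prop-inc-superparabolic} yields that $v$ is super\-para\-bolic. Because $v$ is lsc-regularized by Theorem~\ref{thm-essliminf} and agrees a.e.\ with the locally bounded function $u$, the function $v$ is itself locally bounded; Theorem~\ref{thm-essliminf}\,\ref{jj-super} then gives that $v$ is a super\-solu\-tion. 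Since $u=v$ a.e., we have $u^m=v^m$ as elements of $L^2(t_1,t_2;W^{1,2}(U))$ for every $U_{t_1,t_2}\Subset\Theta$, and the integral inequality defining a super\-solu\-tion therefore transfers from $v$ to $u$.

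The subsolution part proceeds symmetrically with $v_k=(u_k)^*$. By Theorem~\ref{thm-usc-repr} and the sub\-para\-bolic half of Theorem~\ref{thm-lsc-supersoln}, each $v_k$ is sub\-para\-bolic and equals $u_k$ a.e. The essential limit superior gives $v_{k+1}\le v_k$ everywhere, and the preceding proposition on decreasing limits of sub\-para\-bolic functions yields that $v=\lim_{k\to\infty}v_k$ is sub\-para\-bolic. Then Theorem~\ref{thm-esslimsup}\,\ref{jj-sub}, which carries no local boundedness hypothesis, shows that $v$ is a sub\-solu\-tion, and a.e.\ equality transfers this to $u$.

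The step requiring the most care is the passage from the merely a.e.\ monotonicity of the original sequence to everywhere monotonicity of the regularized sequence; once this is in place, the remainder is a direct chaining of the cited monotone-limit and equivalence results.
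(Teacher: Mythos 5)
Your argument is correct and follows essentially the same route as the paper's proof: regularize (Theorems~\ref{thm-lsc-repr} and~\ref{thm-usc-repr}), pass to sub/super\-para\-bolicity via Theorem~\ref{thm-lsc-supersoln}, take the monotone limit using Proposition~\ref{prop-inc-superparabolic} and its sub\-para\-bolic counterpart, and return to sub/super\-solu\-tions via Theorems~\ref{thm-essliminf} and~\ref{thm-esslimsup}. You merely spell out the a.e.-to-everywhere monotonicity of the regularized sequence and the final a.e.\ transfer back to $u$, which the paper compresses into ``we may assume that $u_k$ are lsc-regularized''.
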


\begin{proof}
Consider first the case of supersolutions.
By Theorem~\ref{thm-lsc-repr} we may assume that $u_k$ are 
lsc-regularized.
By Theorem~\ref{thm-lsc-supersoln}, $u_k$ is superparabolic,
and thus $u$ is superparabolic, by Proposition~\ref{prop-inc-superparabolic}.
It then follows that $u$ is a supersolution by Theorem~\ref{thm-essliminf}.

The case for subsolutions is obtained similarly.
As before there is no need to assume local boundedness.
\end{proof}

We can now also conclude the following result, which we have not
seen in the literature, 
though it might be well known to experts in the field.

\begin{prop}\label{prop-weak-supersol}
Let $m \ge 1$. If $u$ and $v$ 
are supersolutions,
then so is $\min\{u,v\}$. 
Similarly, if $u$ and $v$ are subsolutions,
then so is $\max\{u,v\}$.

\end{prop}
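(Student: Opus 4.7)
\emph{Proof proposal.}
For the supersolution case, I would first use Theorem~\ref{thm-lsc-repr} to replace $u,v$ by their lsc-regularizations $u_*,v_*$ (which agree a.e.\ with $u,v$), and then invoke Theorem~\ref{thm-lsc-supersoln} to conclude that $u_*,v_*$ are superparabolic. For each integer $k\ge1$, the truncation $w_k:=\min\{u_*,v_*,k\}$ is superparabolic by Lemma~\ref{lem-min-superparabolic} and bounded, hence a supersolution by Theorem~\ref{thm-essliminf}\ref{jj-super}. Writing $w:=\min\{u_*,v_*\}=\lim_{k\to\infty}w_k$, the task reduces to passing to the limit in the weak supersolution inequality satisfied by each $w_k$.

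Since $x\mapsto x^m$ is increasing on $[0,\infty)$, one has $w^m=\min\{u_*^m,v_*^m\}$; because the pointwise minimum of two $W^{1,2}$-functions lies in $W^{1,2}$, the assumption $u_*^m,v_*^m\in L^2(t_1,t_2;W^{1,2}(U))$ for every $U_{t_1,t_2}\Subset\Theta$ immediately yields $w^m\in L^2(t_1,t_2;W^{1,2}(U))$, together with the pointwise bound $|\nabla w_k^m|\le|\nabla u_*^m|+|\nabla v_*^m|$. For the time term in the weak inequality
\[
\iintlim{t_1}{t_2}\int_U \nabla w_k^m\cdot\nabla\phi\,dx\,dt - \iintlim{t_1}{t_2}\int_U w_k\,\partial_t\phi\,dx\,dt \ge 0,\quad 0\le\phi\in C_0^\infty(U_{t_1,t_2}),
\]
dominated convergence applies via $w_k\le u_*+v_*\in L^1_{\rm loc}$ (for $m\ge1$ the bound $u_*^m,v_*^m\in L^2_{\rm loc}$ gives $u_*,v_*\in L^{2m}_{\rm loc}\subset L^1_{\rm loc}$). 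For the gradient term, the uniform $L^2$-bound above extracts a weakly convergent subsequence of $\nabla w_k^m$, and the $L^2_{\rm loc}$-convergence $w_k^m\to w^m$ (by dominated convergence, since $w_k^m\le u_*^m\in L^2_{\rm loc}$) identifies the weak limit as $\nabla w^m$. Hence the inequality passes to the limit.

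For subsolutions the argument is shorter. One passes to the usc-regularizations $u^*,v^*$ (Theorems~\ref{thm-usc-repr} and~\ref{thm-lsc-supersoln}), which are subparabolic; $\max\{u^*,v^*\}$ is then subparabolic by Lemma~\ref{lem-min-superparabolic}, and Theorem~\ref{thm-esslimsup}\ref{jj-sub} directly yields that it is a subsolution. No truncation is needed, since nonnegative subparabolic functions are automatically locally bounded by Lemma~\ref{lem-min-superparabolic}.

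The main obstacle is the supersolution limit step: one cannot simply appeal to the increasing-convergence proposition stated just above the target statement, because $w=\min\{u,v\}$ need not be locally bounded (for example if $u$ and $v$ share a pole). Hence the direct weak-formulation argument outlined above, resting on the uniform $L^2$-bound for $\nabla w_k^m$ and the dominated convergence of $w_k^m$ to $w^m$, seems essential.
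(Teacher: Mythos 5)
Your argument is correct and follows essentially the same route as the paper: lsc/usc-regularize, pass to superparabolic/subparabolic functions via Theorem~\ref{thm-lsc-supersoln}, take the min/max using Lemma~\ref{lem-min-superparabolic}, observe that $\min\{u,v\}^m$ inherits the $L^2(t_1,t_2;W^{1,2}(U))$ regularity, and return to supersolutions. The truncation-and-limit step you carry out by hand for $\min\{u_*,v_*,k\}$ is precisely the content of Propositions~\ref{prop-char-uk} and~\ref{prop-KiLind-improve}, which the paper simply cites at this point, so your concern about the unbounded limit is already resolved there.
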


To prove this we need the following characterization.

\begin{prop} \label{prop-char-uk}
Let $m >0$ and $u:\Theta \to [0,\infty]$ be a function 
such that $u^m \in L^{2}(t_1,t_2;W^{1,2}(U))$ whenever 
$U_{t_1,t_2} \Subset \Theta$.
Then $u$ is a supersolution if and only if $u_k:=\min\{u,k\}$ is a supersolution
for all $k=1,2,\ldots$.
\end{prop}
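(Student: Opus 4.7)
The plan is to prove each direction separately, with the forward implication $(\Rightarrow)$ being the subtler one. For the easier direction $(\Leftarrow)$, assume every $u_k$ is a supersolution. The Sobolev regularity $u^m\in L^2(t_1,t_2;W^{1,2}(U))$ is part of the hypothesis, so only the weak integral inequality needs to be checked. I would start from the supersolution inequality for $u_k$ with a nonnegative test function $\phi\in C_0^\infty(U_{t_1,t_2})$ and pass to the limit $k\to\infty$. Using the standard Sobolev truncation identity $\nabla u_k^m=\chi_{\{u\le k\}}\nabla u^m$, the gradient integrand converges a.e.\ to $\nabla u^m\cdot\nabla\phi$ and is dominated by $|\nabla u^m||\nabla\phi|\in L^1$, so dominated convergence applies. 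For the time-derivative term, $u_k\nearrow u$ pointwise with $|u_k\phi_t|\le|u\phi_t|$; since $u^m\in L^2_{\mathrm{loc}}$ gives $u\in L^{2m}_{\mathrm{loc}}\subset L^1_{\mathrm{loc}}$ (for $m\ge\tfrac12$, and in particular in the degenerate range of the paper), dominated convergence again applies and we obtain the supersolution inequality for $u$.

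For the harder direction $(\Rightarrow)$, assume $u$ is a supersolution. I would route through the superparabolic machinery built up in the previous section: by Theorem~\ref{thm-lsc-supersoln} the lsc-regularization $u_*$ is superparabolic; constants are trivially superparabolic; hence, applying Lemma~\ref{lem-min-superparabolic} to $u_*$ and the constant $k$, the bounded function $\min\{u_*,k\}$ is superparabolic. Theorem~\ref{thm-essliminf}\ref{jj-super} then upgrades this locally bounded superparabolic function to a supersolution. Since $u_*=u$ a.e.\ by Theorem~\ref{thm-lsc-repr}, we have $\min\{u_*,k\}=u_k$ a.e., so the integral inequality, being unchanged by modification on null sets, transfers to $u_k$. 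The remaining Sobolev condition $u_k^m\in L^2_{\mathrm{loc}}(W^{1,2})$ is immediate from standard Sobolev truncation applied to the hypothesis on $u^m$.

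The main obstacle, in my view, is that the proposition is formally stated for all $m>0$, whereas the forward direction as sketched essentially relies on the equivalence between bounded lsc-regularized supersolutions and bounded superparabolic functions, an equivalence available in the paper only for $m\ge 1$ via Theorems~\ref{thm-lsc-supersoln}, \ref{thm-lsc-repr} and~\ref{thm-essliminf}. For the singular range $0<m<1$ one would need a direct verification -- most plausibly by Steklov averaging together with a carefully chosen test function localized on $\{u<k\}$, such as $\phi\cdot\eta_\varepsilon(k-u)$ for a smooth one-sided cutoff $\eta_\varepsilon$, or alternatively by invoking a one-sided obstacle problem framework with upper obstacle $k$ to identify $u_k$ as its unique supersolution. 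This technical extension is what I would expect to be the real difficulty if the full $m>0$ range is to be recovered.
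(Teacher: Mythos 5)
Your backward direction is exactly the paper's argument (the paper states it as a one-line interchange of limit and integral; you supply the dominated-convergence details, correctly, via the truncation identity for $\nabla u_k^m$ and monotonicity of $u_k$). Your forward direction, however, takes a genuinely different route from the paper. The paper does not argue at all: it simply cites DiBenedetto--Gianazza--Vespri \cite[Lemma~3.5.1]{DBGV-mono} for $m\ge1$ and B\"ogelein--Duzaar--Gianazza \cite[Lemma~3.1]{BDG4} for $0<m<1$, both of which establish directly (by energy/truncation arguments of the kind you sketch at the end) that truncations of supersolutions are supersolutions. You instead route through the superparabolic machinery: $u_*$ superparabolic by Theorem~\ref{thm-lsc-supersoln}, $\min\{u_*,k\}$ superparabolic by Lemma~\ref{lem-min-superparabolic}, hence a supersolution by Theorem~\ref{thm-essliminf}, and $\min\{u_*,k\}=u_k$ a.e. This is valid for $m\ge1$ and creates no circularity, since none of the cited results depend on Proposition~\ref{prop-char-uk}; what it buys is a self-contained derivation from results already in the paper, at the cost of being confined to the degenerate range where that machinery is available.

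The one point where your proposal falls short of the stated proposition is the forward direction for $0<m<1$, which you yourself flag. Your diagnosis is accurate: a direct test-function (Steklov-averaged, truncation-based) verification is needed there, and this is precisely what the paper outsources to the citation of \cite[Lemma~3.1]{BDG4} rather than proving. So the ``real difficulty'' you anticipate is real, but in the paper it is handled by reference, not by argument.
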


\begin{proof}
Assume first that $u$ is a supersolution.
Then it follows from 
DiBenedetto--Gianazza--Vespri~\cite[Lemma~3.5.1]{DBGV-mono}
that also $u_k$ is a supersolution, if $m \ge 1$.
For $0<m<1$, this was proved in a slightly different context, and
for a wider class of equations, 
in B\"ogelein--Duzaar--Gianazza~\cite[Lemma~3.1]{BDG4}.

Conversely, assume that $u_k$, $k=1,2,\ldots$, are supersolutions.
Let $U_{t_1,t_2} \Subset \Theta$ be a cylinder and $\phi \in C_0^\infty(U_{t_1,t_2})$
be nonnegative.
Then
\begin{align*}
& \iintlim{t_1}{t_2}\int_{U} { \nabla u^{m}} \cdot
\nabla\phi \, dx\,dt - \iintlim{t_1}{t_2}\int_{U} u
\partial_t \phi \, dx\,dt   \\
& \kern 10em =  
\lim_{k \to \infty} 
\iintlim{t_1}{t_2}\int_{U} { \nabla u_k^{m}} \cdot
\nabla\phi \, dx\,dt - \iintlim{t_1}{t_2}\int_{U} u_k
\partial_t\phi \, dx\,dt \ge 0,
\end{align*}
and thus $u$ is a supersolution.
\end{proof}

Using the characterization we can obtain the following consequence,
cf.\ Theorem~\ref{thm-essliminf}\,\ref{jj-super}.

\begin{prop} \label{prop-KiLind-improve}
Let $m\ge 1$ and  $u$ be superparabolic in $\Theta$.
If $u^m \in L^{2}(t_1,t_2;W^{1,2}(U))$ whenever 
$U_{t_1,t_2} \Subset \Theta$, 
then $u$ is a supersolution in $\Theta$.
\end{prop}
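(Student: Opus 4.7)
The plan is to reduce to the already-proven results, namely Theorem~\ref{thm-essliminf}\,\ref{jj-super} (local boundedness $+$ superparabolicity $\Rightarrow$ supersolution) and the truncation characterization in Proposition~\ref{prop-char-uk}. The strategy is to truncate $u$ from above by constants, handle the resulting bounded superparabolic functions via Theorem~\ref{thm-essliminf}, and then pass back to $u$ itself using the assumed Sobolev regularity of $u^m$.

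First I would set $u_k = \min\{u,k\}$ for each positive integer $k$. Since the constant function $k$ is a continuous weak solution of the porous medium equation (trivially, as both sides of~\eqref{eq:para} vanish), it is parabolic, and hence both sub- and superparabolic. By Lemma~\ref{lem-min-superparabolic}\,(a), $u_k = \min\{u,k\}$ is therefore superparabolic. Being bounded above by $k$ and below by $0$, $u_k$ is locally bounded, so Theorem~\ref{thm-essliminf}\,\ref{jj-super} applies and gives that $u_k$ is a weak supersolution in $\Theta$.

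Finally, I would invoke Proposition~\ref{prop-char-uk}: its hypothesis is exactly that $u^m \in L^{2}(t_1,t_2;W^{1,2}(U))$ whenever $U_{t_1,t_2} \Subset \Theta$, which is our standing assumption. Since each $u_k$ is a supersolution, the proposition yields that $u$ itself is a supersolution in $\Theta$.

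The main obstacle, such as it is, is not really an obstacle in the strict sense but rather the need to ensure that Lemma~\ref{lem-min-superparabolic}\,(a) applies, i.e.\ that constants are genuinely superparabolic in our sense of Definition~\ref{def:superparabolic}; this is verified via Theorem~\ref{thm-cont-exist}, which gives the comparison against the parabolic solution $h$. All other steps are simply the result of bolting together Proposition~\ref{prop-char-uk} and Theorem~\ref{thm-essliminf}, with the truncation at height $k$ serving as the bridge between the unbounded superparabolic $u$ and the locally bounded regime where the supersolution property is directly accessible.
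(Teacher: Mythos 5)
Your proposal is correct and follows essentially the same route as the paper: truncate at height $k$ to get superparabolic functions $u_k=\min\{u,k\}$ via Lemma~\ref{lem-min-superparabolic}, apply Theorem~\ref{thm-essliminf}\,\ref{jj-super} to the locally bounded truncations, and conclude with Proposition~\ref{prop-char-uk}. The only cosmetic difference is that you justify the truncation through part (a) of the lemma together with the superparabolicity of constants, whereas part (b) gives it directly; both are fine.
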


In general this kind of regularity does not hold for superparabolic
functions, since there are superparabolic functions which
are not supersolutions, the Barenblatt solution being
perhaps the easiest example, see 
Kinnunen--Lindqvist~\cite[p.\ 148]{Kinnunen-Lindqvist:2008}.

\begin{proof}
Let $k>0$.
By Lemma~\ref{lem-min-superparabolic}, $u_k:=\min\{u,k\}$ is superparabolic,
and hence  
a supersolution by Theorem~\ref{thm-essliminf}\,\ref{jj-super}.
It then follows from Proposition~\ref{prop-char-uk}, that $u$ is 
a supersolution.
\end{proof}

\begin{proof}[Proof of Proposition~\ref{prop-weak-supersol}]
First, assume that $u$ and $v$ are supersolutions.
By Theorem~\ref{thm-lsc-repr}, we may without loss of generality
assume that they are lsc-regularized.
It then follows from Theorem~\ref{thm-lsc-supersoln} that they are
both superparabolic, and hence by Lemma~\ref{lem-min-superparabolic},
so is $\min\{u,v\}$. 
Let $U_{t_1,t_2} \Subset \Theta$. 
As $u^m, v^m \in L^{2}(t_1,t_2;W^{1,2}(U))$,
also $\min\{u,v\}^m \in L^{2}(t_1,t_2;W^{1,2}(U))$.
Thus it follows from Proposition~\ref{prop-KiLind-improve} that
$\min\{u,v\}$ is a supersolution.

Next, we turn to the case when $u$ and $v$ are subsolutions.
By Theorem~\ref{thm-usc-repr}, we may 
assume that they  are usc-regularized.
It then follows from Theorem~\ref{thm-lsc-supersoln} that they are
both subparabolic, and, by Lemma~\ref{lem-min-superparabolic},
so is $\max\{u,v\}$.
Finally, by Theorem~\ref{thm-esslimsup}\,\ref{jj-sub},
$\max\{u,v\}$ is a subsolution.
\end{proof}

\section{Comparison principles for sub- and superparabolic functions}\label{S:Comparison}

In this section we obtain a series of different kinds of comparison principles
for sub- and superparabolic functions, which will be important later on.
Recall that one such comparison principle
has already been obtained
for cylinders when $m\ge1$ in Theorem~\ref{thm-comp-KLL}.
Note that the following two theorems do not require $m\ge1$.

\begin{theo}  \label{thm-para-comp-princ}
\textup{(Parabolic comparison principle for general sets)}
Let $m >0 $ and $\Theta$ be bounded.
Suppose that $u$ is super\-parabolic and $v$ is sub\-parabolic
in $\Theta$.
Let $T \in \R$ 
and assume that
 \begin{equation}  \label{eqn:liminf_vertailu}
  \infty \ne    \limsup_{\Theta \ni (y,s)\rightarrow (x,t)} v(y,s) <
   \liminf_{\Theta \ni (y,s)\rightarrow (x,t)} u(y,s) 
 \end{equation}
for all
$(x,t) \in\{(x,t) \in \partial\Theta : t< T\}$.
Then $v\le u$ in $\{(x,t) \in \Theta : t<T\}$.
\end{theo}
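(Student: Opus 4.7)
My plan is to argue by contradiction and combine the strict boundary inequality with a forward-in-time exhaustion of $\Omega := \Theta \cap \{t<T\}$ by $C^{2,\alpha}$-cylinders where the cylinder-wise comparison built into Definition~\ref{def:superparabolic} can be applied.

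\textbf{Reduction.} Since $v$ is subparabolic it is locally bounded by Lemma~\ref{lem-min-superparabolic}, and because $\limsup v < \infty$ at every boundary point with $t<T$, a standard semicontinuity/compactness argument shows $v$ is uniformly bounded on $\Theta\cap\{t\le T'\}$ for each fixed $T'<T$. Replacing $u$ by $\min\{u,M\}$, which is still superparabolic by Lemma~\ref{lem-min-superparabolic}, preserves the strict boundary inequality~\eqref{eqn:liminf_vertailu} as long as $M$ exceeds $\limsup v$ everywhere on the relevant portion of $\partial\Theta$. So I may assume both $u$ and $v$ are bounded. It then suffices to prove $v\le u$ on $\Theta\cap\{t\le T'\}$ for arbitrary $T'<T$ and send $T'\uparrow T$.

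\textbf{Strict margin near the boundary.} Fix such a $T'$ and consider the compact set $B:=\partial\Theta\cap\{t\le T'\}$. At each $\xi \in B$, the strict inequality~\eqref{eqn:liminf_vertailu} combined with the upper semicontinuity of $v$ and the lower semicontinuity of $u$ produces an open neighborhood $N_\xi$ and $\eps_\xi>0$ with $\sup_{N_\xi\cap\Theta} v + \eps_\xi \le \inf_{N_\xi\cap\Theta} u$. Extracting a finite subcover from $\{N_\xi\}_{\xi\in B}$ yields an open set $N\supset B$ and a uniform $\eps>0$ with $v+\eps\le u$ on $N\cap\Theta\cap\{t\le T'\}$.

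\textbf{Propagation into the interior.} It remains to propagate $v\le u$ from $N\cap\Theta\cap\{t\le T'\}$ to the compactly contained complement $\Omega' := (\Theta\cap\{t\le T'\})\setminus N$, which satisfies $\overline{\Omega'}\Subset\Theta$. I would exhaust $\Omega'$ from inside by an increasing sequence of finite unions of $C^{2,\alpha}$-cylinders $F_k\Subset\Theta$, arranged so that the cylinders march forward in time: each new cylinder has its parabolic boundary (bottom plus lateral sides) lying either inside $N$ (where the $\eps$-margin applies) or inside previously processed cylinders (where $v\le u$ has already been established). On each individual cylinder, the cylinder comparison in Definition~\ref{def:superparabolic}\,\ref{iii} is used: by the strict margin $v+\eps\le u$ on the parabolic boundary, one can sandwich a continuous parabolic function $h$ (produced via Theorems~\ref{thm-dirichlet-bvp} and~\ref{thm-cont-exist}) between $v$ and $u$ on $\bdyp$-data, and Theorem~\ref{thm-stability} handles the passage to continuous approximations of the semicontinuous datum. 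Passing to the limit $k\to\infty$ then gives $v\le u$ on $\Omega'$, which together with the $N$-part produces the desired inequality on $\Theta\cap\{t\le T'\}$.

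\textbf{The main obstacle.} The key difficulty is orchestrating the forward-in-time exhaustion so that for every cylinder $U_{t_1,t_2}\subset F_k$ the parabolic boundary $\bdy_p U_{t_1,t_2}$ lies in territory where $v\le u$ has already been secured. The asymmetric definition of the parabolic boundary (bottom and lateral sides, \emph{not} the top) is what makes such an inductive time-march possible: from any known bound at an earlier time one may reach slightly higher in time via a thin cylinder. The strict boundary hypothesis~\eqref{eqn:liminf_vertailu} is indispensable precisely at this stage: it furnishes the uniform positive gap $\eps$ needed to absorb the loss incurred when replacing the merely semicontinuous $v$ and $u$ by continuous Dirichlet data on each cylinder's parabolic boundary, which is unavailable in a nonstrict comparison statement and is the reason the authors formulate Theorem~\ref{thm-para-comp-princ} as a \emph{strict} principle.
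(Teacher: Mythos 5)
Your reduction and your ``strict margin near the boundary'' step are fine, but the decisive part of the argument --- what you call ``propagation into the interior'' --- is only a plan, and as described it has a genuine gap. You propose to exhaust the interior region by cylinders ``marching forward in time'' so that each cylinder's parabolic boundary lies either in the boundary collar $N$ or in previously processed cylinders. The trouble is that the lateral boundary $\bdy U\times(t_1,t_2]$ of a cylinder lives at the \emph{same} time levels as its interior, so it cannot be secured by induction on time; it would have to be secured by other cylinders at the same times, and for a general bounded open $\Theta$ (whose time-slices are arbitrary open sets that vary with $t$) it is not at all clear that such a well-founded covering exists. You explicitly flag this orchestration as ``the main obstacle'' but do not resolve it, and resolving it is essentially the whole content of the theorem.

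The paper's proof avoids the covering problem entirely. It sets $E=\{(x,t)\in\Theta: t\le T-\eps,\ v>u\}$, uses the strict hypothesis \eqref{eqn:liminf_vertailu} together with semicontinuity and compactness to show that $\itoverline{E}$ is a \emph{compact subset of} $\Theta$ (this is the real role of the strict inequality --- it keeps the bad set away from $\bdy\Theta$), and then takes the earliest time $T_0$ at which $E$ is nonempty. The bad set at time $T_0$ is a compact set $K\Subset\Theta$, so one can choose a single smooth spatial set $U\supset K$ and a short time interval $(\sigma,\tau)$ with $\sigma<T_0<\tau$ such that $E\cap\{t\le\tau\}\Subset U\times(\sigma,\tau]$. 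The point is that $\bdy_p U_{\sigma,\tau}$ then avoids $E$ not because it is near $\bdy\Theta$, but because the bad set at those times is spatially compactly contained in $U$ and the bottom is at time $\sigma<T_0$. On this parabolic boundary $v\le u$, and since $v$ is usc and $u$ is lsc one can insert a continuous $\psi$ with $v\le\psi\le u$, solve for a continuous parabolic $h$ with data $\psi$ via Theorem~\ref{thm-cont-exist}, and apply condition \ref{iii} of Definition~\ref{def:superparabolic} to both $u$ and $v$ to get $v\le h\le u$ in $U_{\sigma,\tau}$, contradicting the minimality of $T_0$. (Note also that the strict gap $\eps$ is not what lets one pass from semicontinuous to continuous data --- the nonstrict inequality $v\le u$ on the compact parabolic boundary already suffices for the insertion of $\psi$.) To repair your proof you would need to replace the exhaustion scheme by this ``first bad time'' argument or something equivalent.
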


\begin{remark}
The proof of this comparison principle is very different from the proof
of the nonstrict comparison principle in cylinders in 
Theorem~\ref{thm-comp-KLL}.
Our proof is based on the proof in
Bj\"orn--Bj\"orn--Gianazza--Parviainen~\cite[Theorem~2.4]{BBGP}
for the \p-parabolic equation~\eqref{eq-p-para}.
\end{remark}

\begin{proof}[Proof of Theorem~\ref{thm-para-comp-princ}]
Let $\eps >0$ and
\[
E= \{(x,t)\in\Theta: t\le T-\eps \text{ and } v(x,t) > u(x,t)\}.
\]
By \eqref{eqn:liminf_vertailu}, together with
the  compactness of $\{(x,t)\in\bdry\Theta: t\le T-\eps\}$
and the semicontinuity of $u$ and $v$, we conclude that
$\itoverline{E}$ is a compact subset of $\Theta$.
We argue by contradiction. Assume that $E\ne \emptyset$, and
let
\[
T_0=\inf \{t : (x,t) \in E\} = \min \{t : (x,t) \in \itoverline{E}\}
\quad \text{and} \quad 
K=\{(x,t) \in \itoverline{E} : t=T_0\}.
\]
Since $K$ is compact,
we can find an open $C^{2,\alp}$-smooth set $U \subset \R^n$ 
such that
\[
      K \Subset U \times \{T_0\}\Subset \Theta,
\]
and thus also $\sigma < T_0 < \tau$ such that
\[
     \{(x,t) \in E : t \le  \tau\} \Subset U \times (\sigma,\tau]
     \Subset \Theta.
\]
In particular, the parabolic boundary
$\bdry_p U_{\sigma,\tau} \subset\Theta\setm E$, 
and hence $v \le u$ on $\bdry_p U_{\sigma,\tau}$.
(Here we could apply 
Theorem~\ref{thm-comp-KLL}, but in addition to adding the
requirement $m \ge 1$, it would also make this proof less elementary.)
Due to the semicontinuity of $u$ and $v$, there is a continuous
function $\psi$ on $\bdry_p U_{\sigma,\tau}$ such that $v \le \psi \le u$.
By Theorem~\ref{thm-cont-exist}, we can find
a function $h \in C(\overline{U}_{\sigma,\tau})$ 
which is parabolic in $U_{\sigma,\tau}$ 
and continuously attains its boundary values $h=\psi$ on $\bdry_p U_{\sigma,\tau}$.

The comparison principle in the definition of sub/super\-para\-bolic functions 
applied in $U_{\sigma,\tau}$ to $v$ and $h$, and to $u$ and $h$, 
shows that
$v\le h \le u$ in $U_{\sigma,\tau}$.  
Thus, we obtain that $U_{\sigma,\tau} \cap E = \emptyset$, and so $T_0 \ge \tau$, 
which gives a contradiction.
Hence $E$ must be empty, and letting $\eps \to 0$ concludes the proof.
\end{proof}

A direct consequence of Theorem~\ref{thm-para-comp-princ}
is the following comparison principle, which can be
considered as a sort of \emph{elliptic} version of the comparison principle,
since it does not acknowledge the parabolic boundary 
and uses all boundary points.
(To prove it just apply Theorem~\ref{thm-para-comp-princ} with a large
enough $T$.)

\begin{theo} \label{thm-comp-princ}
\textup{(Elliptic-type comparison principle)}
Let $m >0 $ and  $\Theta$ be bounded. 
Suppose that
$u$ is super\-pa\-ra\-bol\-ic and $v$ is sub\-parabolic in $\Theta$.
If
\begin{equation} \label{eq-elliptic}
   \infty \ne    \limsup_{\Theta \ni (y,s)\rightarrow (x,t)} v(y,s) <
    \liminf_{\Theta \ni (y,s)\rightarrow (x,t)} u(y,s) 
\end{equation}
for all $(x,t) \in \partial\Theta$,
then $v\leq u$ in $\Theta$.
\end{theo}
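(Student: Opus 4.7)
The plan is to deduce Theorem~\ref{thm-comp-princ} directly from Theorem~\ref{thm-para-comp-princ} by choosing the time parameter $T$ large enough to engulf the entire set $\Theta$. Since $\Theta$ is bounded, the projection of $\Theta$ onto the time axis is a bounded subset of $\R$, so I can pick any
\[
T > \sup\{ t : (x,t) \in \itoverline{\Theta}\}.
\]
With this choice, every point of $\itoverline{\Theta}$ has time coordinate strictly less than $T$.

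Consequently, $\{(x,t) \in \bdry\Theta : t < T\} = \bdry\Theta$, so the boundary hypothesis \eqref{eqn:liminf_vertailu} of Theorem~\ref{thm-para-comp-princ} coincides exactly with the assumed inequality \eqref{eq-elliptic}. Applying Theorem~\ref{thm-para-comp-princ} with this $T$ yields $v \le u$ on the set $\{(x,t) \in \Theta : t < T\}$, which by the choice of $T$ is all of $\Theta$.

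There is essentially no obstacle here beyond recognising that boundedness of $\Theta$ is precisely what allows the reduction to work: without it, no single $T$ could capture all of $\Theta$, and the elliptic-flavoured statement that ignores the distinction between lateral and top/bottom boundary would fail (as it does for the heat equation on unbounded time-cylinders). The content of this theorem is therefore not any new analytical work, but rather the observation that on a bounded set the strict boundary inequality on \emph{all} of $\bdry\Theta$ — including points on the ``top'' of $\Theta$ that usually play no role in parabolic comparison — already suffices, because those top points can be treated as ordinary boundary points once $T$ is pushed above them.
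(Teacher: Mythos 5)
Your proposal is correct and is exactly the paper's own argument: the paper proves Theorem~\ref{thm-comp-princ} by the parenthetical remark that one should ``just apply Theorem~\ref{thm-para-comp-princ} with a large enough $T$,'' which is precisely your choice of $T$ above the supremum of the time coordinates of $\itoverline{\Theta}$. No gaps.
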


Both in Theorems~\ref{thm-para-comp-princ} and~\ref{thm-comp-princ}
we would have liked to have nonstrict comparison principles,
only assuming nonstrict inequalities in 
\eqref{eqn:liminf_vertailu} and \eqref{eq-elliptic},
but since we cannot add constants to sub/super\-para\-bolic functions,
we have not been able to achieve this.
In fact, this is a well-known problem with the comparison principle, 
and the nonstrict elliptic comparison principle
is known to be equivalent to the fundamental inequality
$\lP f \le \uP f$ between lower and upper Perron solutions, 
see Definition~\ref{def-Perron} below. 
Moreover, the parabolic-type and elliptic-type
comparison principles in Theorems~\ref{thm-para-comp-princ} 
and~\ref{thm-comp-princ}
are equivalent, since the former follows from the latter together with 
Propositions~\ref{prop-extend-subpar-m} and~\ref{prop-extend-superpar-m}.

In both comparison principles the conclusion is nonstrict,
even though the inequalities in 
\eqref{eqn:liminf_vertailu} and \eqref{eq-elliptic} are strict.
If one knew that $u_{\psi} < u_{\psi+\eps}$,
where $\psi \in C(\bdy_pU_{t_1,t_2})$ is positive 
and $u_{\psi}$ and $u_{\psi+\eps}$ are  as provided by 
Theorem~\ref{thm-cont-exist}, then a strict inequality could also be concluded, but
this seems to be one of the many open questions in the area.

Next, we extend the nonstrict parabolic comparison principle in
Theorem~\ref{thm-comp-KLL} to 
unions of bounded cylinders. 
Note that this improvement also removes the boundedness assumption
from Theorem~\ref{thm-comp-KLL}.

\begin{theo}  \label{thm-para-comp-princ-union-cyl}
\textup{(Parabolic comparison principle for unions of cylinders)}
Let $m \ge1$ and $\Theta$ be a finite union of bounded 
cylinders in $\R^{n+1}$.
Suppose that $u$ is super\-parabolic and $v$ is sub\-parabolic
in $\Theta$.
Assume that
\[ 
  \infty \ne    \limsup_{\Theta \ni (y,s)\rightarrow (x,t)} v(y,s) \le
   \liminf_{\Theta \ni (y,s)\rightarrow (x,t)} u(y,s) 
\] 
for all
$(x,t) \in\bdyp \Theta$.
Then $v\le u$ in $\Theta$.
\end{theo}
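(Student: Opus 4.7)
The plan is first to reduce to bounded $u,v$ by truncation, and then to mimic the interior-approximation argument from the proof of Theorem~\ref{thm-comp-KLL}, adapted to the non-cylindrical domain $\Theta$. For the reduction, since $m\ge 1$, Theorem~\ref{thm-esslimsup}\,\ref{jj-sub} says $v$ is a subsolution, so Andreucci~\cite{andreucci} yields that $v$ is essentially bounded from above; being usc-regularized, this gives a pointwise bound $v\le M$ on $\Theta$. Replacing $u$ by $\tilde u:=\min\{u,M+1\}$, which remains superparabolic by Lemma~\ref{lem-min-superparabolic}, preserves the boundary hypothesis because $\limsup v\le M<M+1$ at every point of $\bdyp\Theta$, so without loss of generality both $u$ and $v$ are bounded.

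The core construction follows the cylinder-case proof of Theorem~\ref{thm-comp-KLL}. For each $\eps_j=1/j$ I would exhaust $\Theta$ from inside by a sequence $\Theta^j\Subset\Theta^{j+1}\Subset\Theta$ of finite unions of $C^{2,\al}$-cylinders arranged so that $v^m\le u^m+\eps_j^m$ on $\Theta\setminus\Theta^j$, which is possible by compactness of $\bdyp\Theta$, the boundary inequality, and the semicontinuity of $v$ and $u$. Smoothing from above yields $\hat h_j\in C^\infty(\R^{n+1})$ with $v\le \hat h_j$ and $\hat h_j^m\le u^m+\eps_j^m$ near $\bdyp\Theta^j$. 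Cylinder by cylinder in order of increasing time, Theorems~\ref{thm-dirichlet-bvp} and~\ref{thm-cont-exist} yield weak solutions $h_j$ and $\hh_j$ on $\Theta^j$ with boundary data $\hat h_j$ and $(\hat h_j^m+\eps_j^m)^{1/m}$, respectively. Applying the cylinder-case comparison Theorem~\ref{thm-comp-KLL} component by component and using Proposition~\ref{prop-comp-sub-supersoln} then give $h_j\le u$, $v\le \hh_j$, and $h_j\le\hh_j\le h_j+\eps_j$ throughout $\Theta^j$.

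The limit passage mirrors the cylinder case. Local equicontinuity of bounded weak solutions (\cite[Theorem~5.16.1]{DBGV-mono}), Ascoli's theorem, and a diagonal argument produce subsequences $h_j\to h$ and $\hh_j\to\hh$ locally uniformly in $\Theta$, with $h\le\hh$. Applying \cite[Lemma~3.2]{Kinnunen-Lindqvist-Lukkari:2013} in each component cylinder of $\Theta^j$ and summing gives $\int_{\Theta}(\hh_j-h_j)(\hh_j^m-h_j^m)\,dx\,dt\le C\eps_j$, so the two limits coincide a.e., and hence everywhere by continuity; the chain $v\le\hh=h\le u$ is the conclusion in $\Theta$.

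The main obstacle is propagating the cylinder-case comparison across the internal interfaces where the component cylinders of $\Theta^j$ meet. Theorem~\ref{thm-comp-KLL} applies only inside a single cylinder, so after solving the Dirichlet problem in one component one must transfer the values of $h_j$ and $\hh_j$ to the next as boundary data in a way that preserves the required semicontinuity inequalities; here the one-sided essential limit characterizations in Theorems~\ref{thm-essliminf} and~\ref{thm-esslimsup}, together with the monotone-convergence results for sub\-/super\-parabolic functions in Section~\ref{S:Further-Superparabolic}, are the decisive tools for making this propagation consistent.
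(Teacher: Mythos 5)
Your proposal has a genuine gap, and it sits exactly where you flag it: the propagation across the internal interfaces of $\Theta^j$ is not an auxiliary technicality but the entire content of the theorem, and none of the results you invoke supply it. Theorems~\ref{thm-dirichlet-bvp} and~\ref{thm-cont-exist} produce solutions only on a \emph{single} smooth cylinder, and Theorem~\ref{thm-comp-KLL} compares sub- and superparabolic functions only on a \emph{single} cylinder with the inequality known on its full parabolic boundary. When you try to apply these ``component by component'' to the overlapping cylinders making up $\Theta^j$, part of the parabolic boundary of each component lies in the interior of $\Theta^j$, where the needed inequality between $v$ (resp.\ $u$) and $\hh_j$ (resp.\ $h_j$) is precisely what you are trying to establish; ordering the components ``by increasing time'' and feeding top values of one piece as initial data of the next requires knowing $v\le\hh_j$ on the top of the earlier piece, i.e.\ a comparison statement on a union of cylinders. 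So the step ``$h_j\le u$, $v\le\hh_j$ on $\Theta^j$'' is circular as written. Your preliminary reduction is also flawed: Andreucci's estimate is a local $L^\infty$ bound (cf.\ Lemma~\ref{lem-min-superparabolic}, which already records that subparabolic functions are locally bounded), and local boundedness does not give a global bound $v\le M$ on $\Theta$; the global bound needs the boundary hypothesis combined with a comparison principle on $\Theta$ (the paper gets it from the strict principle, Theorem~\ref{thm-para-comp-princ}, applied to the constant $M$).

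The paper's proof avoids both problems by not constructing global approximating solutions at all. It extends $u,v$ to $\bdyp\Theta$, assumes $E=\{v>u\}\ne\emptyset$, and looks at the first failure time $\tau=\inf\{t:(x,t)\in E\}$. If $\tau$ is not one of the finitely many endpoint times of the constituent cylinders (or is, but no failure occurs at time $\tau$ itself), then the time slab of $\Theta$ between consecutive endpoint times around $\tau$ is a single cylinder on whose parabolic boundary $v\le u$ holds, and Theorem~\ref{thm-comp-KLL} alone gives a contradiction. The only delicate case is failure exactly at a seam time $\tau$ at some interior point $(x_\tau,\tau)$; there one works in a small cylinder $U_{t',\tau}$ with top at level $\tau$, squeezes a continuous solution $h$ between $v$ and $u$ below level $\tau$ by iterating Theorems~\ref{thm-cont-exist} and~\ref{thm-comp-KLL} on $U_{t',\tau-1/j}$, extends $h$ continuously to the top by \cite[Theorem~5.16.1]{DBGV-mono}, and concludes $v(x_\tau,\tau)\le u(x_\tau,\tau)$ from the one-sided essential limit characterizations in Theorems~\ref{thm-essliminf} and~\ref{thm-esslimsup}. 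Those one-sided limits are indeed the decisive tool, as you guessed, but they are deployed pointwise at the first failure time rather than to glue global Dirichlet solutions; if you want to salvage your construction, you would first have to prove (by some such seam-crossing argument) the very comparison on unions of cylinders that your construction presupposes.
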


\begin{proof}
Assume that $\Theta=\bigcup_{j=1}^{N} U^{j}_{t_j,s_j}$ and
extend $u$ and $v$ to $(x,t) \in \bdyp \Theta$, by letting
\[
    v(x,t):=\limsup_{\Theta \ni (y,s)\rightarrow (x,t)} v(y,s)
    \quad \text{and} \quad
    u(x,t):=\liminf_{\Theta \ni (y,s)\rightarrow (x,t)} u(y,s).
\]
We argue by contradiction, assuming that 
$E=\{\xi \in \Theta : v(\xi) > u(\xi)\}$ is nonempty.
Let $\tau:=\inf\{t: (x,t) \in E \}$ and 
\[
S:=\{t_1, s_1, t_2, s_2, \ldots, t_N, s_N\}.
\]
We now divide the proof into three cases as follows.

\medskip
\emph{Case} 1. $\tau \notin S$ and thus 
$\min S<\tau<\max S$.
Let 
\[
t_0=\max\{ t \in S: t < \tau\} \quad \text{and} \quad 
s_0=\min\{ t \in S: t > \tau\}.
\]
Then $t_0 < \tau < s_0$,
$U_{t_0,s_0}=\{(x,t) \in \Theta : t_0 < t < s_0\}$ is a cylinder,
and $v \le u$ on $\bdyp U_{t_0,s_0}$.
Hence $v \le u$ in $U_{t_0,s_0}$ by Theorem~\ref{thm-comp-KLL}.
But this contradicts the fact that $\tau < s_0$.

\medskip

\emph{Case} 2. 
\emph{$\tau \in S$ but there is no point $(x,\tau) \in E$.}
In this case we let $t_0=\tau$ and proceed as in Case~1.

\medskip

\emph{Case} 3. \emph{$\tau \in S$ and there is at least one point 
$(x_\tau,\tau) \in E$.}
First, we show that $v$ is bounded. As $v$ is upper semicontinuous
and does not take the value $\infty$ at the compact set
$\bdyp \Theta$, there is $M<\infty$
such that $v < M$ on $\bdyp \Theta$.
It then follows from Theorem~\ref{thm-para-comp-princ} that 
$v \le M$ in $\Theta$.

Next, since $(x_\tau,\tau) \in E \subset \Theta$, we can find a 
$C^{2,\alp}$-cylinder $U_{t',\tau} \Subset \Theta$ with  $x_\tau \in U$.
Then there is a continuous 
$h : A:=\{(x,t) \in \bdy U_{t',\tau} : t < \tau\} \to \R$ such that
$v \le h \le u$ on $A$.
As $v$ is bounded, we can choose $h$ to be bounded.
We can then iterate Theorem~\ref{thm-cont-exist}
on $U_{t',\tau-1/j}$, $j=1,2,\ldots$,
to find a continuous solution, also called $h$,
in $U_{t',\tau}$ which has $h$ as continuous boundary values on $A$. 
By iterating also Theorem~\ref{thm-comp-KLL}, 
we see that  $v \le h \le u$ in $U_{t',\tau}$.
By  
DiBenedetto--Gianazza--Vespri~\cite[Theorem~5.16.1]{DBGV-mono},
$h$ has a continuous extension (also called $h$) to 
the top $U \times \{\tau\}$.
We then get from Theorems~\ref{thm-essliminf} and~\ref{thm-esslimsup} that
\[
   v(x_\tau,\tau) 
   =\esslimsup_{\substack{(x,t) \to (x_\tau,\tau) \\ t < \tau}} v(x,t)
   \le \lim_{\substack{(x,t) \to (x_\tau,\tau) \\ t < \tau}} h(x,t)
   \le \essliminf_{\substack{(x,t) \to (x_\tau,\tau) \\ t < \tau}} u(x,t)
   = u(x_\tau,\tau),
\]
which contradicts  the fact that $(x_\tau,\tau) \in E$.
\end{proof}

The following lemma is useful when constructing new superparabolic
functions.

\begin{lem} \label{lem-pasting}
\textup{(Pasting lemma)}
Let $m \ge 1 $ and  $G \subset \Theta$ be open. 
Suppose $u$ and $v$ are super\-pa\-ra\-bo\-lic in $\Theta$ and $G$,
respectively.
Let
\[
    w=\begin{cases}
     \min\{u,v\} & \text{in } G, \\
     u & \text{in } \Theta \setm G. \\
    \end{cases}
\] 
If $\overline{\{(x,t) \in G : v(x,t) < u(x,t) \}} \cap \Theta \subset G$, 
then $w$ is superparabolic in $\Theta$.
\end{lem}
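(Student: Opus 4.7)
The strategy is to verify directly the three defining properties of a superparabolic function from Definition~\ref{def:superparabolic}. The crucial feature of the hypothesis $\overline{\{v<u\}}\cap\Theta\subset G$ is that on a neighborhood of any point $\xi_0\in\Theta\setm G$, the function $w$ in fact coincides with $u$: $\xi_0\notin\overline{\{v<u\}}$ yields a neighborhood $N$ of $\xi_0$ with $N\cap\{v<u\}=\emptyset$, hence $v\ge u$ on $N\cap G$, so $w=\min\{u,v\}=u$ there, while $w=u$ on $N\setm G$ by definition. Thus $w$ inherits lower semicontinuity from $u$ at points of $\Theta\setm G$, while on the open set $G$ it is the minimum of two lsc functions. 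Since $w\le u$ throughout $\Theta$, finiteness on a dense subset is also inherited from $u$, giving properties \ref{i} and \ref{ii}.

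For \ref{iii}, fix a $C^{2,\al}$-cylinder $U_{t_1,t_2}\Subset\Theta$ and $h\in C(\overline{U}_{t_1,t_2})$ parabolic in $U_{t_1,t_2}$ with $h\le w$ on $\bdyp U_{t_1,t_2}$. Since $h\le w\le u$ on $\bdyp U_{t_1,t_2}$ and $u$ is superparabolic, the comparison principle applied to $u$ gives $h\le u$ throughout $U_{t_1,t_2}$. It therefore remains to prove $h\le v$ on $G\cap U_{t_1,t_2}$, or equivalently that, for every $\de>0$, the set
\[
E_\de:=\{\xi\in U_{t_1,t_2}: h(\xi)\ge w(\xi)+\de\}
\]
is empty. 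Since $h-w-\de$ is upper semicontinuous, $E_\de$ is closed in $U_{t_1,t_2}$. On $E_\de$ the combination $h\ge w+\de$ and $h\le u$ forces $w\le u-\de<u$, and since $w=u$ outside $G$ this yields $E_\de\subset G\cap\{v<u\}$. The closure hypothesis then gives $\overline{E_\de}\subset\overline{\{v<u\}}\cap\overline{U}_{t_1,t_2}\subset\overline{\{v<u\}}\cap\Theta\subset G$, so $\overline{E_\de}$ is compact in $G$ and in particular has positive distance from $\bdy G$.

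To close the argument, I plan to cover $\overline{E_\de}$ by a finite union $\Omega$ of $C^{2,\al}$-cylinders with $\overline{E_\de}\Subset\Omega\Subset G$ and to compare $v$ (superparabolic and bounded in a neighborhood of $\overline{\Omega}$) with $h$ (subparabolic as a parabolic function) on $\Omega\cap U_{t_1,t_2}$, then to let $\de\to0$. The \emph{main obstacle} is that $\Omega\cap U_{t_1,t_2}$ is not itself a finite union of cylinders, so Theorem~\ref{thm-para-comp-princ-union-cyl} does not apply verbatim, while on $\bdy G\cap U_{t_1,t_2}$ only the nonstrict inequalities $v\ge u\ge h$ are available on approach from within $G$ (with possible equality at points where $h=u$), so the strict Theorem~\ref{thm-para-comp-princ} is also not directly available. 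I expect this gap to be bridged by a first-time argument in the spirit of the proof of Theorem~\ref{thm-para-comp-princ}---propagating the inequality $h\le w+\de$ in the time direction using the compactness of $\overline{E_\de}$ and its positive separation from $\bdy G$---so that the comparison principle is applied locally on genuine $C^{2,\al}$-cylinders contained in $G$, where both Theorem~\ref{thm-comp-KLL} and the defining comparison principle for the superparabolic function $v$ can be invoked.
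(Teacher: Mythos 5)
Your reduction to the set where $w<u$ and the verification of lower semicontinuity and finiteness match the paper's proof, and your step $h\le u$ in $U_{t_1,t_2}$ via the superparabolicity of $u$ is exactly the paper's first move. The proof is not complete, however: you explicitly leave the decisive comparison between $h$ and $v$ as something you ``expect'' to be bridgeable, so as written there is a gap at the one step that actually requires an argument.

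The good news is that the obstacle you identify is illusory. In this paper a \emph{cylinder} is any set $V\times(\sigma,\tau)$ with $V\Subset\R^n$ open and bounded --- no smoothness is required --- so if $\Omega=\bigcup_j V^j\times(\sigma_j,\tau_j)$ is a finite union of cylinders compactly contained in $G$, then
\[
\Omega\cap U_{t_1,t_2}=\bigcup_j \bigl(V^j\cap U\bigr)\times\bigl((\sigma_j,\tau_j)\cap(t_1,t_2)\bigr)
\]
\emph{is} again a finite union of cylinders, and Theorem~\ref{thm-para-comp-princ-union-cyl} applies to it verbatim. Moreover that theorem is a \emph{nonstrict} comparison principle, so your second worry (that only $v\ge u\ge h$ is available) is also moot: on $\bdyp(\Omega\cap U_{t_1,t_2})$ one either lies on $\bdyp U_{t_1,t_2}$, where $h\le w\le v$ (these points lie in $\overline{\Omega}\subset G$, so $v$ is defined and $w=\min\{u,v\}\le v$), or in $U_{t_1,t_2}$ outside the covered set, where $h\le u\le v$; the boundary of $G$ is never reached because $\overline{\Omega}\Subset G$. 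This is precisely the paper's argument: it covers $A:=\overline{\{(x,t)\in U_{t_1,t_2}\cap G: v<u\}}\Subset G$ (no $\delta$-fattening is needed, since $A$ is already compactly contained in $G$ by hypothesis), forms $\Xi=\bigcup_j(V^j_{\sigma_j,\tau_j}\cap U_{t_1,t_2})$, checks $h\le v$ on $\bdyp\Xi$ as above, and concludes $h\le v$ in $\Xi$ and hence $h\le w$ in $U_{t_1,t_2}$ from Theorem~\ref{thm-para-comp-princ-union-cyl}. Replacing your final paragraph by this observation completes your proof.
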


This is a more restrictive pasting lemma than the one 
for \p-parabolic functions in
Bj\"orn--Bj\"orn--Gianazza--Parviainen~\cite[Lemma~2.9]{BBGP}.
As we only have a strict comparison
principle in Theorem~\ref{thm-para-comp-princ},
the proof in \cite{BBGP} does not carry over to our situation.
If however $u$ is constant, then we can obtain the full pasting lemma 
as follows.
Note that in applications the pasting lemma is often used with a 
constant ``outer'' function.

\begin{lem} \label{lem-pasting-const}
\textup{(Pasting lemma)}
Let $m \ge 1 $ and $0 \le k < \infty$. Suppose $G \subset \Theta$ is open and $v$ is super\-pa\-ra\-bo\-lic in $G$.
Define
\[
    w:=\begin{cases}
     \min\{k,v\} & \text{in } G, \\
     k & \text{in } \Theta \setm G. \\
    \end{cases}
\] 
If $w$ is lower semicontinuous, 
then $w$ is superparabolic in $\Theta$.
\end{lem}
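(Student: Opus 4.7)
The plan is to bootstrap the more restrictive pasting lemma (Lemma~\ref{lem-pasting}) using a simple approximation from below. Since any nonnegative constant is trivially a weak solution, hence parabolic, hence superparabolic, for each $0<\de<k$ I will apply Lemma~\ref{lem-pasting} with the ``outer'' superparabolic function taken to be the constant $k-\de$, obtaining that
\[
w_\de(x,t) := \begin{cases} \min\{k-\de,\, v(x,t)\}, & (x,t)\in G, \\ k-\de, & (x,t)\in\Theta\setm G, \end{cases}
\]
is superparabolic on $\Theta$. After that I let $\de\downarrow 0$ and invoke Proposition~\ref{prop-inc-superparabolic} on the family $\{w_\de\}$, which is pointwise increasing as $\de\downarrow 0$ on both pieces and whose pointwise limit equals $w$. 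Since $w\le k<\infty$, $w$ is trivially finite on a dense subset, so Proposition~\ref{prop-inc-superparabolic} delivers the conclusion.

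The crux is to verify the closure condition $\overline{\{(x,t)\in G : v(x,t)<k-\de\}}\cap\Theta\subset G$ required by Lemma~\ref{lem-pasting}, and this is precisely where the standing hypothesis that $w$ is lsc enters. Suppose, for contradiction, that a sequence $(y_n,s_n)\in G$ with $v(y_n,s_n)<k-\de$ converged to some $(x,t)\in\Theta\setm G$. Then $(x,t)\in\bdy G\cap\Theta$, so by definition $w(x,t)=k$, while at each $(y_n,s_n)\in G$ we have $w(y_n,s_n)=\min\{k,v(y_n,s_n)\}\le v(y_n,s_n)<k-\de$. Taking $\liminf$ and using the lsc of $w$ at $(x,t)$ then gives $k\le k-\de$, contradicting $\de>0$. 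Hence no such sequence exists and the closure condition holds.

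The degenerate case $k=0$ must be handled separately: then $v\ge 0$ forces $w\equiv 0$ on $\Theta$, which is evidently superparabolic. I expect the main subtlety to be the translation sketched above --- recognising that although the lsc hypothesis is imposed on the pasted function $w$ (and not directly on $v$), it is precisely strong enough to pin $v$ from below by $k$ near $\bdy G\cap\Theta$, and thereby to enforce the geometric separation that Lemma~\ref{lem-pasting} demands. Once this is made rigorous, the final appeal to Proposition~\ref{prop-inc-superparabolic} is automatic.
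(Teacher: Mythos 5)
Your proposal is correct and is essentially the paper's own proof: the paper sets $k_j=(k-1/j)_{\limplus}$ (your $k-\de$ with $\de=1/j$, with the positive part absorbing the degenerate case $k=0$), applies Lemma~\ref{lem-pasting} after checking the closure condition via the lower semicontinuity of $w$ exactly as you do, and then passes to the limit with Proposition~\ref{prop-inc-superparabolic}. Your contradiction argument for the closure condition is in fact a welcome expansion of the step the paper only sketches.
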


Before proving Lemma~\ref{lem-pasting}, we first show how it can be used
to obtain Lemma~\ref{lem-pasting-const}.

\begin{proof}
Let $k_j=(k-1/j)_\limplus$ and 
\[
    w_j=\begin{cases}
     \min\{k_j,v\} & \text{in } G, \\
     k_j & \text{in } \Theta \setm G, \\
    \end{cases}
\] 
$j=1,2,\ldots$.
As $w$ is lower semicontinuous, it follows from the local
compactness of $\Theta \cap \bdy G$ that 
$\overline{\{(x,t) \in G : v(x,t) < k_j \}} \cap \Theta \subset G$.
Hence, by Lemma~\ref{lem-pasting}, $w_j$ is superparabolic.
Finally, using Proposition~\ref{prop-inc-superparabolic},
$w$ is superparabolic.
\end{proof}

\begin{proof}[Proof of Lemma~\ref{lem-pasting}]
Let us first show that $w$ is lower semicontinuous.
This is clear in $G$ and in $\Theta \setm \itoverline{G}$.
Let $\xi \in \Theta \cap \bdy G$. 
Then, by assumption, $w=u$ in a neighbourhood of $\xi$ and,
since $u$ is lower semicontinuous, we conclude that 
$w$ is lower semicontinuous at $\xi$,
i.e.\ $w$ is lower semicontinuous in $\Theta$.

Since $0 \le   w \le u$, $w$ is finite in a dense subset of $\Theta$,
and we only have to obtain the comparison principle.
Therefore, let $U_{t_1,t_2}\Subset{\Th}$ be a $C^{2,\alp}$-cylinder, 
and $h \in C(\overline{U}_{t_1,t_2})$ be parabolic in $U_{t_1,t_2}$ and such
that $h \le w$ on $\bdyp U_{t_1,t_2}$.
Since $h \le u$ on $\bdyp U_{t_1,t_2}$ and $u$ is superparabolic, 
we directly have that $h \le u$ in $U_{t_1,t_2}$.

Next, let $A=\overline{\{(x,t) \in U_{t_1, t_2}\cap G : v(x,t) < u(x,t) \}}$. 
Then $A \Subset G$ by assumption.
Thus, by compactness, we can cover $A$
by a finite number of cylinders 
$V^j_{\sigma_j,\tau_j}:=V^j \times (\sigma_j,\tau_j) \Subset G$.
Let $\Xi=\bigcup_{j=1}^m (V^j_{\sigma_j,\tau_j} \cap U_{t_1,t_2})$, which is 
a finite union of cylinders.
If $\xi \in \bdy_p\Xi$, then either $\xi \in \bdy_p U_{t_1,t_2}$ and thus
$h(\xi) \le w(\xi) \le v(\xi)$, or $\xi \in  U_{t_1,t_2} \setm A$ in which
case $h(\xi) \le u(\xi) \le v(\xi)$.
In either case $h \le v$ on $\bdy_p\Xi$.
Since $h$ is continuous it follows from the comparison principle in 
Theorem~\ref{thm-para-comp-princ-union-cyl} that $h \le v$ in $\Xi$.
Thus $h \le w$ in $U_{t_1,t_2}$ which shows that $w$ is superparabolic in $\Th$.
\end{proof}

We also need the corresponding pasting lemmas for subparabolic functions.
While these are not immediate consequences of the ones for superparabolic 
functions, the proofs are
easy modifications of the proofs for the superparabolic
pasting lemmas. We omit the details.

\begin{lem} \label{lem-pasting-sub}
\textup{(Pasting lemma)}
Let $m \ge 1 $ and  $G \subset \Theta$ be open. Suppose
$u$ and $v$ are sub\-pa\-ra\-bo\-lic in $\Theta$ and $G$,
respectively. Let
\[
    w=\begin{cases}
     \max\{u,v\} & \text{in } G, \\
     u & \text{in } \Theta \setm G. \\
    \end{cases}
\] 
If $\overline{\{(x,t) \in G : v(x,t) > u(x,t)\}} \cap \Theta \subset G$, 
then $w$ is subparabolic in $\Theta$.
\end{lem}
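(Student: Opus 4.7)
The plan is to mirror the proof of Lemma~\ref{lem-pasting}, replacing $\min$ with $\max$, lower with upper semicontinuity, and the hypothesis $\overline{\{v<u\}}\cap\Theta\subset G$ with its symmetric partner $\overline{\{v>u\}}\cap\Theta\subset G$. Everything needed on the subparabolic side is already in place: Lemma~\ref{lem-min-superparabolic}\,(c),(d) records that maxima of subparabolic functions are subparabolic and that subparabolic functions are locally bounded, and Theorem~\ref{thm-para-comp-princ-union-cyl} is symmetric in its sub/super roles.

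First I would check upper semicontinuity of $w$. This is immediate on the open sets $G$ and $\Theta\setm\itoverline{G}$. At a point $\xi\in\Theta\cap\bdy G$, the hypothesis produces a neighbourhood $N\subset\Theta$ of $\xi$ with $N\cap G\subset\{v\le u\}$, so $w=u$ on $N$ and upper semicontinuity at $\xi$ follows from that of $u$. Finiteness of $w$ is automatic since $u,v\colon\Theta\to[0,\infty)$. To verify the comparison condition, I would fix a $C^{2,\alpha}$-cylinder $U_{t_1,t_2}\Subset\Theta$ and a parabolic $h\in C(\overline{U}_{t_1,t_2})$ with $h\ge w$ on $\bdy_p U_{t_1,t_2}$. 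Subparabolicity of $u$ gives $h\ge u$ throughout $U_{t_1,t_2}$. I would then form
\[
A:=\overline{\{(x,t)\in U_{t_1,t_2}\cap G : v(x,t)>u(x,t)\}},
\]
which lies compactly in $G$ by hypothesis, cover it by finitely many cylinders $V^j_{\sigma_j,\tau_j}\Subset G$, and set $\Xi:=\bigcup_j(V^j_{\sigma_j,\tau_j}\cap U_{t_1,t_2})$, a finite union of open cylinders.

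At any $\xi\in\bdy_p\Xi$, either $\xi\in\bdy_p U_{t_1,t_2}$, in which case $h(\xi)\ge w(\xi)\ge v(\xi)$, or $\xi\in U_{t_1,t_2}\setm A$, in which case $v(\xi)\le u(\xi)\le h(\xi)$. Upper semicontinuity of $v$ together with continuity of $h$ promotes these pointwise inequalities to the boundary hypothesis of Theorem~\ref{thm-para-comp-princ-union-cyl}; since $h$, being parabolic, is in particular superparabolic, that theorem delivers $v\le h$ in $\Xi$. Combined with $h\ge u$ on all of $U_{t_1,t_2}$, this yields $h\ge\max\{u,v\}\ge w$ throughout $U_{t_1,t_2}$, as required. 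The main step to execute with care is the application of Theorem~\ref{thm-para-comp-princ-union-cyl} on $\Xi$: one must observe that each $V^j_{\sigma_j,\tau_j}\cap U_{t_1,t_2}$ is itself a bounded cylinder (intersections of product cylinders are product cylinders), so that $\Xi$ qualifies as a finite union of bounded cylinders as demanded by the hypothesis of that theorem.
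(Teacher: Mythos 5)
Your proposal is correct and is precisely the "easy modification" of the proof of Lemma~\ref{lem-pasting} that the paper alludes to when it omits the details: the same decomposition into the semicontinuity check at $\Theta\cap\bdy G$, the comparison with $u$ on all of $U_{t_1,t_2}$, and the comparison with $v$ on the finite union of cylinders $\Xi$ covering $A$ via Theorem~\ref{thm-para-comp-princ-union-cyl}. All the roles of min/max, lsc/usc and the two inequalities are swapped correctly, so nothing further is needed.
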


\begin{lem} \label{lem-pasting-const-sub}
\textup{(Pasting lemma)}
Let  $m \ge 1 $ and $0 \le k < \infty$. 
Suppose $G \subset \Theta$ is open and $v$ is sub\-pa\-ra\-bo\-lic in $G$.
Define
\[
    w=\begin{cases}
     \max\{k,v\} & \text{in } G, \\
     k & \text{in } \Theta \setm G. \\
    \end{cases}
\] 
If $w$ is upper semicontinuous, 
then $w$ is subparabolic in $\Theta$.
\end{lem}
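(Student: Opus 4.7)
The plan is to mirror the proof of Lemma~\ref{lem-pasting-const}, but approximating the constant $k$ \emph{from above} rather than from below, because now we deal with subparabolic functions and maxima. Set $k_j = k + 1/j$ for $j=1,2,\ldots$, and define
\[
w_j = \begin{cases}
\max\{k_j, v\} & \text{in } G, \\
k_j & \text{in } \Theta \setm G.
\end{cases}
\]
Constants are trivially subparabolic (they are continuous, hence upper semicontinuous, and the required comparison inequality follows from the comparison part of Theorem~\ref{thm-cont-exist}), so $k_j$ can play the role of the outer subparabolic function in the non-constant pasting lemma Lemma~\ref{lem-pasting-sub}. The strategy is to show that each $w_j$ is subparabolic in $\Theta$ via that lemma, and then pass to the limit $j\to\infty$.

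The key step is verifying the separation hypothesis
\[
\overline{\{(x,t)\in G : v(x,t) > k_j\}}\cap \Theta \subset G.
\]
The only way this could fail is at points $\xi \in \Theta\cap\bdy G$. At any such $\xi$ we have $w(\xi)=k$, so by the assumed upper semicontinuity of $w$ there is a neighbourhood $N$ of $\xi$ on which $w < k+1/j = k_j$. Inside $N\cap G$ the function $w$ equals $\max\{k,v\}$, forcing $v < k_j$ there. Hence no point of $N$ lies in the closure of $\{v>k_j\}\cap G$, and the hypothesis is satisfied. Applying Lemma~\ref{lem-pasting-sub} with outer function $u:=k_j$ yields that $w_j$ is subparabolic in $\Theta$.

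Finally, as $j$ increases, $k_j$ decreases to $k$, so $\max\{k_j,v\}$ decreases to $\max\{k,v\}$ in $G$ and $k_j$ decreases to $k$ in $\Theta\setm G$; thus $w_j$ is a decreasing sequence of subparabolic functions converging pointwise to $w$. The subparabolic analogue of Proposition~\ref{prop-inc-superparabolic}, stated immediately after it in Section~\ref{S:Further-Superparabolic}, then shows that $w$ is subparabolic in $\Theta$. The only non-routine point in the whole argument is the upper-semicontinuity-based verification of the separation hypothesis; everything else is bookkeeping parallel to the superparabolic case.
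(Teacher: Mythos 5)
Your proof is correct and is precisely the ``easy modification'' of the proof of Lemma~\ref{lem-pasting-const} that the paper invokes when it omits the details: you approximate $k$ from above by $k_j=k+1/j$, verify the separation hypothesis of Lemma~\ref{lem-pasting-sub} (with the constant, hence subparabolic, outer function $k_j$) from the upper semicontinuity of $w$ at points of $\Theta\cap\bdy G$, and pass to the decreasing limit using the subparabolic analogue of Proposition~\ref{prop-inc-superparabolic}. Nothing is missing.
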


\section{The Perron method and boundary regularity}\label{S:Boundary}

\emph{In Sections~\ref{S:Boundary}--\ref{S:reg-cylinders},
$\Theta \subset \R^{n+1}$ is always a\/ {\bf bounded} open set.}

\medskip

Now we come to the Perron method for \eqref{eq:para}.
For us it will be enough to consider Perron solutions for
bounded (and nonnegative)
functions, so for simplicity we restrict ourselves
to this case throughout the paper.

\begin{definition}   \label{def-Perron}
Given a bounded $f : \bdy \Theta \to [0,\infty)$,
let the upper class $\UU_f$ be the set of all
superparabolic  functions $u$ on $\Theta$ such that
\[ 
    \liminf_{\Theta \ni \eta \to \xi} u(\eta) \ge f(\xi) \quad \text{for all }
    \xi \in \bdy \Theta.
\] 
Define the \emph{upper Perron solution} of $f$  by
\[
    \uP f (\xi) = \inf_{u \in \UU_f}  u(\xi), \quad \xi \in \Theta.
\]
Similarly, let the lower class $\LL_f$ be the set of all
subparabolic functions $u$ on $\Theta$ which are
bounded above and such that
\[ 
\limsup_{\Theta \ni \eta \to \xi} u(\eta) \le f(\xi) \quad \text{for all }
\xi \in \bdy \Theta,
\] 
and define the \emph{lower Perron solution}  of $f$ by
\[
    \lP f (\xi) = \sup_{u \in \LL_f}  u(\xi), \quad \xi \in \Theta.
\]
If $\lP f = \uP f$, then $f$ is called \emph{resolutive}.
\end{definition}

Since we only have strict comparison principles in 
Theorems~\ref{thm-para-comp-princ} and~\ref{thm-comp-princ},
we also introduce strict Perron solutions as follows.

\begin{definition}   \label{def-S-Perron}
Given a bounded $f : \bdy \Theta \to [0,\infty)$,
let the upper class $\UUt_f$ be the set of all
superparabolic  functions $u$ on $\Theta$ such that
\[ 
    \liminf_{\Theta \ni \eta \to \xi} u(\eta) > f(\xi) \quad \text{for all }
    \xi \in \bdy \Theta.
\] 
Define the \emph{upper strict Perron solution} of $f$  by
\[
    \uS f (\xi) = \inf_{u \in \UUt_f}  u(\xi), \quad \xi \in \Theta.
\]
Similarly, let the lower class $\LLt_f$ be the set of all
subparabolic functions $u$ on $\Theta$ which are
bounded above and such that
\[
\limsup_{\Theta \ni \eta \to \xi} u(\eta) < f(\xi) \quad \text{for all }
\xi \in \bdy \Theta.
\]
Define the \emph{lower strict Perron solution}  of $f$ by
\[
    \lS f (\xi) = \sup_{u \in \LLt_f}  u(\xi), \quad \xi \in \Theta,
\]
if $\LLt_f \ne \emptyset$, and set $ \lS f \equiv 0$ if $\LLt_f = \emptyset$.
\end{definition}

Since $\LLt_f = \emptyset$ if, and only if, $f$ takes the value $0$ at 
some boundary point (the constant zero function allowed otherwise
is excluded in this case), the lower strict Perron solution is rather 
restrictive.
A possibility would have been to consider signed subparabolic functions
in the definition of $\LLt_f$, which we have refrained from since that would
lead into uncharted territory.

\begin{remark}\label{rmk-perron-domain}
Observe that the definitions of Perron solutions 
always depend on the set $\Theta$. 
To emphasize this dependence, we will at times use the notation 
$\uP_\Theta f, \lP_\Theta f, \uS_\Theta f$ and $\lS_\Theta f$,
as well as $\UU_f(\Theta)$, $\LL_f(\Theta)$, 
$\UUt_f(\Theta)$ and $\LLt_f(\Theta)$.

It follows from the elliptic-type comparison principle 
in Theorem~\ref{thm-comp-princ}
that $v\le u$ whenever $u\in \UUt_f$ and $v\in \LL_f$. 
Hence $\lS f \le \lP f\le \uS f$ and
similarly, $\lS f \le \uP f \le \uS f$. 
The inequality $\lP f \le \uP f$ is only known for finite unions of
cylinders, in which case it follows directly from 
the parabolic comparison principle in 
Theorem~\ref{thm-para-comp-princ-union-cyl}.

A key question in the theory is whether in general $\lP f = \uP f$. 
If this happens, the boundary data $f$ are called \emph{resolutive}.
A recent resolutivity result from 
Kinnunen--Lindqvist--Lukkari~\cite[Theorem~5.1]{Kinnunen-Lindqvist-Lukkari:2013}
shows that continuous functions are resolutive on general cylinders
when $m>1$.
In Theorem~\ref{thm-reg-monot-cyl} below we 
generalize this result to certain unions of cylinders. 
For $m=1$ (i.e.\ for the heat equation) resolutivity 
of continuous functions holds in arbitrary bounded open sets, see
e.g.\ Watson~\cite[Theorem~8.26]{watson}.
\end{remark}

Note that we have elliptic-type boundary conditions
on the full boundary, not just 
on the possibly smaller parabolic boundary, whenever it is defined. 
This is similar to the case of the 
\p-parabolic equation \eqref{eq-p-para} in 
Bj\"orn--Bj\"orn--Gianazza--Parviainen~\cite{BBGP}.
Nevertheless, the following result is true.
Recall that $\Theta_T = \{ (x,t)\in\Theta: t<T\}$. 

\begin{lem} \label{lem-restrict-Theta-T}
Let $m>0$, $T\in\R$ and suppose that 
$f:\bdy\Theta\to[0,\infty)$ is bounded.
Then
\[
   \uP f = \uP^T f 
\quad  \text{in }\Theta_T,
\]
where  $\uP^T f$ is the infimum of 
all superparabolic functions in $\Theta_T$ such that
\[ 
    \liminf_{\Theta_T \ni \eta \to \xi} u(\eta) \ge f(\xi) 
     \text{ {\rm(}or $>f(\xi)${\rm)}}
\quad \text{for all }\xi=(x,t) \in \bdy \Theta \text{ with } t<T.
\] 

Similar identity holds for $\lP f$, $\uS f$ and, when $f$ is 
bounded away from $0$, also for $\lS f$, with obvious modifications in the
definitions.
\end{lem}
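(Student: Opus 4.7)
The plan is to prove the four identities by the same template, illustrated here for $\uP f$; write $\UU^T_f$ for the class of superparabolic functions on $\Theta_T$ used in the definition of $\uP^T f$. The inclusion $\uP^T f \le \uP f$ on $\Theta_T$ is immediate by restriction: any $u \in \UU_f(\Theta)$ restricts to a superparabolic function on the open subset $\Theta_T$, and for every $\xi \in \bdy\Theta$ with $t(\xi) < T$ the inequality
\[
\liminf_{\Theta_T \ni \eta \to \xi} u(\eta) \ge \liminf_{\Theta \ni \eta \to \xi} u(\eta) \ge f(\xi)
\]
places $u|_{\Theta_T}$ in $\UU^T_f$; taking the infimum yields the claim.

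For the reverse inequality, fix $M > \sup_{\bdy\Theta} f$. Given $u \in \UU^T_f$, set $u' = \min\{u, M\}$, which is still in $\UU^T_f$ by Lemma~\ref{lem-min-superparabolic}. For each $T' < T$, I apply Proposition~\ref{prop-extend-superpar-m} to $u'$ on $\Theta_{T'}$ with bound $M$ to obtain a superparabolic function $\tilde u_{T'}$ on all of $\Theta$, equal to $u'$ on $\Theta_{T'}$, defined as a liminf from below on the slice $\{t = T'\}$, and equal to $M$ on $\Theta \cap \{t > T'\}$.

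To verify $\tilde u_{T'} \in \UU_f(\Theta)$ I check the boundary condition at every $\xi = (x, t_0) \in \bdy\Theta$ in three cases. If $t_0 < T'$, then $\tilde u_{T'} = u'$ near $\xi$ in $\Theta$ and the $\UU^T_f$-condition forces $\liminf \tilde u_{T'} \ge \min\{f(\xi), M\} = f(\xi)$. If $t_0 > T'$, small neighbourhoods of $\xi$ in $\Theta$ lie in $\{t > T'\}$, where $\tilde u_{T'} \equiv M \ge f(\xi)$. The delicate case is $t_0 = T'$: here $T' < T$, so the $\UU^T_f$-condition still applies at $\xi$ and controls sequences with $t(\eta_n) < T'$; sequences with $t(\eta_n) > T'$ yield $M$; and sequences $\eta_n$ on the slice are dealt with by choosing, for each $n$, a nearby approximant $(y_n, s_n) \in \Theta_{T'}$ with $u'(y_n, s_n) < \tilde u_{T'}(\eta_n) + 1/n$ (which exists by the definition of $\tilde u_{T'}$ on the slice as a liminf from below) and applying the first estimate to the sequence $(y_n, s_n) \to \xi$. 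This slice case is the main technical point of the proof.

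Once the verification is done, $\tilde u_{T'} \in \UU_f(\Theta)$ gives $\uP f \le \tilde u_{T'} = u'$ on $\Theta_{T'}$; since $\Theta_T = \bigcup_{T' < T} \Theta_{T'}$, letting $T' \nearrow T$ yields $\uP f \le u$ on $\Theta_T$, and taking the infimum over $u \in \UU^T_f$ completes the inequality $\uP f \le \uP^T f$. For $\lP f$ the same argument works with Proposition~\ref{prop-extend-subpar-m} (using $c = 0$) in place of Proposition~\ref{prop-extend-superpar-m}, extending $v$ by $0$ above $T'$. For $\uS f$ the choice $M > \sup f$ automatically gives the strict inequalities $\liminf \tilde u_{T'} > f(\xi)$ needed at the boundary, and for $\lS f$ the hypothesis that $f$ is bounded away from $0$ ensures that extending by $0$ above $T'$ preserves the strict condition $\limsup \tilde v_{T'} < f$.
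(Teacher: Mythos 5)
Your proposal is correct and follows essentially the same route as the paper: restriction gives the easy inequality, and for the converse one truncates at $M>\sup_{\bdy\Theta}f$, extends by the constant above a level $T'<T$ (the paper writes $T'=T-\eps$) via Propositions~\ref{prop-extend-superpar-m} and~\ref{prop-extend-subpar-m}, checks membership in the class over $\Theta$, and lets $T'\nearrow T$. Your verification of the boundary condition on the slice $t=T'$ is more detailed than the paper's, which leaves that step implicit.
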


Applying Lemma~\ref{lem-restrict-Theta-T} to both $\Theta$ and 
$\Theta_T$ immediately gives the following corollary.

\begin{cor} \label{cor-ThetaT}
Let $m>0$, $T\in\R$ and suppose that 
$f:\bdy\Theta\cup \bdy \Theta_T\to[0,\infty)$ is bounded.
Then
\[
   \uP_{\Theta_T} f = \uP_{\Theta} f, \quad 
   \uS_{\Theta_T} f = \uS_{\Theta} f \quad \text{and}\quad 
   \lP_{\Theta_T} f = \lP_{\Theta} f 
\qquad    \text{in } \Theta_T.
\]
If $f$ is, in addition, bounded away from $0$ then also
$\lS_{\Theta_T} f = \lS_{\Theta} f$ in $\Theta_T$.
\end{cor}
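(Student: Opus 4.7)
The plan is to derive all four identities by applying Lemma~\ref{lem-restrict-Theta-T} twice: once with the set $\Theta$ and once with the set $\Theta_T$, using the same value of $T$ in both applications.

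Applied to $\Theta$, the lemma directly gives $\uP_\Theta f = \uP^T f$ in $\Theta_T$, where $\uP^T f$ is the infimum over superparabolic $u$ on $\Theta_T$ satisfying $\liminf_{\Theta_T \ni \eta \to \xi} u(\eta) \ge f(\xi)$ for every $\xi \in \bdy\Theta$ with $t<T$. Applied to the open set $\Theta_T$, with the same $T$, the lemma gives $\uP_{\Theta_T} f = \uP^T_{\Theta_T} f$ on $(\Theta_T)_T = \Theta_T$, the latter infimum being taken over superparabolic functions on $\Theta_T$ with the corresponding liminf condition imposed instead at every $\xi \in \bdy\Theta_T$ with $t<T$.

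The key observation is the topological identity
\[
   \bdy\Theta_T \cap \{t<T\} = \bdy\Theta \cap \{t<T\},
\]
which I would verify by noting that $\bdy\Theta_T$ differs from $\bdy\Theta$ only on the slice $\{t=T\}$ (since $\Theta$ is open), and that slice is excluded by the restriction $t<T$. Moreover, for $\xi=(x_\xi,t_\xi)$ with $t_\xi<T$, any sequence in $\Theta$ approaching $\xi$ eventually lies in $\Theta_T$, so the two liminf conditions coincide. Consequently the two classes of admissible superparabolic functions are identical, and hence $\uP^T f = \uP^T_{\Theta_T} f$ on $\Theta_T$. Combining the two applications yields $\uP_{\Theta_T} f = \uP_\Theta f$ in $\Theta_T$.

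The same scheme, using the corresponding statements from Lemma~\ref{lem-restrict-Theta-T} for $\lP$, $\uS$, and (when $f$ is bounded away from zero) $\lS$, yields the remaining three identities without further work. The only point that demands any genuine verification is the topological identity above, and this is the sole (and very minor) obstacle in the argument.
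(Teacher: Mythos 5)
Your proposal is correct and is exactly the paper's argument: the paper states that the corollary follows immediately from applying Lemma~\ref{lem-restrict-Theta-T} to both $\Theta$ and $\Theta_T$, and your verification of the boundary identity $\bdy\Theta_T\cap\{t<T\}=\bdy\Theta\cap\{t<T\}$ and of the coincidence of the liminf/limsup conditions simply spells out why the two auxiliary classes agree.
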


\begin{remark}  \label{rem-Pf-parab-bdy}
Note that the set $\{\xi=(x,t) \in \bdy \Theta: t<T\}$ in the definition
of $\uP^Tf$ is in general not compact. 

If $\Theta=U_{t_1,t_2}$ is a cylinder, then 
the parabolic boundary is included in the full boundary and 
contains the above set defining $\uP^Tf$.
Also the corresponding classes
of admissible super/sub\-parabolic functions are included in each other.
From this we conclude that the Perron solutions using only the parabolic
boundary $\bdy_p U_{t_1,t_2}$
lie between the two solutions $\uP f$ and $\uP^T f$,
 and thus coincide with them.

If $\Theta$ is a finite union of cylinders
(and thus the parabolic boundary is defined), the situation is less
clear, unless the boundary points 
not belonging to the parabolic boundary
are at the same time,
in which case the above argument applies.
\end{remark}

\begin{proof}[Proof of Lemma~\ref{lem-restrict-Theta-T}]
The inequality $\uP f \ge \uP^T f$ is obvious,
since the restrictions of functions from $\UU_f$ are admissible
in the definitions of $\uP^T f$.
Conversely, let $\eps >0$ and $u$ be admissible in the definition of
$\uP^{T} f$.
For $M= \sup_{\bdy \Theta} f +1$, let 
\[
v(x,t) = \begin{cases}
        M, & \text{if } (x,t) \in \Theta \text{ and } t>T-\eps, \\
        \min\{u(x,t),M\}, 
             & \text{if } (x,t) \in \Theta \text{ and } t \le T-\eps.
     \end{cases}
\]
By Lemma~\ref{lem-min-superparabolic} and
Proposition~\ref{prop-extend-superpar-m}, $v$ is superparabolic in $\Theta$
and thus $v \in \UU_f$.
Taking infimum over all $u$ shows that 
$\uP f(x,t) \le  \uP^{T} f(x,t)$ when $t <T-\eps$.
Letting $\eps \to 0$, yields
$ \uP f \le   \uP^{T} f$ in $\Theta_T$.
The identities for $\lP f$, $\uS f$ and $\lS f$ are proved similarly,
possibly replacing Proposition~\ref{prop-extend-superpar-m} by
Proposition~\ref{prop-extend-subpar-m}.
\end{proof}

\begin{thm}\label{thm-Perron-solution}
Let $m \ge 1$ and suppose 
that $f:\bdy\Theta\to[0,\infty)$ is bounded.
Then $\lP f$, $\uP f$, $\lS f$ and $\uS f$ are parabolic,
and in particular they are all continuous.
\end{thm}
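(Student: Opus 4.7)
The plan is the classical Perron-method argument by Poisson modification in smooth cylinders. I would first reduce to bounded competitors: since $f$ is bounded, Lemma~\ref{lem-min-superparabolic} allows me to truncate every $u\in\UU_f$ at the height $M:=\sup_{\bdy\Theta}f+1$ while keeping it in $\UU_f$, and analogously for $\UUt_f$; the sub-families $\LL_f$ and $\LLt_f$ are bounded above by definition. Since parabolicity is a local property, it suffices to prove that $\uP f$ is parabolic in every $C^{2,\alpha}$-cylinder $V=U_{t_1,t_2}\Subset\Theta$, and the same scheme will apply to $\lP f$, $\uS f$, and $\lS f$ with the obvious sign and strictness changes. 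Continuity is then automatic from Definition~\ref{def-sol}.

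Fix such a $V$ and a countable dense set $\{\xi_k\}_{k\ge1}\subset V$. For each $k$ pick $u_j^{(k)}\in\UU_f$ with $u_j^{(k)}(\xi_k)\to\uP f(\xi_k)$; via Lemma~\ref{lem-min-superparabolic} and a standard diagonalization produce a bounded decreasing sequence $u_j\in\UU_f$ with $u_j(\xi_k)\to\uP f(\xi_k)$ for every $k$. Now Poisson-modify each $u_j$ inside $V$: approximate the lsc function $u_j|_{\bdy_pV}$ from below by continuous $\phi_{j,i}\nearrow u_j|_{\bdy_pV}$, solve the Dirichlet problem in $V$ with data $\phi_{j,i}$ by Theorem~\ref{thm-cont-exist} to obtain $G_{j,i}\in C(\overline V)$, and set $H_j:=\lim_i G_{j,i}$. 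The comparison part of Theorem~\ref{thm-cont-exist} combined with the superparabolicity of $u_j$ gives $G_{j,i}\le u_j$ in $V$, and the local equicontinuity estimate (Theorem~5.16.1 of \cite{DBGV-mono}) ensures that $H_j$ is a continuous parabolic function in $V$. Letting $\tilde u_j=H_j$ in $V$ and $\tilde u_j=u_j$ outside $V$, the pasting lemma (via the technical step below) then guarantees $\tilde u_j\in\UU_f$; it is parabolic in $V$ and $\tilde u_j(\xi_k)\to\uP f(\xi_k)$ for each $k$.

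A further application of the equicontinuity estimate extracts a subsequence $\tilde u_{j_\ell}\to h$ locally uniformly in $V$, with $h$ parabolic and $h(\xi_k)=\uP f(\xi_k)$ on the dense set. To promote this equality to all of $V$, for any competitor $w\in\UU_f$ I would rerun the entire construction with $u_j$ replaced by $\min\{u_j,w\}\in\UU_f$, obtaining a parabolic limit $h_w$ that satisfies $h_w\le w$ in $V$ and $h_w(\xi_k)=\uP f(\xi_k)$ for every $k$. Since $h$ and $h_w$ are continuous and agree on the dense set $\{\xi_k\}$, they coincide in $V$, so $h\le w$ pointwise in $V$; taking infimum over $w\in\UU_f$ gives $h\le\uP f$, which combined with the reverse inequality $h\ge\uP f$ (inherited from the $\tilde u_j$) yields $\uP f=h$ in $V$, hence $\uP f$ is parabolic. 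The same scheme handles $\lP f$, $\uS f$, and $\lS f$: for the subparabolic Perron solutions one uses Lemmas~\ref{lem-pasting-sub} and~\ref{lem-pasting-const-sub} in place of Lemmas~\ref{lem-pasting} and~\ref{lem-pasting-const}, while the strict Perron solutions are treated identically because Poisson modification alters only the interior values, so the strict boundary inequalities on $\bdy\Theta$ are preserved.

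My expected main obstacle is verifying that the pasted function $\tilde u_j$ is genuinely superparabolic on $\Theta$: the closure hypothesis of Lemma~\ref{lem-pasting} can fail at points of $\bdy V$ where $u_j$ is merely lower semicontinuous but not continuous, because the set $\{H_j<u_j\}$ may accumulate there from inside $V$. I would sidestep this by exhausting $V$ by cylinders $V^{(m)}\nearrow V$ chosen (via a Fubini/Sard-type argument on the level sets of $u_j$) so that $u_j$ is continuous on $\bdy_p V^{(m)}$, performing the Poisson modification in each $V^{(m)}$, and passing to the limit using Proposition~\ref{prop-inc-superparabolic}; on each $V^{(m)}$ the continuity of $u_j$ on $\bdy_p V^{(m)}$ makes the closure condition of Lemma~\ref{lem-pasting} straightforward (or alternatively Lemma~\ref{lem-pasting-const} can be invoked after a preliminary truncation of $u_j$ by a constant slightly exceeding its maximum on $\bdy_p V^{(m)}$).
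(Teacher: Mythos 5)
Your overall architecture is sound and is essentially the argument behind the result the paper relies on: the paper's own proof is a one-line citation of Theorem~4.6 of Kinnunen--Lindqvist--Lukkari~\cite{Kinnunen-Lindqvist-Lukkari:2013} (plus locality, plus the remark that the argument transfers essentially verbatim to $\uS f$ and $\lS f$), so you are in effect reconstructing that proof from scratch. The reduction to bounded competitors, the diagonal choice of a decreasing sequence from $\UU_f$, the Poisson modification via Theorem~\ref{thm-cont-exist} and the equicontinuity estimate, and the final comparison of the parabolic limit with an arbitrary $w\in\UU_f$ through the countable dense set are all correct and standard, as is your observation that the strict classes only differ at $\bdy\Theta$ and are therefore unaffected by an interior modification.

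The gap is in the pasting step, and your proposed workaround does not repair it. The failure of the hypothesis of Lemma~\ref{lem-pasting} is not caused by lack of continuity of $u_j$ on the parabolic boundary: it is caused by the top $U\times\{t_2\}$ of the cylinder $V=U_{t_1,t_2}$, which lies in $\Theta\setm V$ while the modification set $\{H_j<u_j\}$ generically accumulates there, so the closure condition fails no matter how $V$ is chosen. Selecting the exhausting cylinders $V^{(m)}$ so that $u_j$ is continuous on $\bdy_p V^{(m)}$ therefore does not help, since the top is not part of the parabolic boundary. Moreover, the Poisson modifications associated with an increasing exhaustion $V^{(m)}\nearrow V$ form a \emph{decreasing} sequence of superparabolic functions (modifying on a larger set produces a smaller function, by comparison), so Proposition~\ref{prop-inc-superparabolic} is inapplicable, and a decreasing limit of superparabolic functions need not be lower semicontinuous; the alternative via Lemma~\ref{lem-pasting-const} replaces $u_j$ by a constant outside $V^{(m)}$ and so destroys membership in $\UU_f$. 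The standard repair (implicit in \cite{Kinnunen-Lindqvist-Lukkari:2013} and mirrored by Proposition~\ref{prop-extend-superpar-m} of this paper) is to include the top in the modified region: set $\tilde u_j=H_j$ on $U\times(t_1,t_2]$, with the value on the top defined as the limit of $H_j$ from below (which exists by the boundary continuity estimates), and $\tilde u_j=u_j$ elsewhere. Lower semicontinuity at the top then follows because Theorem~\ref{thm-essliminf} gives $u_j(x,t_2)=\essliminf_{\,s<t_2}u_j\ge\lim_{s\to t_2^-}H_j(x,s)$, and the comparison property \ref{iii} for cylinders straddling the level $t=t_2$ is verified directly from this $\essliminf$ characterization rather than from Lemma~\ref{lem-pasting}. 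With that replacement your proof goes through.
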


\begin{proof}
For $\lP f$ and $\uP f$ in cylinders this is Theorem~4.6
in Kinnunen--Lindqvist--Lukkari~\cite{Kinnunen-Lindqvist-Lukkari:2013}
but as everything is local this is true in arbitrary sets, as they
in fact mention in \cite[p.~2960]{Kinnunen-Lindqvist-Lukkari:2013}.
For $m=1$, see e.g.\ Watson~\cite{watson}.

The proofs carry over essentially verbatim to $\lS f$ and $\uS f$.
\end{proof}

Since we cannot add constants to solutions of the porous medium equation, 
unlike in the elliptic and \p-parabolic cases, the boundary regularity might 
\emph{a priori} depend on the value of the boundary function at that point,
and could also be different from above and below.
We are therefore led to the following definitions.

\begin{definition}
\label{def:regular}
A boundary point $\xi_0\in \partial \Theta$ is 
\emph{upper regular} with respect to $\Theta$ if
\[
        \limsup_{\Theta \ni \xi \to \xi_0} \uP f(\xi) \le f(\xi_0)
\]
whenever $f : \partial \Theta \to (0,\infty)$ is positive and continuous.

Similarly, $\xi_0$ is 
\emph{lower regular for positive\/ \textup{(}nonnegative\/\textup{)} 
boundary data} with respect to $\Theta$ if
\begin{equation*}   
        \liminf_{\Theta \ni \xi \to \xi_0} \lP f(\xi) \ge f(\xi_0)
\end{equation*}
whenever $f : \partial \Theta \to [0,\infty)$ 
is positive (nonnegative) and continuous.

Finally, we say that $\xi_0$ is \emph{regular for positive\/ 
\textup{(}nonnegative\/\textup{)} boundary data} if it is both
upper regular and lower regular for positive (nonnegative) data.
\end{definition}

We will often
omit the explicit reference to $\Theta$, whenever no confusion may arise.
The following result is an elementary but useful tool.

\begin{prop} \label{prop-reg-S-P-equiv}
Let $m>0$ and $\xi_0 \in \partial \Theta$.
Then the following are true\/\textup{:}
\begin{enumerate}
\item \label{1-u}
If  $f: \bdy \Theta \to [0,\infty)$ is bounded and continuous at $\xi_0$,
and $\xi_0$ is upper regular, then
\[
        \limsup_{\Theta \ni \xi \to \xi_0} \uP f(\xi) 
        \le \limsup_{\Theta \ni \xi \to \xi_0} \uS f(\xi) 
        \le f(\xi_0).
\]
\item \label{1-nn}
If  $f: \bdy \Theta \to [0,\infty)$ is bounded and continuous at $\xi_0$,
and $\xi_0$ is lower regular for nonnegative boundary data, then
\[
        \liminf_{\Theta \ni \xi \to \xi_0} \lP f(\xi) 
        \ge f(\xi_0).
\]
\item \label{1-r}
If  $f: \bdy \Theta \to (0,\infty)$ is bounded, bounded 
away from $0$ and continuous at $\xi_0$,
and $\xi_0$ is lower regular for positive boundary data, then
\[
        \liminf_{\Theta \ni \xi \to \xi_0} \lP f(\xi) 
        \ge \liminf_{\Theta \ni \xi \to \xi_0} \lS f(\xi) 
        \ge f(\xi_0).
\]
\item \label{1-ur}
If  $f: \bdy \Theta \to (0,\infty)$ is bounded from above, bounded 
away from $0$ and continuous at $\xi_0$,
and $\xi_0$ is regular 
for positive boundary data, then
\[
        \lim_{\Theta \ni \xi \to \xi_0} \lS f(\xi) 
        =         \lim_{\Theta \ni \xi \to \xi_0} \lP f(\xi) 
        =         \lim_{\Theta \ni \xi \to \xi_0} \uP f(\xi) 
        =         \lim_{\Theta \ni \xi \to \xi_0} \uS f(\xi) 
        = f(\xi_0).
\]
\end{enumerate}
\end{prop}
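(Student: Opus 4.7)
The plan is to deduce each of the four parts from the defining upper/lower regularity by sandwiching strict Perron solutions between ordinary Perron solutions of slightly perturbed continuous boundary data. The two chains $\lS f \le \lP f \le \uS f$ and $\lS f \le \uP f \le \uS f$ furnished by Remark~\ref{rmk-perron-domain} give the inner inequalities in parts (a) and (c) with no further work, so all the effort goes into the outer ones.

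My first tool is a pair of elementary comparisons between strict and ordinary Perron solutions. For any bounded continuous $g:\bdy\Theta\to(0,\infty)$ and any $\delta>0$, every $u\in\UU_{g+\delta}$ satisfies $\liminf u\ge g+\delta>g$ at each boundary point, hence lies in $\UUt_g$; taking infima yields $\uS g\le\uP(g+\delta)$. Symmetrically, if $g-\delta>0$ on $\bdy\Theta$, then $\LL_{g-\delta}\subset\LLt_g$ and $\lS g\ge\lP(g-\delta)$.

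My second tool is the construction, for every $\eps>0$, of continuous envelopes $g_1,g_2:\bdy\Theta\to[0,\infty)$ with $g_2\le f\le g_1$ and $|g_i(\xi_0)-f(\xi_0)|<\eps$ for $i=1,2$. One concrete choice takes a continuous cutoff $\phi$ on $\bdy\Theta$ with $\phi(\xi_0)=1$ supported in a neighbourhood where $|f-f(\xi_0)|<\eps$, and interpolates:
\[
g_1=(1-\phi)M+\phi(f(\xi_0)+\eps),\qquad g_2=(1-\phi)c+\phi(f(\xi_0)-\eps)_\limplus,
\]
where $M=\sup_{\bdy\Theta}f$ and $c\ge 0$ is a constant (chosen positive and small when $f$ is bounded away from $0$, so that $g_2>0$ throughout).

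With these in hand, part~(a) follows from the chain
\[
\uS f\le\uS g_1\le\uP(g_1+\eps),
\]
where the first inequality uses $\UUt_{g_1}\subset\UUt_f$ (as $g_1\ge f$), and upper regularity at $\xi_0$ applied to the positive continuous $g_1+\eps$ gives $\limsup_{\xi\to\xi_0}\uP(g_1+\eps)(\xi)\le g_1(\xi_0)+\eps\le f(\xi_0)+2\eps$; let $\eps\to0$. Part~(b) is immediate from $\lP f\ge\lP g_2$ (since $\LL_{g_2}\subset\LL_f$) together with lower regularity for nonnegative data applied to $g_2$. Part~(c) mirrors~(a): $\lS f\ge\lS g_2\ge\lP(g_2-\eps')$ for small $\eps'>0$, combined with lower regularity for positive data applied to the positive continuous $g_2-\eps'$. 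Finally, part~(d) merges (a) and (c): the orderings from Remark~\ref{rmk-perron-domain} squeeze all four quantities between $\liminf\lS f\ge f(\xi_0)$ and $\limsup\uS f\le f(\xi_0)$, forcing each to tend to $f(\xi_0)$.

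The only real subtlety is the two-step detour $\uS f\to\uS g_1\to\uP(g_1+\eps)$, since one cannot expect $\uS g_1=\uP g_1$ in the absence of a nonstrict elliptic-type comparison principle; inserting the harmless $+\eps$ perturbation is precisely what lets us cash in the assumed regularity (which is phrased in terms of $\uP$) to control the strict solution $\uS$, at the cost of an $\eps$ that disappears in the limit. Everything else is bookkeeping.
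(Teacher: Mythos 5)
Your proposal is correct and follows essentially the same route as the paper: continuous majorants/minorants pinned near $f(\xi_0)$, the class containments $\UU_{g+\delta}\subset\UUt_g$ and $\LL_{g-\delta}\subset\LLt_g$ to pass between strict and nonstrict Perron solutions, the definition of regularity applied to the perturbed continuous data, and $\eps\to0$. The only cosmetic difference is your two-step detour through $\uS g_1$ (resp.\ $\lS g_2$), where the paper goes directly via a single continuous $\tf\ge f+\eps$ (resp.\ $\le f$, or $\ge 2\eps$ with $\tf-\eps>0$); the mechanism is identical.
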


\begin{proof}
\ref{1-nn}
Find $\ft \in C(\bdy \Theta)$ so that $0\le \ft \le f$ 
and $\ft(\xi_0)=f(\xi_0)$. Then
\[
        \liminf_{\Theta \ni \xi \to \xi_0} \lP f(\xi) 
        \ge \liminf_{\Theta \ni \xi \to \xi_0} \lP \tf(\xi) 
        \ge \ft(\xi_0)
        =  f(\xi_0).
\]

\ref{1-r}
First of all, for any $\eps>0$ small enough,
we can find $\ft \in C(\bdy \Theta)$ so that
$2\eps \le \ft \le f$ on $\bdy \Theta$ and $\ft(\xi_0)=f(\xi_0)$.
Now $\ft-\eps$ is positive on $\bdy \Theta$,
and thus
\[
        \liminf_{\Theta \ni \xi \to \xi_0} \lS f(\xi) 
        \ge \liminf_{\Theta \ni \xi \to \xi_0} \lP (\tf-\eps)(\xi) 
        \ge \ft(\xi_0)-\eps
        =  f(\xi_0)-\eps.
\]
Letting $\eps \to 0$ shows the second inequality,
while the first one follows directly from the inequality $\lP f \ge \lS f$.

\ref{1-u} This is shown in the same way as \ref{1-r},
using a continuous function $\tf\ge f+\eps$ such that 
$\tf(\xi_0)=f(\xi_0)+\eps$.

\ref{1-ur}
This follows from 
\ref{1-u} and \ref{1-r} and the inequalities
$\lS f \le \uP f$ and $\lP f \le \uS f$.
\end{proof}

The following are direct consequences of 
Proposition~\ref{prop-reg-S-P-equiv}.

\begin{enumerate}
\renewcommand{\theenumi}{\textup{(\roman{enumi})}}%
\item
Upper regularity is the same for positive
and nonnegative boundary data, which is the reason why we did not
define them separately. 
We do not know if such an equivalence holds for lower regularity.
\item
Upper regularity can equivalently be defined using
the upper strict Perron solution $\uS$.
Similarly, lower regularity for positive boundary data 
can be defined using the lower strict Perron solution $\lS$.
\end{enumerate}

It seems that upper regularity is easier to handle. 
At this point it is far from clear whether upper and lower regularity
are equivalent or not, nor if one may imply the other. 
One can also ask whether (upper/lower) regularity at one level, 
i.e.\ for one given boundary value 
$c \ge 0$ at $\xi_0$, is equivalent to regularity at other levels, 
and also if the growth of the functions plays a role for the regularity.

In the next two sections we are going to show that
regularity for positive boundary data can be characterized by
the existence of two two-parameter families of barriers, 
one family from above and one from below. 
Whether all barriers are really needed to guarantee regularity is 
far from obvious, but hopefully future research will be able
to clarify this. 

However, it may be worth to compare with the situation
for the \p-parabolic equation \eqref{eq-p-para} for which
regularity was characterized using one one-parameter family
in Bj\"orn--Bj\"orn--Gianazza--Parviainen~\cite[Theorem~3.3]{BBGP}.
(The crucial difference here necessitating two two-parameter families
instead of one one-parameter family is the fact that 
we can neither change sign nor add constants.)
In Bj\"orn--Bj\"orn--Gianazza~\cite[Proposition~1.2]{BBG}
it was shown that one barrier is not enough to characterize
regularity for the \p-parabolic equation, at least not for $p<2$.
But with one barrier one gets regularity for boundary data $f$ not
growing too fast, see \cite[Proposition~5.1]{BBG}.

For the heat equation one barrier is enough,
as was first shown by  Bauer~\cite[Theorems~30 and~31]{Bauer62} 
for general domains.

We end the section with the following result.

\begin{prop} \label{prop-scaled-eq}
Let $\xi_0 \in \bdy \Theta$ and $a>0$.
Assume that $m>0$, $m \ne 1$.
Then $\xi_0$ is upper/lower regular for positive\/ 
\textup{(}nonnegative\/\textup{)}
boundary data for the porous medium equation \eqref{eq:para}
if and only if it is upper/lower regular 
for positive\/ \textup{(}nonnegative\/\textup{)}
boundary data for the multiplied equation
\begin{equation} \label{eq-multiplied}
    \partial_t u = a \Delta u^m.
\end{equation}
\end{prop}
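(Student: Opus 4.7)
The idea is that when $m\neq 1$ the two equations differ only by a multiplicative rescaling of the dependent variable, and this rescaling preserves every ingredient of the definition of regularity. Set $\lambda=a^{1/(1-m)}$, which is a well-defined positive real number since $a>0$ and $m\neq 1$. A direct computation shows that if $u\ge 0$ and $v=\lambda u$, then $\partial_t v=\lambda\partial_t u$ while $a\Delta v^m=a\lambda^m\Delta u^m$; hence
\[
\partial_t u = \Delta u^m
\quad\Longleftrightarrow\quad
\partial_t v = a\Delta v^m,
\]
because $a\lambda^m=\lambda\iff \lambda^{1-m}=a$. Applying the same identity to the integral form \eqref{eq:weak-solution} (with the same test functions $\phi$, which play no role in the scaling) yields that $u$ is a weak solution, supersolution or subsolution of \eqref{eq:para} if and only if $\lambda u$ is a weak solution, supersolution or subsolution of \eqref{eq-multiplied}. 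The direction of inequalities is preserved because $\lambda>0$.

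The second step is to transfer this bijection to the class of superparabolic functions. Lower semicontinuity, nonnegativity and denseness of finite values in Definition~\ref{def:superparabolic}\ref{i}--\ref{ii} are all preserved by multiplication by $\lambda>0$. For condition \ref{iii}, observe that $h\in C(\overline{U}_{t_1,t_2})$ is parabolic for \eqref{eq-multiplied} if and only if $h/\lambda$ is parabolic for \eqref{eq:para}, and that $h\le \lambda u$ on $\partial_p U_{t_1,t_2}$ iff $h/\lambda\le u$ there. Hence $u$ is superparabolic for \eqref{eq:para} iff $\lambda u$ is superparabolic for \eqref{eq-multiplied}, and the same holds for subparabolic functions. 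Consequently, the upper and lower classes of Definitions~\ref{def-Perron} and~\ref{def-S-Perron} scale as
\[
\UU_f^{(a)}=\{\lambda u:u\in \UU_{f/\lambda}^{(1)}\},
\qquad
\LL_f^{(a)}=\{\lambda u:u\in \LL_{f/\lambda}^{(1)}\},
\]
and analogously for $\UUt_f,\LLt_f$; taking infimum/supremum yields
\[
\uP^{(a)}f=\lambda\,\uP^{(1)}(f/\lambda),
\qquad
\lP^{(a)}f=\lambda\,\lP^{(1)}(f/\lambda)\quad\text{in }\Theta,
\]
with the obvious counterparts for $\uS$ and $\lS$. Here the superscripts $(1)$ and $(a)$ denote the Perron solutions associated with \eqref{eq:para} and \eqref{eq-multiplied}, respectively.

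The regularity equivalence is now immediate. For any fixed $\lambda>0$ the map $f\mapsto f/\lambda$ is a bijection on the set of continuous positive (respectively nonnegative) functions on $\bdy\Theta$. Hence
\[
\limsup_{\Theta\ni\xi\to\xi_0}\uP^{(a)} f(\xi)\le f(\xi_0)
\quad\text{for all continuous }f>0\text{ on }\bdy\Theta
\]
is equivalent, after dividing through by $\lambda$, to the analogous condition for $\uP^{(1)}(f/\lambda)$ with $f/\lambda$ ranging over the same class. This gives the upper regularity equivalence, and the lower regularity statement (both for positive and for nonnegative data) follows in exactly the same way from the identity for $\lP$. There is no analytic difficulty; the only point requiring care is keeping track of which equation each superparabolic or Perron object refers to, and the requirement $m\neq 1$ is needed solely to guarantee that the scaling factor $\lambda=a^{1/(1-m)}$ exists.
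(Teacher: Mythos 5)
Your proposal is correct and follows essentially the same route as the paper: the paper's proof consists precisely of the observation that $v=a^{1/(m-1)}u$ converts sub/supersolutions of the multiplied equation into sub/supersolutions of the porous medium equation (your $\lambda=a^{1/(1-m)}$ is the inverse of this scaling), after which the whole theory transfers. You have merely spelled out the transfer of superparabolic functions, Perron classes and Perron solutions that the paper leaves implicit.
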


For $m=1$ it is well-known that this is false, by e.g.\ the 
Petrovski\u\i\ criterion~\cite{Petro1} and~\cite{Petro2}, 
and our proof breaks down in this case.

\begin{proof}
If $u$ is a sub/super\-so\-lu\-tion of 
\eqref{eq-multiplied}, 
then $v=a^{1/(m-1)} u$ is a sub/super\-solu\-tion of the porous medium equation
\eqref{eq:para}.
The whole theory can now equivalently be developed for the equation
\eqref{eq-multiplied} and the upper/lower regularity becomes equivalent.
\end{proof}

Since  the space $\Rno$ is homogeneous, one can translate the equation
and still have the same regularity. 
Thus, without loss of generality, we will sometimes assume that 
the boundary point under consideration is the origin $\xi_0=(0,0)$.

\section{Barrier characterization of upper regularity}\label{S:Upper-reg}

\begin{definition}
\label{def-barrier-upper} 
A family of functions $\wck$, with $c\in\Q_\limplus$ and $k=1,2,\ldots$, 
is an \emph{upper barrier family}
in $\Theta$ at the point  $\xi_0=(x_0,t_0)\in \partial \Theta$ 
if for each $c\in\Q_\limplus$ and $k$, 
\begin{enumerate}
 \item \label{cond-c-1/j}  
$\wck: \Theta\to [c,\infty)$ 
is superparabolic\/;
 \item \label{cond-lim-up} 
$\displaystyle \lim_{\Theta \ni \zeta\to \xi_0} \wck(\zeta) = c$;
 \item \label{cond-k} 
there is $j=j(c,k)\in\mathbf{N}$ such that 
\[ 
   \liminf_{\Theta \ni \zeta\to(x,t)} \wcj(\zeta) \ge c+k
\] 
for all $(x,t) \in \bdy \Theta$ with $|x-x_0|+|t-t_0| \ge 1/k$.
\end{enumerate}
\end{definition}

Here $\Qp=\{x \in \Q : x >0\}$.

\begin{thm} \label{thm:barrier-char-upper}
Let $m\ge1$ and $\xi_0\in \bdy \Theta$. 
Then the following are equivalent\/\textup{:}
\begin{enumerate}
\renewcommand{\theenumi}{\textup{(\arabic{enumi})}}%
\item \label{up-reg}
$\xi_0$ is upper regular\/\textup{;}
\item \label{up-bar}
there is an upper barrier family at $\xi_0$\/\textup{;}
\item \label{up-bar-Theta}
there is an upper barrier family at $\xi_0$ 
consisting of parabolic functions
satisfying~\ref{cond-k} in Definition~\ref{def-barrier-upper} 
with $(x,t) \in \bdy \Theta$ replaced by $(x,t) \in \overline{\Theta}$.
\end{enumerate}
\end{thm}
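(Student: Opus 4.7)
I would prove the three equivalences by the cycle $(3)\Rightarrow(2)\Rightarrow(1)\Rightarrow(3)$. The implication $(3)\Rightarrow(2)$ is immediate, since parabolic functions are superparabolic (Theorem~\ref{thm-lsc-supersoln}) and the $\liminf$-condition over $\overline\Theta$ implies the corresponding one over $\bdy\Theta$.

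For $(2)\Rightarrow(1)$, fix a continuous $f:\bdy\Theta\to(0,\infty)$ and $\eps>0$. The idea is to produce a single admissible superparabolic majorant of $f$ out of the barrier family. Choose $c\in\Qp$ with $f(\xi_0)<c<f(\xi_0)+\eps$; using continuity of $f$ at $\xi_0$ and boundedness of $f$ on the compact set $\bdy\Theta$, pick $k\in\mathbf{N}$ so that $f\le c$ on $\bdy\Theta\cap\{|\xi-\xi_0|<1/k\}$ and $c+k\ge\sup_{\bdy\Theta}f$. Let $j=j(c,k)$ be the index provided by condition~\ref{cond-k} in Definition~\ref{def-barrier-upper}. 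Then $\wcj\in\UU_f$: on the part of $\bdy\Theta$ close to $\xi_0$ one has $\wcj\ge c\ge f$ by~\ref{cond-c-1/j}, while on the complementary part the $\liminf$ of $\wcj$ is $\ge c+k\ge f$ by~\ref{cond-k}. Hence $\uP f\le\wcj$ in $\Theta$, and applying~\ref{cond-lim-up} to $\wcj$ (itself a member of the family) gives $\limsup_{\xi\to\xi_0}\uP f(\xi)\le\lim_{\xi\to\xi_0}\wcj(\xi)=c<f(\xi_0)+\eps$. Letting $\eps\to 0$ yields upper regularity.

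For $(1)\Rightarrow(3)$ I would construct the barrier family from Perron solutions. For each $c\in\Qp$ and $j\in\mathbf{N}$ choose $f_{c,j}\in C(\bdy\Theta)$ with $f_{c,j}(\xi_0)=c$, $c\le f_{c,j}\le c+M_j$, and $f_{c,j}\equiv c+M_j$ on $\bdy\Theta\cap\{|\xi-\xi_0|\ge 1/j\}$, where $(M_j)$ is an unbounded increasing sequence to be specified and the family is arranged to be nondecreasing in $j$. Set $\wcj:=\uP f_{c,j}$; this is parabolic by Theorem~\ref{thm-Perron-solution}. The lower bound $\wcj\ge c$ (condition~\ref{cond-c-1/j}) follows by applying the elliptic-type comparison principle (Theorem~\ref{thm-comp-princ}) to the subparabolic constant $c-\delta$, with $\delta\in(0,c)$, against any $u\in\UU_{f_{c,j}}$, then taking the infimum over $u$ and letting $\delta\to 0$. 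Combined with upper regularity of $\xi_0$ applied to the continuous datum $f_{c,j}$, this gives $\lim_{\xi\to\xi_0}\wcj(\xi)=c$, which is condition~\ref{cond-lim-up}.

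The main obstacle is condition~\ref{cond-k}: given $c,k$ one must find $j=j(c,k)$ with $\wcj\ge c+k$ on $\Theta\cap\{|\xi-\xi_0|\ge 1/(2k)\}$; the required $\liminf$ bound at boundary points of $\overline\Theta\cap\{|\xi-\xi_0|\ge 1/k\}$ then follows by approaching each such point from within $\{|\zeta-\xi_0|\ge 1/(2k)\}$. The crucial claim is that the monotone limit $W_c:=\lim_j\wcj\in[c,\infty]$ is identically $+\infty$ on $\Theta\setminus\{\xi_0\}$; granted this, local equicontinuity of parabolic functions together with Dini's theorem provides a uniform lower bound on the compact set $\overline\Theta\cap\{|\xi-\xi_0|\ge 1/(2k)\}$ for $j$ sufficiently large. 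To establish the claim I would argue by contradiction using Proposition~\ref{prop-inc-superparabolic}: if $W_c$ were finite in a dense subset of $\Theta$ it would be superparabolic there, and one should derive a contradiction from the blow-up of the data $f_{c,j}$ on $\bdy\Theta\setminus\{\xi_0\}$. The technical heart is controlling the boundary behaviour of $W_c$ at points of $\bdy\Theta$ other than $\xi_0$, since a monotone limit of Perron solutions need not inherit the limiting boundary values. A natural remedy is to exhaust $\Theta\setminus\{\xi_0\}$ by smooth subcylinders $U_{t_1,t_2}\Subset\Theta$ on which Theorem~\ref{thm-cont-exist} furnishes explicit continuous parabolic functions with controllable boundary data, and to use the pasting lemmas (Lemmas~\ref{lem-pasting-const} and~\ref{lem-pasting-const-sub}) to assemble subparabolic lower barriers that can be fed into the strict elliptic comparison principle of Theorem~\ref{thm-comp-princ}.
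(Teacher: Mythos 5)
Your implications \ref{up-bar-Theta}$\Rightarrow$\ref{up-bar} and \ref{up-bar}$\Rightarrow$\ref{up-reg} are correct and essentially identical to the paper's argument. The problem is \ref{up-reg}$\Rightarrow$\ref{up-bar-Theta}, where your route has a genuine gap at exactly the point you flag as ``the main obstacle''. Two things go wrong. First, the claim that $W_c:=\lim_j\wcj\equiv+\infty$ on $\Theta\setm\{\xi_0\}$ is never actually proved: you propose a contradiction via Proposition~\ref{prop-inc-superparabolic}, but a superparabolic limit of Perron solutions need not blow up just because the boundary data do, since nothing is assumed about the regularity of boundary points other than $\xi_0$; controlling the boundary behaviour of $W_c$ there is precisely the hard part, and your ``natural remedy'' via exhaustion and pasting is only a direction, not an argument. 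Second, even granting $W_c\equiv+\infty$ in $\Theta\setm\{\xi_0\}$, your appeal to local equicontinuity and Dini's theorem only yields $\wcj\ge c+k$ on compact subsets of the \emph{open} set $\Theta$; it does not give the required $\liminf$ bound as $\zeta\to(x,t)$ for $(x,t)\in\bdy\Theta$ with $|x-x_0|+|t-t_0|\ge1/k$, because the sets $\Theta\cap\{|\zeta-\xi_0|\ge1/(2k)\}$ are not compactly contained in $\Theta$ and interior equicontinuity says nothing near $\bdy\Theta$. So condition~\ref{cond-k} of Definition~\ref{def-barrier-upper} is not established.

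The paper avoids all of this with an explicit comparison function: it sets $\psicj(x,t)=(c^m+j|x-x_0|^2+jb(t-t_0)^2)^{1/m}$ with $b=mc^{m-1}/\diam\Theta$, checks by direct computation that $\bdy_t\psicj-\Delta\psicj^m\le0$, so that $\psicj$ is subparabolic by Theorem~\ref{thm-lsc-supersoln}, and defines $\wcj:=\lP\psicj$. Since $\psicj\in\LL_{\psicj}$, one gets the pointwise lower bound $\wcj\ge\psicj\ge c$ \emph{everywhere in $\Theta$ from the explicit formula}; condition~\ref{cond-k} (in the strengthened form over $\overline{\Theta}$) then follows immediately by taking $j$ large, with no limit passage, no equicontinuity, and no boundary analysis away from $\xi_0$. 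Upper regularity at $\xi_0$ together with Proposition~\ref{prop-reg-S-P-equiv} gives $\lim_{\zeta\to\xi_0}\wcj(\zeta)=c$, i.e.\ condition~\ref{cond-lim-up}. If you want to salvage your construction, you should replace your abstract data $f_{c,j}$ by such explicit sub- or superparabolic functions whose growth away from $\xi_0$ is visible from a formula; as written, the proposal does not prove \ref{up-reg}$\Rightarrow$\ref{up-bar-Theta}.
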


\begin{proof}
We assume, without loss of generality, that $\xi_0=(0,0)$.

\ref{up-bar} \imp \ref{up-reg}
Let $\{\wck\}$ be an upper barrier family 
at $\xi_0$. 

Assume that $f:\bdry\Theta\to(0,\infty)$ is continuous.
Let $c\in\Q_\limplus$ be such that $c>f(\xi_0)$.
Then find $k \ge \sup_{\partial \Theta} f$ such that
$f(x,t)<c$ whenever $|x|+|t| <1/k$.
Let $j=j(c,k)$ be as given in 
Definition~\ref{def-barrier-upper}\,\ref{cond-k}.
Since $\wcj \ge c$, we see that 
\[
\liminf_{\Theta \ni \zeta\to(x,t)} \wcj(\zeta)  > f(x,t)
\quad \text{for all } (x,t)\in\bdry\Theta.
\]
This implies that $\wcj \in \UU_f$, and thus 
$\uP f\le \wcj$ in $\Theta$. Consequently, 
\[
\limsup_{\Theta\ni\zeta\to\xi_0} \uP f(\zeta) 
\le \limsup_{\Theta\ni\zeta\to\xi_0} \wcj(\zeta)
= c.
\]
Since this holds for all rational $c>f(\xi_0)$, we conclude that
\[
\limsup_{\Theta\ni\zeta\to\xi_0} \uP f(\zeta) \le f(\xi_0),
\]
and thus $\xi_0$ is upper regular.

\ref{up-reg} \imp \ref{up-bar-Theta} 
Assume that $\xi\in\partial \Theta$ is upper regular. 
Given $c\in\Q_\limplus$ and $j=1,2,\ldots$, we let 
\[
\psi_{c,j}(x,t) := (c^m+j|x|^2+jbt^2)^{1/m}
\quad \text{for } (x,t)\in \R^{n+1},
\]
where $b=mc^{m-1}/{\diam \Theta}$.
Note that $\psicj\ge c$. 
Direct calculations show that 
\[
\partial_t \psi_{c,j} =\frac{2jbt}m (c^m+j|x|^2+jbt^2)^{-1+1/m} 
  \le \frac{2jbt}{mc^{m-1}}
\]
and
\begin{align*}
\Delta \psicj^m &=j \Delta |x|^2 = 2j \dvg x = 2j n\ge 0.
\end{align*}
Hence
\[
\bdry_t \psicj - \Delta \psicj^m \le 2j\biggl( \frac{bt}{mc^{m-1}}-n\biggr)
 \le0,
\]
which implies by Theorem~\ref{thm-lsc-supersoln} that $\psicj$ is subparabolic 
in $\Theta$.
By Theorem~\ref{thm-Perron-solution} the function 
\[
  \wcj := \lP\psi_{c,j}
\] 
is parabolic. 
Since $\psi_{c,j}$ is subparabolic, it belongs to $\LL_{\psi_{c,j}}$.
Therefore, by definition, we get that $\wcj \ge \psi_{c,j} \ge c$,
and so \ref{cond-c-1/j} in Definition~\ref{def-barrier-upper}
holds. 
As $\xi_0$ is upper regular, we also obtain
using Proposition~\ref{prop-reg-S-P-equiv} that
\begin{align*}
\limsup_{\Theta \ni \xi \to \xi_0}  \wcj
\le \limsup_{\Theta \ni \xi \to \xi_0} \uS\psi_{c,j} = c
\end{align*}
and thus $\lim_{\Theta \ni \xi \to \xi_0}  \wcj = c$, giving \ref{cond-lim-up}
in Definition~\ref{def-barrier-upper}.
By the form of $\wcj$ it follows that also \ref{cond-k} 
in Definition~\ref{def-barrier-upper} is satisfied,
and thus the functions $\wcj$ form an upper barrier family
of the type required in \ref{up-bar-Theta}.

\ref{up-bar-Theta} \imp \ref{up-bar}
This is trivial.
\end{proof}

The following useful restriction result
is a direct consequence of the barrier characterization in 
Theorem~\ref{thm:barrier-char-upper}\,\ref{up-bar-Theta}.

\begin{cor} \label{cor-restrict-up}
Let $m \ge 1$ and $G \subset \Theta$ be open. 
Suppose that $\xi_0 \in \bdy \Theta \cap \bdy G$.
If $\xi_0$ is upper regular 
with respect to $\Theta$,
then it is upper regular  
with respect to $G$.
\end{cor}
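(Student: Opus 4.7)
The plan is to use the barrier characterization of upper regularity from Theorem~\ref{thm:barrier-char-upper}, specifically the equivalence between \ref{up-reg} and the strengthened version~\ref{up-bar-Theta}. Since $\xi_0$ is upper regular with respect to $\Theta$, \ref{up-bar-Theta} furnishes a family $\{w_{c,j}\}_{c\in\Qp,\,j\in\mathbf{N}}$ of parabolic functions on $\Theta$ which is an upper barrier family at $\xi_0$ and in which condition~\ref{cond-k} of Definition~\ref{def-barrier-upper} holds for all $(x,t)\in\overline{\Theta}$ (not merely on $\bdy\Theta$). The strategy is to restrict these functions to $G$ and verify that they form an upper barrier family at $\xi_0$ with respect to $G$; the conclusion will then follow by the implication \ref{up-bar}\,\imp\,\ref{up-reg} of Theorem~\ref{thm:barrier-char-upper}.

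First I would check \ref{cond-c-1/j}: since parabolicity is a local property and $G\subset\Theta$, each restriction $w_{c,j}|_G$ is parabolic on $G$, and in particular superparabolic, and it still takes values in $[c,\infty)$. Next, for \ref{cond-lim-up}, since $\lim_{\Theta\ni\zeta\to\xi_0} w_{c,k}(\zeta)=c$ and $G\subset\Theta$, the limit along $G\ni\zeta\to\xi_0$ is the same $c$. The only nontrivial condition is~\ref{cond-k}, which is where the strengthened form \ref{up-bar-Theta} is essential: a boundary point $(x,t)\in\bdy G$ with $|x-x_0|+|t-t_0|\ge 1/k$ need not lie on $\bdy\Theta$ at all (it may be an interior point of $\Theta$), so the original version of~\ref{cond-k} with $(x,t)\in\bdy\Theta$ would be insufficient. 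However, $\bdy G\subset\overline{\Theta}$, and for $(x,t)\in\overline{\Theta}$ the enhanced condition gives
\[
\liminf_{\Theta\ni\zeta\to(x,t)} w_{c,j}(\zeta)\ge c+k
\]
with the same $j=j(c,k)$ as before. Restricting the approach to $\zeta\in G$ can only increase the liminf, so
\[
\liminf_{G\ni\zeta\to(x,t)} w_{c,j}(\zeta) \ge \liminf_{\Theta\ni\zeta\to(x,t)} w_{c,j}(\zeta) \ge c+k,
\]
which is exactly what is required.

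Thus $\{w_{c,j}|_G\}$ is an upper barrier family at $\xi_0$ with respect to $G$, and the implication \ref{up-bar}\,\imp\,\ref{up-reg} of Theorem~\ref{thm:barrier-char-upper} yields that $\xi_0$ is upper regular with respect to $G$. There is no serious obstacle here; the crux is simply that the enhanced form \ref{up-bar-Theta} produces barriers with control at every point of $\overline{\Theta}$, and this is precisely what is needed to accommodate boundary points of $G$ which sit inside $\Theta$.
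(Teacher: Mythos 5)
Your proof is correct and is exactly the argument the paper intends: the corollary is stated there as a direct consequence of Theorem~\ref{thm:barrier-char-upper}\,\ref{up-bar-Theta}, and you have correctly identified that the strengthened condition on all of $\overline{\Theta}$ is what makes the restriction to $G$ work at boundary points of $G$ lying in the interior of $\Theta$. No gaps.
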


Another consequence of the barrier characterization is that
upper regularity is a \emph{local} property.

\begin{prop}  \label{prop-local-up}
Let $m \ge 1$ and $\xi_0 \in \bdy \Theta$, and suppose that
$B$ is a ball containing $\xi_0$.
Then
$\xi_0$ is upper regular 
with respect to $\Theta$
if and only if it is upper regular 
with respect to $B \cap\Theta$.
\end{prop}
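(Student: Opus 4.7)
The forward implication is immediate from Corollary~\ref{cor-restrict-up} applied with $G:=B\cap\Theta\subset\Theta$, noting that $\xi_0\in\bdy\Theta\cap\bdy G$ because every neighbourhood of $\xi_0$ meets $\Theta$ (as $\xi_0\in\bdy\Theta$) and lies inside $B$ for a small enough scale (as $\xi_0\in B$), yet $\xi_0\notin G$.

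For the converse, suppose $\xi_0$ is upper regular with respect to $G$. The plan is to build an upper barrier family at $\xi_0$ with respect to $\Theta$ by pasting the barriers on $G$ with a constant outside a smaller ball, and then invoke Theorem~\ref{thm:barrier-char-upper} in the reverse direction. By Theorem~\ref{thm:barrier-char-upper}\,\ref{up-bar-Theta}, there is an upper barrier family $\{w'_{c,j}\}_{c,j}$ at $\xi_0$ with respect to $G$, consisting of parabolic functions and satisfying the stronger liminf condition on $\overline{G}$ rather than only on $\bdy G$. Fix an open ball $B''$ with $\xi_0\in B''\Subset B$ of radius $r''>0$, and set $U:=B''\cap\Theta\subset G$. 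Given $c\in\Qp$ and $k\in\mathbf{N}$, choose an integer $k'\ge k$ with $1/k'<r''$, and select $j^*=j^*(c,k)$ so that $v:=w'_{c,j^*}$ satisfies
\[
   \liminf_{G\ni\zeta\to(x,t)}v(\zeta)\ge c+k'
   \qquad\text{for all }(x,t)\in\overline{G}\text{ with }|x-x_0|+|t-t_0|\ge 1/k'.
\]
Then define
\[
   w_{c,k}(\xi):=\begin{cases}
     \min\{v(\xi),\,c+k'\} & \text{if }\xi\in U,\\
     c+k' & \text{if }\xi\in\Theta\setminus U.
   \end{cases}
\]

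Lemma~\ref{lem-pasting-const} will show $w_{c,k}$ is superparabolic on $\Theta$ once we verify its lower semicontinuity. The only nontrivial case is $\xi\in\bdy U\cap\Theta=\bdy B''\cap\Theta$, which lies in the interior of $G$ at $\ell^1$-distance $\ge r''>1/k'$ from $\xi_0$; continuity of the parabolic $v$ together with the interior liminf condition forces $v(\xi)\ge c+k'$, so $w_{c,k}(\eta)\to c+k'=w_{c,k}(\xi)$ both from $U$ and from $\Theta\setminus U$. Properties \ref{cond-c-1/j} and \ref{cond-lim-up} of Definition~\ref{def-barrier-upper} are then immediate, using $\lim_{G\ni\zeta\to\xi_0}v(\zeta)=c$. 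For \ref{cond-k} I take $j(c,k):=k$: at any $(x,t)\in\bdy\Theta$ with $|x-x_0|+|t-t_0|\ge 1/k$, if $(x,t)\notin\overline{B''}$ then $w_{c,k}\equiv c+k'$ in a neighbourhood of $(x,t)$ in $\Theta$, while for $(x,t)\in\overline{B''}\cap\bdy\Theta\subset\overline{G}$ the liminf from sequences in $U\subset G$ is $\ge c+k'$ by the $G$-barrier inequality (using $1/k\ge 1/k'$), and the liminf from $\Theta\setminus U$ equals $c+k'$. Hence $\liminf_{\Theta\ni\eta\to(x,t)}w_{c,k}(\eta)\ge c+k'\ge c+k$, and Theorem~\ref{thm:barrier-char-upper} yields the upper regularity of $\xi_0$ with respect to $\Theta$.

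The main obstacle is the lower semicontinuity of $w_{c,k}$ on the inner sphere $\bdy B''\cap\Theta$, which lies in the \emph{interior} of $G$ and is therefore not covered by the default barrier condition on $\bdy G$. This is precisely why I invoke the stronger form \ref{up-bar-Theta} of Theorem~\ref{thm:barrier-char-upper}, where the liminf inequality extends to all of $\overline{G}$; combined with the continuity of the parabolic barriers, this yields the required pointwise lower bound on $v$ at interior points of $G$ that are not too close to $\xi_0$, which makes the pasting work.
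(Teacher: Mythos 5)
Your proof is correct and follows essentially the same route as the paper: the forward direction via Corollary~\ref{cor-restrict-up}, and the converse by truncating a barrier family for $B\cap\Theta$ at the level $c+k$ and pasting with that constant via Lemma~\ref{lem-pasting-const}. The only difference is that the paper pastes along $\partial B$ itself, where the ordinary condition \ref{cond-k} of Definition~\ref{def-barrier-upper} (with $1/k<\dist(\xi_0,\partial B)$) already yields the required lower semicontinuity since $\Theta\cap\partial B\subset\partial(B\cap\Theta)$, whereas you paste along an inner sphere lying in the interior of $B\cap\Theta$ and therefore need the strengthened form \ref{up-bar-Theta}; both work.
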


\begin{proof}
Corollary~\ref{cor-restrict-up} shows 
that if $\xi_0$ is upper regular with respect to $\Theta$, 
then it is also upper regular with respect to $B \cap \Theta$. 
It remains to show the converse direction.

By Theorem~\ref{thm:barrier-char-upper} we have an 
upper barrier family $\wcj$ in $B \cap \Theta$. 
Let $k_0$ be large enough so that 
$1/k_0 < \dist(\xi_0, \partial B)$ 
and let $j(c,k_0)$ be as in Definition~\ref{def-barrier-upper}\,\ref{cond-k}. 
Define
\[
w'_{c,k} := 
\begin{cases}
\min\{w_{c, j(c,k)}, c+k\} &\text{in }  B \cap \Theta \\
c+k &\text{in }  \Theta \setminus B.
\end{cases}
\]
for $k \ge k_0$ and $w'_{c,k}= w'_{c,k_0}$ for $k < k_0$. 
By the pasting lemma~\ref{lem-pasting-const} the function $w'_{c,k}$ 
is superparabolic, and thus $\{w'_{c,k}\}$ is an upper barrier family 
in $\Theta$. 
This implies that $\xi_0$ is upper regular with respect to $\Theta$.
\end{proof}

\section{Barrier characterization of lower regularity for positive boundary data}\label{S:Lower-reg}

\begin{definition}
\label{def-barrier-lower-pos} 
A family of functions $\wck$, with $c\in\Q_\limplus$ and $k=1,2,\ldots$, 
is a \emph{lower barrier family for positive boundary data}
in $\Theta$ at the point  $\xi_0=(x_0,t_0)\in \partial \Theta$ 
if for each $c\in\Q_\limplus$ and $k$, 
\begin{enumerate}
 \item \label{cond-c+1/j}  
$\wck: \Theta\to [0,c]$ 
is subparabolic\/;
 \item \label{cond-lim} 
$\displaystyle \lim_{\Theta \ni \zeta\to \xi_0} \wck(\zeta)=c$;
 \item \label{cond-de} 
there is $j=j(c,k)\ge k$ such that 
\[ 
   \limsup_{\Theta \ni \zeta\to(x,t)} \wcj(\zeta) \le \frac1k
\] 
for all $(x,t) \in \bdy \Theta$ with $|x-x_0|+ |t-t_0| \ge 1/k$.
\end{enumerate}
\end{definition}

\begin{lem}  \label{lem-removable}
Let $m >0$. Assume that $u$ is a 
supersolution\/ \textup{(}subsolution\/\textup{)}
in $\Theta\setm (E\times\R)$, where $E\subset\R^n$ is a 
set of zero capacity such that $\Theta\setm (E\times\R)$ is open.
If $u^m\in L^{2}(t_1,t_2;W^{1,2}(U))$ for every cylinder 
$U_{t_1,t_2} \Subset \Theta$, 
then $u$ is a supersolution\/ \textup{(}subsolution\/\textup{)}
in $\Theta$ as well.
\end{lem}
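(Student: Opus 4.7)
The plan is to show that the weak supersolution inequality
\[
\iintlim{t_1}{t_2}\int_U \grad u^m \cdot \grad \phi \, dx\, dt - \iintlim{t_1}{t_2}\int_U u\, \partial_t \phi \, dx\, dt \ge 0
\]
holds for every nonnegative $\phi\in C_0^\infty(U_{t_1,t_2})$ with $U_{t_1,t_2}\Subset\Theta$, by cutting $\phi$ off on a shrinking \emph{spatial} neighborhood of $E$ and passing to the limit. The uniform regularity $u^m\in L^2(t_1,t_2;W^{1,2}(U))$ on subcylinders is what makes the limiting integrals converge.

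First, fix such a $\phi$ and let $K\Subset U$ denote the spatial projection of $\supp\phi$. Since $\capacity(E\cap \overline K)=0$, standard capacity theory furnishes Lipschitz cutoff functions $\eta_j\colon\R^n\to[0,1]$ with compact support such that $\eta_j\equiv 1$ on an open neighborhood $V_j$ of $E\cap\overline K$ and $\|\eta_j\|_{W^{1,2}(\R^n)}\to 0$; in particular $\eta_j\to 0$ a.e.\ and $\|\grad \eta_j\|_{L^2(\R^n)}\to 0$. Set $\phi_j(x,t):=(1-\eta_j(x))\phi(x,t)$. Then $\phi_j$ is nonnegative and Lipschitz, and its spatial support lies in $K\setm V_j\subset \R^n\setm E$, so $\supp\phi_j\subset U_{t_1,t_2}\cap(\Theta\setm(E\times\R))$. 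After a spatial mollification of $\eta_j$ and the usual density argument in the weak formulation, $\phi_j$ is admissible as a test function on $\Theta\setm(E\times\R)$, yielding
\[
\iintlim{t_1}{t_2}\int_U \grad u^m \cdot \grad \phi_j\, dx\, dt - \iintlim{t_1}{t_2}\int_U u\, \partial_t \phi_j\, dx\, dt \ge 0.
\]

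Now I let $j\to\infty$. Since $\eta_j$ depends only on $x$, one has $\partial_t\phi_j=(1-\eta_j)\partial_t\phi$, and dominated convergence (with the integrable majorant $u|\partial_t\phi|$ on $U_{t_1,t_2}$) handles the time derivative term. For the spatial term, decompose $\grad\phi_j=(1-\eta_j)\grad\phi-\phi\,\grad\eta_j$. Dominated convergence handles the first piece, while Cauchy--Schwarz gives
\[
\biggl|\iintlim{t_1}{t_2}\int_U \phi\, \grad u^m\cdot\grad\eta_j\, dx\, dt\biggr|
\le \|\phi\|_\infty\, \|\grad u^m\|_{L^2(U_{t_1,t_2})}\, (t_2-t_1)^{1/2}\, \|\grad\eta_j\|_{L^2(\R^n)} \longrightarrow 0
\]
by the assumption on $u^m$. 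Combining the two limits yields the desired inequality for $\phi$, so $u$ is a supersolution on $\Theta$. The subsolution case is identical with reversed inequalities.

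The main obstacle is the admissibility of $\phi_j$ as a test function in the weak formulation on $\Theta\setm(E\times\R)$, since it is merely Lipschitz rather than $C_0^\infty$; this is resolved by mollifying $\eta_j$ in $x$ to produce smooth cutoffs with the same $W^{1,2}$-bounds and letting the mollification parameter tend to zero before $j\to\infty$. A point worth highlighting is that $E$ is assumed to lie in $\R^n$ rather than in space-time, which is essential: because the cutoffs $\eta_j$ are purely spatial, they commute with $\partial_t$ and do not interfere with the time-derivative integral, so no additional assumption on $\partial_t u$ is required.
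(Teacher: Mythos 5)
Your proposal is correct and follows essentially the same route as the paper's proof: purely spatial cutoffs $\eta_j$ with $\|\eta_j\|_{W^{1,2}(\R^n)}\to 0$ supplied by the zero-capacity hypothesis, the test function $\phi_j=(1-\eta_j)\phi$, and a Cauchy--Schwarz estimate against $\|u^m\|_{L^2(t_1,t_2;W^{1,2}(U))}$ to kill the error terms. The only differences are cosmetic (mollified Lipschitz cutoffs versus directly smooth ones, and splitting the limit into a dominated-convergence part and a Cauchy--Schwarz part rather than one combined bound).
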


For the definition of capacity, see~\eqref{eq-def-cap} below.

\begin{proof}
We consider supersolutions. The proof for subsolutions is similar.
Let $U_{t_1,t_2} \Subset \Theta$ and 
$\phi\in C_0^\infty(U_{t_1,t_2})$ be arbitrary.
Since $E$ has zero capacity, there exist $\eta_j\in C_0^\infty(\R^n)$ such
that $0\le\eta_j\le1$ in $\R^n$, $\eta_j=1 $ in an open neighbourhood of $E$
and $\|\eta_j\|_{W^{1,2}(\R^n)}<1/j$, $j=1,2,\ldots.$
Set $\phi_j(x,t)=(1-\eta_j(x))\phi(x,t)$. 
Then $\phi_j\in C_0^\infty(U_{t_1,t_2})\setm (E\times\R))$.
Inserting  $\bdy_t\phi_j = \bdy_t\phi - \eta_j\bdy_t\phi$ and
$\grad\phi_j = \grad\phi - (\eta_j\grad\phi+\phi\grad\eta_j)$ 
into~\eqref{eq:weak-solution} gives 
\begin{align*} 
&\iintlim{t_1}{t_2}\int_{U} { \nabla u^{m}} \cdot
\nabla\phi \, dx\,dt - \iintlim{t_1}{t_2}\int_{U} u
\partial_t\phi \, dx\,dt \\  
&\quad \ge 
\iintlim{t_1}{t_2}\int_{U} { \nabla u^{m}} \cdot 
(\eta_j\grad\phi+\phi\grad\eta_j)\, dx\,dt 
- \iintlim{t_1}{t_2}\int_{U} u \eta_j \partial_t\phi \, dx\,dt 
\end{align*} 
Since $\phi\in C_0^\infty(U_{t_1,t_2})$, there exists $M<\infty$ such that
$|\phi|, |\grad\phi|, |\bdy_t\phi|\le M$ on $U_{t_1,t_2}$ and hence the
Cauchy--Schwarz 
inequality implies that the right-hand side in the last equality
is majorized (in absolute value) by
\begin{align*} 
3M \biggl( \iintlim{t_1}{t_2}\int_{U} 
    (|\nabla u^{m}|^2 + |u|^2)\, dx\,dt \biggr)^{1/2}
\biggl( \iintlim{t_1}{t_2}\int_{U} 
    (|\nabla \eta_j|^2 + |\eta_j|^2)\, dx\,dt \biggr)^{1/2}.
\end{align*} 
By assumption, the first factor is bounded while the
last factor equals $$(t_2-t_1)^{1/2}\|\eta_j\|_{W^{1,2}(\R^n)}$$ and tends to zero
as $j\to\infty$. 
Thus, the left-hand side in \eqref{eq:weak-solution} is 
nonnegative for every 
$\phi\in C_0^\infty(U_{t_1,t_2})$, which concludes the proof.
\end{proof}

\begin{thm} \label{thm:barrier-char-lower-pos}
Let $m \ge 1$ and $\xi_0 \in \bdy \Theta$. 
Then the following are equivalent\/\textup{:}
\begin{enumerate}
\renewcommand{\theenumi}{\textup{(\arabic{enumi})}}%
\item \label{pos-reg}
$\xi_0$ is lower regular for positive boundary data\/\textup{;}
\item \label{pos-bar}
there is a lower barrier family for positive boundary data at $\xi_0$\/\textup{;}
\item \label{pos-bar-Theta}
there is a lower barrier family for positive boundary data at $\xi_0$ 
consisting of parabolic functions 
satisfying~\ref{cond-de} in Definition~\ref{def-barrier-lower-pos} 
with $(x,t) \in \bdy \Theta$ replaced by $(x,t) \in \overline{\Theta}$.
\end{enumerate}
\end{thm}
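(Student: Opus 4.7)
The plan is to prove (2) $\Rightarrow$ (1), (1) $\Rightarrow$ (3), and (3) $\Rightarrow$ (2). The last is immediate. The overall strategy parallels the proof of Theorem~\ref{thm:barrier-char-upper}, but the implication (1) $\Rightarrow$ (3) carries a genuinely new difficulty because one must construct a \emph{superparabolic} barrier that respects the positivity constraint of the porous medium equation.

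For (2) $\Rightarrow$ (1), let $\{\wck\}$ be a lower barrier family and let $f:\bdy\Theta\to(0,\infty)$ be continuous. Fix a rational $c$ with $0<c<f(\xi_0)$. By continuity of $f$ at $\xi_0$ and positivity on the compact set $\bdy\Theta$, choose $k$ so large that $f(x,t)>c$ on $\{|x-x_0|+|t-t_0|<1/k\}\cap\bdy\Theta$ and $1/k<\inf_{\bdy\Theta}f$. With $j=j(c,k)$ from Definition~\ref{def-barrier-lower-pos}\,\ref{cond-de}, property~\ref{cond-c+1/j} gives $\limsup w_{c,j}\le c<f$ near $\xi_0$ and property~\ref{cond-de} gives $\limsup w_{c,j}\le 1/k\le f$ far from $\xi_0$. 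Hence $\wcj\in\LL_f$, so $\lP f\ge\wcj$ in $\Theta$, and
\[
\liminf_{\Theta\ni\zeta\to\xi_0}\lP f(\zeta)\;\ge\;\lim_{\Theta\ni\zeta\to\xi_0}\wcj(\zeta)=c.
\]
Letting $c\nearrow f(\xi_0)$ through rationals gives lower regularity.

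For (1) $\Rightarrow$ (3), the goal is to construct, for each $c\in\Qp$ and $j$, a continuous function $\phi_{c,j}:\overline\Theta\to[\delta_{c,j},c]$ (with $\delta_{c,j}>0$) that is superparabolic in $\Theta$, satisfies $\phi_{c,j}(\xi_0)=c$, and obeys $\phi_{c,j}(x,t)\le 1/k$ on $\{|x-x_0|+|t-t_0|\ge 1/k\}\cap\overline\Theta$ whenever $j\ge j(c,k)$. Once that is done, set $\wcj:=\uP_\Theta\phi_{c,j}|_{\bdy\Theta}$, which is parabolic by Theorem~\ref{thm-Perron-solution}. Because $\phi_{c,j}\in\UU_g$ for $g:=\phi_{c,j}|_{\bdy\Theta}$, we have $\wcj\le\phi_{c,j}$ throughout $\Theta$, and this single inequality immediately delivers conditions~\ref{cond-c+1/j} and~\ref{cond-de} of Definition~\ref{def-barrier-lower-pos} (in the stronger form demanded by~\ref{pos-bar-Theta}). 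For the limit in~\ref{cond-lim}, the bound $\limsup_{\zeta\to\xi_0}\wcj(\zeta)\le\phi_{c,j}(\xi_0)=c$ is immediate, and the matching lower bound $\liminf_{\zeta\to\xi_0}\wcj(\zeta)\ge c$ follows from Proposition~\ref{prop-reg-S-P-equiv}\,\ref{1-r}: since $g$ is positive, continuous at $\xi_0$, and bounded away from $0$ by $\delta_{c,j}$, lower regularity at $\xi_0$ gives $\liminf\lS g(\zeta)\ge g(\xi_0)=c$, while $\lS g\le\uP g=\wcj$ by Remark~\ref{rmk-perron-domain}.

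The construction of $\phi_{c,j}$ is the main obstacle. The upper case used $\psi_{c,j}^m=c^m+j|x|^2+jbt^2$, whose positivity was automatic since $\psi_{c,j}\ge c$. Here one instead needs $\phi^m$ to be superharmonic in $x$ while keeping $\phi$ strictly positive. A naive dual choice $\phi^m=(c^m-j|x|^2-jbt^2)_\limplus$ fails because the factor $\phi^{1-m}$ in $\bdy_t\phi$ blows up at the boundary of the support when $m>1$, so the PDE inequality $\bdy_t\phi\ge\Delta\phi^m$ breaks down there. I would instead allow a singularity at the spatial point $x_0$: build a superparabolic $\widetilde\phi_{c,j}$ on $\Theta\setm(\{x_0\}\times\R)$ which tends to $+\infty$ as $x\to x_0$ (using a spatially concentrated profile whose $m$-th power behaves like a Newton-type potential $|x-x_0|^{-\alpha}$, coupled with sufficiently fast time decay to make the PDE inequality hold), verify the Sobolev hypothesis of Lemma~\ref{lem-removable} so that $\widetilde\phi_{c,j}$ extends to a superparabolic function on all of $\Theta$, and then set $\phi_{c,j}:=\min\{c,\widetilde\phi_{c,j}\}$. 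This min is superparabolic by Lemma~\ref{lem-min-superparabolic}, is identically $c$ in a neighbourhood of $\xi_0$ (hence continuous there), is continuous elsewhere on $\overline\Theta$, and is strictly positive on the compact set $\overline\Theta$; tuning $j$ large enough confines $\{\widetilde\phi_{c,j}>1/k\}$ to a small neighbourhood of $\xi_0$, which yields the decay condition. Finally, (3) $\Rightarrow$ (2) is trivial since a parabolic function is in particular subparabolic.
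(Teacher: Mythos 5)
Your architecture coincides with the paper's: (2) $\Rightarrow$ (1) via $\wcj\in\LL_f$, (3) $\Rightarrow$ (2) trivially, and (1) $\Rightarrow$ (3) by first constructing a continuous superparabolic profile $\phi_{c,j}$ with values in $(0,c]$, peak value $c$ at $\xi_0$ and decay away from $\xi_0$, then setting $\wcj=\uP\phi_{c,j}$ and invoking Theorem~\ref{thm-Perron-solution}, the inequality $\wcj\le\phi_{c,j}$, and Proposition~\ref{prop-reg-S-P-equiv}\,\ref{1-r}. That reduction is correct and is exactly what the paper does. The gap is that the construction of $\phi_{c,j}$ -- which is the technical heart of the theorem -- is only gestured at, and the sketch you give does not work as stated. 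First, a profile that tends to $+\infty$ as $x\to x_0$ does so for \emph{every} time $t$, so $\min\{c,\widetilde\phi_{c,j}\}\equiv c$ on a full neighbourhood of the segment $(\{x_0\}\times\R)\cap\Theta$; consequently the set $\{\widetilde\phi_{c,j}>1/k\}$ can never be confined to a small neighbourhood of $\xi_0=(x_0,t_0)$, and condition~\ref{cond-de} (in the $\overline\Theta$ form you are aiming for) fails at points $(x_0,t)$ with $|t-t_0|\ge1/k$. Your "sufficiently fast time decay" cannot repair this while the spatial blow-up persists. Second, a Newton-type power $|x-x_0|^{-\alpha}$ is unusable in the borderline dimension $n=2$ (which the paper allows throughout): there it is not superharmonic for any $\alpha>0$, and its gradient is never in $L^2$ near $x_0$, so the hypothesis of Lemma~\ref{lem-removable} fails. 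Third, the superparabolicity inequality $\bdy_t\phi\ge\Delta\phi^m$ is never verified, and it is genuinely delicate.

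The paper resolves all three points with a single bounded profile: $\psicj=\vcj^{-\ga}$ with $\vcj=c^{-1/\ga}+j^{\al}t^2+j/(d-\log|x|)$, $0<\al<\ga<1/m$, $d=2+\log\diam\Theta$. This function never exceeds $c$, equals $c$ only at $\xi_0$, and tends to $0$ as $j\to\infty$ away from $\xi_0$ in \emph{both} space and time, so no truncation is needed and condition~\ref{cond-de} holds on $\overline\Theta$; the logarithmic spatial term has gradient in $L^2$ for all $n\ge2$ and the right sign of the Laplacian (the computation produces the factor $(n-2)(d-\log|x|)+2\ge2$), so Lemma~\ref{lem-removable} applies across $\{x=x_0\}$; and the PDE inequality is checked by an explicit computation that hinges on the exponent balance $\al+\ga(m-1)<1$, which makes the bad term of order $j^{\al+\ga(m-1)}$ dominated by the good term of order $j$ for large $j$. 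Without an analogue of this computation and a profile compatible with the decay requirement, the implication \ref{pos-reg} $\Rightarrow$ \ref{pos-bar-Theta} is not established.
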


\begin{proof}
We assume, without loss of generality, that $\xi_0=(0,0)$.

The proof of \ref{pos-bar} \imp \ref{pos-reg} is 
similar to that of Theorem~\ref{thm:barrier-char-upper} and we omit the details.

\ref{pos-bar-Theta} \imp \ref{pos-bar}
This is trivial.

\ref{pos-reg} \imp \ref{pos-bar-Theta}
Assume that $\xi_0\in\partial \Theta$ is lower regular for positive
boundary data.
To construct a lower barrier family for positive boundary data at $\xi_0$,
let $0<\al<\ga<1/m$ and $d:=2+\log\diam\Theta$.
Given $c\in\Q_\limplus$ and $j=1,2,\ldots$, we define for $(x,t)\in \Theta$,
\[ 
\vcj(x,t) = \begin{cases}
\displaystyle        c^{-1/\ga} + j^\al t^2 + \frac{j}{d-\log|x|}, &x\ne0, \\
        c^{-1/\ga} + j^\al t^2, &x=0,   
            \end{cases}  
\quad \text{and} \quad \psicj = \vcj^{-\ga}.
\] 
Note that $0 <\psicj\le c$.
Assume that $x \ne 0$ for the moment.
Direct calculations show that $\bdry_t \vcj = 2j^\al t$,
\begin{align} \label{eq-grad-vcj}
\grad \vcj = \frac{jx}{|x|^2(d-\log|x|)^2}
\end{align}
and 
\begin{align*}
\Delta \vcj = \frac{j\bigl[(n-2)(d-\log|x|)+2\bigr]}{|x|^2(d-\log|x|)^3}
\ge \frac{2j}{|x|^2(d-\log|x|)^3},
\end{align*}
since $n\ge2$.
Moreover, we have
\begin{align}
\bdry_t \psicj & = -\ga \vcj^{-\ga-1} \bdry_t \vcj, \notag \\
\grad \psicj^m &= \grad \vcj^{-\ga m} 
         = -\ga m \vcj^{-\ga m-1} \grad \vcj,
\label{eq-grad-psicjm}
\end{align}
and thus
\[
\Delta \psicj^m 
= -\ga m \vcj^{-\ga m-1} \Delta \vcj 
        + \ga m (\ga m+1) \vcj^{-\ga m-2} |\grad\vcj|^2.
\] 
It follows that
\[
\bdry_t \psicj - \Delta \psicj^m
= \ga m \vcj^{-\ga m-2} \bigl[ \vcj\Delta\vcj - (\ga m+1) |\grad\vcj|^2 
- m^{-1} \vcj^{1+\ga(m-1)} \bdry_t \vcj \bigr].
\]

This will be nonnegative if 
\[ 
I:=
\frac{2j\vcj}{|x|^2(d-\log|x|)^3} - \frac{(\ga m+1)j^2}{|x|^2(d-\log|x|)^4}
- \frac2m \vcj^{1+\ga(m-1)} j^\al t \ge 0.
\] 
Since  
$\vcj\ge j/(d-\log|x|)$ and $m\ge1$,
we obtain
\begin{align*}
   I & \ge \frac{2j\vcj}{|x|^2(d-\log|x|)^3} - \frac{(\ga m+1)j\vcj}{|x|^2(d-\log|x|)^3}
- 2 \vcj^{1+\ga(m-1)} j^\al t \\
    & = \vcj \biggl(\frac{(1-\ga m)j}{|x|^2(d-\log|x|)^3} 
    - 2 \vcj^{\ga(m-1)} j^\al t \biggr).
\end{align*}
A straightforward calculation shows that $\rho \mapsto \rho^2(d-\log \rho)^3$
is increasing on $[0,\diam \Theta]$,
which together with  $\vcj\le j(c^{-1/\ga}+(\diam\Theta)^2+1)$ 
and the choice of $d$ yields
\begin{align*}
   I & \ge \vcj \biggl(\frac{(1-\ga m)j}{8(\diam \Theta)^2} - 2 
j^{\al+\ga(m-1)}(c^{-1/\ga}+(\diam\Theta)^2+1)^{\ga(m-1)} \diam\Theta\biggr).
\end{align*}

Since $\al<\ga< 1/m$, we have $\al+\ga(m-1)<1$ and
it follows that $I \ge 0$ 
for sufficiently large $j$. 
Thus, by Theorem~\ref{thm-lsc-supersoln} we see that $\psicj$ 
is superparabolic in $\{(x,t) \in \Theta : x \ne 0\}$ for such $j$. 
By \eqref{eq-grad-vcj} and \eqref{eq-grad-psicjm}, 
$\grad \psicj^m \in L^2(t_1,t_2;W^{1,2}(U))$ whenever
$U_{t_1,t_2} \Subset \Theta$. 
Hence Lemma~\ref{lem-removable} shows that
$\psicj$ is superparabolic in $\Theta$ for sufficiently large $j$.

Set $\wcj:=\uP \psi_{c,j}$. Then $\wcj$ is parabolic,
by Theorem~\ref{thm-Perron-solution}.
Since $\psi_{c,j}$ is superparabolic, it belongs to $\UU_{\psi_{c,j}}$,
and thus $\wcj \le \psi_{c,j} \le c$.
By the lower regularity of $\xi_0$ for positive boundary data
and Proposition~\ref{prop-reg-S-P-equiv}\,\ref{1-r}, we also have
\[
\liminf_{\Theta\ni\zeta\to\xi_0} \wcj(\zeta)  
\ge \liminf_{\Theta\ni\zeta\to\xi_0} \lS \psi_{c,j}(\zeta)  
=  \psi_{c,j}(\xi_0)= c,
\]
and thus $\lim_{\Theta\ni\zeta\to\xi_0} \wcj(\zeta)  =c$.
By the form of $\wcj$ it follows that \ref{cond-de} is satisfied,
and thus the functions $\wcj$ form a lower barrier family 
for positive boundary data of the type required in \ref{pos-bar-Theta}.
\end{proof}

As in the case of upper regularity we have the following two results.
The first is a direct consequence of the barrier characterization in 
Theorem~\ref{thm:barrier-char-lower-pos}\,\ref{pos-bar-Theta},
whereas the proof of the second  is 
 similar to the proof of Proposition~\ref{prop-local-up};
we omit the details.

\begin{cor} \label{cor-restrict-pos}
Let $m \ge 1$ and $G \subset \Theta$ be open. 
Suppose that $\xi_0 \in \bdy \Theta \cap \bdy G$.
If $\xi_0$ is lower regular for positive boundary data 
with respect to $\Theta$,
then it is lower regular for positive boundary data 
with respect to $G$.
\end{cor}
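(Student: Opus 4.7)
The plan is to invoke the barrier characterization (Theorem~\ref{thm:barrier-char-lower-pos}) in both directions: extract a refined lower barrier family in $\Theta$, restrict it to $G$, and verify that the restriction is a valid lower barrier family at $\xi_0$ with respect to $G$.

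First I would apply the implication \ref{pos-reg}~$\Rightarrow$~\ref{pos-bar-Theta} of Theorem~\ref{thm:barrier-char-lower-pos} to obtain a lower barrier family $\{w_{c,k}\}_{c\in\Qp,\,k\in\mathbf{N}}$ for positive boundary data at $\xi_0$ with respect to $\Theta$, consisting of \emph{parabolic} functions satisfying the strengthened condition~\ref{cond-de} with $(x,t) \in \overline{\Theta}$ rather than merely $(x,t)\in\bdy\Theta$. The point of this refinement is precisely to let us control the restriction on the new boundary $\bdy G$, part of which may lie inside $\Theta$.

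Next I would define $\wck' := \wck|_G$ and check the three conditions of Definition~\ref{def-barrier-lower-pos} at $\xi_0$ with respect to $G$. Condition~\ref{cond-c+1/j} is immediate: a parabolic function restricted to an open subset is still parabolic, hence subparabolic, and the range remains contained in $[0,c]$. Condition~\ref{cond-lim} follows because the limit along $G\ni\zeta\to\xi_0$ is dominated by the limit along $\Theta\ni\zeta\to\xi_0$, which equals $c$, while the opposite inequality holds since $\wck'\le c$. The only step requiring genuine care is condition~\ref{cond-de}: fix $c\in\Qp$, $k\in\mathbf{N}$, and let $j=j(c,k)$ be as supplied by the strengthened family. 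Since $G\subset\Theta$ and $\Theta$ is open, $\overline{G}\subset\overline{\Theta}$, so every $(x,t)\in\bdy G$ with $|x-x_0|+|t-t_0|\ge 1/k$ also belongs to $\overline{\Theta}$. The enhanced condition~\ref{cond-de} from \ref{pos-bar-Theta} therefore yields
\[
\limsup_{G\ni\zeta\to(x,t)}\wcj'(\zeta)
\le \limsup_{\Theta\ni\zeta\to(x,t)}\wcj(\zeta)\le \frac1k,
\]
since the supremum is taken over a smaller set on the left. (In the sub-case $(x,t)\in \Theta\cap\bdy G$, continuity of $\wcj$ at $(x,t)$ together with the enhanced condition gives the pointwise bound $\wcj(x,t)\le 1/k$.)

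Having verified all three conditions, the family $\{\wck'\}$ is a lower barrier family for positive boundary data at $\xi_0$ with respect to $G$. Applying the implication \ref{pos-bar}~$\Rightarrow$~\ref{pos-reg} of Theorem~\ref{thm:barrier-char-lower-pos} then gives that $\xi_0$ is lower regular for positive boundary data with respect to $G$, as desired. No new analytic input is needed beyond the barrier characterization; the only subtle step is exploiting the enhancement in~\ref{pos-bar-Theta} to handle the portion of $\bdy G$ that lies in the interior of $\Theta$, where the original boundary condition~\ref{cond-de} would give no information.
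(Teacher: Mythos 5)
Your proposal is correct and is exactly the argument the paper intends: the corollary is stated there as ``a direct consequence of the barrier characterization in Theorem~\ref{thm:barrier-char-lower-pos}\,\ref{pos-bar-Theta}'', and your write-up simply spells out the restriction of the enhanced barrier family to $G$, with the enhancement handling the part of $\bdy G$ lying inside $\Theta$. No discrepancy with the paper's (omitted) proof.
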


\begin{prop}  \label{prop-local-pos}
Let $m \ge 1$ and $\xi_0 \in \bdy \Theta$, and suppose that
 $B$ is a ball containing $\xi_0$.
Then
$\xi_0$ is lower regular for positive boundary data 
with respect to $\Theta$
if and only if it is lower regular for positive boundary data 
with respect to $B \cap\Theta$.
\end{prop}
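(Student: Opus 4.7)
The plan is to mirror the argument for Proposition~\ref{prop-local-up}, exchanging maxima for minima and using Lemma~\ref{lem-pasting-const-sub} in place of Lemma~\ref{lem-pasting-const}. The forward direction---that lower regularity with respect to $\Theta$ implies lower regularity with respect to $B\cap\Theta$---is immediate from Corollary~\ref{cor-restrict-pos}, so the work is in the converse.

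For the converse, I start from a lower barrier family $\{w_{c,k}\}$ for positive boundary data in $B\cap\Theta$ supplied by Theorem~\ref{thm:barrier-char-lower-pos}. Fix $c\in\Qp$ and choose $k_0=k_0(c)$ large enough that $1/k_0<\min\{c,\dist(\xi_0,\bdy B)\}$. For $k\ge k_0$, let $j(c,k)$ be the index from Definition~\ref{def-barrier-lower-pos}\,\ref{cond-de}, and set
\[
w'_{c,k} := \begin{cases} \max\{w_{c,j(c,k)},\, 1/k\} & \text{in } B\cap\Theta,\\ 1/k & \text{in } \Theta\setm B,\end{cases}
\]
and $w'_{c,k}:=w'_{c,k_0}$ for $k<k_0$. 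By construction $0\le w'_{c,k}\le c$.

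To apply Lemma~\ref{lem-pasting-const-sub} (with $G=B\cap\Theta$, constant $1/k$, and subparabolic $v=w_{c,j(c,k)}$) I must check that $w'_{c,k}$ is upper semicontinuous on $\Theta$. The only nontrivial locus is $\Theta\cap\bdy B$, where every point lies at distance $\ge \dist(\xi_0,\bdy B)>1/k$ from $\xi_0$; condition~\ref{cond-de} then gives $\limsup_{B\cap\Theta\ni\zeta\to(x,t)} w_{c,j(c,k)}(\zeta)\le 1/k$, matching the outside value. The pasting lemma therefore makes $w'_{c,k}$ subparabolic on $\Theta$.

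Finally I verify the three barrier axioms for $\{w'_{c,k}\}$ in $\Theta$. Axiom~\ref{cond-c+1/j} is clear. For \ref{cond-lim}, note that near $\xi_0$ one has $\zeta\in B\cap\Theta$ and $w_{c,j(c,k)}(\zeta)\to c\ge 1/k$, so $w'_{c,k}(\zeta)\to c$. For \ref{cond-de}, set $j'(c,k)=k$ and split a boundary point $(x,t)\in\bdy\Theta$ with $|x-x_0|+|t-t_0|\ge 1/k$ according to whether it lies outside $B$ (where $w'_{c,k}\equiv 1/k$), inside $B$ (where the bound from condition~\ref{cond-de} for the original family applies since $(x,t)\in\bdy(B\cap\Theta)$), or on $\bdy B$ (where both arguments combine); the case $k<k_0$ uses the monotonicity $1/k_0\le 1/k$. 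Applying Theorem~\ref{thm:barrier-char-lower-pos} then yields lower regularity at $\xi_0$ with respect to $\Theta$. The only subtle point in the proof is the upper-semicontinuity check on $\Theta\cap\bdy B$, which is precisely why the cutoff constant must be placed at the level $1/k$ rather than at $0$.
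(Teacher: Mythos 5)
Your proof is correct and follows exactly the route the paper intends: the paper omits the details, stating only that the argument is similar to that of Proposition~\ref{prop-local-up}, and your adaptation (Corollary~\ref{cor-restrict-pos} for one direction; pasting the truncated barriers at level $1/k$ via Lemma~\ref{lem-pasting-const-sub} for the other) is the natural mirror image of that proof. The extra requirement $1/k_0<c$, which has no counterpart in the upper-regularity case, is correctly identified and is indeed needed so that the truncation at level $1/k$ does not destroy axiom~\ref{cond-lim} or the codomain condition in axiom~\ref{cond-c+1/j}.
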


\section{Earliest points are always regular}\label{S:Earlier}

To make the notation easier, we consider regularity of the origin.
Let
\[
   \Theta_\limminus:=\{(x,t) \in \Theta: t <0\}
   \quad \text{and} \quad
   \Theta_\limplus:=\{(x,t) \in \Theta: t >0\}.
\]

\begin{prop}  \label{prop-earliest-reg-below-new}
Let $m\ge 1$ and $\xi_0=(0,0)\in\bdry \Theta$.
If $\xi_0\notin\bdry \Theta_\limminus$,
which in particular holds if $\Theta_\limminus$ is empty, 
then $\xi_0$ is regular for positive boundary data.
\end{prop}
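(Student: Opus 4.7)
The plan is to use that near $\xi_0$ the set $\Theta$ lies strictly above the time level $\{t=0\}$, combined with the locality Propositions~\ref{prop-local-up} and~\ref{prop-local-pos} and the barrier characterizations Theorems~\ref{thm:barrier-char-upper} and~\ref{thm:barrier-char-lower-pos}. Concretely, $\xi_0=(0,0)\in\bdy\Theta$ has time coordinate $0$, while $\xi_0\notin\Theta_\limminus$ and $\xi_0\notin\bdy\Theta_\limminus$, so $\xi_0\notin\itoverline{\Theta_\limminus}$. Hence there is $\rho>0$ with $B:=B(\xi_0,\rho)$ disjoint from $\itoverline{\Theta_\limminus}$, and openness of $\Theta$ forces $\Theta\cap B\subset\{t>0\}$ (any $(x,0)\in\Theta\cap B$ would admit a neighborhood in $\Theta\cap B$ meeting $\{t<0\}$, contradicting $B\cap\Theta_\limminus=\emptyset$). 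It therefore suffices to construct both an upper barrier family and a lower barrier family for positive data at $\xi_0$ with respect to $B\cap\Theta$.

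For the upper barrier, let
\[
    u_{c,k}(x,t) := \bigl(c^m + a_k t + b_k |x|^2\bigr)^{1/m},
\]
with $b_k:=4k^2((c+k)^m-c^m)$ and $a_k$ to be chosen. The pointwise identity $\bdy_t u_{c,k}-\Delta u_{c,k}^m=a_k/(m u_{c,k}^{m-1})-2b_k n$ makes $u_{c,k}$ a classical supersolution in $B\cap\Theta$ provided $a_k\ge 2b_k mn\,u_{c,k}^{m-1}$, which via $u_{c,k}^m\le c^m+a_k\rho+b_k\rho^2$ reduces to the implicit inequality $a_k\ge 2b_k mn(c^m+a_k\rho+b_k\rho^2)^{(m-1)/m}$. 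Since the right-hand side grows at most like $a_k^{(m-1)/m}$ (it is constant when $m=1$), one can pick $a_k$ large enough, also with $a_k\ge 2k((c+k)^m-c^m)$. Theorem~\ref{thm-lsc-supersoln} then yields that $u_{c,k}$ is superparabolic in $B\cap\Theta$, and by construction $u_{c,k}\ge c$, $u_{c,k}(\xi_0)=c$, and $u_{c,k}(x,t)\ge c+k$ whenever $|x|\ge 1/(2k)$ or $t\ge 1/(2k)$, which covers every boundary point with $|x|+|t|\ge 1/k$.

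For the lower barrier, let
\[
    \psi_{c,j}(x,t) := \bigl(\max\{0,\,c^m-j|x|^2-2jmnc^{m-1}t\}\bigr)^{1/m}.
\]
On the open set $G_j:=\{\psi_{c,j}>0\}\cap(B\cap\Theta)$ it is smooth and
\[
    \bdy_t\psi_{c,j}-\Delta\psi_{c,j}^m = 2jn\bigl(1 - c^{m-1}/\psi_{c,j}^{m-1}\bigr)\le 0,
\]
since $\psi_{c,j}\le c$; so $\psi_{c,j}$ is a classical subsolution in $G_j$, hence subparabolic there by Theorem~\ref{thm-lsc-supersoln}. The pasting Lemma~\ref{lem-pasting-const-sub} (applied with the constant $0$, using continuity of $\psi_{c,j}$ and its vanishing outside $G_j$) extends subparabolicity to all of $B\cap\Theta$. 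Clearly $\psi_{c,j}\in[0,c]$, $\psi_{c,j}(\xi_0)=c$, and with $j(c,k):=\max\{4k^2c^m,\,kc/(mn),\,k\}$ the inequality $j|x|^2+2jmnc^{m-1}t\ge c^m$ holds whenever $|x|\ge 1/(2k)$ or $t\ge 1/(2k)$, so $\psi_{c,j(c,k)}\equiv 0$ at every boundary point with $|x|+|t|\ge 1/k$.

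Theorems~\ref{thm:barrier-char-upper} and~\ref{thm:barrier-char-lower-pos} applied to $B\cap\Theta$ give upper regularity and lower regularity for positive data at $\xi_0$ with respect to $B\cap\Theta$, and the locality propositions transfer these to $\Theta$. The main technical obstacle is verifying sub-/superparabolicity of the explicit formulas: for the upper barrier, balancing the implicit constraint on $a_k$ against its own growth inside the $(\,\cdot\,)^{(m-1)/m}$ term; for the lower barrier, extending subparabolicity across the interface $\{\psi_{c,j}=0\}$, which is handled cleanly via the pasting lemma.
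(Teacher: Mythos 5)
Your proposal is correct and follows essentially the same route as the paper: reduce via the locality propositions to the case where $\Theta$ lies in $\{t>0\}$, then exhibit explicit classical super-/subsolutions of the form $(c^m+\text{linear in }t+\text{quadratic in }x)^{1/m}$ and $(c^m-j|x|^2-2jmnc^{m-1}t)_\limplus^{1/m}$, invoking Theorem~\ref{thm-lsc-supersoln} and the pasting Lemma~\ref{lem-pasting-const-sub}. Your lower barrier is identical to the paper's, and your upper barrier differs only in the parametrization of the coefficients (the paper takes $j^{2m-1}t+j|x|^2$ and checks the supersolution inequality for large $j$, where you fix $b_k$ and choose $a_k$ large via the sublinearity of the right-hand side).
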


\begin{proof}
By Propositions~\ref{prop-local-up} and~\ref{prop-local-pos}
we may assume that $\Theta_\limminus=\emptyset$.

First, we turn to upper regularity and let $c \in \Qp$
be arbitrary.
Let
\[
      \wcj := (c^m+j|x|^2+j^{2m-1}t)^{1/m}
      \quad \text{and} \quad
      \de=\max\{\diam \Theta,1\}.
\]
Then  
\[
   \partial_t \wcj = \frac{j^{2m-1}}{m} (c^m+j|x|^2+j^{2m-1}t)^{1/m-1}
   \ge \frac{j^{2m-1}}{m} (c^m+2j^{2m-1} \de^2)^{1/m-1}
\]
and $\Delta \wcj^m = j \Delta |x|^2 = 2jn$.
We want to have
\[
    \partial_t \wcj - \Delta \wcj^m \ge 
    \frac{j^{2m-1}}{m} (c^m+2j^{2m-1} \de^2)^{1/m-1} -2jn \ge 0,
\]
which is equivalent to 
\[
    c^m + 2j^{2m-1} \de^2 
    \le \biggl(\frac{2nm}{j^{2m-2}}\biggr)^{m/(1-m)}
    =  \frac{j^{2m}}{(2nm)^{m/(m-1)}}
\]
and this happens if $j$ is large enough. 
Thus, for such $j$, $\wcj$ is superparabolic, 
by Theorem~\ref{thm-lsc-supersoln}.
It now follows that $\{\wcj\}$ is an upper barrier family,
and thus $\xi_0$ is upper regular.

For lower regularity, let again $c \in \Qp$
be arbitrary.
This time let
\[
      \vcj := (c^m-j|x|^2-jat)_\limplus^{1/m}
      \quad \text{and} \quad
      \Theta_{c,j}:=\{(x,t) \in \Theta : \vcj(x,t) > 0\},
\]
where $a=2nmc^{m-1}$.
In $\Theta_{c,j}$ we have  
\[
   \partial_t \vcj = -\frac{ja}{m} (c^m-j|x|^2-jat)^{1/m-1}
   \le -\frac{ja}{m} c^{1-m}=-2jn
\]
and $\Delta \vcj^m = -j \Delta |x|^2 = -2jn$.
Hence $\partial_t \vcj- \Delta \vcj^m \le 0$
and $\vcj$ is subparabolic in $\Theta_{c,j}$, 
by Theorem~\ref{thm-lsc-supersoln}.
Lemma~\ref{lem-pasting-const-sub} shows that $\vcj$ is
subparabolic in $\Theta$.
Hence, it follows that $\{\vcj\}$ is a lower
barrier family for positive boundary data
and thus by Theorem~\ref{thm:barrier-char-lower-pos},
$\xi_0=(0,0)$ is lower regular for positive boundary data.
\end{proof}

\begin{remark}
Using
the family $\{\vcj\}$ above one can show that $\xi_0=(0,0)$
is lower regular for nonnegative data, in a similar
way as the proof of \ref{pos-bar} \imp \ref{pos-reg}
in Theorem~\ref{thm:barrier-char-lower-pos}. 
We do not aim at developing 
the general theory
of lower regular points for nonnegative data here.
\end{remark}

\section{Independence of the future}\label{S:Future}

The next result shows that regularity is independent of the future.

\begin{thm} \label{thm-indep-future}
Assume $m\ge1$ and $\xi_0=(0,0) \in \bdy \Theta$.
Then $\xi_0$ is upper regular\/ 
\textup{(}lower regular for positive boundary data\/\textup{)}
 with respect to
$\Theta$ if and only if either $\xi_0 \notin \bdy \Theta_\limminus$
or $\xi_0$ is upper regular\/
\textup{(}lower regular for positive boundary data\/\textup{)}
with respect to $\Theta_\limminus$.
\end{thm}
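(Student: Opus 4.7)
\emph{Plan.} The forward implication is immediate: if $\xi_0\in\bdy\Theta_\limminus$ and $\xi_0$ is upper (resp.\ lower for positive data) regular with respect to $\Theta$, then Corollary~\ref{cor-restrict-up} (resp.\ Corollary~\ref{cor-restrict-pos}) applied with $G=\Theta_\limminus$ gives the same type of regularity with respect to $\Theta_\limminus$.

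For the backward implication, the case $\xi_0\notin\bdy\Theta_\limminus$ is exactly Proposition~\ref{prop-earliest-reg-below-new}. So assume $\xi_0\in\bdy\Theta_\limminus$ and that regularity of the relevant type holds with respect to $\Theta_\limminus$. I will treat the upper case; the lower case for positive data is symmetric, using Theorem~\ref{thm:barrier-char-lower-pos} and Lemmas~\ref{lem-pasting-sub} and~\ref{lem-pasting-const-sub} in place of their superparabolic analogues. By Theorem~\ref{thm:barrier-char-upper}\,\ref{up-bar-Theta} the assumption gives an upper barrier family $\{w^-_{c,k}\}$ in $\Theta_\limminus$ of parabolic (hence bounded) functions; let $M_{c,k}:=\sup_{\Theta_\limminus}w^-_{c,k}$. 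I extend each $w^-_{c,k}$ forward in time by Proposition~\ref{prop-extend-superpar-m}, setting it equal to $M_{c,k}$ on $\Theta\cap\{t>0\}$; call the resulting superparabolic function on $\Theta$ by $\tilde w^-_{c,k}$.

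To handle approach to $\xi_0$ through $\Theta_\limplus$ (the only remaining possibility, which is nontrivial precisely when $\xi_0\in\bdy\Theta_\limplus$), I use the observation that $(\Theta_\limplus)_\limminus=\emptyset$, so $\xi_0$ is an ``earliest point'' of $\Theta_\limplus$ in the sense of Proposition~\ref{prop-earliest-reg-below-new}. Following its proof, I introduce the explicit superparabolic function
\[
   v^+_{c,k}(x,t):=\bigl(c^m+A_{c,k}|x|^2+B_{c,k}t\bigr)^{1/m},\qquad (x,t)\in\Theta\cap\{t\ge0\},
\]
where $A_{c,k}$ and $B_{c,k}$ are chosen so large that $v^+_{c,k}$ is superparabolic in $\Theta_\limplus$, satisfies $v^+_{c,k}\ge c+k$ at every point of $\bdy(\Theta_\limplus)$ at distance $\ge 1/k$ from $\xi_0$, and exceeds $M_{c,k}$ on the portion of $\overline{\Theta_\limplus}\cap\{t=0\}$ outside a prescribed small neighbourhood $\mathcal N$ of $\xi_0$. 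I then set
\[
  \hat w_{c,k}(x,t):=\begin{cases} \min\{\tilde w^-_{c,k}(x,t),v^+_{c,k}(x,t)\}, & (x,t)\in\Theta_\limplus,\\ \tilde w^-_{c,k}(x,t), & (x,t)\in\Theta\setm\Theta_\limplus,\end{cases}
\]
and apply the pasting Lemma~\ref{lem-pasting} with $G=\Theta_\limplus$ to conclude that $\hat w_{c,k}$ is superparabolic in $\Theta$; the required inclusion $\overline{\{v^+_{c,k}<\tilde w^-_{c,k}\}\cap\Theta_\limplus}\cap\Theta\subset\Theta_\limplus$ reduces to the statement that $v^+_{c,k}\ge M_{c,k}=\tilde w^-_{c,k}$ near the ``lower lid'' $\Theta\cap\{t=0\}$ outside $\mathcal N$, which is exactly the condition imposed when fixing $A_{c,k}$.

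Finally I verify that $\{\hat w_{c,k}\}$ is an upper barrier family at $\xi_0$ in $\Theta$ in the sense of Definition~\ref{def-barrier-upper}: condition~\ref{cond-c-1/j} is clear since both $w^-_{c,k}\ge c$ and $v^+_{c,k}\ge c$; condition~\ref{cond-lim-up} follows because approach from $\Theta_\limminus$ gives the limit $c$ of $w^-_{c,k}$, while approach from $\Theta_\limplus$ gives the limit $c$ of $v^+_{c,k}$; and condition~\ref{cond-k} holds for the index $j(c,k)$ coming from the past family (for boundary points with $t<0$) and from the growth of $v^+_{c,j}$ (for boundary points with $t\ge 0$), using that $M_{c,j}\to\infty$ with $j$. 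Theorem~\ref{thm:barrier-char-upper} then yields upper regularity at $\xi_0$ with respect to $\Theta$. The main obstacle is the pasting step: because backward-in-time extensions of superparabolic functions are obstructed by the parabolic asymmetry of the equation, the free parameters $A_{c,k}$, $B_{c,k}$ and the neighbourhood $\mathcal N$ must be chosen delicately so that Lemma~\ref{lem-pasting} applies, and I expect this to be the technical heart of the argument.
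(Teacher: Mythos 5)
Your forward implication and the case $\xi_0 \notin \bdy \Theta_\limminus$ match the paper exactly. The problem is the backward implication in the main case, and it sits precisely where you locate ``the technical heart'': the pasting step does not just require delicacy, it fails outright. Lemma~\ref{lem-pasting} with $G=\Theta_\limplus$, $u=\tilde w^-_{c,k}$ and $v=v^+_{c,k}$ demands
$\overline{\{v^+_{c,k}<\tilde w^-_{c,k}\}\cap\Theta_\limplus}\cap\Theta\subset\Theta_\limplus$
at \emph{every} point of $\Theta\cap\{t=0\}$, including those inside your neighbourhood $\mathcal N$ of $\xi_0$; you only arrange it outside $\mathcal N$. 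In $\Theta_\limplus$ one has $\tilde w^-_{c,k}\equiv M_{c,k}$, and condition~\ref{cond-k} of Definition~\ref{def-barrier-upper} forces $M_{c,k}\ge c+k$ as soon as $\bdy\Theta_\limminus$ has points at distance $\ge 1/k$ from $\xi_0$. On the other hand, if $\Theta$ contains points $(x,0)$ with $|x|$ arbitrarily small (e.g.\ $\Theta=(B(0,1)\setm\{0\})\times(-1,1)$), then $v^+_{c,k}(x,t)\to (c^m+A_{c,k}|x|^2)^{1/m}\approx c<M_{c,k}$ as $t\downarrow 0$, so the set $\{v^+_{c,k}<M_{c,k}\}$ accumulates at $(x,0)\in\Theta\setm\Theta_\limplus$ for every choice of $A_{c,k}$, $B_{c,k}$, $\mathcal N$. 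Equivalently, the glued function cannot be lower semicontinuous there: its values just above $t=0$ are close to $c$, whereas its value at $(x,0)$ is $\tilde w^-_{c,k}(x,0)$, which condition~\ref{cond-k} makes $\ge c+k$ for $|x|\ge 1/k$ and which in general exceeds $c$ by an amount decaying arbitrarily slowly as $x\to0$, so no quadratic-in-$|x|$ correction can dominate it. The parabolic asymmetry you mention is exactly the obstruction: a barrier for $\Theta_\limminus$ carries no usable upper bound on its values along the slice $\{t=0\}$ near $\xi_0$, so it cannot be capped from the future by an explicit function that still tends to $c$ at $\xi_0$.

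The paper circumvents gluing entirely. It takes the explicit continuous subparabolic function $\psicj=(c^m+j|x|^2+jbt^2)^{1/m}$ and sets $\wcj:=\lP_\Theta\psicj$, which is parabolic, hence continuous, in all of $\Theta$ by Theorem~\ref{thm-Perron-solution}; there is then nothing to paste. The work is to prove $\lim_{\Theta\ni\zeta\to\xi_0}\wcj(\zeta)=c$: the limit through $\Theta_\limminus$ follows from the assumed upper regularity with respect to $\Theta_\limminus$ via Corollary~\ref{cor-ThetaT} and Proposition~\ref{prop-reg-S-P-equiv}, and the limit through $\Theta_\limplus$ is obtained by squeezing $\wcj$ between $\lS_{\Theta_\limplus}\phi$ and $\lP_{\Theta_\limplus}\phi$ for a boundary function $\phi$ (equal to $\wcj$ on $\Theta\cap\bdy\Theta_\limplus$ and to $\psicj$ on $\bdy\Theta\cap\bdy\Theta_\limplus$) that is continuous at $\xi_0$, and then invoking Proposition~\ref{prop-earliest-reg-below-new} for the earliest point $\xi_0$ of $\Theta_\limplus$. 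If you want to repair your argument, replacing the explicit pasted barrier by this Perron-solution construction is the natural fix; as written, your proof has a genuine gap.
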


\begin{proof}
We  consider first the upper regularity.

If $\xi_0$ is upper regular with respect to $\Theta$,
then either $\xi_0 \notin \bdy \Theta_\limminus$ or
$\xi_0$ is upper regular with respect to $\Theta_\limminus$,
by Corollary~\ref{cor-restrict-up}.

As for the converse, if $\xi_0 \notin  \bdy \Theta_\limminus$,
then $\xi_0$ is regular by 
Proposition~\ref{prop-earliest-reg-below-new}.
Thus it remains to consider the case when
$\xi_0$ is upper regular with respect to $\Theta_\limminus$.

Given $c\in\Q_\limplus$ and $j=1,2,\ldots$, we define for $(x,t)\in
\Theta$
the function
\[
\psi_{c,j} := (c^m+j|x|^2+jbt^2)^{1/m}
\]
with $b= mc^{m-1}/{\diam \Theta}$.
By the proof of Theorem~\ref{thm:barrier-char-upper}
we know that $\psi_{c,j}$ is subparabolic in $\Theta$.

Let $\wcj=\lP_{\Theta} \psicj$. We 
want to show that $\{w_{c,j}\}$ is an upper barrier 
family at $\xi_0$ with respect to $\Theta$.

As $\psicj$ is subparabolic and continuous,
$\uS_{\Theta} \psicj \ge \wcj \ge \psicj$ in $\Theta$.
By the upper regularity of $\xi_0$ with respect to $\Theta_\limminus$ 
as well as by Corollary~\ref{cor-ThetaT} 
and Proposition~\ref{prop-reg-S-P-equiv}\,\ref{1-u}, we see that
\[
   \limsup_{\Theta_\limminus \ni \zeta \to \xi_0} \wcj(\zeta)
   \le \limsup_{\Theta_\limminus \ni \zeta \to \xi_0} \uS_{\Theta} \psicj(\zeta)
   = \limsup_{\Theta_\limminus \ni \zeta \to \xi_0} \uS_{\Theta_\limminus} \psicj(\zeta)
   \le \psicj(\xi_0).
\]
Since $\wcj \ge \psicj$, we obtain
$$\lim_{\Theta_\limminus  \ni \zeta \to \xi_0} \wcj(\zeta)
   = \psicj(\xi_0).$$
Moreover, by the continuity of $\wcj$ in $\Theta$ we have
\begin{equation} \label{eq-Th-minus}
\lim_{\Theta \setm \Theta_\limplus \ni \zeta \to \xi_0} \wcj(\zeta)
   = \psicj(\xi_0).
\end{equation}
Let next
\[
    \phi = \begin{cases}
      \psicj & \text{in } \bdy \Theta \cap \bdy \Theta_\limplus, \\
      \wcj & \text{in } \Theta \cap \bdy \Theta_\limplus.
      \end{cases}
\]
Then $\phi$ is continuous at $\xi_0$, by \eqref{eq-Th-minus}.

If $u \in \LL_{\psicj}(\Theta)$, then 
$u|_{\Theta_\limplus} \in \LL_\phi(\Theta_\limplus)$,
by the definitions of $\phi$ and $\wcj$. 
Hence
\[
    \wcj = \lP_{\Theta} \psicj \le \lP_{\Theta_\limplus} \phi
    \quad \text{in } \Theta_\limplus.
\] 
If $v \in \LLt_\phi(\Theta_\limplus)$ then,
since $\wcj \ge \psicj$ in $\Theta$, we have
\[
   \limsup_{\Theta_\limplus \ni \zeta \to \xi} v(\zeta) 
   < \phi (\xi) 
   \le    \liminf_{\Theta_\limplus \ni \zeta \to \xi} \wcj(\zeta) 
   \quad \text{for all } \xi \in \bdy \Theta_\limplus.
\]
Thus, by Theorem~\ref{thm-comp-princ},
$   v \le \wcj$ in $ \Theta_\limplus$.
Taking supremum over all $v \in \LLt_\phi(\Theta_\limplus)$ shows that
\[
   \lS_{\Theta_\limplus} \phi
    \le \wcj  \le \lP_{\Theta_\limplus} \phi
    \quad \text{in } \Theta_\limplus.
\]
By Proposition~\ref{prop-earliest-reg-below-new}, the initial
points are always regular. 
Thus using also 
Proposition~\ref{prop-reg-S-P-equiv}\,\ref{1-ur}, we
see that
\[
    \lim_{\Theta_\limplus \ni \zeta \to \xi_0} \lS_{\Theta_\limplus} \phi (\zeta)
    =\lim_{\Theta_\limplus \ni \zeta \to \xi_0} \lP_{\Theta_\limplus} \phi (\zeta)
    =    \psicj(\xi_0).
\]
Therefore, we obtain
\[   
\lim_{\Theta_\limplus \ni \zeta \to \xi_0} \wcj(\zeta)= \phi(\xi_0),
\]
which together with \eqref{eq-Th-minus} shows that
\[
    \lim_{\Theta \ni \zeta \to \xi_0} \wcj (\zeta)= \psicj(\xi_0).
\]

All this together now allows us to conclude that $\{\wcj\}$ is indeed an
upper barrier family at $\xi_0$ with respect to $\Theta$.
Thus, by Theorem~\ref{thm:barrier-char-upper},
$\xi_0$ is upper regular with respect to $\Theta$.

Finally, we turn to the lower regularity (for positive boundary data).
The proof is similar to the upper regularity case above,
and the first part is analogous.
For the main part, this time we
let $0<\al<\ga<1/m$, $d:=2+\log\diam\Theta$ 
and define
\[ 
\vcj(x,t) = \begin{cases}
\displaystyle        c^{-1/\ga} + j^\al t^2 + \frac{j}{d-\log|x|}, &x\ne0, \\
        c^{-1/\ga} + j^\al t^2, &x=0,
            \end{cases}  
\quad \text{and} \quad \psicj = \vcj^{-\ga}.
\] 
This time $\psicj$ is superparabolic if $j \ge j_0(c)$,
by the proof of Theorem~\ref{thm:barrier-char-lower-pos}.
The rest of the proof is the same as for upper regularity,
with the direction reversed, and using 
Proposition~\ref{prop-reg-S-P-equiv}\,\ref{1-r} 
and Theorem~\ref{thm:barrier-char-lower-pos} at appropriate places.
We omit the details.
\end{proof}

\section{Regularity of cylinders}\label{S:reg-cylinders}

In this section, we will show that the boundary regularity for
the boundary value problem~\eqref{eq-bvp} 
for the porous medium equation 
in a cylinder is determined by the elliptic regularity of the corresponding
spatial set. 
For this reason, we recall the concept of capacity and the \emph{elliptic 
Wiener criterion}.

For a bounded
set $E\subset\R^n$, we define the \emph{capacity} of $E$ as 
\begin{equation}   \label{eq-def-cap}
\capacity(E) := \inf_u \int_V (|\nabla u|^2 + |u|^2) \, dx,
\end{equation}
where the infimum is taken over all $u \in C_0^\infty(\R^n)$
such that $u\ge1$ in a neighbourhood of $E$.

With this definition, the Wiener criterion~\cite{Wiener24} characterizes 
the \emph{regular} boundary points of a bounded open set $U\subset\R^n$, 
i.e.\ those boundary points $x_0\in\bdy U$ 
at which every solution of the elliptic boundary value problem
\[
   \begin{cases}
     \Delta u = 0 & \text{in }U, \\
        u = g\in C(\bdy U) & \text{on } \partial U,
     \end{cases}
\]
attains its continuous boundary values $g$. 
We call these points \emph{elliptic regular}.
More precisely, $x_0\in\bdy U$ is elliptic regular if and only 
if the complement of $U$ is thick at $x_0$, i.e.\ if
\begin{equation}   \label{eq-wiener}
\int_{0}^1 \frac{\capacity( 
B(x_0,r)\setm U)}{r^{n-2}} \,\frac{dr}{r} = \infty,
\end{equation}
see Wiener~\cite{Wiener24},
Littman--Stampacchia--Weinberger~\cite[Theorem~9.2]{LiStWe}
or Mal\'y--Ziemer~\cite[Theorem~4.24]{MaZi}. 

It is well known, and rather straightforward, that the Wiener 
condition~\eqref{eq-wiener} holds e.g.\ if $x_0$ is not a point of
density of $U$ or if $U$ satisfies the following porosity condition: 
There is $c>0$ and a sequence $r_k\to0$ such that for every $k=1,2,\ldots$,
the set $B(x_0,r_k)\setm U$ contains a ball of radius $cr_k$.
In particular, the famous cone and corkscrew conditions are sufficient
for boundary regularity.
Thus, our results in this section apply to a much larger class of (unions of)
cylinders than 
the ones considered in Abdulla~\cite{abdulla-00} and~\cite{abdulla-05}.

\begin{thm} \label{thm-reg-cylinder}
Let $m\ge1$ and $\Theta:=U_{t_1,t_2}$.
Also let $\xi_0=(x_0,t_0) \in \bdy \Theta$, where $t_1 < t_0 \le t_2$.
Then $\xi_0$ is regular for
positive boundary data if and only if $x_0$ 
is elliptic regular with respect to $U$.
\end{thm}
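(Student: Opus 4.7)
Plan. The key identity behind both directions is that, for every nonnegative harmonic function $H$ on $U$, the time-independent function $(x,t)\mapsto H(x)^{1/m}$ is parabolic in $\Theta$, because $\bdy_t H^{1/m}=0$ and $\Delta(H^{1/m})^m=\Delta H=0$; similarly, superharmonic $H$ gives a (lsc-regularized) superparabolic $H^{1/m}$ via Theorem~\ref{thm-lsc-supersoln}. By Propositions~\ref{prop-local-up} and~\ref{prop-local-pos} together with Theorem~\ref{thm-indep-future}, I would first replace $\Theta$ by a small cylinder $\Theta':=U'\times(t_0-r,t_0)$, where $U':=U\cap B(x_0,\rho)$ is small and $\xi_0=(x_0,t_0)$ lies on the top face of $\Theta'$; the case $x_0\in U$, which forces $t_0=t_2$, is handled by extending the cylinder in time and invoking Proposition~\ref{prop-earliest-reg-below-new}. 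Since the Wiener integral~\eqref{eq-wiener} is a local quantity at $x_0$, elliptic regularity of $x_0$ is the same for $U$ and $U'$.

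For the direction \emph{elliptic regular $\Rightarrow$ PME regular}, Wiener's theorem supplies, for each $\eps>0$, a continuous nonnegative function $h_\eps$ on $\overline{U'}$, superharmonic in $U'$, with $h_\eps(x_0)=0$ and $h_\eps\ge 1$ on $\bdy U'\setm B(x_0,\eps)$. Given $c\in\Qp$ and $k$, choose $\eps(c,k)<1/(2k)$, set $A_k:=(c+k)^m-c^m$, and pick a continuous cutoff $\chi_k:[t_0-r,t_0]\to[0,1]$ with $\chi_k\equiv1$ on $[t_0-r,t_0-1/(2k)]$ and $\chi_k(t_0)=0$. On $\bdy_p\Theta'$ define
\[
g_{c,k}(y,t):=\max\bigl\{(c^m+A_k h_{\eps(c,k)}(y))^{1/m},\,(c+k)\chi_k(t)\bigr\}
\]
on the lateral boundary, extended by the constant $c+k$ on the bottom, and set $w_{c,k}:=\uP_{\Theta'}g_{c,k}$. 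By Theorem~\ref{thm-Perron-solution}, $w_{c,k}$ is parabolic; Theorem~\ref{thm-cont-exist} applied on smooth exhaustions of $U'$, combined with Theorem~\ref{thm-stability}, shows that it attains $g_{c,k}$ continuously on $\bdy_p\Theta'$. A direct verification then gives $g_{c,k}(\xi_0)=c$, $w_{c,k}\ge c$, and $g_{c,k}\ge c+k$ at every boundary point with $|y-x_0|+|t-t_0|\ge 1/k$. Hence $\{w_{c,k}\}$ is an upper barrier family in the sense of Definition~\ref{def-barrier-upper}, and Theorem~\ref{thm:barrier-char-upper} yields upper regularity at $\xi_0$. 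A symmetric construction, using $((c^m-A_k h_{\eps(c,k)})_+)^{1/m}$ (subharmonic where positive) together with an analogous temporal cutoff, provides a lower barrier family, and Theorem~\ref{thm:barrier-char-lower-pos} yields lower regularity for positive boundary data.

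For the converse, assume $\xi_0$ is regular for positive boundary data and let $g\in C(\bdy U')$ be positive. The time-independent parabolic function $v(x,t):=H_g(x)^{1/m}$, with $H_g$ the elliptic Perron solution in $U'$, can be matched with continuous positive boundary data $f$ on $\bdy_p\Theta'$ equal to $g^{1/m}$ on the lateral boundary and to a continuous extension of $H_g^{1/m}$ on the bottom. Theorem~\ref{thm-cont-exist} applied on smooth exhaustions of $U'$ (together with Theorem~\ref{thm-stability}) identifies $v$ with $\uP_{\Theta'}f=\lP_{\Theta'}f$; the assumed PME regularity of $\xi_0$, via Proposition~\ref{prop-reg-S-P-equiv}, then forces $v(\zeta)\to f(\xi_0)=g(x_0)^{1/m}$ as $\zeta\to\xi_0$, so $H_g(x)\to g(x_0)$ as $x\to x_0$. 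Since $g$ was arbitrary positive continuous, this is exactly Wiener's characterization of elliptic regularity of $x_0$.

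The main obstacle I expect is verifying condition~\ref{cond-k} of Definition~\ref{def-barrier-upper} at lateral boundary points close to $x_0$ in space but far from $t_0$ in time: the elliptic-only barrier $(c^m+A_k h_\eps)^{1/m}$ is time-independent, and one cannot simply add a positive perturbation that decreases to $0$ at $t_0$, since any such perturbation would turn the function subparabolic rather than superparabolic. The maximum with the temporal cutoff $(c+k)\chi_k(t)$ inside $g_{c,k}$ is engineered precisely to cover these points while keeping $g_{c,k}$ continuous at the corner $\xi_0$, thanks to $\chi_k(t_0)=0$. A secondary technical issue, present in both directions, is the possible non-smoothness of $\bdy U'$; I would bypass the $C^{1,\be}$ hypothesis in Theorem~\ref{thm-cont-exist} by smooth approximation of $U'$ and by invoking Theorem~\ref{thm-stability} to pass to the limit.
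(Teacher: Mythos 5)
Your converse direction is essentially the paper's argument (time\--independent parabolic functions $H^{1/m}$ built from harmonic $H$, plus Proposition~\ref{prop-reg-S-P-equiv}), although the step identifying $H_g^{1/m}$ with $\uP_{\Theta'}f$ via ``smooth exhaustions and Theorem~\ref{thm-stability}'' is not justified -- Theorem~\ref{thm-stability} perturbs data on a \emph{fixed} smooth cylinder, it does not pass to the limit along an exhaustion of a non-smooth one. The paper avoids this by never identifying $H_g^{1/m}$ with a Perron solution: it lifts each function in the elliptic lower class to a member of $\LL_f$ (using $\max\{u,1\}$ after normalizing $1\le\psi\le2$, so that $u\mapsto u^m$ preserves subharmonicity), concludes $\lP f\ge h$, and then applies Proposition~\ref{prop-reg-S-P-equiv} directly.

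The forward direction, however, has a genuine gap. You define the barrier as $w_{c,k}:=\uP_{\Theta'}g_{c,k}$ and then assert that it ``attains $g_{c,k}$ continuously on $\bdy_p\Theta'$.'' That assertion is exactly boundary regularity: at $\xi_0$ it is the upper regularity you are trying to prove (circular), and at the far lateral points it amounts to condition~\ref{cond-k} of Definition~\ref{def-barrier-upper}, which fails if $\bdy U'$ has irregular points away from $x_0$; no exhaustion/stability argument delivers it. The obstacle you correctly diagnose -- a perturbation decreasing towards $t_0$ is sub-, not superparabolic -- is not resolved by hiding the temporal cutoff inside the boundary data and taking a Perron solution, nor can you take the pointwise maximum of two superparabolic pieces (only minima are superparabolic, Lemma~\ref{lem-min-superparabolic}). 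The missing idea is quantitative negativity of the spatial Laplacian: the paper takes $v$ with $\Delta v=-1$ in $U$ and $v-|x|\in W_0^{1,2}(U)$, so that $v\ge|x|$, $v\to0$ at the elliptic regular point by Gariepy--Ziemer, and the explicit function $w_{c,j}=(c^m+jv+ajt^2)^{1/m}$ is superparabolic because $\Delta w_{c,j}^m=-j$ absorbs $\partial_t w_{c,j}$; the term $ajt^2$ then handles boundary points close in space but far in time, and $v\ge|x|$ handles the rest. Your merely superharmonic $h_\eps$ gives $\Delta h_\eps\le0$ with no margin to absorb any time dependence, so no variant of your construction produces a superparabolic function satisfying both \ref{cond-lim-up} and \ref{cond-k}. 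The same problem recurs, mirrored, in your lower barrier family.
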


Note that by the main result in 
Kinnunen--Lindqvist--Lukkari~\cite{Kinnunen-Lindqvist-Lukkari:2013},
continuous functions are resolutive for cylinders 
(see also Remark~\ref{rmk-perron-domain}).
Also, if $t_0=t_1$ then $\xi_0$, being an earliest point, 
is always regular for positive boundary data, 
by Proposition~\ref{prop-earliest-reg-below-new}.

\begin{proof}
Without loss of generality we may assume that $\xi_0=(0,0)$,
and thus that $t_1 < 0 \le t_2$.

Assume first that $x_0$ is elliptic regular with respect to $U$.
Let $\phi(x)=|x|$, $x \in \R^n$ and let $v$ be the unique 
classical solution of
\[
    \begin{cases}
    \Delta v = -1  \text{ in } U, \\
    v-\phi  \in W^{1,2}_0(U),
     \end{cases}
\]
which exists by Theorem~4.3 in Gilbarg--Trudinger~\cite{GilbargTrudinger}.
Then $v$ is superharmonic in $U$.
Since $\phi$ is subharmonic (by a straightforward calculation), 
we have $v \ge \phi$ a.e., by Lemma~3.18 in
Heinonen--Kilpel\"ainen--Martio~\cite{HeKiMa}.
As $v$ and $\phi$ are continuous,
$v \ge \phi$ everywhere in $U$.
Since $x_0$ is elliptic regular, it satisfies the Wiener test in 
Wiener's criterion. 
Hence it follows from Gariepy--Ziemer~\cite[Theorem~2.2]{GZ} 
(or Mal\'y--Ziemer~\cite[Theorem~4.28]{MaZi})
that 
$\lim_{U \ni x \to0} v(x)=\phi(0)=0$.

We need to create barrier families both for upper and lower regularity.
We begin with upper regularity and let $c \in \Qp$ be arbitrary.
Let 
\[
    w_{c,j} = (c^m+jv+ajt^2)^{1/m},
\]
where $  a= c^{m-1}m/2\diam \Theta$.
Then 
\[
   \partial_t w_{c,j} = \frac{2ajt}{m}(c^m+jv+ajt^2)^{1/m-1}
   \ge - \frac{2aj \diam \Theta}{m}c^{1-m},
\]
while $\Delta w_{c,j}^m = j \Delta v= -j$.
Hence
\[
    \partial_t w_{c,j} - \Delta w_{c,j}^m
    \ge   j \biggl(1-\frac{2a \diam \Theta}{m}c^{1-m}\biggr)=0,
\]
and thus $w_{c,j}$ is superparabolic in 
$\Theta$, by Theorem~\ref{thm-lsc-supersoln}.
As $\lim_{U \ni x \to0} v(x)=0$ and $v \ge \phi$,
it follows that $\{w_{c,j}\}$ is an upper
barrier family and therefore by Theorem~\ref{thm:barrier-char-upper},
$\xi_0=(0,0)$ is upper regular.

Next, we turn to lower regularity for positive boundary data.
Let again $c \in \Qp$ be arbitrary.
This time we let
\[
    u_{c,j} = \max\biggl\{c^m-jv-bj^{1/m}t^2,\frac{1}{j}\biggr\}^{1/m}
      \quad \text{and} \quad
      \Theta_{c,j}=\biggl\{(x,t) \in \Theta : \ucj(x,t)^m > \frac{1}{j}\biggr\},
\]
where $b=m/2\diam \Theta$ and $v$ is the same function as above.
In $\Theta_{c,j}$, 
which is open as $v$ is continuous,
we have
\[
   \partial_t u_{c,j} = -\frac{2bj^{1/m}t}{m}(c^m-jv-bj^{1/m}t^2)^{1/m-1}
   \le  \frac{2bj^{1/m} \diam \Theta}{m}j^{1-1/m}
   =  j, 
\]
while $\Delta u_{c,j}^m = -j \Delta v= j$.
Hence
\[
    \partial_t u_{c,j} - \Delta u_{c,j}^m
    \le   j-j =0,
\]
and thus $u_{c,j}$ is subparabolic in 
$\Theta_{c,j}$, by Theorem~\ref{thm-lsc-supersoln}.
As $v$ is continuous, 
$u_{c,j}$ is also continuous.
Hence, Lemma~\ref{lem-pasting-const-sub} shows that $u_{c,j}$ is
subparabolic in $\Theta$.
Since $\lim_{U \ni x \to0} v(x)=0$ and $v \ge \phi$, 
it follows that $\{u_{c,j}\}$ is a lower
barrier family for positive boundary data
and thus by Theorem~\ref{thm:barrier-char-lower-pos},
$\xi_0=(0,0)$ is lower regular for positive boundary data.

Assume now instead that $\xi_0=(0,0)$ is regular for
positive boundary data. 
We let $\psi$ be a continuous function on $\bdy U$ and let 
$h$ be the harmonic Perron solution with boundary values $\psi$
with respect to $U$.
We need to show that $\lim_{U \ni x \to 0} h(x)=\psi(0)$.
As we can scale and add constants to harmonic functions
we may assume that $1 \le \psi \le 2$, and thus also $1 \le h \le 2$.
Let 
\[
    f(x,t)=\begin{cases}
      \psi(x), & \text{if } x \in \bdy U \text{ and  }t_1 < t < t_2, \\
      h(x), & \text{if } x \in \itoverline{U}, \text{ and }t=t_1$ or $t=t_2. 
      \end{cases}
\]
By Theorem~\ref{thm-indep-future} we may assume that $t_2 >0$.
Then $1 \le f \le 2$ on $\bdy \Theta$ and $f$ is continuous at $\xi_0$.
Moreover, 
if $u$ belongs to the elliptic lower  class for the
harmonic Perron solution of $\psi$, then
$\ut(x,t)=\max\{u(x),1\} \in \LL_f$ 
and hence $\lP f(x,t) \ge h(x)$ for $(x,t) \in \Theta$.
By Proposition~\ref{prop-reg-S-P-equiv}\,\ref{1-u}, 
\[
    \limsup_{U \ni x \to 0} h(x) 
    \le     \limsup_{U \ni x \to 0} \lP f(x,0) 
    \le     \limsup_{U \ni x \to 0} \uS f(x,0) 
    \le f(0,0)
    = \psi(0).
\]
It follows similarly that 
$    \liminf_{U \ni x \to 0} h(x) \ge \psi(0)$,
and thus 
$    \lim_{U \ni x \to 0} h(x) = \psi(0)$.
Hence  $0$ is elliptic regular with respect to $U$.
\end{proof}

A related result is proved by Ziemer in \cite[Theorem~4.4]{ziemer}. 
He considers general degenerate parabolic equations, which include 
the porous medium equation with $m>1$ as a special case. 
He deals with \emph{signed} weak solutions $u$ in a cylinder $U_{t_1,t_2}$, 
and assumes them to be bounded. 
The boundary data $f$ belong 
to the Sobolev space $W^{1,1}_2(\R^{n+1})$ 
of functions which, 
together with their distributional first derivatives, 
belong to $L^2(\R^{n+1})$, and $f{|}_{\R^{n+1}\backslash U_{t_1,t_2}}$ is continuous. 
The boundary condition on $\partial U_{t_1,t_2}$ is taken in a weak 
(Sobolev) sense, 
that is $u-f$ is assumed to be in the $W^{1,1}_2(U_{t_1,t_2})$-closure 
of smooth functions with compact support in $U_{t_1,t_2}$. 
If $\xi_0=(x_0,t_0)\in\partial U\times(t_1,t_2)$ and $x_0$ is elliptic regular, 
then by~\cite[Theorem~4.4]{ziemer},
\[
\lim_{U_{t_1,t_2}\ni\xi\to\xi_0} u(\xi)=f(\xi_0).
\]
There is no restriction on the boundary behaviour of $f$ at $\xi_0$, 
which can be positive, negative, or vanish, but the condition 
for continuity is only proved to be
sufficient.

Our aim in the rest of this section is to obtain
the following generalization of Abdulla's unique solvability result 
(Theorem~\ref{thm-cont-exist})
for $m\ge1$ and positive boundary data.
Note that it is a generalization also in the case when $\Theta$
is just one cylinder.

\begin{thm}   \label{thm-reg-monot-cyl}
Let $m\ge1$ and $\Theta$ be a finite union of cylinders.
Assume that the time-sections 
\[
\Theta(T)=\{(x,t)\in\Theta:t=T\}
\]
are bounded elliptic regular open sets in $\R^n$ satisfying
\[
\Theta(T_1)\subset\Theta(T_2) 
\quad \text{whenever }
T_{\min} < T_1<  T_2< T_{\max},
\]
where 
$T_{\min}:=\inf\{t:(x,t) \in \Theta\}$
and $T_{\max}:=\sup\{t:(x,t) \in \Theta$\}.
Then 
\begin{enumerate}
\item  \label{mon-cyl-reg}
every point in the parabolic boundary $\bdy_p\Theta$
is regular for positive boundary data\/\textup{;}
\item  \label{mon-cyl-res-pos}
every positive $f \in C(\bdy_p\Theta)$ is resolutive and 
\begin{equation} \label{eq-Pf-identities}
u:=Pf:=\lS f = \lP f = \uP f = \uS f 
\end{equation}
is the unique function in $C(\overline{\Theta})$ which is
parabolic in $\Theta$ and takes the boundary values $u=f$ on the
parabolic boundary $\bdy_p\Theta$\/\textup{;}
\item \label{mon-cyl-res}
every nonnegative $f \in C(\bdy_p\Theta)$ is resolutive
and for every $\xi_0\in\bdy_p\Theta$, either
\begin{equation} \label{eq-dichotomy}
\lim_{\Theta\ni\xi\to\xi_0} Pf(\xi) = f(\xi_0)
\quad \text{or} \quad
0 = \liminf_{\Theta\ni\xi\to\xi_0} Pf(\xi) 
     \le \limsup_{\Theta\ni\xi\to\xi_0} Pf(\xi) \le f(\xi_0).
\end{equation}
\end{enumerate}
\end{thm}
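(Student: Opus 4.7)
The plan is to prove \ref{mon-cyl-reg}, \ref{mon-cyl-res-pos} and~\ref{mon-cyl-res} in sequence, with each part relying on the previous ones.

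For~\ref{mon-cyl-reg}, I would treat earliest points and lateral points separately. Points of the form $(x_0, T_{\min}) \in \bdy_p \Theta$ are regular by Proposition~\ref{prop-earliest-reg-below-new}. For a lateral point $\xi_0 = (x_0, t_0)$ with $t_0 > T_{\min}$, since $\Theta$ is a finite union of cylinders the time-sections $\Theta(T)$ are piecewise constant in $T$ away from finitely many breakpoints; setting $V := \bigcup_{T < t_0}\Theta(T)$, the monotonicity hypothesis together with piecewise constancy gives $V = \Theta(T)$ for all $T$ in a one-sided neighbourhood of $t_0$, so $V$ is bounded and elliptic regular by hypothesis, and the cylinder $V \times (t_0 - \delta, t_0)$ is contained in $\Theta$ for some $\delta > 0$. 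In a sufficiently small ball $B$ around $\xi_0$ one then has $B \cap \Theta_\limminus = B \cap (V \times (t_0 - \delta, t_0))$. Combining locality (Propositions~\ref{prop-local-up} and~\ref{prop-local-pos}), independence of the future (Theorem~\ref{thm-indep-future}), and the cylinder characterisation (Theorem~\ref{thm-reg-cylinder}) then yields regularity for positive data at $\xi_0$ whenever $x_0 \in \bdy V$; if instead $x_0 \notin \overline V$, then $\xi_0 \notin \bdy(B \cap \Theta_\limminus)$ and Theorem~\ref{thm-indep-future} reduces the matter to an earliest-points argument again.

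For~\ref{mon-cyl-res-pos}, I would first extend a positive $f \in C(\bdy_p \Theta)$ continuously to all of $\bdy \Theta$, which is possible since $\bdy_p \Theta$ is compact. Part~\ref{mon-cyl-reg} together with Proposition~\ref{prop-reg-S-P-equiv}\,\ref{1-ur} shows that each of $\lS f, \lP f, \uP f, \uS f$ converges to $f(\xi_0)$ at every $\xi_0 \in \bdy_p \Theta$. All four are parabolic by Theorem~\ref{thm-Perron-solution}, so applying the parabolic comparison principle for finite unions of cylinders (Theorem~\ref{thm-para-comp-princ-union-cyl}) in both directions to any pair forces the identities~\eqref{eq-Pf-identities}. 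Because the monotonicity assumption places $\bdy\Theta \setm \bdy_p\Theta$ in the single time-slice $\{t = T_{\max}\}$, Remark~\ref{rem-Pf-parab-bdy} ensures that $u := Pf$ depends only on $f|_{\bdy_p \Theta}$ and lies in $C(\overline\Theta)$ with $u = f$ on $\bdy_p \Theta$. Uniqueness is obtained by one more application of Theorem~\ref{thm-para-comp-princ-union-cyl} comparing $u$ with any competitor in $C(\overline\Theta)$.

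For~\ref{mon-cyl-res}, I would approximate a nonnegative $f \in C(\bdy_p \Theta)$ from above by the positive functions $f_k := f + 1/k$ and let $u_k := Pf_k$ from~\ref{mon-cyl-res-pos}. Monotonicity and comparison force $u_k$ to decrease pointwise to a limit $u$; local equicontinuity from~\cite[Theorem~5.16.1]{DBGV-mono} together with a Caccioppoli estimate for $\grad u_k^m$ gives locally uniform convergence and weak $L^2$-convergence of the gradients, so $u$ is parabolic in $\Theta$. Since $u_k \in \UU_f$, one has $\uP f \le u_k$, so a sandwich with subparabolic members of $\LL_f$ identifies $u$ with $\lP f$ and with $\uP f$, giving resolutivity and the common solution $Pf$. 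The bound $\limsup_{\xi\to\xi_0} Pf(\xi) \le u_k(\xi_0) \to f(\xi_0)$ supplies the upper half of the dichotomy at each $\xi_0 \in \bdy_p \Theta$. The lower half splits according to whether the subparabolic members of $\LL_f$ admit a lower barrier family at $\xi_0$, in which case $\liminf Pf \ge f(\xi_0)$ and the limit exists, or not, in which case only the trivial lower bound $0$ survives.

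The main obstacle will be the dichotomy in~\ref{mon-cyl-res}: since additive shifts and sign changes are unavailable for the porous medium equation, the behaviour of $Pf$ at a point where $f$ vanishes, or where lower regularity for nonnegative data might genuinely fail, has to be extracted entirely from the monotone approximation together with the pasting lemmas (Lemmas~\ref{lem-pasting-const} and~\ref{lem-pasting-const-sub}) and the strict and nonstrict parabolic comparison principles (Theorems~\ref{thm-para-comp-princ} and~\ref{thm-para-comp-princ-union-cyl}). A secondary bookkeeping issue in~\ref{mon-cyl-reg} arises when $t_0$ coincides with a breakpoint of the time-section structure, where the identification of $V$ and of the correct cylinder $V \times (t_0 - \delta, t_0) \subset \Theta$ requires some care.
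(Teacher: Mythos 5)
Your parts \ref{mon-cyl-reg} and \ref{mon-cyl-res-pos} are essentially the paper's argument: earliest points via Proposition~\ref{prop-earliest-reg-below-new}, lateral points via Theorem~\ref{thm-reg-cylinder} combined with locality (your explicit treatment of the breakpoints of the time-section structure, via $V=\bigcup_{T<t_0}\Theta(T)$ and Theorem~\ref{thm-indep-future}, is more careful than the paper's one-line reduction and is welcome), and the identities~\eqref{eq-Pf-identities} by playing the four parabolic Perron solutions against each other with Theorem~\ref{thm-para-comp-princ-union-cyl} --- the paper routes this through Proposition~\ref{prop-S=P}, but the content is the same. Two small points in \ref{mon-cyl-res-pos}: continuity of $u$ up to the top slice $\{t=T_{\max}\}$ is not supplied by Remark~\ref{rem-Pf-parab-bdy} (which only says the top values of $f$ are irrelevant); you need the interior continuity estimate \cite[Theorem~5.16.1]{DBGV-mono} to extend $u$ continuously to $\bdy\Theta\setm\bdy_p\Theta$. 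Also, for resolutivity in \ref{mon-cyl-res} your monotone approximation $f_k=f+1/k$ is workable but heavier than necessary: since every point of $\bdy_p\Theta$ is upper regular, Proposition~\ref{prop-S=P}\,\ref{T-ur} already gives $\uP f\le\lP f$, and Theorem~\ref{thm-para-comp-princ-union-cyl} gives the reverse inequality; no passage to the limit in $k$ (and no discussion of parabolicity of the limit) is needed.

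The genuine gap is the dichotomy~\eqref{eq-dichotomy}, which you flag as ``the main obstacle'' but for which you offer only an unjustified case split (``according to whether the subparabolic members of $\LL_f$ admit a lower barrier family at $\xi_0$''). That split does not prove the required alternative: you must show that \emph{if} $\liminf_{\Theta\ni\xi\to\xi_0}Pf(\xi)>0$ \emph{then} it is at least $f(\xi_0)$, and nothing in your approximation scheme yields this, precisely because one cannot subtract constants to reduce to the positive case. The missing idea (Theorem~\ref{thm-dichotomy} in the paper) is a localization: if $\liminf Pf>0$ at $\xi_0$ and $f(\xi_0)>0$, choose a small ball $B\ni\xi_0$ on which both $\uP f$ and $f$ are bounded below by some $\eps>0$, define $h=f$ on $\bdy\Theta\cap\itoverline{B}$ and $h=\uP f$ on $\bdy B\cap\Theta$, and observe that every $u\in\UU_f(\Theta)$ restricts to a member of $\UU_h(B\cap\Theta)$, whence $\uP f\ge\uP_{B\cap\Theta}h\ge\lS_{B\cap\Theta}h$ in $B\cap\Theta$. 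Since $h$ is bounded away from $0$ and $\xi_0$ is lower regular for \emph{positive} data with respect to $B\cap\Theta$ (Corollary~\ref{cor-restrict-pos}), Proposition~\ref{prop-reg-S-P-equiv}\,\ref{1-r} gives $\liminf\lS_{B\cap\Theta}h\ge h(\xi_0)=f(\xi_0)$. Without this step your proof establishes only $\limsup Pf\le f(\xi_0)$, not the stated dichotomy.
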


The last part also
generalizes the resolutivity result in
Kinnunen--Lindqvist--Lukkari~\cite[Theorem~5.1]{Kinnunen-Lindqvist-Lukkari:2013}
(for general cylinders) to certain finite unions of cylinders.

We will divide the proof into several results. 
Since they have independent interest we formulate
them in greater generality than Theorem~\ref{thm-reg-monot-cyl}.

\begin{thm}   \label{thm-unique-cont-union-cyl}
Let $m\ge1$ and $\Theta$ be a finite union of cylinders.
Let $h \in C(\bdy \Theta)$ be nonnegative.
Then there is at most one $u \in C(\overline{\Theta})$
that is parabolic in $\Theta$
and takes the boundary values $u=h$ on the parabolic boundary 
$\partial_p \Theta$. 
\end{thm}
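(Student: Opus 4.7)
The plan is to reduce the uniqueness statement to a double application of the parabolic comparison principle for finite unions of cylinders, Theorem~\ref{thm-para-comp-princ-union-cyl}. The point is that the statement already supplies us with two fully continuous solutions attaining the same data on $\bdyp\Theta$, so a strict comparison (which would require adding constants to solutions, something not available for the porous medium equation) can be avoided entirely.

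First I would take two candidates $u_1, u_2 \in C(\overline{\Theta})$ that are parabolic in $\Theta$ and agree with $h$ on $\bdyp\Theta$. Since $\overline{\Theta}$ is compact and the $u_i$ are continuous, they are automatically bounded and nonnegative. The key structural input is the proposition in Section~\ref{S:Further-Superparabolic} asserting that a parabolic function is simultaneously sub- and super\-pa\-ra\-bolic; thus each $u_i$ lies in both classes, which is the form required by Theorem~\ref{thm-para-comp-princ-union-cyl}.

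Next, for any $\xi_0 \in \bdyp\Theta$, continuity of $u_1, u_2$ up to the boundary gives
\[
\limsup_{\Theta \ni \xi \to \xi_0} u_2(\xi) = u_2(\xi_0) = h(\xi_0) = u_1(\xi_0) = \liminf_{\Theta \ni \xi \to \xi_0} u_1(\xi),
\]
which is exactly the boundary hypothesis of Theorem~\ref{thm-para-comp-princ-union-cyl} with $u_1$ playing the role of the superparabolic function and $u_2$ the subparabolic one. Applying the theorem yields $u_2 \le u_1$ in $\Theta$; exchanging the roles of $u_1$ and $u_2$ gives $u_1 \le u_2$, and hence $u_1 = u_2$.

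The main (and essentially only) obstacle worth flagging is the absence of any information about $u_1$ and $u_2$ on $\bdy\Theta \setminus \bdyp\Theta$ — the ``future'' part of the topological boundary, which need not be empty for a union of cylinders. This is precisely why Theorem~\ref{thm-para-comp-princ-union-cyl}, rather than the elliptic-type Theorem~\ref{thm-comp-princ}, is the right tool: its hypothesis is imposed only on $\bdyp\Theta$, and it is the one comparison principle in the paper that is nonstrict and hence usable for an equality argument. Everything else is automatic from the definition of a parabolic function.
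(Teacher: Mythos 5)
Your argument is correct and is essentially the paper's own proof: both apply the nonstrict parabolic comparison principle for finite unions of cylinders (Theorem~\ref{thm-para-comp-princ-union-cyl}) in both directions, using that a parabolic function is simultaneously sub- and superparabolic and that continuity up to $\overline{\Theta}$ supplies the boundary hypothesis on $\bdyp\Theta$. Your extra remarks about why the elliptic-type (strict) comparison principle would not suffice are accurate but not needed.
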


\begin{proof}
Let $u$ and $v$ be two solutions of the boundary value problem 
under consideration.
Theorem~\ref{thm-para-comp-princ-union-cyl} shows that $v\le u\le v$ in
$\Theta$ and hence, by continuity, in~$\overline{\Theta}$.
\end{proof}

\begin{prop}  \label{prop-S=P}
Let $m >0$.
For  $T\in\R$, let
\[
\bdy_T\Theta:=\{\xi=(x,t)\in\bdy\Theta: t<T\}
\]
and assume that 
$f:\bdy\Theta_T\to[0,\infty)$ is bounded on $\bdy \Theta_T$
and continuous on $\bdy_T \Theta$.
Then the following are true.
\begin{enumerate}
\item  \label{T-ur}
If every $\xi\in\bdy_T\Theta$ is upper regular, then
\[
\uP_{\Theta_T} f \le \uS_{\Theta_T} f = \lP_{\Theta_T} f. 
\] 
\item \label{T-lrp}
If every $\xi\in\bdy_T\Theta$ is lower regular for positive boundary data 
and $f$ is bounded away from $0$, then
\[
\lP_{\Theta_T} f \ge \lS_{\Theta_T} f = \uP_{\Theta_T} f. 
\] 
\item \label{T-lr}
If every $\xi\in\bdy_T\Theta$ is lower regular 
for nonnegative boundary data, then
\[
\uP_{\Theta_T} f \le \lP_{\Theta_T} f. 
\] 
\item \label{T-res}
If the comparison principle $\lP_{\Theta_T} f \le \uP_{\Theta_T} f$
is valid on $\Theta_T$ then in any of the above cases \ref{T-ur}--\ref{T-lr}, 
$f$ is resolutive in $\Theta_T$.
\item \label{T-res-ae}
Let $0<h \in C(\bdy \Theta_T)$ be positive.
Then in any of the above cases \ref{T-ur}--\ref{T-lr}, 
there is a countable set $A \subset (0,1)$ such that 
\[
\lP_{\Theta_T} (f+ah)=  \uP_{\Theta_T} (f+ah)
  \quad \text{for every }a \in (0,1) \setm A,
\]
i.e.\ $f+ah$ is resolutive in $\Theta_T$.
\end{enumerate}
\end{prop}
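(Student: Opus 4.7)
The plan is to exploit the interplay between the strict and nonstrict Perron classes, which is tightly controlled by the regularity hypotheses. Cases \ref{T-ur}--\ref{T-lr} follow a common pattern: all four Perron solutions on $\Theta_T$ are parabolic by Theorem~\ref{thm-Perron-solution}, hence both sub- and super\-parabolic and bounded, and the general inequalities $\lS f \le \lP f \le \uS f$ and $\lS f \le \uP f \le \uS f$ hold by Remark~\ref{rmk-perron-domain}. Applying Lemma~\ref{lem-restrict-Theta-T} to $\Theta_T$ itself (with the same $T$) shows that these Perron solutions can be realised using admissibility conditions only at $\bdy_T\Theta$, since $\bdy\Theta_T\cap\{t<T\}=\bdy_T\Theta$. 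In case \ref{T-ur}, after transferring upper regularity from $\Theta$ to $\Theta_T$ via Corollary~\ref{cor-restrict-up}, Proposition~\ref{prop-reg-S-P-equiv}\,\ref{1-u} gives $\limsup_{\Theta_T\ni\xi\to\xi_0}\uS_{\Theta_T} f(\xi) \le f(\xi_0)$ for every $\xi_0 \in \bdy_T\Theta$; thus $\uS f$ is admissible in the reduced lower Perron class, yielding $\uS f \le \lP f$ and hence equality, while $\uP f \le \uS f$ is trivial from $\UUt_f \subset \UU_f$. Cases \ref{T-lrp} and \ref{T-lr} are analogous, using Proposition~\ref{prop-reg-S-P-equiv}\,\ref{1-r} (together with the hypothesis that $f$ is bounded away from $0$) to show that $\lS f$ is admissible in the reduced upper Perron class, respectively Proposition~\ref{prop-reg-S-P-equiv}\,\ref{1-nn} to show the same for $\lP f$. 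Part \ref{T-res} is then immediate, since each of \ref{T-ur}--\ref{T-lr} supplies $\uP f \le \lP f$, which combines with the assumed $\lP f \le \uP f$ to give resolutivity.

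For \ref{T-res-ae} the new ingredient is a strict separation. Write $g_a := f + ah$ for $a \in [0,1]$; since $h$ is positive and continuous on the compact set $\bdy\Theta_T$, we have $\delta:=\min_{\bdy\Theta_T} h > 0$ and hence $g_a \ge g_b + (a-b)\delta$ strictly on $\bdy\Theta_T$ whenever $0 \le b < a \le 1$. Consequently, every $v \in \UU_{g_a}(\Theta_T)$ satisfies $\liminf v \ge g_a > g_b$ strictly on $\bdy\Theta_T$, placing $v$ in $\UUt_{g_b}(\Theta_T)$; taking infimum over such $v$ and combining with $\lP g_b \le \uS g_b$ from Remark~\ref{rmk-perron-domain} furnishes the key monotonicity
\[
\lP_{\Theta_T} g_b \le \uP_{\Theta_T} g_a \quad\text{whenever } 0 \le b < a \le 1.
\]

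Fix now a countable dense set $\{\xi_k\}_{k=1}^\infty \subset \Theta_T$. For each $k$, the increasing real-valued map $a \mapsto \lP_{\Theta_T} g_a(\xi_k)$ has at most countably many discontinuities; collecting them into $A_k$ and setting $A := \bigcup_k A_k$, the set $A$ remains countable. For $a \in (0,1)\setminus A$, left-continuity of $\lP g_\cdot(\xi_k)$ at $a$ combined with the displayed inequality gives
\[
\lP_{\Theta_T} g_a(\xi_k) = \lim_{b\uparrow a}\lP_{\Theta_T} g_b(\xi_k) \le \uP_{\Theta_T} g_a(\xi_k),
\]
while the applicable case among \ref{T-ur}--\ref{T-lr}, applied to $g_a$ (positive and bounded away from $0$), supplies the reverse inequality $\uP g_a \le \lP g_a$. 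Equality at every $\xi_k$ then combines with continuity of the parabolic functions $\uP g_a, \lP g_a$ (Theorem~\ref{thm-Perron-solution}) and density of $\{\xi_k\}$ to give equality throughout $\Theta_T$, proving \ref{T-res-ae}.

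The main hurdle I anticipate is in \ref{T-res-ae}: turning the one-sided gap $\lP g_b \le \uP g_a$ for $b < a$ into resolutivity outside a countable $A$ requires the monotonicity-plus-density propagation above, since at a given $a$ only one-sided continuity of the pointwise Perron solutions is naturally available, and we cannot add constants or exploit linearity. The bookkeeping around the three boundary sets $\bdy\Theta$, $\bdy\Theta_T$ and $\bdy_T\Theta$, together with the application of Lemma~\ref{lem-restrict-Theta-T} to $\Theta_T$ itself, also calls for some care.
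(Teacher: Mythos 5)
Your argument is correct and follows essentially the same route as the paper: parts (a)--(c) are proved by combining Proposition~\ref{prop-reg-S-P-equiv} with Lemma~\ref{lem-restrict-Theta-T} (applied to $\Theta_T$) to show that the relevant strict/nonstrict Perron solution is admissible in the opposite reduced class, and part (e) uses a countable dense set, monotonicity of $a\mapsto\lP(f+ah)(\xi)$, and the strict boundary separation coming from $h>0$ exactly as in the paper. The only cosmetic difference is that in (e) you obtain $\lP_{\Theta_T}g_b\le\uP_{\Theta_T}g_a$ via the inclusion $\UU_{g_a}\subset\UUt_{g_b}$ and the inequality $\lP\le\uS$, while the paper invokes Theorem~\ref{thm-comp-princ} directly; these are the same argument.
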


This result naturally combines with Lemma~\ref{lem-restrict-Theta-T} 
and Corollary~\ref{cor-ThetaT}.
In particular, it can be applied to bounded continuous functions
defined only on $\bdy_T\Theta$ and extended 
arbitrarily in a bounded way to $\bdy\Theta_T$ and $\bdy\Theta$.

\begin{proof}
\ref{T-ur}
Proposition~\ref{prop-reg-S-P-equiv}\,\ref{1-u}, 
together with the upper regularity assumption, shows that 
\[
\limsup_{\Theta \ni \zeta \to \xi} \uS_{\Theta_T} f(\zeta) 
        \le f(\xi)
\quad \text{for every }\xi\in\bdy_T\Theta.
\]
Since $\uS_{\Theta_T}f$ is parabolic in $\Theta_T$, this
together with Lemma~\ref{lem-restrict-Theta-T}
(applied to $\Theta_T$)
shows that
\[
\uS_{\Theta_T}f \le \lP^T f = \lP_{\Theta_T}f.
\]
The converse inequality $\lP_{\Theta_T}f \le \uS_{\Theta_T}f$
follows 
from Theorem~\ref{thm-comp-princ}, 
and Remark~\ref{rmk-perron-domain} shows that 
$\uP_{\Theta_T} f \le \uS_{\Theta_T} f$.

\ref{T-lrp}
This is shown in the same way as \ref{T-ur}, by interchanging the role
of the upper and lower solutions, and taking into account the positivity
of $f$ and Proposition~\ref{prop-reg-S-P-equiv}\,\ref{1-r}.

\ref{T-lr}
As in \ref{T-ur}, Proposition~\ref{prop-reg-S-P-equiv}\,{\ref{1-nn}}, 
together with the lower regularity assumption, implies that 
\[
\limsup_{\Theta \ni \eta \to \xi} \lP_{\Theta_T} f(\eta) 
        \ge f(\xi)
\quad \text{for every }\xi\in\bdy_T\Theta,
\]
and hence, by Lemma~\ref{lem-restrict-Theta-T} (applied to $\Theta_T$),
$\lP_{\Theta_T}f \ge \uP_{}^T f = \uP_{\Theta_T}f$.

\ref{T-res}
This is a direct consequence of \ref{T-ur}--\ref{T-lr} and the 
general inequalities
\[
\lS_{\Theta_T}f \le \lP_{\Theta_T}f \le \uS_{\Theta_T}f
\quad \text{and} \quad
\lS_{\Theta_T}f \le \uP_{\Theta_T}f \le \uS_{\Theta_T}f.
\]

\ref{T-res-ae}
Let $E$ be a countable dense subset of $\Theta_T$.
For each $\xi \in E$ the function
$(0,1) \ni a \mapsto \lP(f+ah)(\xi)$ is nondecreasing and
thus has jumps for at most countably many values of $a$;
let $A_{\xi}$ be the set of these values of $a$.
Then $A=\bigcup_{\xi \in E} A_{\xi}$ is also countable.

Now let $a \in  (0,1) \setm A$. 
Then, for every $\xi \in E$, using the positivity of $h$ 
on $\bdy\Theta_T$ and the elliptic comparison principle
(Theorem~\ref{thm-comp-princ}), together with
\ref{T-ur}, \ref{T-lrp} or \ref{T-lr} above,
we see that 
\[
    \lP (f+a h)(\xi) = \lim_{b \to a\limminus} \lP (f+bh)(\xi)
    \le \uP (f+a h)(\xi)
    \le \lP (f+a h)(\xi).
\]
Thus $\lP (f+ah)= \uP (f+ah)$ in $E$ and as they are both continuous
this holds everywhere in $\Theta_T$, i.e.\ $f+ah$ is resolutive.
\qedhere
\end{proof}

\begin{thm}   \label{thm-dichotomy}
Let $m\ge1$ and let $f:\bdy\Theta\to [0,\infty)$ be bounded. 
Assume in addition that $f$ is continuous at $\xi_0\in\bdy\Theta$, $f(\xi_0)>0$
and that $\xi_0$ is lower regular for positive boundary data.
Then either 
\[
\liminf_{\Theta\ni\xi\to\xi_0} \uP f(\xi) \ge f(\xi_0)
\quad \text{or} \quad
\liminf_{\Theta\ni\xi\to\xi_0} \uP f(\xi) = 0.
\]
Similarly, either
$\liminf_{\Theta\ni\xi\to\xi_0} \uS f(\xi) \ge f(\xi_0)$
or $\liminf_{\Theta\ni\xi\to\xi_0} \uS f(\xi) = 0$.
\end{thm}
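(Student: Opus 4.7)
The plan is to derive a contradiction by localizing the problem to a small ball around $\xi_0$ where the hypothesis $L > 0$ provides a usable lower bound, then invoking Proposition~\ref{prop-reg-S-P-equiv}\,\ref{1-r} to force a strict increase of $\uP f$ at $\xi_0$.

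Set $L:=\liminf_{\Theta\ni\xi\to\xi_0}\uP f(\xi)$. If $L=0$ the dichotomy holds, so I assume $L>0$ and suppose, for contradiction, that $L<f(\xi_0)$. Fix rational $c'$ with $L<c'<f(\xi_0)$. By continuity of $f$ at $\xi_0$ there is $r>0$ with $f>c'$ on $\bdy\Theta\cap B(\xi_0,r)$; since $\uP f$ is parabolic (Theorem~\ref{thm-Perron-solution}) with $\liminf_{\xi\to\xi_0}\uP f(\xi)=L$, I can pick $\rho\in(0,r)$ such that $\uP f\ge L/2$ on $\Theta':=\Theta\cap B(\xi_0,\rho)$. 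By Proposition~\ref{prop-local-pos}, $\xi_0$ remains lower regular for positive boundary data with respect to $\Theta'$. On $\bdy\Theta'$ define the continuous function
\[
g(\eta):=c'-(c'-L/2)\frac{\dist(\eta,\xi_0)}{\rho},
\]
so that $L/2\le g\le c'$, $g(\xi_0)=c'$, and $g\equiv L/2$ on $\Theta\cap\bdy B(\xi_0,\rho)$. Being bounded, bounded away from zero, and continuous at $\xi_0$, Proposition~\ref{prop-reg-S-P-equiv}\,\ref{1-r} applied on $\Theta'$ yields $\liminf_{\Theta'\ni\zeta\to\xi_0}\lS_{\Theta'}g(\zeta)\ge g(\xi_0)=c'$.

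The crux of the argument is to prove $\lS_{\Theta'}g\le\uP_\Theta f$ in $\Theta'$. For arbitrary $v\in\LLt_g(\Theta')$ and $u\in\UU_f(\Theta)$, I verify the strict inequality $\limsup v<\liminf u$ at every $\eta\in\bdy\Theta'$ in two cases: for $\eta\in\bdy\Theta\cap\itoverline{B(\xi_0,\rho)}$ one has $\limsup v(\eta)<g(\eta)\le c'<f(\eta)\le\liminf u(\eta)$, while for $\eta\in\Theta\cap\bdy B(\xi_0,\rho)$ the lower semicontinuity of $u$ together with $u\ge\uP_\Theta f\ge L/2$ gives $\liminf_{\Theta'\ni\zeta\to\eta}u(\zeta)\ge u(\eta)\ge L/2>\limsup v(\eta)$. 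Theorem~\ref{thm-comp-princ} applied on $\Theta'$ then yields $v\le u$ in $\Theta'$; taking supremum over $v$ and infimum over $u$ gives $\lS_{\Theta'}g\le\uP_\Theta f$ in $\Theta'$. Combining this with the liminf estimate and the identity $\liminf_{\Theta'\ni\zeta\to\xi_0}\uP f(\zeta)=L$ (sequences in $\Theta$ tending to $\xi_0$ are eventually in $\Theta'$) I obtain $c'\le L$, contradicting $c'>L$.

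The statement for $\uS f$ is proved by the identical argument, with $\UU_f(\Theta)$ replaced by $\UUt_f(\Theta)$; the strict inequality $\liminf u>f$ built into $\UUt_f$ only sharpens the two boundary inequalities above, so the analog $\lS_{\Theta'}g\le\uS_\Theta f$ holds and the same contradiction follows. The principal obstacle I must avoid is the possible vanishing of $f$ at boundary points of $\Theta$ away from $\xi_0$, which would prevent direct comparison of $\uP f$ with $\lS g$ for any everywhere-positive $g\le f$ on the full domain $\Theta$; the passage to the localized domain $\Theta'$ converts this difficulty into a comparison on the interior sphere $\Theta\cap\bdy B(\xi_0,\rho)$, where the lower bound $\uP f\ge L/2$ supplies the strict slack required by Theorem~\ref{thm-comp-princ}.
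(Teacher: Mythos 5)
Your proposal is correct and follows essentially the same route as the paper's proof: localize to $\Theta\cap B(\xi_0,\rho)$, use the positivity of the liminf to manufacture boundary data bounded away from zero there, compare the lower strict Perron solution of that data with $\uP f$ via the elliptic comparison principle (Theorem~\ref{thm-comp-princ}), and conclude with lower regularity on the localized domain through Proposition~\ref{prop-reg-S-P-equiv}\,\ref{1-r}. The only differences are cosmetic (a contradiction framing and an explicit distance-interpolating $g$ instead of the paper's piecewise-defined $h$), plus the trivial point that $\rho$ should be chosen so that $\uP f\ge L/2$ on $\Theta\cap\itoverline{B(\xi_0,\rho)}$, since the bound is used at points of $\Theta\cap\bdy B(\xi_0,\rho)$.
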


\begin{proof}
Assume that $\liminf_{\Theta\ni\xi\to\xi_0} \uP f(\xi) >0$.
Then there exists $\eps>0$ and a ball $B\ni\xi_0$, such that 
$\uP f\ge \eps$ on $\Theta\cap\itoverline{B}$ and 
$f\ge \eps$ on $\bdy\Theta\cap \itoverline{B}$.
Thus, the function
\[
h=\begin{cases} f & \text{on } \bdy\Theta\cap \itoverline{B}, \\
                \uP f & \text{on } \bdy B \cap\Theta,
   \end{cases}
\]
is bounded away from zero on $\bdy(B\cap\Theta)$ and continuous at $\xi_0$.

Now, let $u\in\UU_f(\Theta)$ be arbitrary.
Then by definition,
\[
\liminf_{\Theta\ni{\zeta\to\xi}} u(\zeta)\ge f(\xi) =h(\xi)
\quad \text{for  } \xi\in\bdy\Theta\cap \itoverline{B},
\]
while the lower semicontinuity of $u$, together with the definition of $\uP f$
and $h$, implies that
\[
\liminf_{\Theta\ni{\zeta\to\xi}} u({\zeta}) \ge u({\xi}) 
\ge\uP f({\xi}) =h({\xi})
\quad \text{for  } {\xi}\in\bdy B\cap\Theta.
\]
It follows that $u\in \UU_h(\Theta\cap B)$ and taking infimum over all
such $u$ yields 
\[
\uP f \ge \uP_{B\cap\Theta}h \ge \lS_{B\cap\Theta}h 
\quad \text{in } B\cap\Theta.
\]
Corollary~\ref{cor-restrict-pos} shows that $\xi_0$ is lower regular 
for positive boundary data with respect to $B\cap\Theta$.
We therefore conclude from Proposition~\ref{prop-reg-S-P-equiv}\,\ref{1-r}
that
\[
\liminf_{\Theta\ni\zeta\to\xi_0} \uP f(\zeta) 
\ge \liminf_{\Theta\ni\zeta\to\xi_0} \lS_{B\cap\Theta}h(\zeta) 
\ge h(\xi_0) = f(\xi_0).
\]
The case with $\uS f$ is obtained similarly.
\end{proof}

Finally, we are ready to prove Theorem~\ref{thm-reg-monot-cyl}.

\begin{proof}[Proof of Theorem~\ref{thm-reg-monot-cyl}]
\ref{mon-cyl-reg}
There are two types of points in $\bdy_p\Theta$: those belonging
to the lateral surface of one of the cylinders constituting $\Theta$ and
those belonging to the flat bottom of one of these cylinders.
Because of the assumption $\Theta(T_1)\subset\Theta(T_2)$, none of the
parabolic boundary points belongs to the top of any of the cylinders.
(Here by the \emph{top} of $U_{t_1,t_2}$ we mean 
$\bdy U_{t_1,t_2} \setm \bdyp U_{t_1,t_2}$.)

The lateral points are regular for positive boundary data 
by Theorem~\ref{thm-reg-cylinder},
together with Propositions~\ref{prop-local-up} and~\ref{prop-local-pos},
while the bottom points are regular for positive boundary data by 
Proposition~\ref{prop-earliest-reg-below-new}.

\ref{mon-cyl-res-pos}
The resolutivity and the identities in \eqref{eq-Pf-identities}
follow from Proposition~\ref{prop-S=P}\,\ref{T-ur}, \ref{T-lrp}
and~\ref{T-res}, together with the parabolic
comparison principle, Theorem~\ref{thm-para-comp-princ-union-cyl}.
The uniqueness is a direct consequence of 
Theorem~\ref{thm-unique-cont-union-cyl}.

The continuity of $u$ on $\bdy_p\Theta$ follows from 
Proposition~\ref{prop-reg-S-P-equiv}\,\ref{1-ur}, while Theorem~5.16.1
in DiBenedetto--Gianazza--Vespri~\cite{DBGV-mono} shows that $u$
has a continuous extension to all points on the top level
$\bdy\Theta\setm\bdy_p\Theta$.

\ref{mon-cyl-res}
As in \ref{mon-cyl-res-pos}, the resolutivity follows from 
Proposition~\ref{prop-S=P}\,\ref{T-ur}
and~\ref{T-res}, together with Theorem~\ref{thm-para-comp-princ-union-cyl}.
Proposition~\ref{prop-reg-S-P-equiv}\,\ref{1-u} then shows that 
\[
\limsup_{\Theta\ni\xi\to\xi_0} Pf(\xi) \le f(\xi_0)
\quad \text{for all }\xi_0\in\bdy_p\Theta,
\]
while Theorem~\ref{thm-dichotomy} implies that \eqref{eq-dichotomy} holds.
\end{proof}

\appendix
%
%
\makeatletter
\def\@seccntformat#1{Appendix \csname the#1\endcsname.\quad}
\makeatother

\section{Proof of Theorem~\ref{thm-usc-repr}}
\label{app-pf-usc}

Kuusi~\cite{Kuusi09}, working with the \p-parabolic 
equation~\eqref{eq-p-para},  observed that 
the crucial step towards establishing inner regularity
results like Theorem~\ref{thm-usc-repr} 
is the following supremum estimate  for subsolutions.
Such results had earlier been obtained using weak Harnack inequalities,
which are more difficult to deduce than supremum estimates.
Avelin--Lukkari~\cite{AvelinL2015} later adapted Kuusi's argument
to supersolutions of the porous medium equation (establishing
Theorem~\ref{thm-lsc-repr}).

First, we set the  notation
\[
   Q(x_0,t_0,\rho)=B(x_0,\rho) \times (t_0-\rho^2,t_0+\rho^2)
 \quad \text{and} \quad
   Q(\rho)=Q(0,0,\rho).
\]

\begin{prop} \label{prop-app}
\textup{(Supremum estimate for subsolutions)}
Let $m \ge 1$, $u$ be a bounded nonnegative subsolution in $Q(x_0,t_0,\rho)$,
$0 < \sigma <1$ and $M \ge 0$.
Then 
\begin{equation}   \label{bound-subsol-final}
\esssup_{Q(x_0,t_0,\sigma \rho)} (u-M) \\
\le C \biggl(\frac1{|Q(\rho)|}
\iint_{Q(x_0,t_0,\rho)}(u-M)_\limplus^2\,dx\,dt\biggr)^{1/\lambda},
\end{equation}
where $\lambda=2+\frac{(m-1)n}2$, and $C$ only depends on $n$, $m$, 
$\sigma$ and $L:=\esssup_{Q(x_0,t_0,\rho)}u$, but not on $u$, $M$ and $\rho$.
\end{prop}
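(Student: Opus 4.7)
The plan is to run a De Giorgi iteration for subsolutions of the porous medium equation, in the spirit of DiBenedetto--Gianazza--Vespri~\cite{DBGV-mono} and paralleling the treatment for supersolutions given by Avelin--Lukkari~\cite{AvelinL2015} (which in turn is modeled on Kuusi~\cite{Kuusi09}). The strategy breaks into: (i) a Caccioppoli-type energy estimate, (ii) parabolic Sobolev--Gagliardo--Nirenberg interpolation, and (iii) an iteration on a shrinking family of cylinders with increasing truncation levels.

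For step~(i), fix a smooth cutoff $\zeta$ supported in $Q(x_0,t_0,\rho)$ and a level $k\ge M$, and test the weak subsolution formulation with $\phi=\zeta^2(u-k)_+$, using Steklov averages in time to justify the computation. The identity $\nabla u^m\cdot\nabla(u-k)_+ = m u^{m-1}|\nabla(u-k)_+|^2 = \frac{4m}{(m+1)^2}|\nabla(u-k)_+^{(m+1)/2}|^2$ on $\{u>k\}$ together with the time identity $u\,\partial_t(u-k)_+ = \partial_t\bigl[\tfrac12(u-k)_+^2\bigr]+k\,\partial_t(u-k)_+$ yields, after standard manipulations,
\begin{equation*}
\esssup_{\tau}\int_{B(x_0,\rho)}\!(u-k)_+^2\zeta^2\,dx + \iint_{Q(x_0,t_0,\rho)}\!|\nabla(u-k)_+^{(m+1)/2}|^2\zeta^2\,dx\,dt \le C\iint_{Q(x_0,t_0,\rho)}\!(u-k)_+^2\bigl(|\nabla\zeta|^2+|\partial_t\zeta|\bigr)\,dx\,dt,
\end{equation*}
with an additional constant factor $L^{m-1}$ on the right arising from the estimate $u^{m-1}\le L^{m-1}$.

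For step~(ii), apply the parabolic Gagliardo--Nirenberg inequality
\begin{equation*}
\iint w^{2(n+2)/n}\,dx\,dt \le C\Bigl(\sup_{\tau}\int w^2\,dx\Bigr)^{2/n}\iint|\nabla w|^2\,dx\,dt
\end{equation*}
to $w=(u-k)_+^{(m+1)/2}\zeta$; combined with $w^2\le L^{m-1}(u-k)_+^2$ this converts the Caccioppoli bound into an $L^2\to L^q$ improvement for $(u-k)_+$ with a definite Sobolev gain $q>2$. For step~(iii), with nested cylinders $Q_j\searrow Q(x_0,t_0,\sigma\rho)$ and levels $k_j=M+d(1-2^{-j})\nearrow M+d$, these ingredients combine, via Chebyshev's inequality applied to the level set $\{u>k_{j+1}\}\subset Q_j$, into a super-linear recursion
\begin{equation*}
Y_{j+1}:=\iint_{Q_{j+1}}(u-k_{j+1})_+^2\,dx\,dt \le \frac{C\,2^{jB}}{\rho^{a}\,d^{b}}\,Y_j^{1+\kappa},
\end{equation*}
for explicit positive exponents $a,b,B,\kappa$ depending only on $n$, $m$ and $L$.

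The standard fast-geometric-convergence lemma (see e.g.\ DiBenedetto~\cite[Chap.~I]{dibe-mono}) forces $Y_j\to 0$ as soon as $Y_0$ lies below an explicit $d$-dependent threshold. Solving that threshold for $d$ in terms of $Y_0=\iint_{Q(x_0,t_0,\rho)}(u-M)_+^2$ yields $d\le C\bigl(|Q(\rho)|^{-1}Y_0\bigr)^{1/\lambda}$, and \eqref{bound-subsol-final} then follows from $(u-M)_+\le d$ a.e.\ on $Q(x_0,t_0,\sigma\rho)$. The main obstacle is the bookkeeping of exponents so that the Sobolev gain $2(n+2)/n$, the natural PME power $(m+1)/2$, and the $L^{m-1}$ factors from trading $(u-k)_+^{m+1}$ against $(u-k)_+^2$ all combine to yield precisely $\lambda=2+\frac{(m-1)n}{2}$, with $L$ entering only the constant $C$; a secondary but essential technical point is the Steklov regularization in the Caccioppoli step, since weak subsolutions are not \emph{a priori} differentiable in time.
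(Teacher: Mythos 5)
Your overall scheme (Caccioppoli estimate, parabolic Sobolev embedding, De Giorgi iteration on shrinking cylinders with rising levels, fast geometric convergence) is exactly the one the paper uses, following Andreucci. But the specific way you treat the degenerate weight $u^{m-1}$ does not deliver the stated exponent, and this is precisely the point you defer as ``bookkeeping''. First, a small error: the identity $m u^{m-1}|\grad(u-k)_\limplus|^2 = \frac{4m}{(m+1)^2}|\grad(u-k)_\limplus^{(m+1)/2}|^2$ is false for $k>0$; since $\grad(u-k)_\limplus^{(m+1)/2}=\frac{m+1}{2}(u-k)_\limplus^{(m-1)/2}\grad(u-k)_\limplus$, passing to $(u-k)_\limplus^{(m+1)/2}$ replaces the weight $u^{m-1}$ by $(u-k)_\limplus^{m-1}$, which is only a one-sided bound (``$\ge$'' for $m\ge1$) and, more importantly, is much smaller than $k^{m-1}$ near the level set. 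If you carry your scheme through -- Gagliardo--Nirenberg applied to $w=(u-k)_\limplus^{(m+1)/2}\zeta$, then Chebyshev/H\"older on the level sets to return to $Y_{j+1}=\iint(u-k_{j+1})_\limplus^2$ -- the resulting recursion has threshold $Y_0^{\kappa}\lesssim \rho^a d^b$ with $\kappa/b = 2/\bigl(n(m-1)+2(m+1)\bigr)$, i.e.\ it solves to $d\simeq\bigl(|Q(\rho)|^{-1}Y_0\bigr)^{1/\lambda'}$ with $\lambda'=(m+1)+\tfrac{(m-1)n}{2}=\lambda+(m-1)$. For $m>1$ this is strictly larger than $\lambda$, and since $|Q(\rho)|^{-1}Y_0$ can be arbitrarily small, the weaker exponent cannot be repaired by enlarging the constant $C$ (even one depending on $L$).

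The paper avoids this loss by never passing to the power $(m+1)/2$: on the set where the truncation $u_j=(u-M-k_{j+1})_\limplus$ is nonzero one has $u\ge M+k_{j+1}\ge k/2$ (this is where $M\ge0$ is used), so the weight in the good gradient term is bounded below by $(k/2)^{m-1}$ with $k$ the \emph{final} truncation height; the energy estimate then controls $k^{m-1}\iint\zeta_j^2|\grad u_j|^2$, the parabolic Sobolev embedding is applied to $u_j\zeta_j$ itself, and H\"older is used with the measure bound $|A_j|\le 4^{j+2}k^{-2}Y_j$. That bookkeeping produces exactly $\lambda=2+\tfrac{(m-1)n}{2}$. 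Your version would still yield a valid supremum estimate with a positive exponent, which suffices for the application in Theorem~\ref{thm-usc-repr}, but it does not prove the proposition as stated.
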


To prove this estimate we modify the technique
used by Andreucci~\cite{andreucci}.
Note that we do not have the extra corrective term that appears in 
\cite{andreucci}, 
since here we directly assume that the height of the cylinder is $\rho^2$ 
and allow $C$ to depend on $L$, which is sufficient for our purposes.

Before proving Proposition~\ref{prop-app}, 
we show how this estimate is used to obtain Theorem~\ref{thm-usc-repr}.

\begin{proof}[Proof of Theorem~\ref{thm-usc-repr}]
Let $(x_0,t_0) \in \Theta$ be a Lebesgue point of $u$.
Without loss of generality we assume that $(x_0,t_0)=(0,0)$. 
We want to show that $u^*(0,0)=u(0,0)$. 
First of all,
\[
u^*(0,0)\ge\lim_{\rho\to 0}\frac1{|Q(\rho)|}\iint_{Q(\rho)}u\,dx\,dt=u(0,0).
\]

For the converse inequality, first 
choose $r$ such that $Q(r)\Subset \Theta$.
Let $L=\esssup_{Q(r)} u$, which is finite, see e.g.\ Andreucci~\cite{andreucci}.
Then,  by Proposition~\ref{prop-app}
(with $M=u(0,0)$ and $\sigma=\tfrac12$), we have for any $0<\rho<r$,
\begin{align*}
\esssup_{Q(\rho/2)}(u-u(0,0))
&\le C \biggl(\frac1{|Q(\rho)|}
  \iint_{Q(\rho)}(u-u(0,0))_\limplus^2\,dx\,dt\biggr)^{1/\lambda}\\
&\le C L^{1/\lambda}\biggl(\frac1{|Q(\rho)|}
\iint_{Q(\rho)}|u-u(0,0)|\,dx\,dt\biggr)^{1/\lambda},
\end{align*}
which tends to $0$, as $\rho \to 0$, since $(0,0)$ is a Lebesgue point.
Thus we conclude that $u^*(0,0) \le u(0,0)$.
\end{proof}

Now we turn to the proof of Proposition~\ref{prop-app}.

\begin{proof}[Proof of Proposition~\ref{prop-app}]
Without loss of generality we assume that $(x_0,t_0)=(0,0)$.
Define for all $j\ge0$,
\begin{alignat*}{2}
t_j^\limpm& =\pm \biggl(\sigma^2\rho^2+\frac{1-\sigma^2}{2^{j}}\rho^2\biggr),
& \quad 
\rho_j& =\sigma\rho+\frac{1-\sigma}{2^j}\rho, \\
B_j& =B(0,\rho_j), 
& \quad 
Q_j &= B_j 
\times(t_j^\limminus,t_j^\limplus).
\end{alignat*}
Note that
\[
Q(\sigma \rho) \Subset Q_{j+1} \Subset Q_j \Subset Q_0=Q(\rho).
\]
We also consider $C^\infty$-cutoff functions
$\zeta_j$ such that $0 \le \zeta_j \le 1$ and
\begin{alignat}{2}  \label{eq-est-for-zeta}
\zeta_j& \equiv0 \text{ outside }  Q_j,
& \quad 
\zeta_j& \equiv1 \text{ in } Q_{j+1}, \nonumber
\\
|\nabla\zeta_j|& \le\frac{2^{j+2}}{(1-\sigma)\rho},
& \quad   
0  \le |\partial_t\zeta_j| & \le\frac{2^{j+2}}{(1-\sigma^2)\rho^2}.
\end{alignat}
Fix $k>0$ to be chosen later, set 
\[
\uj = (u-M-k_{j+1})_\limplus, \quad 
\text{where } k_j=k-\frac{k}{2^{j+1}},
\]
and use
$ f_j= 2 \uj \zeta_j^2 $
as a test function.
Note that 
\begin{equation}   \label{eq-grad-uj=u}
\bdy_t \uj = \bdy_t u \quad \text{and} \quad \grad \uj = \grad u
\end{equation} 
a.e.\ in the set where $\uj\ne 0$
(and  that $\bdy_t\uj = \grad\uj = 0$ a.e.\ otherwise).
Some of the calculations below are formal.
As far as the time derivative $\partial_t u_j$ 
is concerned, the calculations
can be made rigorous by means of 
a Steklov averaging process, cf.\ e.g.\ 
DiBenedetto--Gianazza--Vespri~\cite[pp.~21 and~35]{DBGV-mono}, or by a
mollification in time, see
e.g.\ Kinnunen--Lindqvist~\cite[pp.~141--143]{Kinnunen-Lindqvist:2008}. Another
difficulty is represented by $\nabla u$, since in general only $\nabla
u^m$ is well-defined: for a way to deal with this second issue, see
e.g.\ B\"ogelein--Duzaar--Gianazza~\cite[Lemma~2.2]{BDG1}.
Note, however, that in the integrals below, $\grad u$ is only considered
at points, where $u>k_j$, and thus $\grad u = m^{-1}(u^m)^{1/m-1}\grad u^m$
is well defined.

For all $\tb\in(t_j^\limminus,t_j^\limplus)$, the time part of 
the weak formulation of 
subsolutions, within $\Qjt:=B_j\times(t_j^\limminus,\tau)$,
becomes
\begin{align}\label{subsol-time-der}
\iint_{\Qjt} 
f_j \partial_t u \,dx\,dt 
&= \iint_{\Qjt} 
\zeta_j^2 \bdy_t \uj^2 \,dx\,dt
\\
&\ge \int_{B_j}\uj(x,\tb)^2 \zeta_j(x,\tb)^2 \,dx
-\frac{2^{j+3}}{(1-\sigma^2)\rho^2} \iint_{\Qjt} 
\uj^2\,dx\,dt \nonumber,
\end{align}
where we used partial integration with respect to $dt$
and properties~\eqref{eq-est-for-zeta} of the cutoff function $\zeta_j$ 
in the second step.

Now we turn to the elliptic part. 
We have
\[
\grad f_j = 2 \zeta_j^2 \grad \uj 
+ 4 \uj \zeta_j \grad \zeta_j \quad
\text{a.e.}
\]
and hence, using~\eqref{eq-grad-uj=u}, we see that
\begin{align*}
 \iint_{\Qjt}\nabla u^m\cdot\nabla f_j \,dx\,dt  
&\ge 2m \iint_{\Qjt} u^{m-1} \zeta_j^2 |\grad \uj|^2 \,dx\,dt\\
&\quad 
  -4m \iint_{\Qjt} u^{m-1} ( \zeta_j |\grad \uj|) (\uj 
             |\grad\zeta_j|) \,dx\,dt. 
\end{align*}
Next, using 
Young's inequality, we get that for every $\theta>0$,
\begin{align*}
\iint_{\Qjt}\nabla u^m \cdot \nabla f_j \,dx\,dt
&\ge (2m-2m\theta) \iint_{\Qjt} u^{m-1} \zeta_j^2  |\grad \uj|^2 \,dx\,dt\\
&\quad 
-\frac{2m}{\theta}\iint_{\Qjt} u^{m-1} \uj^2 |\grad\zeta_j|^2 \,dx\,dt.
\end{align*}
We now make the choice $\theta=\tfrac12$.
Using also that $L \ge u \ge k_{j+1} \ge k/2$ when $\uj\ne0$
(and  that $\grad \uj = 0$ a.e.\ otherwise), 
together with~\eqref{eq-est-for-zeta},
we conclude that
\begin{align*}
\iint_{\Qjt}\nabla u^m \cdot \nabla f_j \,dx\,dt
& \ge \frac{m k^{m-1}}{2^{m-1}} 
   \iint_{\Qjt} \zeta_j^2 |\grad \uj|^2 \,dx\,dt\\
&\quad 
   - \frac{4^{j+3} m L^{m-1}}{(1-\sigma)^2\rho^2}
             \iint_{\Qjt} \uj^2 \,dx\,dt.
\end{align*}
Combining this with \eqref{subsol-time-der} and 
using that $u$ is a subsolution yields the energy estimate
\[ 
      \int_{B_j} \uj(x,\tau)^2 \zeta_j(x,\tau)^2 \,dx 
+ k^{m-1} 
   \iint_{\Qjt} \zeta_j^2 |\grad \uj|^2 \,dx\,dt 
\le \frac{4^{j}\gah}{2\rho^2}  \iint_{\Qjt} \uj^2\,dx\,dt,
\]
where $\gah$ only depends on $m$, $\sigma$ and $L$, 
but not on $u$, $M$, $k$, $\rho$ or $j$.
Taking supremum separately of each term on the left-hand side
and adding the estimates, yields
\begin{align}
\sup_{t_j^\limminus\le t\le t_j^\limplus}
      \int_{B_j} \uj(x,t)^2 \zeta_j^2(x,t) \,dx
&+ k^{m-1} 
   \iint_{Q_j} \zeta_j^2 |\grad \uj|^2 \,dx\,dt \nonumber\\ 
& 
\le \frac{4^{j}\gah}{\rho^2}  \iint_{Q_j} \uj^2\,dx\,dt
\le \frac{4^{j}\gah}{\rho^2} Y_j,
\label{energy-est}
\end{align}
where
\[
Y_j := \iint_{Q_j} (u(x,t)-M-k_{j})^2_\limplus\,dx\,dt.
\]
Next, we need an estimate for the measure of 
\[
 A_j =\{(x,t) \in Q_j : u_j(x,t)>0\}
 =\{(x,t)\in Q_j: u(x,t) > M + k_{j+1} \}.
\]
Using that $k_{j+1}-k_j = 2^{-(j+2)}k$, we see that
\begin{equation} \label{est-Ajp1}
Y_j 
\ge \iint_{A_{j}} (u(x,t)-M-k_{j})^2_\limplus\,dx\,dt 
\ge  4^{-(j+2)} k^2  |A_j|.
\end{equation}
Let $\alp=2/(n+2)$ and note that $1-\alp=n/(n+2)$.
By H\"older's inequality and 
the parabolic 
Sobolev embedding 
(see e.g.\ DiBenedetto--Gianazza--Vespri~\cite[Proposition~4.1,
Chapter~1]{DBGV-mono}),
we obtain that
\begin{align*}
Y_{j+1} &\le \iint_{Q_j} (\uj \zeta_j)^2\,dx\,dt \\
&\le |A_{j}|^{\alp} \biggl( \iint_{Q_j} (\uj \zeta_j)^{2/(1-\alp)} 
              \,dx\,dt \biggr)^{1-\alp} \\
& \le \gamma(n) |A_{j}|^{\alp} \biggl( \iint_{Q_j}
     (|\zeta_j \grad\uj|^2 + |\uj\grad\zeta_j|^2) 
              \,dx\,dt \biggr)^{1-\alp} \\
&\quad  \times \biggl(\sup_{t_j^\limminus\le t\le t_j^\limplus} 
   \int_{B_j} \uj(x,t)^2 \zeta_j(x,t)^2 \,dx\,dt\biggr)^{\alp}.
\end{align*}
Estimating the factors on the right-hand side using 
\eqref{est-Ajp1} and \eqref{energy-est}
we can conclude that
\[
Y_{j+1}\le A b^j\, Y_j^{1+\alp},
\]
where
\[
b:=4^{1+\alp} \quad \text{and} \quad
A:= \frac{\gat (1+ k^{1-m})^{1-\alp}}{k^{2\alp}\rho^2},
\]
and $\gat$ only depends on $n$, $m$, $\sigma$ and $L$,
but not on $u$, $M$, $k$, $\rho$ or $j$.
It is now easily shown by induction 
(cf.\ DiBenedetto--Gianazza--Vespri~\cite[Lemma~5.1, Chapter~1]{DBGV-mono}) 
that 
\[
Y_j \le A^{-1/\alp} b^{-1/\alp^2} b^{-j/\alp} \to 0  \text{ as } j\to\infty, 
\quad \text{i.e. } u\le M+k  \text{ in } Q(\sigma\rho),
\]
provided that $k$ is chosen so that 
$Y_0 \le A^{-1/\alp} b^{-1/\alp^2}$.
Such a condition is satisfied if 
\[
Y_0 \le \iint_{Q_0} (u(x,t)-M)_\limplus^2\,dx\,dt
\le \gat^{-1/\alp}  b^{-1/\alp^2} \frac{k^2\rho^{n+2}}
   {(1+ k^{ 1-m})^{n/2}}.
\]
Choosing
\[
k = \ga' \biggl( \biggl( \frac{Y_0}{\rho^{n+2}} \biggr)^{1/\la}
        + \biggl( \frac{Y_0}{\rho^{n+2}} \biggr)^{1/2} \biggr),
\]
with $\ga'$ large enough only depending on $n$, $m$, $\sigma$ and $L$, will do.
Since $Y_0/\rho^{n+2} \le 2|B(0,1)|L^2$ and $1/2-1/\la>0$, 
we see that
\[
       \esssup_{Q(\sigma\rho)} {(u-M)} \le
    k \le \ga \biggl( \frac{Y_0}{\rho^{n+2}} \biggr)^{1/\la},
\]
where $\ga$ again only depends on $n$, $m$, $\sigma$ and $L$.
From this
\eqref{bound-subsol-final} follows.
\end{proof}

\section*{Conflict of Interest}

\noindent
{\bf Funding:} 
A.\ B. and J.\ B.\ were supported by the Swedish Research Council, 
Grant Nos.\ 621-2011-3139, 621-2014-3974 and 2016-03424. 
J.\ S. was supported by the Academy of Finland grant 259363
and a V\"ais\"al\"a foundation travel grant.

\medskip

\noindent
{\bf Conflict of Interest:}
 The authors declare that they have no conflict of interest.

\end{document}